%%%  sj353.tex  unrooted.tex  
%%%  The distance profile of rooted and unrooted simply generated trees
%%%  Gabriel Berzunza Ojeda  and  Svante Janson

\documentclass[11pt,reqno,tbtags,a4paper]{amsart}
\usepackage{amssymb}
\usepackage{xpunctuate}
\usepackage{url}
\usepackage[square,numbers]{natbib}

\bibpunct[, ]{[}{]}{;}{n}{,}{,}

\title[distance profile of simply generated trees]
{The distance profile of rooted and unrooted simply generated trees}

\date{31 August, 2020; revised 18 June, 2021}
%(typeset \today{} \klockan)}   %\Small
%\ifdraft{(typeset \today{} \klockan)}
%{\unskip}} 
%; revised ...

\author{Gabriel Berzunza Ojeda}
\address{Department of Mathematical Sciences, University of Liverpool, Mathematical Sciences Building, L69 7ZL Liverpool, United Kingdom}
\email{gabriel.berzunza-ojeda@liverpool.ac.uk}

\author{Svante Janson}
%\thanks{SJ partly supported by the Knut and Alice Wallenberg Foundation}
%\thanks{Delvis stött av Knut och Alice Wallenbergs Stiftelse}
%\thanks{Wallenberg Scholar, 
%  partly supported by the Knut and Alice Wallenberg Foundation}  
%\thanks{Partly supported by the Knut and Alice Wallenberg Foundation}
% KAW 2016.0357  %fr.o.m. 2017-07-01
\address{Department of Mathematics, Uppsala University, PO Box 480,
SE-751~06 Uppsala, Sweden}
\email{svante.janson@math.uu.se}
%  \qquad http://www2.math.uu.se/{\tiny$\sim$}svante/}
\newcommand\urladdrx[1]{{\urladdr{\def~{{\tiny$\sim$}}#1}}}
\urladdrx{http://www2.math.uu.se/~svante/}
%\urladdr{http://www.math.uu.se/svante-janson}
%https://orcid.org/0000-0002-9680-2790

\keywords{Random trees; Brownian excursion; local time; profiles;
H\"older continuity}

\subjclass[2010] %really 2020
{60C05; 05C05; 60J65}
%??60F17;
%{60C05 (68P10,68W40)} %%{Primary: <subject>; Secondary: <subject>}

\overfullrule 0pt % no overfullrule even for draft option

%\linespread{1.6}  % double spaced

\numberwithin{equation}{section}

\renewcommand\le{\leqslant}
\renewcommand\ge{\geqslant}

\allowdisplaybreaks

% \setlength{\textheight}{23cm}
% \setlength{\textwidth}{16cm}
% \setlength{\topmargin}{-1cm}
% \setlength{\oddsidemargin}{-0.2cm}
%\setlength{\evensidemargin}{0.0cm}

%\showthe\marginparwidth
%\marginparwidth=90pt %%(default) %% smaller?

%% Exempel på manipulering av numrering:
%\show\refstepcounter
%\show\thetheorem
%\let\xtheproperty\theproperty
%\def\theproperty{Q\xtheproperty}
%\newtheorem*{property*}{Property \csname @currentlabel\endcsname}
%\newenvironment{propertyx}
%{%
%\begin{property*}}
%{\end{property*}}
%\makeatletter\xdef\@currentlabel{\theproperty$'$}\makeatother
%\begin{propertyx} ...
%\newtheorem*{propertyx}{Property \theproperty$'$} %hur får man label rätt?

% \newproof{@proofx}{\pfnamn}   %% SJ, for proof with special heading
% \newenvironment{proofx}[1]{\gdef\pfnamn{#1}\begin{@proofx}}{\end{@proofx}} 

%%%%% THEOREMS ETC %%%%%

\theoremstyle{plain}% default
\newtheorem{theorem}{Theorem}[section]
\newtheorem{lemma}[theorem]{Lemma}
\newtheorem{proposition}[theorem]{Proposition}
\newtheorem{corollary}[theorem]{Corollary}
\newtheorem{claim}{Claim}

\theoremstyle{definition}

\newtheorem{exampleqqq}[theorem]{Example}
\newenvironment{example}{\begin{exampleqqq}}
  {\hfill\qedsymbol\end{exampleqqq}}
%\AtEndEnvironment{example}{\null\hfill\qedsymbol}
%\let\oldexam\example
%\renewcommand{\example}{\oldexam\normalfont}

\newtheorem{remarkqqq}[theorem]{Remark}
\newenvironment{remark}{\begin{remarkqqq}}
  {\hfill\qedsymbol\end{remarkqqq}}
%\AtEndEnvironment{remark}{\null\hfill\qedsymbol}

% \newtheorem*{definition}{Definition}

\newtheorem{problem}[theorem]{Problem}

\theoremstyle{remark}

\newenvironment{romenumerate}[1][-10pt]{% optional argument changes indentation
\addtolength{\leftmargini}{#1}\begin{enumerate}% gives (i), (ii) etc.
 }{\end{enumerate}}

\newcounter{oldenumi}
% continues numbering from previous romenumerate
{\setcounter{oldenumi}{\value{enumi}}
\begin{romenumerate} \setcounter{enumi}{\value{oldenumi}}}
{\end{romenumerate}}

% thmenumerate gives (i) run in after ``Theorem ...''
% now works with \label too, but needs '%' after \label{...}%
\newcounter{thmenumerate}

\newcounter{xenumerate}   %no left indentation; thus wider lines

 %$ $ tycks eliminera fel
		%som annars uppstår i typstorlek på följande fotnotsnummer.

\newcommand\pfitemx[1]{\par\emph{#1}:}

%\newcommand\step[1]{\smallskip\noindent\emph{Step #1.} \noindent}
%\newcommand\step[1]{\par{#1.}}
%\newcounter{steps}

\newcommand{\refT}[1]{Theorem~\ref{#1}}
\newcommand{\refTs}[1]{Theorems~\ref{#1}}
\newcommand{\refC}[1]{Corollary~\ref{#1}}

\newcommand{\refL}[1]{Lemma~\ref{#1}}
\newcommand{\refLs}[1]{Lemmas~\ref{#1}}
\newcommand{\refR}[1]{Remark~\ref{#1}}

\newcommand{\refS}[1]{Section~\ref{#1}}
\newcommand{\refSs}[1]{Sections~\ref{#1}}
\newcommand{\refSS}[1]{Section~\ref{#1}}
\newcommand{\refP}[1]{Problem~\ref{#1}}

\newcommand{\refApp}[1]{Appendix~\ref{#1}}
%\newcommand{\refConj}[1]{Conjecture~\ref{#1}}

% Marginal notes and labelling macros
%\newcommand\marginal[1]{\marginpar{\raggedright\parindent=0pt\tiny #1}}
%\newcommand\marginal[1]{\marginpar[\raggedleft\tiny #1]{\raggedright\tiny#1}}
%\newcommand\marginal[1]{\ifdraft
%{\marginpar[\raggedleft\tiny #1]{\raggedright\tiny #1}}
%}{\message{ERROR marginal requires draft option}}}
%\newcommand\SJ{\marginal{SJ} }
%\newcommand\kolla{\marginal{CHECK! SJ} }
%\newcommand\kolla{\marginal{KOLLA! SJ} }
%\newcommand\ms[1]{\texttt{[ms #1]}}
%\newcommand{\ms}[1]{\par \vdots \texttt{[ms #1]} \par\vdots}
%\newcommand\XXX{XXX \marginal{XXX}}
%\newcommand{\mer}[1]{\texttt{[#1]}\marginal{MER!}}
%\newcommand\REV{\marginal{Revidera!!}}
%\newcommand\REM[1]{{\raggedright\texttt{[#1]}\par\marginal{XXX}}}
%\newcommand\XREM[1]{\relax}
%\newcommand\rem[1]{{\texttt{[#1]}\marginal{XXX}}}
%\newenvironment{OLD}{\Small \REM{Old stuff to be edited:}\par}{}
%\newenvironment{comment}{\setbox0=\vbox\bgroup}{\egroup} %deletes!

%\newcommand\linebreakx{\unskip\marginal{$\backslash$linebreak}\linebreak}

%\def\nomarginal{%
% \def\marginal##1{\ignorespaces}%
% \overfullrule=0pt \message{*** no marginals or black boxes ***}}

\begingroup
  \count255=\time
  \divide\count255 by 60
  \count1=\count255
  \multiply\count255 by -60
  \advance\count255 by \time
  \ifnum \count255 < 10 \xdef\klockan{\the\count1.0\the\count255}
  \else\xdef\klockan{\the\count1.\the\count255}\fi
\endgroup

% further AmSTeX adaptations
\newcommand\nopf{\qed}   % for theorem without proof
 % for proof with explicit \qed
%\newcommand\qedtag{\tag*{\qedsymbol}}

%\DeclareMathOperator*{\sumsum}{\sum\sum}
\DeclareMathOperator*{\sumx}{\sum\nolimits^{*}}

\newcommand{\sumio}{\sum_{i=0}^\infty}

\newcommand{\sumko}{\sum_{k=0}^\infty}
\newcommand{\summo}{\sum_{m=0}^\infty}

\newcommand{\sumi}{\sum_{i=1}^\infty}
\newcommand{\sumj}{\sum_{j=1}^\infty}
\newcommand{\sumk}{\sum_{k=1}^\infty}
\newcommand{\summ}{\sum_{m=1}^\infty}
\newcommand{\sumn}{\sum_{n=1}^\infty}
\newcommand{\sumin}{\sum_{i=1}^n}

\newcommand\set[1]{\ensuremath{\{#1\}}}

\newcommand\xpar[1]{(#1)}
\newcommand\bigpar[1]{\bigl(#1\bigr)}
\newcommand\Bigpar[1]{\Bigl(#1\Bigr)}

\newcommand\lrpar[1]{\left(#1\right)}
\newcommand\sqpar[1]{[#1]}
\newcommand\bigsqpar[1]{\bigl[#1\bigr]}
\newcommand\Bigsqpar[1]{\Bigl[#1\Bigr]}

\newcommand\xcpar[1]{\{#1\}}

\newcommand\Bigcpar[1]{\Bigl\{#1\Bigr\}}

\newcommand\bigabs[1]{\bigl\lvert#1\bigr\rvert}
\newcommand\Bigabs[1]{\Bigl\lvert#1\Bigr\rvert}

\newcommand\lrabs[1]{\left\lvert#1\right\rvert}
\def\rompar(#1){\textup(#1\textup)}    % usage: \rompar(...)
\newcommand\xfrac[2]{#1/#2}

\def\xexp(#1){e^{#1}}
\newcommand\ceil[1]{\lceil#1\rceil}

\newcommand\floor[1]{\lfloor#1\rfloor}

\newcommand\setn{\set{1,\dots,n}}

\newcommand\ntoo{\ensuremath{{n\to\infty}}}

\newcommand\ktoo{\ensuremath{{k\to\infty}}}
\newcommand\mtoo{\ensuremath{{m\to\infty}}}

\newcommand\ttoo{\ensuremath{{t\to\infty}}}
\newcommand\xtoo{\ensuremath{{x\to\infty}}}
\newcommand\bmin{\land}
\newcommand\bmax{\lor}
\newcommand\norm[1]{\lVert#1\rVert}

\newcommand\normmx[1]{\norm{#1}'_2}
\newcommand\bignorm[1]{\bigl\lVert#1\bigr\rVert}

\newcommand\downto{\searrow}

\newcommand\punkt{\xperiod}    % xpunctuate
\newcommand\iid{i.i.d\punkt}    
\newcommand\ie{i.e\punkt}
\newcommand\eg{e.g\punkt}

\newcommand\cf{cf\punkt}
\newcommand{\as}{a.s\punkt}
\newcommand{\aex}{a.e\punkt}
  % som felkoll

\newcommand\ii{\mathrm{i}}

\newcommand{\tend}{\longrightarrow}
\newcommand\dto{\overset{\mathrm{d}}{\tend}}
\newcommand\pto{\overset{\mathrm{p}}{\tend}}

\newcommand\eqd{\overset{\mathrm{d}}{=}}

\newcommand\Op{O_{\mathrm p}}

\newcommand\bbR{\mathbb R}

\newcommand\bbN{\mathbb N}
\newcommand\bbNo{\bbN_0}

\newcommand\bbZ{\mathbb Z}

\newcounter{CC}
\newcommand{\CC}{\stepcounter{CC}\CCx} %new constant C_i
\newcommand{\CCx}{C_{\arabic{CC}}}     %repeats the last C_i
     %defines #1 as the last C_i
    %new C_i and defines #1 as it
 %repeats from  C_1
\newcounter{cc}
\newcommand{\cc}{\stepcounter{cc}\ccx} %new constant c_i
\newcommand{\ccx}{c_{\arabic{cc}}}     %repeats the last c_i
\newcommand{\ccdef}[1]{\xdef#1{\ccx}}     %defines #1 as the last c_i
\newcommand{\ccname}[1]{\cc\ccdef{#1}}    %new c_i and defines #1 as it
 %repeats from  c_1

\newcommand\E{\operatorname{\mathbb E{}}}
\newcommand\Ez{\E\,}
\renewcommand\P{\operatorname{\mathbb P{}}}

\newcommand\Po{\operatorname{Po}}
\newcommand\Bi{\operatorname{Bi}}

\newcommand\Be{\operatorname{Be}}

\newcommand\diam{\operatorname{diam}}

\newcommand\ga{\alpha}

\newcommand\gd{\delta}

\newcommand\gl{\lambda}
\newcommand\gL{\Lambda}
\newcommand\go{\omega}

\newcommand\gs{\sigma}

\newcommand\gss{\sigma^2}

\newcommand\eps{\varepsilon}

\newcommand\cE{\mathcal E}

\newcommand\cH{\mathcal H}

\newcommand\cL{{\mathcal L}}
\newcommand\cM{\mathcal M}

\newcommand\cT{{\mathcal T}}

\newcommand\tC{\widetilde C}
\newcommand\tD{\widetilde D}

\newcommand\indic[1]{\boldsymbol1\xcpar{#1}}

\newcommand\etta{\boldsymbol1}

\newcommand\qw{^{-1}}
\newcommand\qww{^{-2}}
\newcommand\qq{^{1/2}}
\newcommand\qqw{^{-1/2}}
\newcommand\qqcw{^{-3/2}}

\newcommand\intoi{\int_0^1}
\newcommand\intot{\int_0^t}
\newcommand\intoo{\int_0^\infty}
\newcommand\intoom{\int_{-1}^\infty}
\newcommand\intoooo{\int_{-\infty}^\infty}
\newcommand\intpi{\int_0^{\pi}}
\newcommand\intpipi{\int_{-\pi}^{\pi}}
\newcommand\oi{\ensuremath{[0,1]}}

\newcommand\ooo{[0,\infty)}
\newcommand\oooq{[0,\infty]}
\newcommand\oooo{(-\infty,\infty)}

\newcommand\dtv{d_{\mathrm{TV}}}

\newcommand\dd{\,\mathrm{d}}

\newcommand\lhs{left-hand side}
\newcommand\rhs{right-hand side}

\newcommand\GW{Galton--Watson}
\newcommand\GWt{\GW{} tree}
\newcommand\cGWt{conditioned \GW{} tree}
\newcommand\cmGWt{conditioned modified \GW{} tree}
\newcommand\mGWt{modified \GW{} tree}
\newcommand\GWp{\GW{} process}
\newcommand\GWf{\GW{} forest}
\newcommand\cGWf{conditioned \GW{} forest}

\newcommand\xoo{_1^\infty}
\newcommand\zoo{_0^\infty}
\newcommand\fT{\mathfrak{T}}
\newcommand\fL{\mathfrak{L}}
\newcommand\fU{\mathfrak{U}}

\newcommand\rx{o_+}
\newcommand\ry{o_-}
\newcommand\Tx{T_+}
\newcommand\Ty{T_-}
\newcommand\cttx[1]{\cT_{(#1)}}
\newcommand\cttxn[1]{\cT_{n,(#1)}}
\newcommand\ctt{\cttx1}

\newcommand\uctn{\cT^\circ_{n}}
\newcommand\uctnw{\cT^{\bw,\circ}_{n}}
\newcommand\uctnwl{\hcT^{\bw,\circ}_{n}}
\newcommand\uctnwx{\hcT^{\bw,\bullet}_{n}}
\newcommand\uctnwy{\hcT^{\bw,*}_{n}}
\newcommand\ctnx{\cT_{n,1}}
\newcommand\ctny{\cT_{n,2}}
\newcommand\ctnp{\cT_{n,+}}
\newcommand\ctnm{\cT_{n,-}}
\newcommand\dout{d^+}
\newcommand\hw{\widehat{w}}

\newcommand\sg{simply generated}
\newcommand\sgq{\sg{} }
\newcommand\sgt{\sgq tree}
\newcommand\sgf{\sgq forest}
\newcommand\sgnct{\sgq non-crossing tree}

\newcommand\ctaw{\cT^{\bw,\bullet}}
\newcommand\ctb{\cT\x}
\newcommand\ctbw{\cT^{\bw,*}}
\newcommand\ctbn{\ctb_n}

\newcommand\ctcw{\cT^{\bw,\bullet\bullet}}
\newcommand\hcT{\widehat{\cT}}
\newcommand\phio{\phi^0}
\newcommand\x{^{*}}

\newcommand\za{a}
\newcommand\zb{u}
\newcommand\ZA{A}
\newcommand\ZB{U}
\newcommand\coi{C\oi}
\newcommand\coo{C[0,\infty)}
\newcommand\cooq{C[0,\infty]}
\newcommand\moo{\cM([0,\infty))}

\newcommand\bphi{\boldsymbol{\phi}}
\newcommand\bphio{\boldsymbol{\phi}^0}
\newcommand\tbphi{\bphi'}
\newcommand\tbphio{\bphi^{0\prime}}
\newcommand\tphi{\phi'}
\newcommand\tphio{\phi^{0\prime}}
\newcommand\be{\mathbf{e}}
\newcommand\bes{\be^{[s]}}
\newcommand\bb{\mathbf{b}}
\newcommand\bp{\mathbf{p}}
\newcommand\bpo{\mathbf{p}^0}
\newcommand\bv{\mathbf{v}}
\newcommand\bw{\mathbf{w}}
\newcommand\tbp{\tilde{\bp}}
\newcommand\po{{p}^0}
\newcommand\SG{\cT}
\newcommand\SGx[1]{\SG^{#1}}

\newcommand\SGphi{\SGx{\bphi}}
\newcommand\SGphiphio{\SGx{\bphi,\bphio}}
\newcommand\SGphin{\SGphi_n}
\newcommand\SGxx[2]{\SG^{#1,#2}}
\newcommand\SGphinm{\SGphi_{n,m}}
\newcommand\GGW{\SG}
\newcommand\GGWx[1]{\GGW^{#1}}
\newcommand\GGWxn[1]{\GGW_n^{#1}}
\newcommand\GGWp{\GGWx{\bp}}
\newcommand\GGWpn{\GGWp_n}
\newcommand\GGWppo{\GGWx{\bp,\bpo}}
\newcommand\GGWppon{\GGWppo_n}
\newcommand\GGWpnm{\GGWp_{n,m}}
\newcommand\GGWpnmx[1]{\GGWp_{n,m;#1}}
\newcommand\GGWpnmy[1]{\GGWp_{n,m;(#1)}}
\newcommand\ddd{\mathsf{d}}
\newcommand\GLX{\gL_\be}
\newcommand\hGLX{\widehat{\GLX}}
\newcommand\GLXd{\GLX'}
\newcommand\LX{L_\be}
\newcommand\LXs{L_{\bes}}
\newcommand\hLX{\widehat{\LX}}
\newcommand\Wiener{\operatorname{Wie}}
\newcommand\llx{[\ell]}
\newcommand\xll{^{\llx}}
\newcommand\olw{\overline{w}}

\newcommand\Lb{{L}_{\bb}}

\newcommand\hLb{\widehat{\Lb}}

\newcommand\Hax[1]{\cH_{#1}}
\newcommand\Ha{\cH_\ga}
\newcommand\HaA{\cH_\ga[0,A]}
\newcommand\Haoo{\cH_\ga[0,\infty)}

\newcommand\normHa[1]{\norm{#1}_{\Ha}}
\newcommand\normHaA[1]{\norm{#1}_{\HaA}}
\newcommand\normHaoo[1]{\norm{#1}_{\Haoo}}
\newcommand\normHax[2]{\norm{#1}_{\Ha#2}}
\newcommand\cdotx{x}
\newcommand\HCx[1]{\Holder($#1$)}
\newcommand\HCga{\Holder($\ga$)}
\newcommand\HCh{\Holder($\frac12$)}
\newcommand\bL{\bar{L}}
\newcommand\TAU[1]{\bar\fT^{#1}}
\newcommand\aT{\widetilde{\cT}}
\newcommand\bU{\overline U}
\newcommand\Tbe{T_{\be}}
\newcommand\hh{\hat h}
\newcommand\SOB[1]{\cL^2_{#1}}
\newcommand\BES[1]{B^{2,2}_{#1}}
\newcommand\BESoo[1]{B^{\infty,\infty}_{#1}}
\newcommand\sumxij{{\sum_{i,j}}^\ast}
\newcommand\sumxijl{{\sum_{i,j,l}}^\ast}
\newcommand\sumxijlr{{\sum_{i,j,l,r}}^\ast}
\newcommand\hLn{\widehat{L_n}}
\newcommand\hgLn{\widehat{\gL_n}}
\newcommand\hzLn{\widehat{L_n^\bbZ}}
\newcommand\hzgLn{\widehat{\gL^\bbZ_n}}
\newcommand\hzf{\widehat{f^\bbZ}}
\newcommand\htau{\widehat{\tau}}
\newcommand\bx{\bar x}
\newcommand\cn{\tilde c_n} %?
%\newcommand\xdot{\,\boldsymbol{\cdot}}
%\newcommand\xdot
% {\mskip\thinmuskip\lower0.2ex\hbox{\scalebox{1.5}{$\cdot$}}\mskip\thinmuskip}
%  requires graphicx, I think
%\newcommand\RED{\color{red}}
%\newcommand\MAG{\color{magenta}}

%QQQ

\newcommand{\Holder}{H\"older}

\newcommand\CS{Cauchy--Schwarz}
\newcommand\CSineq{\CS{} inequality}

\hyphenation{Upp-sala Lip-schitz}

%\newcommand\urladdrx[1]{\urladdr{\def~{\~{}}#1}}
%\newcommand\urladdrx[1]{{\urladdr{\def~{{\tiny$\sim$}}#1}}}
% By the Skorohod coupling theorem \cite[Theorem~4.30]{Kallenberg},

\begin{document}

% Some suggestions:
% 05 Combinatorics 
% 05C Graph theory [For applications of graphs, see 68R10, 90C35, 94C15]
% 05C05 Trees
% 05C07 Vertex degrees
% 05C35 Extremal problems [See also 90C35]
% 05C40 Connectivity
% 05C65 Hypergraphs
% 05C80 Random graphs
% 05C90 Applications
% 05C99 None of the above, but in this section 
% 
% 60 Probability theory and stochastic processes
% 60C Combinatorial probability
% 60C05 Combinatorial probability
% 
% 60F Limit theorems [See also 28Dxx, 60B12]
% 60F05 Central limit and other weak theorems
% 60F17 Functional limit theorems; invariance principles
% 
% 60J Markov processes
% 60J65 Brownian motion [See also 58J65]
% 60J80 Branching processes (Galton-Watson, birth-and-death, etc.)

\begin{abstract} 
It is well-known that the height profile of a critical 
conditioned Galton--Watson tree with finite
offspring variance converges, after a suitable normalization, to the local time
of a standard Brownian excursion. In this work, we study the
distance profile, defined as the profile of all distances between pairs of
vertices. We show that after a proper rescaling the distance profile converges
to a continuous random function that can be described as the density of
distances between random points in the Brownian continuum random tree.

We show that this limiting function a.s.\ is H\"older continuous of any order
$\alpha<1$, and that it is a.e.\ differentiable.
We note that it cannot be differentiable at $0$, but leave as open questions
whether it is Lipschitz, and whether it is
continuously differentiable on the half-line $(0,\infty)$.

The distance profile is naturally defined also for unrooted trees contrary to
the height profile that is designed for rooted trees. 
This is used in our proof, and 
we prove the corresponding convergence result for the distance profile of
random unrooted simply generated trees. As a 
minor purpose of the present work, we also formalize the notion of unrooted
simply generated trees and include some simple results relating them to rooted
simply generated trees, which might be of independent interest.
\end{abstract}

\maketitle

  \section{Introduction}\label{S:intro}

Consider a random \sgt.
(For definitions of this and other concepts in the introduction, see 
\refSs{Snot}--\ref{Sroot}.)
Under some technical conditions, amounting to the tree being equivalent to a
critical \cGWt{} with finite offspring variance,
the (height)
profile of the tree converges in distribution, 
as a random function in $\coo$. Moreover, the limiting random function can
be identified with the local time of a standard Brownian excursion;
this was 
conjectured by \citet{AldousII} and %Conjecture 4 
proved by \citet{DrmotaG} (under a stronger assumption),
see also \citet[Section 4.2]{Drmota}, and in general by \citet{Kersting}
in a paper that unfortunately remains unpublished.
See further \citet{Pitman} for related results and a proof in a special case.
See also \citet{Kersting} for extensions when the offspring variance is
infinite, 
a case not considered in the present paper.

\begin{remark}\label{RDrmota}
To be precise,
\cite{DrmotaG} and \cite{Drmota} assume that the offspring distribution
for the \cGWt{} has a finite exponential moment. 
As said in \cite[footnote on page 127]{Drmota}, the analysis can be
extended, but it seems that the proof of tightness in \cite{Drmota}, 
which is based on estimating fourth moments, requires 
a finite fourth moment of the offspring distribution.

Note also that \citet[``a shortcut'' pp.~123--125]{Drmota} besides the proof
from
\cite{DrmotaG} also 
gives an alternative proof 
that combines tightness 
(taken from the first proof) and 
the convergence of the contour process
to a Brownian excursion shown by \citet{AldousIII},
and thus avoids some intricate calculations in the first proof.
We will use this method of proof below.
\end{remark}

Using notation introduced below, the result can be stated as follows.
\begin{theorem}[\citet{DrmotaG}, \citet{Kersting}]
\label{T0}
Let\/ $L_n$ be the (height) profile of a \cGWt{} %$\GGWpn$ 
of order $n$, with an
offspring distribution that has mean $1$ and finite variance $\gss$.
Then, as \ntoo,
\begin{align}\label{t0}
  n\qqw L_n(x n\qq) \dto \frac{\gs}2\LX\Bigpar{\frac{\gs}2 x},
\end{align}
in the space $\cooq$, where $\LX$ is a random function that can be identified
with the local time of a standard Brownian excursion $\be$;
this means that
for every bounded measurable
$f:\ooo\to\bbR$, 
\begin{align}\label{t0d}
\intoo f(x) \LX(x) \dd x
=\intoi f\bigpar{\be(t)}\dd t
.\end{align}
\end{theorem}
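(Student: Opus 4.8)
The plan is to follow the ``shortcut'' method mentioned in \refR{RDrmota}: combine tightness of the rescaled profiles $\{n\qqw L_n(x n\qq)\}_n$ with a characterization of the limit via the contour (or depth-first walk) process. First I would recall Aldous's theorem \cite{AldousIII}: if $V_n$ denotes the contour function of the \cGWt{} of order $n$, suitably rescaled in time by $2n$ and in space by $\gs n\qqw/2$ (or, equivalently, if one uses the \L{}ukasiewicz/depth-first walk), then $V_n \dto \be$ in $\coi$, where $\be$ is the standard Brownian excursion. The height profile $L_n$ is, up to the bookkeeping of how many times each vertex is visited by the contour, essentially the occupation measure of $V_n$: for suitable test functions $f$,
\begin{align}
  \frac1{2n}\sum_{\text{steps } i} f\bigpar{\text{height at step } i}
  = \intoo f(x)\, \bar L_n(x)\dd x,
\end{align}
where $\bar L_n$ is a minor modification of $L_n$ (accounting for the fact that each edge is traversed twice and leaves are visited once); one checks that $\bar L_n$ and $L_n$ have the same scaling limit. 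Applying the continuous mapping theorem to the occupation-measure functional $g \mapsto \bigpar{x\mapsto \text{density of } \int_0^1 \delta_{g(t)}\dd t}$ — which is continuous at the Brownian excursion because $\be$ a.s.\ has a jointly continuous local time — identifies the subsequential limit of $n\qqw L_n(\cdot\, n\qq)$, after the change of variables dictated by the space rescaling $\gs/2$, with $\tfrac{\gs}{2}\LX(\tfrac{\gs}{2}\,\cdot)$, where $\LX$ is the local time of $\be$. The defining relation \eqref{t0d} is then just the occupation-time formula for Brownian excursion local time, i.e.\ $\intoi f(\be(t))\dd t = \intoo f(x)\LX(x)\dd x$.

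The two ingredients that need genuine work are: (a) establishing that the map from a continuous excursion-type path to the density of its occupation measure is a.s.\ continuous at $\be$ in the relevant topology (this uses that Brownian local time is jointly continuous in $(t,x)$ and is supported on a compact interval, so the occupation density depends continuously on the path near $\be$ — one must be a little careful about the topology on $\cooq$, allowing the profile to ``escape to the compactification point''), and (b) tightness of $\{n\qqw L_n(\cdot\, n\qq)\}$ in $\cooq$. For (b) I would import the tightness estimate from \cite{DrmotaG}/\cite[pp.~123--125]{Drmota}; as noted in \refR{RDrmota}, their fourth-moment computation needs a finite fourth offspring moment, but the statement of \refT{T0} only assumes finite variance because the general finite-variance case is covered by Kersting \cite{Kersting}. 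Since we are allowed to cite earlier results, we may simply invoke the tightness from those references for the proof under the stronger moment hypothesis, and cite \cite{Kersting} for the general case; alternatively, tightness can be obtained directly from the modulus-of-continuity control on the contour process plus the relation between $L_n$ and the occupation measure of $V_n$.

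The main obstacle I expect is \textbf{tightness at the ``right end''}, i.e.\ controlling $n\qqw L_n(xn\qq)$ uniformly for large $x$ (equivalently, ruling out mass of the profile escaping to infinity along the rescaled height axis). The height of the tree is of order $n\qq$, so after rescaling the support is bounded in probability, but one needs a quantitative tail bound — e.g.\ $\P(\operatorname{height}(\ctn) > A n\qq)$ decays suitably in $A$ uniformly in $n$ — to get tightness in $\cooq$ rather than merely on compact sets, and to justify that no mass is lost in the limit (so that \eqref{t0d} holds with the stated normalization constant, with total mass $\intoo \LX = 1$ matching $\intoi \dd t = 1$). This tail bound is classical for \cGWt{}s with finite variance (it follows from the corresponding bound for the maximum of the rescaled contour/depth-first walk and its convergence to $\max\be$), so it can be invoked, but it is the step where the finite-variance hypothesis is really being used and where care is needed; everything else is continuous mapping plus bookkeeping.
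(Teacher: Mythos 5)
You should first note that the paper does not prove \refT{T0} at all: it is quoted from \citet{DrmotaG} and \citet{Kersting}, with \refR{RDrmota} only describing the available proof strategies; the closest the paper comes to the argument you sketch is its own proof of \refT{Teo}, which implements the ``shortcut'' for the distance profile. Measured against that, your outline has the right two ingredients (tightness of the rescaled profiles, plus Aldous's convergence of the rescaled contour process to $\be$), but the mechanism you propose for identifying the limit contains a genuine error: the map sending a continuous path $g$ to the density of its occupation measure is \emph{not} continuous at $\be$ in the uniform topology, and for a general $g$ near $\be$ the occupation measure need not even have a density. Joint continuity of Brownian local time does not help here: uniform convergence $g_n\to\be$ only gives, via continuity of $g\mapsto\intoi f(g(t))\dd t$ for each fixed bounded continuous $f$, weak convergence of the occupation measures; it gives no control whatsoever on the occupation densities of the $g_n$. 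So the continuous mapping theorem cannot be applied to your ``occupation-density functional'', and step (a) of your plan fails as stated.

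The correct glue, which is exactly what the paper uses in the proof of \refT{Teo} (Claims \ref{claim1} and \ref{claim2} there), is: (i) tightness of the profiles in $\cooq$; (ii) convergence in distribution of the occupation \emph{measures} (obtained from contour convergence by continuous mapping applied to $g\mapsto\intoi f(g(t))\dd t$); and (iii) an identification lemma such as \cite[Lemma 7.1]{SJ185} or \cite[Theorem 4.17]{Drmota}, which upgrades (i)+(ii) to convergence of the densities in $\cooq$ and identifies the limit through \eqref{t0d}. Your step (b) supplies (i), but only by citation: under the stated hypothesis (finite variance only) the tightness estimate of \cite{DrmotaG}/\cite{Drmota} is not available (it needs a fourth moment, as \refR{RDrmota} notes), so your argument in the generality of \refT{T0} reduces to citing \citet{Kersting} — which is precisely what the paper does by stating the theorem as a quoted result. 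Your concern about the right end of $\cooq$ is legitimate but minor; as the paper notes after \refL{Lcooq}, it is settled by standard tail bounds for the height.
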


\begin{remark}\label{Rcooq}
This result is often stated with convergence \eqref{t0} in the space
$\coo$;
the version stated here with $\cooq$  is somewhat stronger but follows easily.
(Note that the maximum is a continuous functional on $\cooq$ but not on $\coo$.)
See further \refS{SSspaces}.
\end{remark}

The profile discussed above is the profile of the distances from the
vertices to the root. Consider now instead the \emph{distance profile}, 
defined as the profile of all distances between pairs of points.
(Again, see \refS{Snot} for details.)
One of our main results is the following analogue of 
\refT{T0}.

\begin{theorem}
  \label{T1}
Let\/ $\gL_n$ be the distance profile of a \cGWt{} of order $n$, 
with an offspring distribution
that has mean $1$ and finite variance $\gss>0$.
Then, as \ntoo, 
\begin{align}
  \label{t1}
n^{-3/2}\gL_n\bigpar{x n\qq}
\dto
\frac{\gs}2\GLX\Bigpar{\frac{\gs}2 x},
\end{align}
in the space $\cooq$,
where $\GLX(x)$ is a 
continuous
random function that can be described as the density of
distances between random points in the Brownian continuum random tree  
\cite{AldousI,AldousII,AldousIII}; equivalently, 
for a standard Brownian excursion $\be$, 
%we can define $\GLX$ such that, a.s., 
we have for every bounded measurable
$f:\ooo\to\bbR$, 
\begin{align}\label{t1d}
\intoo f(x) \GLX(x) \dd x
=2\iint_{0<s<t<1} f\bigpar{\be(s)+\be(t)-2\min_{u\in[s,t]} \be(u) }\dd s\dd t
.\end{align}
\end{theorem}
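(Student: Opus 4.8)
The plan is to follow the "shortcut" strategy described in Remark~\ref{RDrmota} for \refT{T0}: establish tightness of the rescaled distance profiles in $\cooq$ separately, then identify the limit via convergence of the contour (or depth-first) process to the Brownian excursion, which pins down the finite-dimensional behaviour through the integral representation \eqref{t1d}. First I would set up the combinatorial description of $\gL_n$. Writing $v_1,\dots,v_n$ for the vertices of the \cGWt{} and $d(\cdot,\cdot)$ for the graph distance, the distance profile is (essentially) $\gL_n(k)=\#\{(i,j): i<j,\ d(v_i,v_j)=k\}$, and by the standard formula $d(v_i,v_j)=h(v_i)+h(v_j)-2h(v_i\wedge v_j)$ this is a functional of the height process. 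Via the coding of the tree by its contour function $C_n$, the pair $(v_i,v_j)$ corresponds to a pair of times $(s,t)$ and $d(v_i,v_j)=C_n(s)+C_n(t)-2\min_{[s,t]}C_n$, so $\gL_n$ is the occupation measure of the two-parameter field $(s,t)\mapsto C_n(s)+C_n(t)-2\min_{[s,t]}C_n$. The target \eqref{t1d} is exactly the occupation measure of the analogous field built from $\be$, so the identification of the limit is forced \emph{once we know} that this occupation measure has a continuous density $\GLX$ and that the rescaled $\gL_n$ converge to it in $\cooq$.

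The first substantive step is therefore to prove that $\GLX$ is well-defined and continuous, i.e.\ that the pushforward of Lebesgue measure on $\{0<s<t<1\}$ under $(s,t)\mapsto \be(s)+\be(t)-2\min_{[s,t]}\be$ is a.s.\ absolutely continuous with a continuous density. I would deduce this from the already-available Theorem~\ref{T0}: conditionally on $\be$, the distance between two independent uniform points of the CRT can be generated by sampling the two points and the height process, and one can reduce the density of this distance to a convolution/integral involving the local time $\LX$, whose continuity is given by \refT{T0}. (Alternatively one invokes the known Hölder regularity of the CRT distance density, which is presumably established later in the paper; here I only need existence and continuity of $\GLX$.) The second substantive step is tightness: I would prove that $\{n^{-3/2}\gL_n(xn\qq)\}_n$ is tight in $C[0,A]$ for every $A$, and then upgrade to $\cooq$ exactly as in Remark~\ref{Rcooq} using that the diameter of the tree is $\Op(n\qq)$ (so the mass near $x=\infty$ is negligible). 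Tightness in $C[0,A]$ amounts to an equicontinuity estimate: for $0\le k\le k'$, $\gL_n(k')-\gL_n(k)$ counts pairs at distance in $(k,k']$, and one needs $\E\bigsqpar{(\gL_n(k')-\gL_n(k))^2}\le \CC n^{3}\,\omega(|k'-k|/\sqrt n)$ for a modulus $\omega$ with $\omega(\gd)\to0$. This is a fourth-moment-type computation on the contour process — count quadruples of vertices with prescribed pairwise-distance constraints — and is morally the same estimate that underlies the tightness half of Drmota's argument; the offspring variance being finite should suffice here, since the contour process already converges under only a second-moment assumption by \citet{AldousIII}.

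Given tightness and the identification of all subsequential limits as the (deterministic functional of $\be$ giving the) density $\GLX$, convergence in distribution in $\cooq$ follows: along any convergent subsequence, the joint convergence $(C_n \text{ rescaled},\ \gL_n \text{ rescaled})\dto(\be,\Lambda)$ holds, and testing against bounded continuous $f$ (integrating the profile against $f$ is a continuous functional on $\cooq$, using the uniform control near infinity) forces $\Lambda=\GLX$ via \eqref{t1d}; the constant and scaling $\frac{\gs}{2}\GLX(\frac{\gs}{2}x)$ come out of Aldous's normalization $C_n\approx \frac{2}{\gs}\sqrt n\,\be(\cdot/n)$ exactly as the factor in \eqref{t0} does. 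I expect the main obstacle to be the tightness estimate: controlling the second moment of the increment $\gL_n(k')-\gL_n(k)$ uniformly requires good tail bounds on the contour-process increments $C_n(s)+C_n(t)-2\min_{[s,t]}C_n$ over short ranges, and getting a modulus of continuity that is uniform down to $k=0$ (where $\GLX$ is continuous but, as the abstract notes, not differentiable) is delicate. A secondary technical point is the passage between the contour function and the depth-first/height process encodings when counting unordered pairs — this is where the unrooted formulation and the auxiliary results of \refS{Sroot} are convenient, since the distance profile is genuinely a function of the unrooted tree and is cleaner to analyze there, as the authors indicate they will do.
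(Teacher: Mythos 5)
Your overall architecture (tightness in $\cooq$ plus identification of the limit by testing against compactly supported $f$ through the contour process, and a transfer between rooted and unrooted trees) is the same as the paper's: the identification step is exactly \refL{LP}/\refL{LQ} and Claim 2 in the proof of \refT{Teo}, and the final reduction of \refT{T1} to the unrooted case via leaf-marking matches \refS{Smark3} and \refL{LL1}. The genuine gap is in the step you yourself flag as the main obstacle: tightness. You propose a direct increment estimate $\E\bigsqpar{(\gL_n(k')-\gL_n(k))^2}\le C n^3\,\omega(|k'-k|/\sqrt n)$ by counting quadruples of vertices, and assert that finite offspring variance should suffice because the contour process converges under a second moment. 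Neither part survives scrutiny. A second-moment bound with a modulus merely tending to $0$ does not give tightness in $C$ (Kolmogorov/Billingsley-type criteria need $C|x-y|^{1+\beta}$ with $\beta>0$, or higher moments), and convergence of the contour process provides no such quantitative control of profile increments: as \refR{RDrmota} recalls, even for the height profile the increment-moment route of Drmota--Gittenberger appears to need a finite \emph{fourth} moment, and the paper's own increment-type bounds for $\gL_n$ (\refL{LFN}, \refT{THn}, proved by lengthy generating-function computations in \refApp{GeneF}) are established only under a fourth-moment hypothesis, with the finite-variance case left open (\refP{P2}, cf.\ \eqref{bgpl}). So your tightness step is not just unproved; it is doubtful at the stated level of generality and, as formulated, insufficient in form.

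The paper circumvents this by an idea absent from your proposal: the re-rooting identity \eqref{dpropro}, $\gL_T(x)=\sum_{v\in T}L_{T(v)}(x)$, exploited inside the unrooted simply generated framework. For $\uctnw$, each re-rooted tree $\cT(i)$ has the law of a conditioned modified \GWt{} $\GGWppon$ (\refS{Smark1}), so $n^{-3/2}\gL_n(xn\qq)$ is an average of $n$ identically distributed copies of the rescaled height profile of $\GGWppon$; tightness of the average then follows from tightness of a single copy (\refT{T0m}) together with uniform integrability of the rescaled widths (\eqref{EW2} in \refT{TMGW1}) via the general averaging lemma \refL{LU2}. This is precisely what makes the second-moment hypothesis sufficient, and it also delivers the existence and continuity of $\GLX$ as an output of the argument (via the tightness-plus-measure-convergence lemma used in the proof of \refT{Teo}), rather than as an input; your sketch of deducing continuity of $\GLX$ from \refT{T0} by a ``convolution with local time'' is not substantiated as stated, though the re-rooting representation \eqref{dproI} is the correct way to make such a reduction precise. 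To repair your proof you would need either this averaging device (and hence the modified/unrooted machinery of \refSs{Smod}--\ref{Smark3}) or a genuinely new increment estimate for $\gL_n$ under finite variance, which is currently not available.
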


The random distance profile $\gL_n$
was earlier studied in \cite{SJ222}, where the estimate
\eqref{egln} below was shown.

\begin{remark}
  It is easy to see that the random function $\GLX$ really is random and not
  deterministic, \eg{} as a consequence of \refT{Twie}.
However, we do not know its distribution, although
the expectation $\E\GLX(x)$ is given in \refL{LNC}.
In particular, the following problem is open.
(See \cite[Section 4.2.1]{Drmota} for such results, in several different
forms,  for $\LX$.)
\end{remark}

\begin{problem}
 Find a description of the (one-dimensional)  distribution of $\GLX(x)$ for
fixed $x>0$. 
\end{problem}

We have so far discussed rooted trees. However, the distance profile is
defined also for unrooted trees, and we will find it convenient to use
unrooted trees in parts of the proof. This leads us to consider 
random \emph{unrooted \sgt{s}}.

Simply generated families of rooted trees were introduced by \citet{MM},
leading to the notion of simply generated random rooted
trees, see \eg{} \citet{Drmota}.
This class of random rooted trees is one of the most popular classes of
random trees, and these trees have been frequently studied in many different
contexts by many authors.
Simply generated random \emph{unrooted} trees have been much less studied,
but
they have occured \eg{} in a work on non-crossing trees by
\citet{KortchemskiM} (see also \citet{MarckertP}).
Nevertheless, we have not found a general treatment of them, 
so a minor purpose of the present paper is to 
do this in some detail, both for use in the paper and for future reference.
We thus include (\refSs{Sunroot}--\ref{Smark3} and \refApp{Agen}) a
general discussion of random unrooted \sgq trees, with some simple results
relating them to rooted \sgt{s},
allowing the transfer of many results for rooted \sgq trees to the
unrooted case. 
Moreover, as part of the proof of \refT{T1}, we prove
the corresponding result (\refT{Teo}) for random unrooted \sgt{s}.

As a preparation for the unrooted case, we also give (\refS{Smod})
some results (partly from \citet{KortchemskiM})
on modified rooted \sgt{s} (\GWt{s}),
where the root has different weights (offspring distribution) than all other
vertices. 

The central parts of the proof of \refT{T1} are given in
Sections \ref{S1step}--\ref{SDR},
where we use both rooted and unrooted trees.
As a preparation, in Section \ref{Smod2},
we extend \refT{T0}  to \cmGWt{s}.
We later also extend \refT{T1}  to \cmGWt{s} (\refT{T1m}). 
 
We end the paper with some comments and further results related to our main
results. In Section \ref{WienerI}, we discuss 
a simple application to the Wiener
index of unrooted \sgt{s}. Section \ref{Momentdp} contains some important
moment estimations of the distance profile for \cGWt{s} as well as for its
continuous counterpart $\GLX$. In Section \ref{SHolder}, we establish
H\"older continuity properties of the continuous random function $\LX$ and
$\GLX$. 
It is known that $\LX$ is \as{} 
H\"older continuous of order $\alpha$ 
(abbreviated to \HCga{}) 
for $\ga<\frac12$,  
but not for $\ga\ge\frac12$. We show that $\GLX$ is smoother; it is \as{}
\HCga{} for $\ga<1$, and it is \aex{} differentiable (\refT{TF}).
We do not know whether it is Lipschitz, or even continuously differentiable
on $\ooo$, but we show that it is does not \as{} have a two-sided derivative
at $0$ (\refT{TNC}), and we state some open problems.

Finally, some further remarks are given in Section \ref{Sfurther}.

\section{Some notation}\label{Snot}

Trees are finite except when explicitly said to be infinite. 
Trees may be \emph{rooted} or \emph{unrooted}; in a rooted tree,
the root is denoted $o$. The rooted trees may be \emph{ordered} or not.
The unrooted trees will always be \emph{labelled}; we do not consider unrooted
unlabelled trees in the present paper.

If $T$ is a tree, then its number of vertices is denoted by $|T|$; this is
called the \emph{order} or the \emph{size} of $T$. (Unlike some authors, we
do not distinguish between order and size.)
The notation $v\in T$ means that $v$ is a vertex in $T$.

The degree of a vertex $v\in T$ is denoted $d(v)$. % or $d_v$.
In a rooted tree, we also define the outdegree $\dout(v)$ %=\dout_v$
as the number of children of $v$; thus 
\begin{align}\label{dout}
  \dout(v)=
  \begin{cases}
    d(v)-1, & v\neq o,
\\
d(v), & v=o.
  \end{cases}
\end{align}
A \emph{leaf} in an unrooted tree is a vertex $v$ with  $d(v)=1$. In a
rooted tree, we instead require $\dout(v)=0$; this may make a difference only
for the root.

A \emph{fringe subtree} 
in a rooted tree is a subtree consisting of some vertex $v$
and all its descendants. We regard $v$ as the root of the fringe tree.
The \emph{branches} of a rooted tree are the fringe trees rooted at the
children of 
the root. The number of branches  thus equals the degree $d(o)$ of the root.

Let $\fT_n$ be the set of all ordered rooted trees of order $n$, and
let $\fT:=\bigcup\xoo\fT_n$.
Note that $\fT_n$ is a finite set; 
we may 
identify the vertices of an ordered rooted tree
by finite strings of positive integers,
such that the root is the empty string 
and the children of $v$ are $vi$, $i=1,\dots,d(v)$. 
(Thus, an ordered rooted tree is regarded as a subtree of the infinite
Ulam--Harris tree.)
In fact, it is well-known that $|\fT_n|=\frac{1}{n}\binom{2n-2}{n-1}$, the
Catalan number $C_{n-1}$.

Let $\fL_n$ be the set of all unrooted trees of order $n$, with the labels
$1,\dots,n$; thus $\fL_n$ is the set of all trees on $[n]:=\setn$.
$\fL_n$ is evidently finite and by Cayley's formula $|\fL_n|=n^{n-2}$.
Let $\fL:=\bigcup\xoo\fL_n$.

A \emph{probability sequence} is the same as a probability distribution on
$\bbNo:=\set{0,1,2,\dots}$, \ie, a sequence $\bp=(p_k)\zoo$ with $p_k\ge0$
and $\sumko p_k=1$.
The \emph{mean} 
$\mu(\bp)$ and \emph{variance} $\gss(\bp)$ of a probability sequence
are defined to be the mean and variance of a random variable with
distribution $\bp$, \ie,
\begin{align}\label{mubp}
  \mu(\bp):=\sumko kp_k, &&&
\gss(\bp):=\sumko k^2p_k-\mu(\bp)^2.
\end{align}

We use $\dto$ and $\pto$ for convergence in distribution and in probability,
respectively, for a sequence of random variables in some metric space; see
\eg{} \cite{Billingsley}.
Also, $\eqd$ means convergence in distribution. 

The total variation distance between two random variables $X$ and $Y$
in a metric space 
(or rather between their distributions) 
is defined by
\begin{align}\label{dtv}
  \dtv(X,Y):=\sup_A\bigabs{\P(X\in A)-\P(Y\in A) },
\end{align}
taking the supremum over all measurable subsets $A$. It is well-known that
in a complete separable metric space,
there exists a coupling of $X$ and $Y$ (\ie, a joint distribution with the
given marginal distributions) such that
\begin{align}\label{dtv2}
  \P(X\neq Y) = \dtv(X,Y),
\end{align}
and this is best possible.

$\Op(1)$ denotes a sequence of real-valued 
random variables $(X_n)_n$ that is stochastically
bounded, \ie, for every $\eps>0$, there exists $C$ such that 
$\P(|X_n|>C)\le\eps$. This is equivalent to $(X_n)_n$ being tight. 
For tightness in more general metric spaces, see \eg{} \cite{Billingsley}.

Let $f$ be a real-valued function defined on an interval $I\subseteq\bbR$.
The \emph{modulus of continuity} of $f$ is the function $\ooo\to[0,\infty]$
defined by, for $\gd\ge0$,
\begin{align}\label{go}
  \go(\gd;f)=\go(\gd;f;I)
:=\sup\bigpar{|f(s)-f(t)|: s,t\in I, |s-t|\le\gd}.
\end{align}
%We write (for the cases we are interested in)
%\begin{align}\label{a2}
%  \go(\gd;f;a,b)
%:=
%  \begin{cases}
%    \go(\gd;f;[a,b]), & -\infty<a<b<\infty
%\\
%    \go(\gd;f;[a,b)), & -\infty<a<b=\infty
%  \end{cases}
%\end{align}

If  $x$ and $y$ are  real numbers, $x\bmin y:=\min\set{x,y}$
and $x\bmax y:=\max\set{x,y}$.

$C$ denotes unspecified constants that may vary from one occurrence to the
next. They may depend on parameters such as weight sequences or offspring
distributions, but they never depend on the size of the trees.
Sometimes we write \eg{} $C_r$ to emphasize that the constant depends on the
parameter $r$.

Unspecified limits are as \ntoo.

% $\floor x$, % anv}nt; beh{vs ej f{rklaras? 

\subsection{Profiles}\label{SSprof}

For two vertices $v$ and $w$ in a tree $T$, let $\ddd(x,y)=\ddd_T(x,y)$
denote the distance between $v$ and $w$, \ie, the number of edges in the
unique path joining $v$ and $w$.
In particular, in a rooted tree, $\ddd(v,o)$ is the distance to the root,
often called the \emph{depth} (or sometimes \emph{height}) of $v$.%
\footnote{We use different fonts to distinguish the distance $\ddd$ 
from the  degree $d$; 
note also that the distance has two arguments and the degree only one.}

For a rooted tree $T$, the \emph{height} of $T$ is
$H(T):=\max_{v\in T}\ddd(v,o)$, \ie, the maximum depth.
The \emph{diameter} of a tree $T$, rooted or not, is
$\diam(T):=\max_{v,w\in T}\ddd(v,w)$.

The \emph{profile} of a rooted tree is the function $L=L_T:\bbR\to\ooo$
defined by
\begin{align}\label{pro}
  L(i):=\bigabs{\set{v\in T:\ddd(v,o)=i}},
\end{align}
for integers $i$, extended by linear interpolation to all real $x$.
(We are mainly interested in $x\ge0$, and trivially $L(x)=0$ for $x\le -1$,
but it will be convenient to allow negative $x$.)
The linear interpolation can be written
\begin{align}\label{Ltau}
  L(x):=\sumio L(i)\tau(x-i),
\end{align}
where $\tau$ is the triangular function $\tau(x):=(1-|x|)\bmax0$.

Note that $L(0)=1$, and that $L$ is a continuous function with compact
support
$[-1,H(T)+1]$.
Furthermore,  since $\int\tau(x)\dd x=1$,
\begin{align}\label{pro1}
%  \intoo L(t)\dd t = \sumio L(i) -\frac12 = |T|-\frac12.
  \intoom L(x)\dd x = \sumio L(i)  = |T|,
\end{align}
where we integrate from $-1$ because of the linear interpolation;
we have
$ \intoo L(x)\dd x = \sumio L(i) -\frac12 = |T|-\frac12$.
%(The $-\frac12$ is an inconvenient 
%consequence of the linear interpolation; it is
%asymptotically negligible.)

The \emph{width} of $T$ is defined as
\begin{align}
  \label{width}
W(T):=\max_{i\in\bbN_0} L(i)=\max_{x\in\bbR} L(x).
\end{align}

Similarly, in any tree $T$, rooted or unrooted, we define the
\emph{distance profile} as the function $\gL=\gL_T:\ooo\to\ooo$
defined by
\begin{align}\label{dpro}
\gL(i):=\bigabs{\set{(v,w)\in T:\ddd(v,w)=i}}
\end{align}
for integers $i$, again extended by linear interpolaton to all real $x\ge0$.
For definiteness, we count ordered pairs in \eqref{dpro}, and we include the
case $v=w$, so $\gL(0)=|T|$. 
$\gL$ is a continuous function on $\ooo$ with support $[-1,\diam(T)+1]$.
We have, similarly to \eqref{pro1},
\begin{align}\label{dpro1}
%  \intoo \gL(x)\dd x = \sumio \gL(i) -\frac {|T|}2 
%%= |T|\bigpar{|T|-\tfrac12}
%= |T|^2-\tfrac12|T|
  \intoom \gL(t)\dd t = \sumio \gL(i)
= |T|^2
.\end{align}

If $T$ is an unrooted tree, let $T(v)$ denote the rooted tree obtained by
declaring $v$ as the root, for $v\in T$. Then, as a consequence of 
\eqref{pro} and \eqref{dpro},  
\begin{align}\label{dpropro}
  \gL_T(x)=\sum_{v\in T} L_{T(v)}(x).
\end{align}
Hence, the distance profile can be regarded as the sum (or, after
normalisation, average) of the profiles for all possible choices of a root.

\begin{remark}\label{Rstep}
Alternatively, one might extend $L$ to  a step function by $L(x):=L(\floor x)$,
and similarly for $\gL$.
The asymptotic results are the same (and equivalent by simple arguments),
with $L$ and $\gL$ elements 
of $D\oooq$ instead
of $\cooq$ and limit theorems taking place in that space.
This has some advantages,
%would avoid the inconvenient $-\frac12$ in \eqref{pro1} and \eqref{dpro1}, 
but for technical reasons (\eg{} 
simpler tightness criteria), we prefer to 
work in the space $\cooq$ of continuous functions.
\end{remark}

\begin{remark}
Another version of $\gL$  would count unordered pairs of
distinct vertices. The two versions are obviously equivalent and our results
hold for the alternative version too, 
\emph{mutatis mutandis}.
\end{remark}

\subsection{Brownian excursion and its local time}\label{SSbex}
The standard Brownian excursion $\be$ is a random continuous
function $\oi\to\ooo$ such that $\be(0)=\be(1)=0$ and $\be(t)>0$ for
$t\in(0,1)$.
Informally, $\be$ can be regarded as a Brownian motion conditioned on these
properties; 
this can be formalized as an appropriate limit
\cite{DurrettIM1977}.
There are several other, quite different but equivalent, definitions, 
see \eg{} \cite[XII.(2.13)]{RY}, \cite[Example II.1d)]{Blu},  
and \cite[Section 4.1.3]{Drmota}.

The \emph{local time} $\LX$ of $\be$ is a continuous random function that
is defined (almost surely) as a functional of $\be$ satisfying
\begin{align}\label{local}
  \intoo f(x)\LX(x)\dd x = \intoi f\bigpar{\be(t)}\dd t,
\end{align}
for every bounded (or non-negative) measurable function $f:\ooo\to\bbR$.
In particular, \eqref{local} yields, for any $x\ge0$ and $\eps>0$,
\begin{align}\label{local1}
  \int_x^{x+\eps}\LX(y)\dd y = \intoi \indic{\be(t)\in[x,x+\eps)}\dd t
\end{align}
and thus
\begin{align}\label{local2}
\LX(x) = \lim_{\eps\to0} \frac{1}{\eps}\intoi \indic{\be(t)\in[x,x+\eps)}\dd t.
\end{align}
Hence, 
$\LX(x)$ can be regarded as the occupation density of $\be$ at the
value $x$.

Note that the existence (almost surely) 
of a function $\LX(x)$ satisfying \eqref{local}--\eqref{local2} is far from
obvious; this is part of the general theory of local times for
semimartingales, see \eg{} \cite[Chapter VI]{RY}.
The existence also follows from (some of) the proofs of \refT{T0}.

\subsection{Brownian continuum random tree}\label{SSCRT}

Given a continuous function $g:\oi\to\ooo$ with $g(0)=g(1)=0$, one can
define a pseudometric $\ddd$ on $\oi$ by
\begin{align}\label{ddd}
\ddd(s,t)=  \ddd(s,t;g):=g(s)+g(t)-2\min_{u\in [s,t]}g(u),
\qquad 0\le s\le t\le 1.
\end{align}
By identifying points with distance 0, we obtain a metric space $T_g$, 
which is a
compact real tree, see \eg{} \citet[Theorem 2.2]{LeGall2005}. We denote the
natural quotient map $\oi\to T_g$ by $\rho_g$, and let
$T_g$ be rooted at $\rho_{g}(0)$. 
%(i.e., the class of points that are identified with $0$).
The \emph{Brownian continuum random tree} 
defined by \citet{AldousI,AldousII,AldousIII} 
can be defined as the
random real tree $T_{\be}$ constructed in this way from the random 
Brownian excursion $\be$, see \cite[Section 2.3]{LeGall2005}. 
(\citet{AldousI,AldousII,AldousIII} used another definition, and another scaling
corresponding to $T_{2\be}$.)
Note that using \eqref{ddd}, %the \rhs{} of 
\eqref{t1d} can be written
\begin{align}\label{t1dx}
\intoo f(x) \GLX(x) \dd x
=
\iint_{s,t\in\oi} f\bigpar{\ddd(s,t;\be)}\dd s\dd t,
\end{align}
for any bounded (or non-negative) measurable function $f$.
This means that
$\GLX$ is the density of the distance in $T_\be$ between
two random points, chosen independently with the probability measure on
$T_\be$ induced by the uniform measure on $\oi$.
This justifies the equivalence of the two definitions of $\GLX$
stated in \refT{T1}.
As for the local time $\LX$, the existence (almost surely) of a 
continuous function $\GLX$ satisfying \eqref{t1dx} is far from trivial;
this will be a consequence of our proof.

An important feature of the Brownian continuum random tree is its re-rooting
invariance property. More precisely, fix $s \in [0,1]$ and set
\begin{align} \label{PathT}
\be^{[s]}(t)=
  \begin{cases}
\ddd(s,s+t;\be), & 0\le t < 1-s
\\
\ddd(s,s+t-1;\be), &1-s \le t \le 1.
  \end{cases}
\end{align}
Note that $\be^{[s]}$ is a random continuous
function $\oi\to\ooo$ such that $\be^{[s]}(0)=\be^{[s]}(1)=0$ and \as{}
$\be^{[s]}(t)>0$ for 
$t\in(0,1)$; clearly, $\be^{[0]} = \be$. 
By  \citet[Lemma 2.2]{DuquesneLeGall-aspects}, 
the compact real tree $T_{\be^{[s]}}$ is then canonically identified with
the $T_{\be}$ tree re-rooted at the vertex $\rho_{\be}(s)$. 
%(i.e., the class of points that are identified with $s$). 
\citet[Proposition~4.9]{MM-map}
(see also  \citet[Theorem 2.2]{DuquesneLeGall-rerooting}) 
have shown that for every
fixed $s \in [0,1]$, 
\begin{align} \label{Reroot}
  \be^{[s]}\eqd \be \quad \text{and}  \quad
T_{\be^{[s]}} =T_{\be},
\end{align}
in distribution. Thus the re-rooted tree $T_{\be^{[s]}}$ is a version of the
Brownian continuum random tree.

\begin{remark} \label{RemarkReroot}
Indeed, Aldous \cite[(20)]{AldousII} already observed that the
Brownian continuum random tree is invariant under uniform re-rooting and
that this property corresponds to the invariance of the law of the Brownian
excursion under the path transformation \eqref{PathT}
if $s = U$ is uniformly random on $[0,1]$ and
independent of $\be$.  
\end{remark}

As a consequence of the previous re-rooting invariance property, we deduce
the following explicit expression for the continuous function $\GLX$. For
every fixed $s \in [0,1]$, let $\LXs$ denote the local time of
$\be^{[s]}$, which is perfectly defined thanks to \eqref{Reroot}. It follows
from  \eqref{ddd},  \eqref{t1dx} and \eqref{PathT} that
\begin{align*}
\intoo f(x) \GLX(x) \dd x
=
\intoi\intoi f\bigpar{\be^{[s]}(t)}\dd s \dd t
=
\int_{0}^{1} \intoo f(x) \LXs(x) \dd x\dd s,
\end{align*}
for any bounded (or non-negative) measurable function $f$,
or equivalently,
\begin{align} \label{dproI}
\GLX(x)  =
\int_{0}^{1} \LXs(x) \dd s, \hspace*{4mm} x \ge 0.
\end{align}
In accordance
with the discrete analogue of $\GLX$ in \eqref{dpropro}, the identity
\eqref{dproI} shows that $\GLX$ can be regarded as the average of the
profiles for all possible choices of a root in $T_{\be}$. 

\subsection{The function spaces $\coo$ and $\cooq$}\label{SSspaces}

Recall that $\coo$ is the space of continuous functions on $\ooo$, and that
convergence in $\coo$ means uniform convergence on each compact interval
$[0,b]$. As said in \refR{Rcooq}, we prefer to state our results in the
space $\cooq$ of functions that are continuous on the extended half-line
$\oooq$. These are the functions $f$ in $\coo$ such that the limit
$f(\infty):=\lim_{\xtoo}f(x)$ exists; in our case, this is a triviality
since all random functions on both sides of \eqref{t0} and \eqref{t1}, and in
similar later statements, have compact support, and thus trivially extend
continuously to $\oooq$ with $f(\infty)=0$.
The important difference between $\coo$ and $\cooq$ is instead the topology:
convergence in $\cooq$ means uniform convergence on $\oooq$ (or,
equivalently, on $\ooo$). 

In particular, the supremum
is a continuous functional on $\cooq$, but not on $\coo$ (where it also may
be infinite). 
Thus,
convergence of the width  (after rescaling),
follows immediately from \refT{T0}
(see also the proof of \refT{TMGW1});
if this was stated with
convergence in $\coo$, a small extra argument would be needed
(more or less equivalent to showing convergence in $\cooq$).

In the random setting, the difference between the two topologies can be 
expressed as in the following lemma.
See also  \cite[Proposition 2.4]{SJIII}, for the similar case of the spaces
$D\oooq$ and $D\ooo$.

\begin{lemma}\label{Lcooq}
Let $X_n(t)$ and $X(t)$ be random functions in $\cooq$. Then $X_n(t)\dto X(t)$
in $\cooq$ if and only if
\begin{romenumerate}
  
\item\label{Lcooqa} 
$X_n(t)\dto X(t)$ in $\coo$, and
\item\label{Lcooqb} 
$X_n(t)\pto X_n(\infty)$, as \ttoo, uniformly in $n$; i.e., for every $\eps>0$,
  \begin{align}\label{lcooqb}
    \sup_{n\ge1}\P\bigpar{\sup_{u<t<\infty}|X_n(t)-X_n(\infty)|>\eps}\to 0,
\qquad \text{as }u\to\infty.
  \end{align}
\end{romenumerate}
%\nopf
\end{lemma}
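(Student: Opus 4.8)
The plan is to reduce the statement to a deterministic criterion for uniform convergence on $\oooq$ and then package it using the Skorokhod representation theorem. First I would recall that a sequence $(f_n)$ in $\cooq$ converges uniformly on $\oooq$ to $f\in\cooq$ if and only if it converges uniformly on each $[0,u]$ and, in addition, $\sup_n\sup_{u<t\le\infty}|f_n(t)-f_n(\infty)|\to0$ as $u\to\infty$. This is a routine $\eps/3$ argument: on $[0,u]$ use the uniform convergence there; near infinity use the tail smallness of $f_n$ uniformly in $n$ together with the corresponding (automatic) tail smallness of the limit $f$, which follows by letting $n\to\infty$ in the tail bound. So the lemma is the ``in probability / in distribution'' analogue of this elementary fact, and the two conditions \ref{Lcooqa} and \ref{Lcooqb} are exactly the stochastic versions of the two deterministic ingredients.

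For the ``if'' direction, assume \ref{Lcooqa} and \ref{Lcooqb}. The cleanest route is to first reduce to almost sure statements: by the Skorokhod representation theorem (applicable since $\coo$ is a complete separable metric space), we may assume $X_n\to X$ a.s.\ in $\coo$, i.e.\ uniformly on every compact $[0,u]$. It remains to upgrade this to a.s.\ uniform convergence on $\oooq$, i.e.\ to control the supremum over $[0,\infty]$. Fix $\eps>0$. By \ref{Lcooqb} choose $u$ so that $\sup_n\P(\sup_{t>u}|X_n(t)-X_n(\infty)|>\eps)<\eps$; a small subtlety is that the Skorokhod coupling changes the joint law but not the marginal laws, so this probability bound is preserved (each $X_n$ individually, and $X$, keep their original distributions). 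Letting $n\to\infty$ and using that $X_n(t)\to X(t)$ for each fixed $t$ and $X_n(\infty)\to X(\infty)$ (the latter because $X_n(\infty)=\lim_t X_n(t)$ and, using the tail bound, this can be read off from values at a large fixed $t$), one gets the same tail bound for $X$, hence $\P(\sup_{t>u}|X(t)-X(\infty)|>\eps)\le\eps$. Now split $\sup_{t\in\oooq}|X_n(t)-X(t)|$ into the sup over $[0,u]$, which $\to0$ a.s., and the sup over $(u,\infty]$, which is bounded by $\sup_{t>u}|X_n(t)-X_n(\infty)|+|X_n(\infty)-X(\infty)|+\sup_{t>u}|X(t)-X(\infty)|$; the middle term $\to0$ a.s., and the two outer terms are each small with probability $\ge1-\eps$ uniformly in $n$ (resp.\ for $X$). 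A standard $\eps\to0$ argument then gives $X_n\to X$ in probability in $\cooq$, and since convergence in distribution to $X$ plus this is what we want — actually it is cleaner to argue directly that $\P(\sup_{t\in\oooq}|X_n-X|>2\eps)\to0$ along the coupling, which yields convergence in distribution in $\cooq$ by the mapping/coupling back to the original laws.

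For the ``only if'' direction, assume $X_n\dto X$ in $\cooq$. Condition \ref{Lcooqa} is immediate because the identity inclusion $\cooq\hookrightarrow\coo$ is continuous, so convergence in distribution is preserved by the continuous mapping theorem. For \ref{Lcooqb}, note that $f\mapsto\sup_{t>u}|f(t)-f(\infty)|$ is a continuous (in fact $1$-Lipschitz) functional on $\cooq$ for each fixed $u$; hence $Y_n^u:=\sup_{t>u}|X_n(t)-X_n(\infty)|\dto Y^u:=\sup_{t>u}|X(t)-X(\infty)|$ as $n\to\infty$, for each $u$. Since $X\in\cooq$ a.s., $Y^u\downarrow0$ a.s.\ as $u\to\infty$, so $Y^u\to0$ in probability, i.e.\ $\P(Y^u>\eps)\to0$ as $u\to\infty$. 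The required uniform-in-$n$ tail estimate \eqref{lcooqb} then follows by a routine argument: given $\eps,\eta>0$, pick $u_0$ with $\P(Y^{u_0}>\eps/2)<\eta/2$; by the convergence in distribution (using that the distribution function of $Y^{u_0}$ has at most countably many discontinuities, or by a continuity point/Portmanteau argument), $\P(Y_n^{u_0}>\eps)<\eta$ for all $n\ge n_0$; the finitely many remaining $n<n_0$ are handled individually since each $X_n\in\cooq$ a.s., so $Y_n^u\to0$ in probability as $u\to\infty$; enlarging $u_0$ to cover these finishes it. I expect the main obstacle to be purely bookkeeping: keeping track of which statements are about the original (marginal) laws versus the Skorokhod coupling, and making the ``transfer the tail bound from $X_n$ to $X$'' step rigorous, since $\sup_{t>u}$ is not a bounded functional and one must be slightly careful passing to the limit; everything else is standard $\eps/3$ and Portmanteau manipulation.
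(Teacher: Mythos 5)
Your proposal is essentially correct, and there is nothing in the paper to compare it with: the paper disposes of this lemma with ``A straightforward exercise'', so your write-up simply supplies the details, along the natural route (continuous inclusion $\cooq\hookrightarrow\coo$ plus Lipschitz tail functionals and Portmanteau for ``only if''; Skorokhod coupling in the Polish space $\coo$ plus a three-term tail decomposition for ``if''). Two points of precision in the ``if'' direction. First, you assert that under the coupling the middle term $|X_n(\infty)-X(\infty)|$ tends to $0$ almost surely; this is not justified and can actually fail, since the coupling gives only a.s.\ uniform convergence on compacts and \eqref{lcooqb} controls the tails only in probability (independent rare ``late bumps'' give an example where $X_n(\infty)\not\to X(\infty)$ a.s.\ although both hypotheses hold). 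What your own argument --- reading off $X_n(\infty)$ from the value at a large fixed $t$ via the tail bound --- really yields is $X_n(\infty)\pto X(\infty)$, and that suffices, because your final step only needs $\P\bigpar{\sup_{0\le t\le\infty}|X_n(t)-X(t)|>2\eps}\to0$, for which in-probability smallness of each of the three (plus the compact-interval) terms is enough. Second, the ``transfer the tail bound from $X_n$ to $X$'' step, which you rightly flag as the delicate point, can be skipped altogether: $X$ is by hypothesis a random element of $\cooq$, so $\sup_{t>u}|X(t)-X(\infty)|\downarrow0$ a.s.\ as $u\to\infty$, hence is small in probability for $u$ large, with no limit interchange needed. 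The ``only if'' direction is fine as written; note only that the uniformity in $n$ at the end uses the monotonicity of $\sup_{t>u}|X_n(t)-X_n(\infty)|$ in $u$ when you enlarge $u_0$ to cover the finitely many small $n$, which you implicitly do.
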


\begin{proof}
A straightforward exercise.
\end{proof}

In our cases, such as \eqref{t0} and \eqref{t1}, the condition \eqref{lcooqb}
is easily verified
from convergence (or just tightness) of the 
normalized height $H_n/\sqrt n$, 
which can be used to bound the support of the \lhs{s}.
Hence, convergence in $\coo$ and $\cooq$ are essentially equivalent.

Note that $\cooq$ is a separable Banach space, and that it is
isomorphic to $\coi$ by a change of variable; thus general results for
$\coi$ may be transferred.
Note also that all functions that we are interested in lie in the 
(Banach) subspace
$C_0\ooo:=\set{f\in\cooq:f(\infty)=0}$.
Hence, the results may just as well be stated as convergence in distribution
in  $C_0\ooo$.

\section{Rooted \sgt{s}}\label{Sroot}

As a background, we recall first the definition of random rooted \sgt{s}
and the almost equivalent \cGWt{s},
see \eg{} \cite{Drmota} or \cite{SJ264} for further details, and \cite{AN}
for more on \GWp{es}.

\subsection{Simply generated trees}
Let $\bphi=(\phi_k)\zoo$ be a given sequence of non-negative weights,
with $\phi_0>0$ and $\phi_k>0$ for at least one $k\ge2$.
(The latter conditions exclude only trivial cases when the random
tree $\SGphin$
defined below either does not exist or is a deterministic path.)

For any rooted ordered tree $T\in\fT$, define the weight of $T$ as
\begin{align}\label{wTr}
  \phi(T):=\prod_{v\in T} \phi_{\dout(v)}.
\end{align}

For a given $n$, we define the random rooted \emph{\sgt} $\SGphin$ of order $n$
as a random tree in $\fT_n$ with probability proportional to its weight;
\ie,
\begin{align}\label{Pr}
  \P(\SGphin=T) :=\frac{\phi(T)}{\sum_{T'\in\fT_n}\phi(T')},
\qquad T\in\fT_n
.\end{align}
We consider only $n$ such that at least one tree $T$ with $\phi(T)>0$ exists.

A weight sequence $\tbphi=(\tphi_k)\zoo$ with
\begin{align}\label{equiv}
  \tphi_k=a b^k\phi_k,
\qquad k\ge0,
\end{align}
for some $a,b>0$
is said to be
\emph{equivalent} to $(\phi_k)\zoo$. It is easily seen that equivalent
weight sequences define the same random tree $\SGphin$, i.e.,
$\SGx{\tbphi}_n\eqd\SGx{\bphi}_n$.

\subsection{\GWt{s}}\label{SSGW}

Given a probability sequence $\bp=(p_k)\zoo$, the \emph{\GWt} $\GGWp$ is the
family tree of a \GWp{} with offspring distribution $\bp$. This means that
$\GGWp$ is a random ordered rooted tree, which is constructed as follows: 
Start with a root and give it a random number of children with the
distribution $\bp$. Give each new vertex a random number of children with the
same distribution and independent of previous choices, 
and continue as long as there are new vertices.
In general, $\GGWp$ may be an infinite tree. We will mainly consider the
\emph{critical} case when the expectation $\mu(\bp)=1$,
and then it is well-known that $\GGWp$ is finite a.s.
(We exclude the trivial case when $p_1=1$.)

The size $|\GGWp|$ of a \GWt{} is random. Given $n\ge1$, the
\emph{\cGWt} $\GGWpn$ is defined as $\GGWp$ conditioned on $|\GGWp|=n$.
(We consider only $n$ such that this happens with positive probability.)
Consequently, $\GGWpn$ is a random ordered rooted tree of size $n$.
It is easily seen that a \cGWt{} $\GGWpn$ equals (in distribution) 
the \sgt{} with the  weight sequence $\bp$, 
and thus we use the same notation $\GGWpn$ for both.

A (conditioned) \GWt{} is \emph{critical} if its offspring distribution
$\bp$ has mean $\mu(\bp)=1$.
We will in the present paper mainly consider \cGWt{s} that are critical and
have a finite variance $\gss(\bp)$; this condition is rather mild, as is
seen in the following subsection.

\subsection{Equivalence}\label{SS=}
A random simply generated tree $\SGphin$ 
with a weight sequence $(\phi_k)\zoo$ that is a probability sequence
equals, as just said,
the \cGWt{} $\GGWx{\bphi}_n$. Much more generally,
any weight sequence $\bphi$ such that its generating function
\begin{align}\label{Phi}
  \Phi(z):=\sumko \phi_k z^k
\end{align}
has positive radius of convergence
is equivalent to some probability weight sequence;
hence, $\SGphin$ can be regarded as a \cGWt{} in this case too.
Moreover, in many cases we can choose an equivalent probability weight
sequence that has mean 1 and finite variance;
see  \eg{} \cite[\S4]{SJ264}.

We will use this to switch between \sgt{s} and \cGWt{s} 
without comment in the sequel; we will use the name that seems best
and most natural in different contexts.

\subsection{Simply generated forests}\label{GWf}

The \GWp{} above starts with one individual. More generally, we may start
with $m$ individuals, which we may assume are numbered $1,\dots,m$; 
this yields a \emph{\GWf} 
consisting of $m$
independent copies of $\GGWp$. Conditioning on the total size being $n\ge m$, 
we obtain a \emph{\cGWf} $\GGWpnm$,
which thus consists of $m$ random trees 
$\GGWpnmx{1}, \dots,\GGWpnmx{m}$ with $|\GGWpnmx{1}|+\dots+|\GGWpnmx{m}|=n$.
Conditioned on the sizes $|\GGWpnmx{1}|,\dots,|\GGWpnmx{m}|$, the trees are
independent 
\cGWt{s} with the given sizes. 

More generally, given any weight sequence $\bphi$,
a random \emph{\sgf} $\SGphinm$
is a random forest with $m$ rooted trees and total size $n$, chosen with
probability proportional to its weight, defined as in
\eqref{wTr}.
Again, conditioned on their sizes, the trees are independent \sgt{s}.

Thus, the distribution of the sizes of the
trees in the forest is of major importance.
Consider the \GW{} case, and
let $\GGWpnmy{1}, \dots,\GGWpnmy{m}$ 
denote the trees arranged in decreasing order:
$|\GGWpnmy{1}|\ge\dots\ge|\GGWpnmy{m}|$.
(Ties are resolved  randomly, say; this applies tacitly 
to all similar situations.)
We have the following general result, which was proved by 
\citet[Lemma 5.7(iii)]{KortchemskiM} under an additional regularity
hypothesis. 
%assumed to be in the domain of attraction of a stable law

\begin{lemma}\label{LGWf}
  Let $m\ge1$ be fixed, and consider 
the \cGWf{} $\GGWpnm$ as \ntoo.
Then 
\begin{align}
|\GGWpnmy{i}|=
  \begin{cases}
n-\Op(1), & i=1
\\
\Op(1), & i=2,\dots,m.
  \end{cases}
\end{align}
\end{lemma}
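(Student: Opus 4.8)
The plan is to reduce everything to the known one-tree (and simplest forest) asymptotics for conditioned \GW{} trees, together with the local limit theorem for the \GWp{}. Write $S_k:=|\GGWpnmx1|+\dots+|\GGWpnmx k|$; each $|\GGWpnmx i|$ has the distribution of $|\GGWp|$, and the joint law of $(|\GGWpnmx1|,\dots,|\GGWpnmx m|)$ is that of $m$ \iid{} copies of $|\GGWp|$ conditioned on $S_m=n$. The key classical input is that for a critical offspring distribution with finite variance $\gss$, the total progeny satisfies $\P(|\GGWp|=k)\sim c\,k^{-3/2}$ as $\ktoo$ (along the span of the distribution), where $c=1/\sqrt{2\pi\gss}$; this is the standard consequence of the Otter--Dwass formula together with a local limit theorem. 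From this one gets $\P(S_m=n)\sim c\,m\,n^{-3/2}$ (the sum of $m$ \iid{} heavy-tailed summands is dominated by one large summand — the ``one big jump'' principle for subexponential distributions), which is the normalizing denominator.

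First I would establish the upper bounds. Fix $i\ge2$. For the claim $|\GGWpnmy i|=\Op(1)$ it suffices, by a union bound over the $\binom mi$ choices of which trees are the $i$ largest, to show that $\P\bigpar{|\GGWpnmx1|\wedge\dots\wedge|\GGWpnmx i|\ge K}\to0$ as $\ktoo$, uniformly in $n$, i.e.\ that the probability two or more of the trees are simultaneously large is negligible. Summing the joint local probabilities,
\begin{align*}
\P\bigpar{|\GGWpnmx1|\ge K,\ |\GGWpnmx2|\ge K,\ S_m=n}
=\sum_{\substack{k_1,\dots,k_m\ge1\\k_1,k_2\ge K,\ \sum k_j=n}}\ \prod_{j=1}^m\P(|\GGWp|=k_j),
\end{align*}
and using $\P(|\GGWp|=k)\le C k^{-3/2}$ for all $k\ge1$ one bounds the sum by $C' n^{-3/2}$ times a factor that is $o(1)$ as $\ktoo$ uniformly in $n$ (the double constraint $k_1,k_2\ge K$ with $k_1+k_2\le n$ forces $\sum_{k_1,k_2\ge K} (k_1 k_2)^{-3/2}\bigpar{1+n-k_1-k_2}^{-3/2}$ to be small; here one uses $\sum_{k\ge K}k^{-3/2}\to0$). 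Dividing by $\P(S_m=n)\sim c m n^{-3/2}$ gives the uniform smallness. This simultaneously proves $|\GGWpnmy i|=\Op(1)$ for every $i\ge2$ and, since exactly one tree can then be large, that $|\GGWpnmy1|=n-\Op(1)$: indeed $n-|\GGWpnmy1|=\sum_{i\ge2}|\GGWpnmy i|$ is a finite sum of $\Op(1)$ terms.

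The main obstacle is handling offspring distributions whose generating function has radius of convergence $1$ but which are not themselves critical probability distributions with finite variance — however, the hypothesis of the lemma is exactly that we are in the critical finite-variance case (a \cGWf{} with such an offspring law), so this is not an issue here; and by the equivalence discussion in \refSS{SS=} any general weight sequence in our setting reduces to this case. A secondary technical point is periodicity: if the offspring law is supported on a sublattice, $\P(|\GGWp|=k)$ is only $\sim ck^{-3/2}$ along the appropriate residue class and vanishes off it, so the local limit theorem must be invoked in its lattice form and $n$ restricted (as is implicitly done throughout the paper) to values for which $\P(S_m=n)>0$; all the estimates above go through verbatim with the sums restricted to the lattice. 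One could alternatively avoid explicit local estimates entirely: by \refT{T0} (or just tightness of $H_n/\sqrt n$, equivalently $|\GGWpn|^{1/2}$-scale bounds) a single \cGWt{} of size $k$ has its relevant functionals on scale $\sqrt k$, and a soft argument shows that conditioning $m$ \iid{} copies to have total size $n$ concentrates almost all the mass on one copy; but the local-limit computation above is the cleanest and gives the stated $\Op(1)$ bounds directly. I would present the local-limit version.
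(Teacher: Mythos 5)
Your local-limit argument is correct as far as it goes, and in fact the paper itself points out (in the unlabelled remark following \refL{LGWf}) that exactly this route — $\P(|\GGWp|=n)\sim cn^{-3/2}$ plus a one-big-jump computation — easily gives the lemma \emph{in the standard case} $\mu(\bp)=1$, $\gss(\bp)<\infty$. The computation you sketch (bounding the probability that two trees simultaneously exceed $K$ by $Cn^{-3/2}K^{-1/2}$ and dividing by $\P(S_m=n)\asymp n^{-3/2}$) does work in that case, modulo routine care with the lattice and with the convolution bound $\P(S_j=l)\le C_jl^{-3/2}$.

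The genuine gap is that \refL{LGWf} carries no criticality or moment hypothesis: $\bp$ is an arbitrary offspring (probability) sequence, and your reading ``the hypothesis of the lemma is exactly that we are in the critical finite-variance case'' is not what is stated. Your fallback, that ``by the equivalence discussion in \refSS{SS=} any general weight sequence reduces to this case,'' is false in general: exponential tilting need not produce an equivalent distribution with mean $1$ (the condensation regime), and even when a mean-$1$ equivalent exists its variance may be infinite, in which case $\P(|\GGWp|=k)\sim ck^{-3/2}$ fails and, absent any regular-variation assumption, no usable local tail asymptotics are available, so your entire estimate collapses. The generality matters for the paper: \refL{L2} invokes \refL{LGWf} for a $\bp$ that is merely equivalent to a weight sequence whose generating function has positive radius of convergence. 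The paper's proof is built precisely for this: for $\mu(\bp)=1$ it uses local convergence of $\GGWp_{n+1}$ to Kesten's infinite tree $\hcT$ (which needs no second moment), conditioning on the root degree so that the branches form $\GGWpnm$ and exploiting that $\hcT$ has a single infinite branch; the cases $\mu(\bp)>1$ and $\mu(\bp)<1$ are then handled by tilting, respectively by the analogous limit tree with a vertex of infinite degree. To repair your proposal you would either have to restrict the statement (and then separately justify the later applications), or replace the local limit theorem by such a local-convergence argument in the general case.
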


\begin{proof}
  Suppose first
that $\mu(\bp)=1$. 
Suppose also, for simplicity, that $p_m>0$.
Consider the \cGWt{} $\GGWp_{n+1}$, and condition on the event $\cE_m$ that
the root degree is $m$.
Conditioned on $\cE_m$, there are $m$ branches, which form a \cGWf{} $\GGWpnm$.

As \ntoo, the random tree $\GGWp_{n+1}$ converges in distribution to an
infinite random tree $\hcT$ 
(the size-biased Galton--Watson tree defined by \citet{Kesten}),
see \cite[Theorem~7.1]{SJ264}.
Moreover, $\P(\cE_m)\to mp_m>0$ by \cite[Theorem~7.10]{SJ264}.
Hence, $\GGWp_{n+1}$ conditioned on $\cE_m$ converges in distribution to 
$\hcT$ conditioned on $\cE_m$. In other words, the forest
$\GGWpnm$ converges in distribution 
to the branches of $\hcT$ conditioned on having exactly
$m$ branches; denote this random limit by $(\cT_1,\dots,\cT_m)$.
By the Skorohod coupling theorem \cite[Theorem~4.30]{Kallenberg},
we may (for simplicity) assume that this convergence is \as.
The convergence here is in the local topology used in \cite{SJ264},
which means \cite[Lemma 6.2]{SJ264} that for any fixed $\ell\ge1$,
if $T\xll$ denotes the tree $T$ truncated at height $\ell$, then
\as, for sufficiently large $n$,
$\GGWx{\bp,\llx}_{n,m;i}=\cT_i\xll$.

The infinite tree $\hcT$ has exactly one infinite branch;
thus there exists a (random) $j\le m$ such that $\cT_j$ is infinite but
$\cT_i$ is finite for $i\neq j$.
Truncating the trees at an $\ell$ chosen larger than the heights $H(\cT_i)$
for all $i\neq j$, we see that for large $n$, 
$\GGWx{\bp}_{n,m;i}=\cT_i$. 
Thus, $|\GGWx{\bp}_{n,m;i}|=O(1)$ for $i\neq j$,
and necessarily the remaining branch
$\GGWx{\bp}_{n,m;j}$ has size $n-O(1)$.
Hence, for large enough $n$, 
$\GGWx{\bp}_{n,m;(1)}=\GGWx{\bp}_{n,m;j}$. 

Consequently, 
$\GGWx{\bp}_{n,m;(2)},\dots,\GGWx{\bp}_{n,m;(m)}$ converge
\as, and thus in distribution, to the $m-1$ finite branches of $\hcT$,
arranged in decreasing order and
conditioned on $\cE_m$. In particular, their sizes converge in distribution, 
and are thus $\Op(1)$.

We assumed for simplicity $p_m>0$. In general, we may select a rooted tree
$T$ with $\ge m$ leaves, such that $p_{\dout(v)}>0$ for every $v\in T$.
Fix $m$ leaves $v_1,\dots,v_m$ in $T$, and consider the \cGWt{}
$\GGWp_{n+|T|-m}$ conditioned on the event $\cE_T$ that it consists of $T$
with subtrees added at $v_1,\dots,v_m$. Then these subtrees form a \cGWf{}
$\GGWpnm$, and we can argue as above, conditioning $\hcT$ on $\cE_T$.

This completes the proof when
 $\mu(\bp)=1$.
If $\mu(\bp)>1$, there always exists an equivalent probability weight
$\tbp$ with $\mu(\tbp)=1$, and the result follows.
If $\mu(\bp)<1$, the same may hold, and if it does not hold, then there is
a similar infinite limit tree $\hcT$
\cite[Theorem~7.1]{SJ264}; in this case, $\hcT$ has one vertex of
infinite degree, but the proof above holds with minor modifications.
\end{proof}

\begin{remark}\label{Rsmall}
  The proof shows that 
in the case $\mu(\bp)=1$,
the small trees 
$\GGWx{\bp}_{n,m;(2)},\dots,\GGWx{\bp}_{n,m;(m)}$ in the forest
converge in distribution 
to $m-1$ independent copies of the unconditioned \GWt{} $\GGWp$, arranged in
decreasing order.
More generally, the small trees converge in distribution to independent
\GWt{s} for a probability distribution equivalent to $\bp$.
(This too was shown in \cite[Lemma 5.7(iii)]{KortchemskiM} under
stronger assumptions.)
\end{remark}

\begin{remark}
  In the standard case $\mu(\bp)=1$, $\gss(\bp)<\infty$, it is also easy to
show \refL{LGWf} using the fact $\P(|\GGWp|=n)\sim c n^{-3/2}$, for some $c>0$,
which is a well-known consequence of the local limit theorem,
\cf{} \eqref{b1}--\eqref{b2}.
\end{remark}

\begin{problem}\label{Psmall}
A \sgf{}  $\SGphi_{n,m}$ is covered by \refL{LGWf} when
the generating function \eqref{Phi} has positive radius of convergence,
since then it is equivalent to a \cGWf.
We conjecture that 
\refL{LGWf} holds for \sgf{s} also in the case when
the generating function  has radius of convergence 0,
but we leave this as an open problem.
\end{problem}

\section{Modified \sgt{s}}\label{Smod}

One frequently meets random trees where the root has a special distribution,
see for example \cite{MarckertP,KortchemskiM}.
Thus, let $\bphi$ and $\bphio$ be two weight sequences, where $\bphi$ is as
above, and $\bphio=(\phio_k)\zoo$ satisfies $\phio_k\ge0$, with strict
inequality for at least one $k$.
We modify \eqref{wTr} and now define the weight of a tree $T\in\fT$ as
\begin{align}\label{wTm}
  \phi^*(T):=\phio_{\dout(o)}\prod_{v\neq o} \phi_{\dout(v)}.
\end{align}
The random \emph{modified \sgt} $\SGxx{\bphi}{\bphio}_n$ is defined as in 
\eqref{Pr}, using the modified weight \eqref{wTm}.

We say that a pair $(\tbphi,\tbphio)$ is equivalent to
$(\bphi,\bphio)$ if \eqref{equiv} holds and similarly
\begin{align}\label{equiv0}
  \tphio_k=a_0 b^k\phio_k,
\qquad k\ge0.
\end{align}
It is important that the same $b$ is used in \eqref{equiv} and
\eqref{equiv0}, while $a$ and $a_0$ may be different.
It is easy to see that equivalent pairs of weight sequences define the same
modified \sgt.

Similarly, given two probability sequences $\bp=(p_k)\zoo$ and
$\bpo=(\po_k)\zoo$, we define the \emph{\mGWt} $\GGWx{\bp,\bpo}$
and \emph{\cmGWt} $\GGWx{\bp,\bpo}_n$ as in \refSS{SSGW}, 
but now giving children to the
root  with distribution $\bpo$, and to everyone else 
with distribution $\bp$.

Again, as indicated by our notation, we have an equality: 
the \cmGWt{}
$\GGWx{\bp,\bpo}_n$ equals the modified \sgt{}
with weight sequences $\bp$ and $\bpo$.
Conversely, 
if two weight sequences $\bphi$ and $\bphio$ both have positive radius of
convergence, then it is possible (by taking $b$ small enough) to find
equivalent weight sequences $\tbphi$ and $\tbphio$ that are probability
sequences, and thus
$
\SGx{\bphi,\bphio}_n
= \SGx{\tbphi,\tbphio}_n
$
 can be interpreted as a \cmGWt.

%The following lemma  should be obvious from the definitions.

\begin{lemma}\label{LM1}
  Consider a modified \sgt{} $\SGphiphio_n$ and denote its branches
  by $T_1,\dots,T_{d(o)}$. 
  \begin{romenumerate}
    
  \item \label{LM1a}
Conditioned on the root degree $d(o)$, the branches form a 
simply generated forest $\SGphi_{n-1,d(o)}$.
  \item \label{LM1b}
Conditioned on the root degree $d(o)$ 
and the   sizes $|T_i|$ of the branches, 
the branches are independent \sgt{s}
$\SGphi_{|T_i|}$.
  \end{romenumerate}
\end{lemma}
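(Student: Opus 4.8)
The plan is a direct computation with the defining weights, based on the multiplicativity of the modified weight $\phi^*$ over the branch decomposition. Suppose $T\in\fT_n$ has root degree $d(o)=m$ and ordered branches $T_1,\dots,T_m\in\fT$. The non-root vertices of $T$ form the disjoint union of the vertex sets of the $T_i$, and for $v\in T_i$ the outdegree $\dout(v)$ in $T$ coincides with its outdegree inside $T_i$, since all descendants of $v$ in $T$ lie in $T_i$. Hence, from \eqref{wTm} and \eqref{wTr},
\begin{align*}
  \phi^*(T)=\phio_m\prod_{i=1}^m\prod_{v\in T_i}\phi_{\dout(v)}=\phio_m\prod_{i=1}^m\phi(T_i),
\end{align*}
and $|T_1|+\dots+|T_m|=n-1$; conversely, every $m$-tuple $(T_1,\dots,T_m)$ of ordered rooted trees with $\sum_i|T_i|=n-1$ arises from a unique such $T$.

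For \ref{LM1a}, I would fix $m$ with $\P(d(o)=m)>0$ and compute, using \eqref{Pr} with the weight $\phi^*$, the conditional law of the branch sequence: for each admissible $(T_1,\dots,T_m)$ (i.e.\ one of total size $n-1$),
\begin{align*}
  \P\bigpar{(T_1,\dots,T_m)\mid d(o)=m}
=\frac{\phio_m\prod_i\phi(T_i)}{\sum\phio_m\prod_i\phi(T_i')}
=\frac{\prod_i\phi(T_i)}{\sum\prod_i\phi(T_i')},
\end{align*}
where the sums are over $m$-tuples $(T_1',\dots,T_m')$ with $\sum_i|T_i'|=n-1$ and the factor $\phio_m$ cancels. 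By the definition of a \sgf{} in \refSS{GWf}, the \rhs{} is exactly $\P\bigpar{\SGphi_{n-1,m}=(T_1,\dots,T_m)}$, which gives \ref{LM1a}.

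Part \ref{LM1b} then follows from \ref{LM1a} together with the general fact, recorded in \refSS{GWf}, that conditionally on their sizes the trees of a \sgf{} are independent \sgt{s} of those sizes; equivalently, conditioning directly on $d(o)=m$ and $|T_i|=n_i$ for all $i$ makes the normalizing sum factor as $\prod_i\sum_{T_i'\in\fT_{n_i}}\phi(T_i')$, so the conditional law becomes $\prod_i\P(\SGphi_{n_i}=T_i)$. There is no real obstacle here; the only points requiring care are the branch-decomposition identity for $\phi^*$ (outdegrees inside a branch are unchanged when the branch sits inside $T$) and the bookkeeping that the admissible branch tuples are precisely those of total size $n-1$. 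Everything else is cancellation of normalizing constants.
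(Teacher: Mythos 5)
Your proof is correct and complete: the factorization $\phi^*(T)=\phio_m\prod_i\phi(T_i)$ over the branch decomposition, the bijection with $m$-tuples of total size $n-1$, and the cancellation of normalizing constants are exactly the standard argument here. The paper simply leaves this lemma as an exercise, and your computation is the intended solution, so there is nothing to compare beyond noting that it fills in the omitted details correctly (including the check that outdegrees are unchanged inside a branch and the restriction to $m$ with $\P(d(o)=m)>0$).
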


\begin{proof}
  Exercise.
\end{proof}
Note that \refL{LM1} applies also to the \sgt{} $\SGphin$ (by taking
$\bphio=\bphi$). Thus, conditioned on the root degree 
%and the sizes of the branches, 
the branches have the same distribution for $\SGphin$ and
$\SGphiphio_n$. 
Hence, the distribution of the root degree is of central importance.
The following lemma is partly shown by 
\citet[Proposition 5.6]{KortchemskiM} in greater
generality (the stable case),
although we add the estimate \eqref{lm2b}.

\begin{lemma}[mainly \citet{KortchemskiM}]\label{LM2}
  Suppose that $\bp$ is a probability sequence with mean $\mu(\bp)=1$ and
  variance $\gss(\bp)\in(0,\infty)$ and that $\bpo$ is a probability
  sequence with finite mean $\mu(\bpo)$.
Then the root degree $d(o)$ in the \cmGWt{} $\GGWxn{\bp,\bpo}$ converges in
distribution to a random variable $\tD$ with distribution
\begin{align}\label{lm2}
  \P(\tD=k)
= \frac{k\po_k}{\sumj j\po_j}
= \frac{k\po_k}{\mu(\bpo)}
.\end{align}
In other words, 
for every fixed $k\ge0$,
\begin{align}
  \label{lm2a}
\P(d(o)=k)\to\P(\tD=k),\qquad\text{as \ntoo} 
.\end{align}
Moreover, if $n$ is large enough, we have uniformly
\begin{align}\label{lm2b}
  \P(d(o)=k)\le 2\P(\tD=k), 
\qquad k\ge1
.\end{align}

As a consequence, $\E\tD<\infty$ if and only if $\gss(\bpo)<\infty$.
\end{lemma}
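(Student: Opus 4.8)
The plan is to compute $\P(d(o)=k)$ exactly and then apply a local limit theorem. Let $\xi_1,\xi_2,\dots$ be \iid{} with distribution $\bp$ and $S_m:=\xi_1+\dots+\xi_m$. By construction of the \mGWt{} $\GGWx{\bp,\bpo}$, the root has $k$ children with probability $\po_k$ and its $k$ branches are then independent copies of $\GGWp$ (see also \refL{LM1}); hence the joint probability that $d(o)=k$ and $|\GGWx{\bp,\bpo}|=n$ equals $\po_k$ times the probability that a forest of $k$ independent $\GGWp$-trees has total size $n-1$. By the classical Otter--Dwass (cycle lemma) formula for the total progeny of a forest, this latter probability is $q_k(n):=\frac{k}{n-1}\,\P\bigpar{S_{n-1}=n-1-k}$ (and $q_k(n):=0$ for $k\ge n$). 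Dividing by $\P(|\GGWx{\bp,\bpo}|=n)=\sum_{j\ge1}\po_jq_j(n)$ gives, for the \cmGWt{} $\GGWxn{\bp,\bpo}$,
\begin{align*}
\P(d(o)=k)=\frac{\po_k\,q_k(n)}{\sum_{j\ge1}\po_j\,q_j(n)},
\end{align*}
so everything reduces to the asymptotics of $q_k(n)$ as \ntoo.

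Since $\mu(\bp)=1$ and $\gss:=\gss(\bp)\in(0,\infty)$, the sum $S_{n-1}$ has mean $n-1$ and variance $(n-1)\gss$, and the local limit theorem yields, uniformly over $k\ge0$,
\begin{align*}
\sqrt{n-1}\;\P\bigpar{S_{n-1}=n-1-k}=\frac{1}{\sqrt{2\pi\gss}}\,e^{-k^2/(2(n-1)\gss)}+o(1)
\end{align*}
(here and below we suppress the routine lattice adjustments needed when $\bp$ has span larger than $1$). Hence $q_k(n)=\bigpar{1+o(1)}\,k\,(n-1)^{-3/2}/\sqrt{2\pi\gss}$ for each fixed $k$, while, bounding $\P(S_{n-1}=\cdot)$ by its maximal value, $q_k(n)\le\bigpar{1+o(1)}\,k\,(n-1)^{-3/2}/\sqrt{2\pi\gss}$ uniformly over $k\ge1$. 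For the denominator, $\sqrt{n-1}\,\P(S_{n-1}=n-1-j)$ is bounded uniformly in $j$ (for $n$ large) and tends to $1/\sqrt{2\pi\gss}$ for each fixed $j$; since $\sum_jj\po_j=\mu(\bpo)<\infty$, dominated convergence gives $\sum_{j\ge1}\po_jq_j(n)=\bigpar{\mu(\bpo)+o(1)}\,(n-1)^{-3/2}/\sqrt{2\pi\gss}$. This is the one place where the hypothesis $\mu(\bpo)<\infty$ is used.

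Putting these together, for each fixed $k$ the ratio converges to $k\po_k/\mu(\bpo)$, which is \eqref{lm2} and \eqref{lm2a}. For the uniform bound, the case $k\ge n$ is trivial since then $q_k(n)=0$, and for $1\le k<n$ the ratio is at most $\bigpar{1+o(1)}\,k\po_k/\mu(\bpo)=\bigpar{1+o(1)}\P(\tD=k)$ with the $o(1)$ not depending on $k$; hence $\P(d(o)=k)\le2\P(\tD=k)$ for all $k\ge1$ once $n$ is large enough, which is \eqref{lm2b}. Finally $\E\tD=\sum_kk\,\P(\tD=k)=\mu(\bpo)^{-1}\sum_kk^2\po_k$, and since $\mu(\bpo)<\infty$ this is finite exactly when $\sum_kk^2\po_k<\infty$, i.e.\ when $\gss(\bpo)<\infty$; this gives the last assertion.

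The computation is routine; the two points that need a little care are the \emph{uniformity in $k$} of the local limit estimate, which is what makes the bound \eqref{lm2b} uniform, and the termwise passage to the limit in the denominator, which is exactly where $\mu(\bpo)<\infty$ enters. The plain convergence \eqref{lm2a} is essentially \cite[Proposition~5.6]{KortchemskiM}; the genuinely new content is \eqref{lm2b}, and for it one wants the local limit theorem with an error term uniform in the lattice point, not merely the asymptotics $\P(|\GGWp|=n)\sim c\,n^{-3/2}$.
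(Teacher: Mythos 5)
Your proposal is correct and follows essentially the same route as the paper: conditioning on the root degree, the Dwass (Otter--Dwass) formula for the total progeny of a forest, the local limit theorem with uniformity in $k$, and a dominated-convergence argument in the denominator where $\mu(\bpo)<\infty$ enters, with the same glossing over of the lattice case. The uniform bound \eqref{lm2b} and the final equivalence with $\gss(\bpo)<\infty$ are obtained exactly as in the paper's proof.
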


\begin{proof}
This uses well-known standard arguments, 
but we give a full proof for
completeness; see also \cite{KortchemskiM}.
%Note first that by a well-known formula by \cite{Otter}, see \eg{}
%\cite[\S15}{SJ264} and further references there,
  Let $D$ be the root degree in the modified \GWt{} $\GGWx{\bp,\bpo}$.
If $D=k$, then the rest of the tree consists of $k$ independent copies
of $\GGWx{\bp}$. 
Thus, the conditional probability 
$\P\bigpar{|\GGWx{\bp,\bpo}|=n\mid D=k}$ equals the probability that a \GWp{}
started with $k$ individuals has in total $n-1$ individuals; hence, by a
formula by \citet{Dwass},
see \eg{} \cite[\S15]{SJ264} and the further references there,
\begin{align}\label{b1}
  \P\bigpar{|\GGWx{\bp,\bpo}|=n\mid D=k}
=
\frac{k}{n-1}\P\bigpar{S_{n-1}=n-k-1},
\end{align}
where $S_{n-1}$ denotes the sum of $n-1$ independent random variables with
distribution $\bp$.

 Suppose for simplicity that the distribution $\bp$ is aperiodic, \ie, not
supported on any subgroup $d\bbN$. (The general case follows similarly
using standard modifications.)
It then follows by the local limit theorem, 
see \eg{}
\cite[Theorem 1.4.2]{Kolchin} or 
\cite[Theorem VII.1]{Petrov},
that, as \ntoo, 
\begin{align}\label{b2}
  \P\bigpar{S_{n-1}=n-k-1}
= \frac{1}{\sqrt{2\pi\gss n}}\bigpar{e^{-k^2/(2n\gss)}+o(1)},
\end{align}
uniformly in $k$.
Consequently, combining \eqref{b1} and \eqref{b2}
 with $\P(D=k)=\po_k$, 
\begin{align}\label{b3}
  \P\bigpar{|\GGWx{\bp,\bpo}|=n \text{ and } D=k}
&=
\frac{k\po_k}{n-1}\P\bigpar{S_{n-1}=n-k-1}
\notag\\&
= c\frac{k\po_k}{n^{3/2}}\bigpar{e^{-k^2/(2n\gss)}+o(1)},
\end{align}
uniformly in $k$, where $c:=(2\pi\gss)\qqw$.

Summing over $k$ we find as \ntoo, using 
$\sum k\po_k<\infty$ and
monotone convergence,
\begin{align}\label{b4}
  \P\bigpar{|\GGWx{\bp,\bpo}|=n}
&=
%\notag\\&
\frac{c}{n^{3/2}}\Bigpar{\sumk {k\po_k}e^{-k^2/(2n\gss)}+o(1)}
\sim \frac{c}{n^{3/2}}\sumk {k\po_k}.
\end{align}
Thus, combining \eqref{b3} and \eqref{b4}, for any fixed $k\ge1$,
as \ntoo,
\begin{align}\label{b6}
  \P\bigpar{D=k\mid|\GGWx{\bp,\bpo}|=n}
&=
\frac{ \P\bigpar{|\GGWx{\bp,\bpo}|=n\text{ and } D=k}}
{ \P\bigpar{|\GGWx{\bp,\bpo}|=n}}
\notag\\&
\to \frac{k\po_k}{\sumj j\po_j}%\bigpar{1+o(1)},
.\end{align}
The limits on the \rhs{} sum to 1, and thus the result \eqref{lm2} follows.

Moreover, \eqref{b3} and \eqref{b4} also yield
\begin{align}\label{b8}
  \P\bigpar{D=k\mid|\GGWx{\bp,\bpo}|=n}
&=
\frac{ \P\bigpar{|\GGWx{\bp,\bpo}|=n\text{ and } D=k}}
{ \P\bigpar{|\GGWx{\bp,\bpo}|=n}}
\notag\\&
\le \frac{k\po_k}{\sumj j\po_j}\bigpar{1+o(1)},
\end{align}
uniformly in $k$. In particular, \eqref{lm2b} holds for all $k$ if $n$ is
large enough.

Finally, by \eqref{lm2}, $\E\tD=\sum k\P(\tD=k)<\infty$ if and only if
$\sum_k k^2\po_k<\infty$.
\end{proof}

It follows from \refL{LM2} 
that the tree is overwhelmingly dominated by one branch.
(Again, this was shown by \citet{KortchemskiM}
in greater generality.)
\begin{lemma}[essentially {\citet[Proposition 5.6]{KortchemskiM}}]\label{LM3}
  Suppose that $\bp$ is a probability sequence with mean $\mu(\bp)=1$ and
  variance $\gss(\bp)\in(0,\infty)$ and that $\bpo$ is a probability
  sequence with finite mean $\mu(\bpo)$.
Let $\cttxn1,\dots,\cttxn{d(o)}$ be the branches of $\GGWppon$ arranged in
decreasing order. 
Then
\begin{align}\label{lm3}
  |\cttxn1|=n-\Op(1).
\end{align}
\end{lemma}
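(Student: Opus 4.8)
The plan is to combine the convergence of the root degree from \refL{LM2} with the structure of the branches from \refL{LM1} and the forest decomposition from \refL{LGWf}. First I would fix $m\ge1$ and condition on the event $\set{d(o)=m}$; by \refL{LM1}\ref{LM1a}, on this event the branches $T_1,\dots,T_m$ form a simply generated forest $\SGphi_{n-1,m}$, which (since $\bp$ has positive radius of convergence, being a probability sequence) is equivalent to a conditioned \GWf{} and hence covered by \refL{LGWf}. That lemma gives that the largest branch has size $n-1-\Op(1)$ and all other branches have size $\Op(1)$, as \ntoo, conditionally on $\set{d(o)=m}$.

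Next I would remove the conditioning. The point is that $d(o)$ is tight: by \refL{LM2}, $d(o)\dto\tD$, so for every $\eps>0$ there is a finite $m_0$ with $\P(d(o)>m_0)\le\eps$ for all large $n$ (using \eqref{lm2b} to get uniformity, or just convergence in distribution together with the fact that a convergent sequence of distributions on $\bbNo$ is tight). On the complementary event $\set{d(o)\le m_0}$ I can apply the conditional estimate above for each value $m\le m_0$: for each such $m$ there is $C_m$ with $\P\bigpar{|\cttxn1|<n-C_m \mid d(o)=m}\le\eps$ for large $n$, and taking $C:=\max_{1\le m\le m_0}C_m$ and summing over $m$ yields $\P\bigpar{|\cttxn1|<n-C}\le 2\eps$ for large $n$. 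Since $\eps$ was arbitrary, this is exactly the statement $|\cttxn1|=n-\Op(1)$.

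I expect the main (and really the only) obstacle to be the passage from the conditional statement to the unconditional one, i.e.\ making sure the $\Op(1)$ bound is genuinely uniform: \refL{LGWf} is stated for each fixed $m$, so a priori the implied constant $C_m$ could blow up with $m$, but the tightness of $d(o)$ confines us to finitely many values of $m$ at a time, which resolves this. A small technical point worth stating explicitly is that the forest $\SGphi_{n-1,m}$ appearing in \refL{LM1}\ref{LM1a} uses the unmodified weight sequence $\bphi$, so the variance hypothesis $\gss(\bp)\in(0,\infty)$ is exactly what is needed for the branches, and the hypothesis on $\bpo$ is used only through \refL{LM2} to control $d(o)$; no finiteness of $\gss(\bpo)$ (hence no finiteness of $\E\tD$) is required. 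Everything else is routine.
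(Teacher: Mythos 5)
Your proposal is correct and follows essentially the same route as the paper's proof: condition on $d(o)=m$, apply \refL{LM1}\ref{LM1a} together with \refL{LGWf} to get the conditional bound, and then remove the conditioning via the tightness of $d(o)$ supplied by \refL{LM2}, taking the maximum of the finitely many constants $C_m$. The only cosmetic difference is that since $\bp$ is itself a probability sequence with $\mu(\bp)=1$, the branch forest is already a conditioned Galton--Watson forest, so no appeal to equivalence of weight sequences is needed.
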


\begin{proof}
Let $D_n=d(o)$ and condition on $D_n=m$, for a fixed $m$.
Then, by \refLs{LM1}\ref{LM1a} and \ref{LGWf}, 
\eqref{lm3} holds. In other words, for every $\eps>0$, there exists
$C_{m,\eps}$ such that
\begin{align}
  \P\bigpar{n-|\cttxn1|>C_{m,\eps}\mid D_n=m} <\eps.
\end{align}
By \refL{LM2}, $D_n\dto\tD$, and thus 
$(D_n)_n$ is tight, \ie, $\Op(1)$, so
there exists $M$ such that
$\P(D_n>M)<\eps$ for all $n$. Consequently, if $C_{\eps}:=\max_{m\le M}
C_{m,\eps}$, 
\begin{align}
  \P\bigpar{n-|\cttxn1|>C_{\eps}}
&=
\E \P\bigpar{n-|\cttxn1|>C_{\eps}\mid D_n}
%\notag\\&
\le \eps + \P(D_n>M)
\notag\\&
\le 2\eps,
\end{align}
which completes the proof.
\end{proof}

\refLs{LM1}--\ref{LM3} make it possible to transfer many
results that are known 
for \sgt{s} (\cGWt{s}) to the modified version. 
See \refS{Smod2} for a few examples.

\begin{problem}\label{PLM2+}
Does \refL{LM2} (and thus \refL{LM3})
hold without assuming finite variance $\gss(\bp)<\infty$.
\ie, assuming only $\mu(\bp)=1$ and $\mu(\bpo)<\infty$?
As said above, \eqref{lm2} was shown also when the variance is infinite by
\citet{KortchemskiM}, 
but they then assume that $\bp$  is in the domain of
attraction of a stable distribution.
What happens without this regularity assumption?
\end{problem}

\begin{remark}\label{RPLM2}
  We assume in \refL{LM2} that $\mu(\bpo)<\infty$.
We claim that if $\mu(\bpo)=\infty$, then
$d(o)\pto\infty$; in other words, $\P(d(o)=k)\to0$ for every fixed $k$,
which can be seen as the natural interpretation of \eqref{lm2}--\eqref{lm2a}
in this case.

We sketch a proof. First, from \eqref{b3} and Fatou's lemma (for sums),
 \begin{align} \label{b9}
\liminf_{\ntoo}
n^{3/2} \P\bigpar{|\GGWx{\bp,\bpo}|=n}
%=
%\liminf_{\ntoo}\sumko
%\P\bigpar{|\GGWx{\bp,\bpo}|=n  \text{ and } D=k}
&\ge  
\sumko \liminf_{\ntoo}n^{3/2} \P\bigpar{|\GGWx{\bp,\bpo}|=n  \text{ and } D=k}
\notag\\&
=\sumko c k\po_k
=\infty.
\end{align}
In other words, 
$%\begin{align} \label{b9}
 n^{3/2} \P\bigpar{|\GGWx{\bp,\bpo}|=n}
\to \infty
$. %\end{align}
Then, \eqref{b3} and \eqref{b9} yield, for any fixed $k\ge0$,
\begin{align}
  \P\bigpar{D =k \mid|\GGWx{\bp,\bpo}|=n}
&=
\frac{ \P\bigpar{|\GGWx{\bp,\bpo}|=n\text{ and } D=k}}
{ \P\bigpar{|\GGWx{\bp,\bpo}|=n}} \to 0.
\end{align}
This proves our claim. 
\end{remark}

\section{Unrooted \sgt{s}}\label{Sunroot}

We make definitions corresponding to \refS{Sroot} for unrooted trees.
In this case, we consider labelled trees, so that we can distinguish the
vertices. (This is not needed for ordered trees, since their vertices can be
labelled canonically as described in \refS{Snot}.)
Of course, we may then ignore the labelling when we want.

Let $(w_k)\zoo$ be a given sequence of non-negative weights,
with $w_1>0$ and  $w_k>0$ for some $k\ge3$.
(The weight $w_0$ is needed only for the trivial case $n=1$, and might be
ignored. We may take $w_0=0$ without essential loss of generality.) 

For any labelled tree $T\in\fL_n$, now define the weight of $T$ as
\begin{align}\label{wT}
  w(T):=\prod_{v\in T} w_{d(v)}.
\end{align}

Given $n\ge1$, 
we define the 
random \emph{unrooted simply generated tree} $\uctn=\uctnw$
as a labelled tree in $\cL_n$, chosen randomly with probability
proportional to the weight \eqref{wT}.
(We consider only $n$ such that at least one tree of positive weight exists.)

\begin{remark}\label{Requiv}
  Just as in the rooted case, replacing the weight sequence by an equivalent
  one (still defined as in \eqref{equiv}) gives the same random tree $\uctn$.
\end{remark}

In the following sections we give three (related but different) relations
with the more well-known rooted \sgt{s}.

\section{Mark a vertex}
\label{Smark1}

Let $\uctnw$ be a random unrooted \sgt{} as in \refS{Sunroot}, and mark one
of its $n$ vertices, chosen uniformly at random. 
Regard the marked vertex as a root, and denote the resulting rooted 
tree by $\ctaw_n$.

Thus, $\ctaw_n$ is a random unordered  rooted tree, where an unordered rooted
tree $T$ has 
probability proportional to its weight given by \eqref{wT}.

We make $\ctaw_n$ ordered by ordering the children of each vertex uniformly
at random; denote the resulting random labelled ordered rooted tree by 
$\ctbw_n$.
Since each vertex $v$ has $\dout(v)!$ possible orders, the probability that
$\ctbw_n$ equals a given ordered tree $T$ is proportional to the weight
\begin{align}\label{wtb}
  w\x(T) := 
\frac{w(T)}{\prod_{v\in T}\dout(v)!}
=\frac{w_{d(o)}}{d(o)!}\prod_{v\neq o}\frac{w_{d(v)}}{\dout(v)!}
=\frac{w_{d(o)}}{d(o)!}\prod_{v\neq o}\frac{w_{\dout(v)+1}}{\dout(v)!}.
\end{align}
The tree $\ctbw_n$ is constructed as a labelled tree, but each ordered rooted
tree $T\in\fT_n$ has the same number $n!$ of labellings, and they 
have the same weight \eqref{wtb} and thus appear
with the same probability. Hence, we may forget the labelling, and regard
$\ctbw_n$ as a random ordered tree in $\fT_n$, with probabilities
proportional to the weight \eqref{wtb}.
 This is  the same as the weight \eqref{wTm} with
\begin{align}\label{phiw}
  \phi_{k}&:=\frac{w_{k+1}}{k!},
\qquad k\ge0,
\\
\label{phio}
  \phio_{k}&:=\frac{w_{k}}{k!},
\qquad k\ge0
.\end{align}
Thus, $\ctbw_n=\SGx{\bphi,\bphio}_n$,
the modified \sgt{} defined in \refS{Smod}.

We recover $\uctnw$ from $\ctbn=\SGx{\bphi,\bphio}_n$
by ignoring the root (and adding a uniformly
random labelling). This yields thus a method to construct $\uctnw$.

\begin{example}\label{Enoncrossing}
\citet{MarckertP} studied uniformly random non-crossing trees of a given
size $n$, 
and found
that if they are regarded as ordered rooted trees, then they have the
same distribution as the \cmGWt{} $\GGWppon$, where
\begin{align}
  p_k&=4(k+1)3^{-k-2}, k\ge0,
\\
  \po_k&=2\cdot3^{-k}, k\ge1.
\end{align}
These weights are equivalent to $\phi_k=k+1$ and $\phio_k=1$, which are
given by \eqref{phiw}--\eqref{phio} with $w_k=k!$.
We may thus reformulate the result by \citet{MarckertP} as:
\emph{A uniformly random non-crossing tree is the same as a random unrooted
  \sgt{} with weights $w_k=k!$.}

More generally,
\citet{KortchemskiM} 
studied \emph{\sgnct{s}}, which are random non-crossing
trees with probability proportional to the weight \eqref{wT} for some weight
sequence $\bw=(w_k)_k$, and showed that they (under a condition)
are equivalent to \cmGWt{s}.
In fact, for any weight sequence $\bw$, the proofs in 
%\cite{MarckertP} and \cite{KortchemskiM} 
\cite[in particular Lemma 2]{MarckertP} and 
\cite[in particular Proposition 2.1]{KortchemskiM} 
show that
the \sgnct, regarded as an ordered rooted tree,
is the same as $\SGphiphio_n$ with
\begin{align}
  \phi_k&:=(k+1)w_{k+1},\quad k\ge0,
\\
  \phio_k&:=w_k,\quad k\ge0.
\end{align}
Thus, comparing with \eqref{phiw}--\eqref{phio}, 
it follows that the \sgnct{} is an unordered \sgt, with weight sequence
$\olw_k:=w_kk!$.

Note that non-crossing trees are naturally defined as unrooted trees. 
A root is  introduced in \cite{MarckertP,KortchemskiM} 
for the analysis, which as said above
makes the trees \cmGWt{s}
(or, more generally, modified \sgt{s}). 
This is precisely the
marking of an urooted \sgt{} discussed in the present section.
\end{example}

\section{Mark an edge}
%{Another relation with rooted simply generated trees} 
\label{Smark2}

In the random unrooted
tree $\uctnw$, mark a (uniformly) random edge, and give it a
direction; i.e, mark two adjacent vertices, say $\rx$ and $\ry$.
Since each tree $\uctnw$ has the same number $n-1$ of edges, the resulting
marked tree $\ctcw_n$ 
is distributed over all labelled trees on $[n]$ with a marked and
directed edge with probabilities proportional to the weight \eqref{wT}.

Now ignore the marked edge, and regard the 
tree $\ctcw_n$ as two rooted trees 
$\ctnx$ and $\ctny$
with roots $\rx$ and $\ry$, respectively.
Furthermore, order randomly the children of each vertex in each of these
rooted trees; this makes $\ctnx$ and $\ctny$ a pair of ordered trees, and each
pair $(\Tx,\Ty)$ of labelled ordered rooted trees with $|\Tx|+|\Ty|=n$ and
the labels $1,\dots,n$ appears with probability proportional to
\begin{align}\label{ww}
  \hw(\Tx,\Ty)&:=\hw(\Tx)\hw(\Ty)
\intertext{where, for a rooted tree $T$,}
\hw(T)&:=\prod_{v\in T}\frac{w_{\dout(v)+1}}{\dout(v)!}
.\label{hwT}
\end{align}
Using  again the definition \eqref{phiw},
we have by \eqref{wTr}, 
\begin{align}\label{hw2}
  \hw(T)
=\prod_{v\in T}\phi_{\dout(v)}
=\phi(T).
\end{align}
Moreover, since we now have ordered rooted trees, the vertices are
distinguishable, and each pair $(\Tx,\Ty)$  of ordered trees with
$|\Tx|+|\Ty|=n$ has the same number $n!$ of labellings. Hence, we may ignore
the labelling, and regard the marked tree $\ctcw_n$ as a pair of ordered trees
$(\ctnx,\ctny)$  %$(\Tx,\Ty)$   
with
$|\ctnx|+|\ctny|=n$ and probabilities proportional to the weight given by
\eqref{ww} and \eqref{hw2}.
This means that $\ctnx$ and $\ctny$, conditioned on their sizes, are two
independent random rooted simply generated trees, with the weight
sequence $\bphi$ given by
\eqref{phiw}; in other words, $(\ctnx,\ctny)$ is a \sgf{} 
$\SGphi_{n,2}$. 

Consequently, we can construct the random unrooted simply generated tree
$\uctnw$ by taking two random rooted simply generated  trees $\ctnx$ and $\ctny$
constructed in this way, with the right distribution of their sizes, 
and joining their roots. 

Note that $|\ctnx|=n-|\ctny|$ is random, with a distribution given by the
construction above. More precisely, if $\za_n$ is the total weight 
\eqref{hw2} summed over all ordered trees of order $n$, then 
\begin{align}\label{julie}
  \P\bigpar{|\ctnx|=m}=\frac{\za_m\za_{n-m}}{\sum_{k=1}^{n-1}\za_k\za_{n-k}}.
\end{align}

\begin{remark}\label{R2}
  If $\phi_2>0$, we can alternatively describe the result as follows: 
Use the weight sequence $(\phi_k)\zoo$ given by \eqref{phiw}
and 
take a random rooted simply generated tree $\SGphi_{n+1}$
of order $n+1$, 
conditioned on the root degree $=2$; 
remove the root and join its two neighbours to each other
(this is the marked edge).

If $\phi_2=0$, we can instead take any $k>2$ with $\phi_k>0$, 
and 
take a random rooted simply generated tree $\SGphi_{n+k-1}$
of order $n+k-1$, 
conditioned on the event that the root degree is $k$, and the $k-2$ last
children of the root are leaves; we remove the root and these children, and
join the first two children. 
\end{remark}

\begin{remark}\label{RSGW}
Suppose that
the weight sequence $(\phi_j)\zoo$ given by \eqref{phiw}
satisfies $\sumj\phi_j=1$, so $(\phi_j)\zoo$ 
is a probability distribution. % on $\bbN_0:=\set{0,1,2,\dots}$.
(Note that a large class of examples 
can be expressed with such weights,
see  \refR{Requiv}.)
Then the construction 
% of $(\ctnx,\ctny)$ 
above can be stated as follows:

\emph{
Consider a \GWp{} with offspring distribution $(\phi_k)\zoo$, starting with
\emph{two} individuals, and conditioned on the total progeny being $n$.
This creates a forest with two trees; join their roots to obtain $\uctnw$.
}

Note that it follows from from the arguments above that if we mark the edge
joining the two roots, then the marked edge will be distributed uniformly
over all edges in the tree $\uctnw$.
\end{remark}

In the construction above, $\ctnx$ and $\ctny$ have the same distribution by
symmetry. 
Now define $\ctnp$ as the largest and $\ctnm$ as the smallest of $\ctnx$ and
$\ctny$. 
%(If these have equal size, define \eg{} $\ctnp=\ctnx$ for definiteness.) 
The next lemma shows that (at least under a weak condition),
$\ctnm$ is stochastically bounded, so 
$\uctnw$ is dominated by the subtree $\ctnp$.

\begin{lemma}\label{L2}
  Suppose that the generating function $\Phi(z)$ in \eqref{Phi} has a positive
radius of convergence. Then, as \ntoo,
$\ctnm\dto\GGWx{\bp}$, an unconditioned \GWt{} with offspring distribution $\bp$
equivalent to
$(\phi_k)\zoo$. In particular, $|\ctnm|=\Op(1)$, and thus
$|\ctnp|=n-\Op(1)$.
\end{lemma}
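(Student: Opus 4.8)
The plan is to observe that the marked-edge construction of \refS{Smark2} exhibits $\uctnw$ as a simply generated forest with two trees, and then to quote directly the forest asymptotics already established for rooted \sgt{s}. First I would recall from \refS{Smark2} that the ordered pair $(\ctnx,\ctny)$ has the same distribution as the \sgf{} $\SGphi_{n,2}$, where $\bphi=(\phi_k)\zoo$ is the weight sequence \eqref{phiw}. Since $\Phi(z)$ has positive radius of convergence $\rho>0$, choosing any $b\in(0,\rho)$ and $a:=1/\Phi(b)$ gives an equivalent \emph{probability} weight sequence $\bp:=(ab^k\phi_k)\zoo$, so by \refSS{SS=} the forest $\SGphi_{n,2}$ coincides in distribution with the \cGWf{} $\GGWp_{n,2}$. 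Denoting its two trees in decreasing order of size by $\GGWp_{n,2;(1)},\GGWp_{n,2;(2)}$, I then have $(\ctnp,\ctnm)\eqd(\GGWp_{n,2;(1)},\GGWp_{n,2;(2)})$, since $\ctnm$ is by definition the smaller of $\ctnx$ and $\ctny$.

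With this identification, the size statements are immediate from \refL{LGWf} applied with $m=2$: it yields $|\GGWp_{n,2;(2)}|=\Op(1)$ and $|\GGWp_{n,2;(1)}|=n-\Op(1)$, hence $|\ctnm|=\Op(1)$ and $|\ctnp|=n-|\ctnm|=n-\Op(1)$. For the distributional limit of $\ctnm$, I would invoke \refR{Rsmall}, which sharpens \refL{LGWf} by asserting that the smaller tree $\GGWp_{n,2;(2)}$ converges in distribution to an unconditioned \GWt{} $\GGWx{\tbp}$ for some probability sequence $\tbp$ equivalent to $\bp$. As equivalence of weight sequences is transitive and $\bp$ is equivalent to $\bphi$, the sequence $\tbp$ is equivalent to $(\phi_k)\zoo$ as well; renaming $\tbp$ as $\bp$, this gives $\ctnm\dto\GGWx{\bp}$ with $\bp$ equivalent to $(\phi_k)\zoo$, which is the assertion.

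I do not anticipate a genuine obstacle: the real content is already packaged in \refL{LGWf} and \refR{Rsmall}, and what is left is routine bookkeeping of weight sequences. The only points I would pause to check are that the $\bphi$ of \eqref{phiw} satisfies the standing hypotheses of \refS{Sroot} (indeed $\phi_0=w_1>0$, and $\phi_k=w_{k+1}/k!>0$ for some $k\ge2$ because $w_k>0$ for some $k\ge3$), so that $\SGphi_{n,2}$ and $\GGWp_{n,2}$ are well defined for the relevant values of $n$; and that the reduction in \refS{Smark2} is an equality in distribution of \emph{ordered} forests, so that it lines up exactly with the definition of $\SGphi_{n,2}$. Everything else follows immediately.
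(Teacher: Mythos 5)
Your proposal is correct and follows essentially the same route as the paper, whose proof is simply the observation that the lemma is a special case of \refL{LGWf} together with \refR{Rsmall}; you have merely spelled out the bookkeeping (identification of $(\ctnx,\ctny)$ with $\SGphi_{n,2}$, passage to an equivalent probability weight sequence, and transitivity of equivalence) that the paper leaves implicit.
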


\begin{proof}
  This is a special case of \refL{LGWf}, see also \refR{Rsmall}.
\end{proof}

As remarked in \refP{Psmall}, we conjecture that 
$|\ctnm|=\Op(1)$ also when the generating function has
radius of convergence 0, but we leave this as an open problem.

\section{Mark a leaf}\label{Smark3}
This differs from the preceding two sections in that we do not recover the
distribution of $\uctnw$ exactly, but only asymptotically.

Let $N_0(T)$ be the number of leaves %(nodes with outdegree 0) 
in an unrooted tree $T$.
Let $\uctnwl$ be a random unrooted labelled tree with probability
proportional to $N_0(T)w(T)$; in other words, we bias the distribution of
$\uctnw$ by the factor $N_0(T)$.

Let $\uctnwx$ be the random rooted tree obtained by marking a uniformly
random leaf in $\uctnwl$, regarding the marked leaf as the root.
Then, any pair $(T,o)$ with $T\in\fL_n$ and $o\in T$ with $d(o)=1$
will be chosen as $\uctnwx$ and its root with probability proportional to
the weight \eqref{wT}. 
We order the children of each vertex at random as in \refSs{Smark1} and
\ref{Smark2}, and obtain an ordered rooted tree $\uctnwy$.
%By symmetry, we may assume that the root has label $n$, and 
Then each tree with root degree $1$ appears with probability
proportional to \eqref{wtb}.

Consequently, if we ignore the labelling, $\uctnwy=\SGxx{\bphi}{\bphio}_n$, 
where $\bphi$ is given by \eqref{phiw}, and $\phio_k:=\gd_{k1}$ (with a
Kronecker delta). Equivalently, $\uctnwy$ has a root of degree 1, and its
single branch is a $\SGphi_{n-1}$.

Conversely, we may obtain $\uctnwl$ from $\SGphi_{n-1}$ by adding a new root
under the old one and then adding a random labelling.

\begin{remark}\label{Rleaf}
  The construction above can also be regarded as a variant of the one in
  \refS{Smark2}, where we mark an edge such that one endpoint is a leaf.
Then, in the notation there, $|\ctnm|=1$ and $\ctnp=\SGphi_{n-1}$.
\end{remark}

As said above, $\uctnwl$ does not have the distribution of $\uctnw$, but it is
not far from it.

\begin{lemma}\label{LL1}
Let $\bw$ be any weight sequence.
  As \ntoo, the total variation distance $\dtv(\uctnwl,\uctnw)\to0$.
In other words, there exists a coupling such that
$\P\bigpar{\uctnwl\neq\uctnw}\to0$.
\end{lemma}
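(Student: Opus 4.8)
The plan is to compare the two random unrooted trees $\uctnwl$ and $\uctnw$ via their probability mass functions on $\fL_n$. Since both distributions have mass on a tree $T\in\fL_n$ proportional to $w(T)$ in one case and to $N_0(T)w(T)$ in the other, the total variation distance is controlled by how concentrated the normalized number of leaves $N_0(T)/n$ is under $\uctnw$. Explicitly, with $Z_n:=\sum_{T\in\fL_n}w(T)$ and $Z_n':=\sum_{T\in\fL_n}N_0(T)w(T)=\E[N_0(\uctnw)]Z_n$, one has
\begin{align}\label{tvformula}
\dtv(\uctnwl,\uctnw)=\sum_{T\in\fL_n}\Bigabs{\frac{N_0(T)w(T)}{Z_n'}-\frac{w(T)}{Z_n}}
=\E\biggabs{\frac{N_0(\uctnw)}{\E N_0(\uctnw)}-1},
\end{align}
the expectation being over $\uctnw$. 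So it suffices to prove that $N_0(\uctnw)/\E N_0(\uctnw)\pto 1$, together with uniform integrability of this ratio (so that $L^1$ convergence follows); in fact it is cleanest to show directly that $\Var N_0(\uctnw)=o\bigl((\E N_0(\uctnw))^2\bigr)$, since then Chebyshev plus \eqref{tvformula} and the trivial bound $N_0/\E N_0\le n/\E N_0=O(1)$ give the claim.

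First I would reduce to a rooted ordered model where computations are feasible. By the ``mark an edge'' construction of \refS{Smark2} (or equivalently the leaf-marking relation at the start of \refS{Smark3}), $\uctnw$ is built from a \sgf{} $\SGphi_{n,2}$ with $\bphi$ given by \eqref{phiw}, i.e.\ two independent rooted \sgt{s} $\ctnx,\ctny$ conditioned on $|\ctnx|+|\ctny|=n$, joined at the roots; and by \refL{L2}, when $\Phi$ has positive radius of convergence, $|\ctnm|=\Op(1)$ while $|\ctnp|=n-\Op(1)$. Thus up to an $\Op(1)$ boundary correction, $N_0(\uctnw)$ equals the number of leaves of a single large rooted \sgt{} $\SGphi_m$ with $m=n-\Op(1)$. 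For rooted \cGWt{s} (equivalently \sgt{s} with $\Phi$ of positive radius of convergence), it is classical — e.g.\ via the additive-functional/fringe-subtree limit theorems of \cite{SJ264} or direct second-moment computations — that the number of vertices with a given outdegree, in particular the number of leaves, satisfies a law of large numbers: $N_0(\SGphi_m)/m\pto \mu_0$ for a constant $\mu_0\in(0,1)$ determined by $\bphi$, and moreover $\Var N_0(\SGphi_m)=O(m)$. I would cite this rather than reprove it. Combining with the $\Op(1)$ correction from the two-tree decomposition gives $\E N_0(\uctnw)=\mu_0 n+o(n)$ and $\Var N_0(\uctnw)=O(n)=o(n^2)$, which is what \eqref{tvformula} needs.

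The remaining case is when $\Phi$ has radius of convergence $0$, where \refL{L2} is unavailable (this is \refP{Psmall}). Here I would argue more robustly, directly on the unrooted model: even without knowing the size split, one can use the marking construction of \refS{Smark1} (mark a vertex), which exactly realizes $\ctbw_n=\SGphiphio_n$ with $(\bphi,\bphio)$ from \eqref{phiw}--\eqref{phio}, and express $N_0(\uctnw)$ in terms of the rooted modified \sgt. A leaf of the unrooted tree is a vertex of degree $1$, i.e.\ (for a non-root vertex) of outdegree $0$, so $N_0$ again counts outdegree-$0$ vertices up to a $\pm1$ root correction; the needed concentration then follows from an exchangeability/martingale argument on the Łukasiewicz-type encoding, or simply by noting that the weight sequence $\bphi$ in \eqref{phiw} always has $\phi_0=w_1>0$, so leaves are never rare. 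I expect this radius-zero case to be the main obstacle, since the clean fringe-subtree machinery of \cite{SJ264} is stated under positive radius of convergence; the fallback is a self-contained first/second moment estimate for $N_0$ using the branching structure of \refL{LM1}, for which the bound \eqref{lm2b} on the root degree and \refL{LGWf}-type control of the small branches suffice. Once $\Var N_0(\uctnw)=o\bigl((\E N_0(\uctnw))^2\bigr)$ is in hand in both cases, \eqref{tvformula} and Chebyshev finish the proof, and the coupling statement follows from \eqref{dtv2}.
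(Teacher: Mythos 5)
Your proposal is essentially the paper's proof: the paper bounds $\dtv(\uctnwl,\uctnw)$ by $\E\bigl|N_0(\uctnw)/\E N_0(\uctnw)-1\bigr|$ (with the paper's definition \eqref{dtv} your displayed identity should carry a factor $\tfrac12$, but only the upper bound matters), and obtains the needed concentration of $N_0(\uctnw)$ exactly as you do in the positive-radius case — via the mark-an-edge decomposition, $|\ctnm|=\Op(1)$ from \refL{L2}, and the leaf-count law of large numbers \cite[Theorem 7.11]{SJ264} — finishing with the trivial bound $N_0\le n$ and dominated convergence rather than your Chebyshev step, whose auxiliary claim $\Var N_0(\SGphi_m)=O(m)$ is both unnecessary and not valid for general weight sequences (e.g.\ infinite-variance or non-generic cases), though the weaker $o(m^2)$ you actually need does follow from the law of large numbers and boundedness. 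Your worry about radius of convergence $0$ is reasonable, but the paper's own proof does nothing extra there either — it simply asserts $|\ctnm|=\Op(1)$ — so your argument already matches the paper's and no separate radius-zero treatment is required of you.
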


\begin{proof}
We may construct $\uctnw$ as in \refS{Smark2} from two random ordered trees
$\ctnp$ and $\ctnm$, where $|\ctnm|=\Op(1)$.
Conditioned on $|\ctnm|=\ell$, for any fixed $\ell$,
we have 
$\ctnp\eqd \SGphi_{n-\ell}$, where $\bphi$ is given by \eqref{phiw}.
Thus, by \cite[Theorem 7.11]{SJ264} (see comments there for   earlier
references to special cases, and to further results), as \ntoo,
conditioned on $|\ctnm|=\ell$
for any fixed $\ell$, 
\begin{align}\label{ll1}
%\Bigpar{  \frac{N_0(\ctnp)}{n}\Bigm||\ctnm|=\ell}\eqd
  \frac{N_0(\ctnp)}{n}\eqd
\frac{N_o(\SGphi_{n-\ell})}{n}\pto \pi_0,
\end{align}
for some constant $\pi_0>0$. (If $\bphi$ is a probability sequence, then
$\pi_0=\phi_0.)$ 
Furthermore, $N_0(\uctnw)=N_0(\ctnp)+N_0(\ctnm)=N_0(\ctnp)+O(1)$, since 
$N_0(\ctnm)\le|\ctnm|=\ell$.
Consequently, still
conditioned on $|\ctnm|=\ell$
for any fixed $\ell$, 
\begin{align}\label{ll2}
  \frac{N_0(\uctnw)}{n}
\pto \pi_0>0.
\end{align}
Since $ |\ctnm|=\Op(1)$, it follows that 
\eqref{ll2} holds also unconditionally.

Since $\xfrac{N_0(\uctnw)}{n}\le1$, dominated convergence yields
\begin{align}\label{ll4}
 \frac{\E N_0(\uctnw)}{n}
=
\E  \frac{N_0(\uctnw)}{n}
\to \pi_0.
\end{align}
By \eqref{ll2} and \eqref{ll4},
\begin{align}\label{ll50}
\frac{N_0(\uctnw)}{\E N_0(\uctnw)}
\pto1,
\end{align}
and thus, by dominated convergence again,
\begin{align}\label{ll5}
\E\Bigabs{ \frac{N_0(\uctnw)}{\E N_0(\uctnw)}-1}
\to0. 
\end{align}

The definition of $\uctnwl$ by biasing means that for
any bounded (or non-negative) function $f:\fL_n\to\bbR$,
\begin{align}
  \E f(\uctnwl) =\frac{\E\bigsqpar{f(\uctnw)N_0(\uctnw)}}{\E N_0(\uctnw)}.
\end{align}
and thus, for any indicator function $f$,
\begin{align}
\bigabs{\E f(\uctnwl)-\E f(\uctnw)} 
&= 
\Bigabs{\E \Bigsqpar{f(\uctnw)\Bigpar{\frac{{N_0(\uctnw)}}{\E N_0(\uctnw)}-1}}}
\notag\\&
\le 
\E \Bigabs{\frac{{N_0(\uctnw)}}{\E N_0(\uctnw)}-1}.
\end{align}
Hence, taking the supremum over all $f=\etta_{A}$,
\begin{align}
\dtv(\uctnwl,\uctnw)
\le 
\E \Bigabs{\frac{{N_0(\uctnw)}}{\E N_0(\uctnw)}-1},
\end{align}
and the result follows by \eqref{ll5}.
\end{proof}

\refL{LL1} implies that any result on convergence in probability or
distribution for one of
${\uctnwl}$ and $\uctnw$ also hold for the other.

\section{Profile of \cmGWt{s}}\label{Smod2}

We will use the following extension of \refT{T0} to \cmGWt{s}.

\begin{theorem}
\label{T0m}
Let\/ $L_n$ be the profile of a \cmGWt{} $\GGWppon$ 
of order $n$, 
and assume that $\mu(\bp)=1$, $\gss(\bp)<\infty$ and
$\mu(\bpo)<\infty$.
Then, as \ntoo,
\begin{align}\label{t0m}
  n\qqw L_n(x n\qq) \dto \frac{\gs}2\LX\Bigpar{\frac{\gs}2 x},
\end{align}
in the space $\cooq$, where $\LX$ is, 
as in \refT{T0},
the local time of a standard Brownian excursion $\be$.
\end{theorem}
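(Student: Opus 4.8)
\smallskip
\noindent\emph{Proof plan.}\enspace
The plan is to reduce \eqref{t0m} to \refT{T0} by using that a \cmGWt{} consists, up to a negligible perturbation, of a single large ordinary \cGWt{} hanging below the root. I would first decompose $\GGWppon$ into its root $o$ and its branches $\cttxn1,\dots,\cttxn{d(o)}$, taken in decreasing order of size. Counting the vertices at each depth and using that linear interpolation is linear in the nodal values, one obtains the identity
\begin{equation*}
  L_n(x)=\tau(x)+\sum_{j=1}^{d(o)}L_{\cttxn j}(x-1),\qquad x\in\bbR .
\end{equation*}
By \refLs{LM2} and \ref{LM3} we have $d(o)=\Op(1)$ and $|\cttxn1|=n-\Op(1)$, hence $\sum_{j\ge2}|\cttxn j|=(n-1)-|\cttxn1|=\Op(1)$; since the width of a tree is at most its size, the remainder $R_n(x):=\tau(x)+\sum_{j\ge2}L_{\cttxn j}(x-1)$ satisfies $0\le R_n(x)\le 1+\sum_{j\ge2}|\cttxn j|$, so $\norm{R_n}_\infty=\Op(1)$ and therefore $\sup_{x}\,n\qqw R_n\bigpar{x n\qq}=n\qqw\norm{R_n}_\infty\pto0$. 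Thus it suffices to prove \eqref{t0m} with $L_n(x)$ replaced by the shifted profile $L_{\cttxn1}(x-1)$ of the dominant branch.

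Next I would treat the dominant branch. Fix $n\ge2$, so $d(o)\ge1$, and let $\cF_n:=\gs\bigpar{d(o),|\cttxn1|,\dots,|\cttxn{d(o)}|}$ and $M_n:=|\cttxn1|$. By \refL{LM1}\ref{LM1b}, conditionally on $\cF_n$ the branch $\cttxn1$ is a \cGWt{} $\GGWp_{M_n}$. Write $W_n(x):=M_n\qqw L_{\cttxn1}\bigpar{x\sqrt{M_n}}$. For any bounded continuous functional $F:\cooq\to\bbR$, conditioning on $\cF_n$ gives $\E\bigsqpar{F(W_n)\mid\cF_n}=g(M_n)$, where $g(m):=\E F\bigpar{m\qqw L_{\GGWp_m}(\,\cdot\,\sqrt m)}$. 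By \refT{T0}, $g(m)\to F_\infty:=\E F\bigpar{\tfrac\gs2\LX(\tfrac\gs2\,\cdot\,)}$ as $\mtoo$; since $M_n=n-\Op(1)$ we have $M_n\pto\infty$, and as $g$ is bounded, bounded convergence yields $\E F(W_n)=\E g(M_n)\to F_\infty$. Hence $W_n\dto\tfrac\gs2\LX(\tfrac\gs2\,\cdot\,)$ in $\cooq$.

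Finally I would pass from the $M_n$-normalization back to the $n$-normalization. With $c_n:=M_n/n\pto1$,
\begin{equation*}
  n\qqw L_{\cttxn1}\bigpar{x n\qq-1}=c_n\qq\,W_n\Bigpar{\frac{x}{\sqrt{c_n}}-\frac{1}{\sqrt{M_n}}} .
\end{equation*}
Here $c_n\qq\pto1$, $1/\sqrt{c_n}\pto1$, $1/\sqrt{M_n}\pto0$, the support of $W_n$ is $\Op(1)$ (tightness of the normalized height of a \cGWt), and the limit $\LX$ is continuous with compact support; a routine argument — using \refL{Lcooq} to reduce to uniform convergence on compact intervals, composing the convergent $W_n$ with the near-identity affine maps, and invoking uniform continuity of the limit — then gives $n\qqw L_{\cttxn1}\bigpar{x n\qq-1}\dto\tfrac\gs2\LX(\tfrac\gs2 x)$ in $\cooq$. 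Together with the reduction of the first paragraph, this proves \eqref{t0m}.

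The step I expect to be the main obstacle is the last one. \refT{T0} is stated for a \emph{deterministic} sequence of tree sizes tending to infinity, whereas the \cGWt{} that dominates $\GGWppon$ has the \emph{random} size $M_n=|\cttxn1|=n-\Op(1)$, which is moreover not exactly $n$; transferring the convergence through this random normalization, while absorbing the one-level shift in depth, is what necessitates the conditioning-and-bounded-convergence device of the second paragraph and the careful handling of the random rescaling in the third. Everything else is immediate from \refLs{LM1}--\ref{LM3} and \refT{T0}, and working in $\cooq$ rather than $\coo$ costs nothing since we argue with bounded continuous functionals on $\cooq$ throughout.
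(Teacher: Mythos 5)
Your proposal is correct and follows essentially the same route as the paper: isolate the dominant branch via Lemmas \ref{LM1}--\ref{LM3}, bound the contribution of the root and the small branches by $\Op(1)$ uniformly, and transfer \refT{T0} to the branch of random size $n-\Op(1)$. The only difference is cosmetic: the paper conditions on the event $n-|\ctt|=\ell$ for each fixed $\ell$ and then uses tightness of $n-|\ctt|$, whereas you condition on the $\gs$-field of branch sizes and use bounded convergence of $\E F(W_n)$ together with an explicit random affine rescaling — both are standard ways of carrying out the same step.
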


\begin{proof}
  Denote the branches of $\GGWppon$ by $\cT_1,\dots,\cT_{d(o)}$ 
and let $\cT_0$ be a single root. Then, 
regarding the branches as rooted trees, which means that their vertices have
their depths shifted by 1 from the original tree,
\begin{align}\label{ma}
%  L(x)=L_{\GGWppon}(x)=
L_n(x)=
\sum_{i=1}^{d(o)} L_{\cT_i}(x-1)+L_{\cT_0}(x)
.\end{align}
Let $\cttx1,\dots,\cttx{d(o)}$ be the branches arranged in decreasing order.
\refL{LM3} shows that $|\cttx1|=n-\Op(1)$.
Hence, \eqref{ma} and the trivial estimate $0\le L_T(x)\le |T|$ for any $T$
and $x$ yield
\begin{align}\label{mb}
\bigabs{L_n(x)-L_{\ctt}(x-1)} \le 
\sum_{i=2}^{d(o)}|\cttx{i}|+1
=n-|\ctt|
=\Op(1)
.\end{align}
Furthermore, conditioned on $|\ctt|=n-\ell$, for any fixed 
%$m$ and 
$\ell$,
$\ctt$ has the same distribution as $\GGWp_{n-\ell}$, and thus \refT{T0}
shows that, 
\begin{align}\label{tom}
  (n-\ell)\qqw L_{\ctt}(x (n-\ell)\qq) 
\dto \frac{\gs}2\LX\Bigpar{\frac{\gs}2 x},
\qquad\text{in }\cooq,
\end{align}
and it follows easily that, still conditioned,
\begin{align}\label{toa}
  n\qqw L_{\ctt}(x n\qq -1) 
\dto \frac{\gs}2\LX\Bigpar{\frac{\gs}2 x},
\qquad\text{in }\cooq
.\end{align}
Together with \eqref{mb}, this shows that for every fixed $\ell$,
%conditioned on $|\ctt|=n-\ell$, 
 \begin{align}\label{tob}
   \bigpar{L_n(x)\mid |\ctt|=n-\ell}
\dto \frac{\gs}2\LX\Bigpar{\frac{\gs}2 x},
\qquad\text{in }\cooq.
 \end{align}
It follows that \eqref{tob} holds also if we condition on $n-|\ctt|\le K$,
for any fixed $K$, and then \eqref{t0m} follows easily from $n-|\ctt|=\Op(1)$.
\end{proof}

%This leads to a result on the width, where we also give a moment estimate.
Recall that for \cGWt{s} $\GGWpn$ with $\mu(\bp)=1$ and $\gss(\bp)<\infty$,
the width divided by $\sqrt n$ converges in distribution: we have
\begin{align}\label{dag}
  n\qqw W(\GGWpn) \dto \gs W,
\end{align}
for some random variable $W$ (not depending on $\bp$).
In fact, 
as noted by \citet{DrmotaG},
this is an immediate consequence of \eqref{width} and \eqref{t0},
with
\begin{align}\label{wlx}
  W:=\tfrac12\max_{x\ge0} \LX(x).
\end{align}
It is also known that all moments converge, see
\cite{DrmotaG2004} (assuming an exponential moment)
and
\cite{SJ250} (in general).

The next theorem records that \eqref{dag} extends to \cmGWt{s},
together with two partial results on moments.

\begin{theorem} \label{TMGW1}
  Consider a \cmGWt{} $\GGWppon$
where $\mu(\bp)=1$,  $\gss(\bp)<\infty$ and $\gss(\bpo)<\infty$.
Then,  
 as \ntoo,
  \begin{align}\label{dW}
    n\qqw W(\GGWppon) &\dto \gs W,
\\\label{EW}
    n\qqw \E W(\GGWppon) &\to \gs \E W =\gs\sqrt{\pi/2},
\\
\E \bigsqpar{W(\GGWppon)^{2}}& = O(n).  \label{EW2}
  \end{align}
\end{theorem}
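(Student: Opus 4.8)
The plan is to deduce \refT{TMGW1} from the corresponding facts for ordinary \cGWt{s} together with \refT{T0m} and the branch-decomposition already used in its proof, following the same strategy as in the proof of \refT{T0m}. Recall that for a \cGWt{} $\GGWpn$ the convergence \eqref{dag} holds, and moreover $\E[W(\GGWpn)]/\sqrt n\to\gs\E W$ and $\E[W(\GGWpn)^2]=O(n)$ by \cite{DrmotaG2004,SJ250}. The value $\E W=\sqrt{\pi/2}$ comes from the known fact that $\max_{x\ge0}\LX(x)$ has the distribution of the maximum of a Brownian excursion (times $2$), equivalently a Rayleigh-type law; I would simply cite this (it is recorded in \cite{DrmotaG} and the references in \cite{SJ250}).

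For \eqref{dW}: since the supremum functional $f\mapsto\max_{x}f(x)$ is continuous on $\cooq$ (as emphasized in \refSS{SSspaces}), \eqref{dW} is an immediate consequence of \refT{T0m} applied together with \eqref{width} and \eqref{wlx}, exactly as in the \cGWt{} case. No extra work is needed here beyond invoking the continuous-mapping theorem.

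For the moment statements I would re-run the branch decomposition from the proof of \refT{T0m}. Write $W_n:=W(\GGWppon)$, let $\ctt$ be the largest branch, and set $W_n':=W(\ctt)$ (the width of the largest branch, viewed as a rooted tree). From \eqref{mb}, $|W_n-W_n'|\le n-|\ctt|=\Op(1)$; more precisely, the same estimate gives the deterministic bound $|W_n-W_n'|\le n-|\ctt|$ pointwise. Conditioned on $|\ctt|=n-\ell$ we have $\ctt\eqd\GGWp_{n-\ell}$, so $\E[W_n'^{\,2}\mid|\ctt|=n-\ell]=O(n-\ell)=O(n)$ and $\E[W_n'\mid|\ctt|=n-\ell]=(1+o(1))\gs\sqrt{(n-\ell)}\,\E W$ uniformly for $\ell$ in any bounded range. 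Then $\E[W_n^2]\le 2\E[W_n'^{\,2}]+2\E[(n-|\ctt|)^2]$; the first term is $O(n)$ after conditioning on $|\ctt|$ and using $n-|\ctt|=\Op(1)$ together with the crude bound $W_n'\le|\ctt|\le n$ to control the tail where $n-|\ctt|$ is large, and the second term is $O(1)$ since $n-|\ctt|$ is tight and bounded by $n$. This proves \eqref{EW2}. For \eqref{EW}, combine the conditional asymptotics for $\E[W_n'\mid|\ctt|]$ with $0\le W_n'\le n$ and $n-|\ctt|=\Op(1)$: split according to whether $n-|\ctt|\le K$ or not, using $\E[W_n\mathbf 1\{n-|\ctt|>K\}]\le n\,\P(n-|\ctt|>K)$ which can be made $o(\sqrt n)$ by first letting $n\to\infty$ and then $K\to\infty$, and on the complementary event use $n^{-1/2}\E[W_n'\mid n-|\ctt|\le K]\to\gs\E W$ together with $|W_n-W_n'|\le K$.

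The main obstacle is the uniform integrability needed to pass from the distributional convergence \eqref{dW} to the convergence of first moments \eqref{EW}: one must rule out mass escaping to infinity. This is exactly what \eqref{EW2} provides (an $L^2$ bound on $n^{-1/2}W_n$ gives uniform integrability of the family $n^{-1/2}W_n$), so the natural order is to establish \eqref{EW2} first and then deduce \eqref{EW} from \eqref{dW} plus uniform integrability, rather than via the more hands-on conditioning argument sketched above — though either route works. The slightly delicate point in \eqref{EW2} itself is handling the event where the second-largest branch is atypically large, but there the trivial bound $W_n\le n$ combined with tightness of $n-|\ctt|$ (\refL{LM3}) and, if one wants an honest bound, a crude tail estimate on $n-|\ctt|$ coming from \refL{LGWf}, suffices.
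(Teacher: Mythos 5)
Your treatment of \eqref{dW} (continuous mapping applied to \refT{T0m}) and your remark that \eqref{EW} should be deduced from \eqref{dW} plus the uniform integrability furnished by \eqref{EW2} are exactly the paper's argument. The gap is in your proof of \eqref{EW2}. You reduce to the largest branch via $W_n\le W_n'+(n-|\ctt|)$ and then claim $\E\bigsqpar{(n-|\ctt|)^2}=O(1)$ ``since $n-|\ctt|$ is tight and bounded by $n$''. Tightness does not bound moments, and here the claim is in fact false: conditioned on $d(o)\ge2$, the total size of the non-largest branches has a heavy tail, $\P\bigpar{n-|\ctt|>j}\asymp j^{-1/2}$ for $1\ll j\ll n$ (the smaller branch sizes follow an $m^{-3/2}$-type law, as in \eqref{julie} with $\za_m\sim c m^{-3/2}$), so $\E\bigsqpar{(n-|\ctt|)^2}\asymp n^{3/2}$. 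Hence your decomposition only yields $\E\bigsqpar{W_n^2}=O(n^{3/2})$, not $O(n)$. Neither \refL{LM3} nor \refL{LGWf} gives more than an $\Op(1)$ statement, so the ``crude tail estimate'' you appeal to is not available from them. The same heavy tail kills the hands-on route to \eqref{EW}: $n\,\P\bigpar{n-|\ctt|>K}\asymp nK^{-1/2}$ uniformly in $n$, which is not $o(\sqrt n)$ for fixed $K$.

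The point is that one must not replace the small branches' widths by their sizes. The paper instead keeps all branch widths: with $W(\cT_i)\le W(\GGWppon)\le\sum_{i\le d(o)}W(\cT_i)$, it conditions on $d(o)$ and the branch sizes (\refL{LM1}), uses $\E\bigsqpar{W(\GGWp_m)^2}\le Cm$ uniformly in $m$ from \cite{SJ250}, Minkowski and Cauchy--Schwarz to get $\E\bigsqpar{W(\GGWppon)^2\mid d(o),|\cT_1|,\dots}\le C\,d(o)\,n$, and then bounds $\E\,d(o)\le C$ via \eqref{lm2b} of \refL{LM2} --- this last step is precisely where the hypothesis $\gss(\bpo)<\infty$ enters, a hypothesis your argument never uses (note the paper explicitly flags its role as delicate in Problems \ref{Pmoments}--\ref{Pgss2}). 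Incidentally, the first term in your decomposition is unproblematic and needs no tail control at all: $\E\bigsqpar{W_n'^2\mid |\ctt|=m}\le Cm\le Cn$ uniformly, so $\E\bigsqpar{W_n'^2}=O(n)$ directly; it is only the $(n-|\ctt|)^2$ term that breaks the argument.
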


\begin{proof}
First, \eqref{dW} follows as in \cite{DrmotaG}:
$f\to\sup f$ is a continuous functional on $\cooq$,
and thus \eqref{dW} follows from \eqref{t0m}, \eqref{width} and \eqref{wlx}.

We next prove \eqref{EW2}.
Denote the branches of $\GGWppon$ by $\cT_1,\dots,\cT_{d(o)}$.
Assume $n>1$; then the width is attained above the root, and we have, for
every $i\le d(o)$,
\begin{align}\label{eva}
  W(\cT_i) \le W(\GGWppon) \le \sum_{i=1}^{d(o)}W(\cT_i).
\end{align}
Condition on $d(o)$ and $|\cT_1|,\dots,|\cT_{d(o)}|$ as in \refL{LM1}\ref{LM1b}.
For a random variable $X$, 
denote its conditioned $L^2$ norm by
\begin{align}\label{evb}
\normmx{X}:=\bigpar{\E\bigsqpar{ X^2\mid d(o),|\cT_1|,\dots,|\cT_{d(o)}|}}\qq.  
\end{align}
By \eqref{eva} and Minkowski's inequality, we have
\begin{align}\label{evc}
  \normmx{W(\GGWppon)}
\le \sum_{i=1}^{d(o)}\normmx{W(\cT_i)}.
\end{align}
Furthermore, by \refL{LM1}\ref{LM1b} and
\cite[Corollary 1.3]{SJ250}, if $|\cT_i|=n_i$,
\begin{align}\label{evd}
  \E \bigpar{W(\cT_i)^2 \mid d(o),|\cT_1|,\dots,|\cT_{d(o)}|}  %|T_i|=n_i}
= \E \bigsqpar{W(\GGWp_{n_i})^2}
\le C n_i
,\end{align}
and thus $\normmx{W(\cT_i)} \le C n_i\qq = C|\cT_i|\qq$.
Hence, by \eqref{evc},
\begin{align}\label{eve}
 \normmx{W(\GGWppon)}
%\notag\\&
%\le  \sum_{i=1}^{d(o)} \normmx{W(T_i)}
\le  \sum_{i=1}^{d(o)} C |\cT_i|\qq
%\le C d(o)\qq \sum_{i=1}^{d(o)}  |T_i|
\end{align}
and thus,  by the \CSineq,
\begin{align}\label{evf}
&{\E\bigsqpar{ W(\GGWppon)^2\mid d(o),|\cT_1|,\dots,|\cT_{d(o)}|}}
=\bigpar{ \normmx{W(\GGWppon)}}^2
\notag\\&\qquad
\le C\Bigpar{ \sum_{i=1}^{d(o)} |\cT_i|\qq}^2
\le C d(o) \sum_{i=1}^{d(o)}  |\cT_i|
\le C d(o) n
.\end{align}
Taking the expectation yields
\begin{align}\label{evg}
\E\bigsqpar{ W(\GGWppon)^2}
\le C n \E\bigsqpar{ d(o)}
.\end{align}
Furthermore, \eqref{lm2b} implies that for large $n$, 
$%\begin{align}\label{evh}
\E\sqpar{ d(o)} \le 2\E{ \tD}  
$, %\end{align}
where $\E\tD<\infty $ by \refL{LM2}.
 Thus 
$\E \sqpar{d(o)}\le C$, and 
\eqref{evg} yields
\begin{align}\label{evi}
\E\bigsqpar{ W(\GGWppon)^2}
\le C n,
\end{align}
showing \eqref{EW2}.

Finally, \eqref{EW2} implies that the variables on the \lhs{} of \eqref{dW}
are uniformly integrable 
\cite[Theorem 5.4.2]{Gut},
and thus \eqref{EW} follows from \eqref{dW}.
$\E W=\sqrt{\pi/2}$ is well-known, see \eg{} \cite{BPY}. 
\end{proof}

\begin{problem}\label{Pmoments}
  We conjecture that under the assumptions of \refT{TMGW1}, 
$\E\bigsqpar{ W(\GGWppon)^r}=O\bigpar{n^{r/2}}$ for any $r>0$, 
which implies convergence of all moments in \eqref{dW},
as shown for
the case $\bpo=\bp$ in \cite{SJ250}.
The proof above is easily generalized if $\E \tD^{r/2}=O(1)$, which is
equivalent to $\sum_k k^{1+r/2}\po_k<\infty$, but we leave the general case
as an open problem.
\end{problem}

\begin{problem}\label{Pgss2}
Is $\gss(\bpo)<\infty$ really needed in \refT{TMGW1}?
\end{problem}

\section{Distance profile, first step}\label{S1step}

We now turn to distance profiles. 
We begin with a weak version of \refT{T1}; 
recall the pseudometric $\ddd$ defined in \eqref{ddd}, and \eqref{t1dx}.
\begin{lemma}\label{LP}
  Consider a \cGWt{} $\GGWpn$
where $\mu(\bp)=1$ and  $\gss=\gss(\bp)<\infty$.
Then,  
 as \ntoo,
for any  continuous function with compact support  $f:\ooo\to\bbR$,
 \begin{align}\label{lp}
\intoo  n\qqcw \gL_{\GGWpn}\bigpar{x n\qq}f(x) \dd x   
\dto 
\intoi\intoi f\Bigpar{\frac{2}{\gs}\ddd\bigpar{s,t;\be}}\dd s\dd t.
 \end{align}
\end{lemma}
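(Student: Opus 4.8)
The plan is to establish \eqref{lp} by combining two known convergences: the classical result that the contour (or depth-first) process of $\GGWpn$, suitably rescaled, converges in distribution to the normalized Brownian excursion $\be$, together with the representation \eqref{dpropro} of the distance profile as a sum over all choices of root. First I would recall from \citet{AldousIII} (see also \cite[Section~4.1]{Drmota}) that if $H_n=(H_n(i))_{i=0}^{2n-2}$ denotes the height process of $\GGWpn$ — i.e.\ $H_n(i)$ is the depth of the $i$-th vertex in the depth-first traversal — then, extended by linear interpolation to $[0,1]$ via $t\mapsto H_n(\lfloor (2n-2)t\rfloor)$ and rescaled, one has
\begin{align}\label{excconv}
  \Bigpar{ n\qqw H_n\bigpar{(2n-2)t} }_{t\in\oi}
  \dto \Bigpar{ \tfrac{2}{\gs}\be(t) }_{t\in\oi}
\end{align}
in $\coi$. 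The key observation is that for two vertices $v,w$ visited at (renormalized) times $s,t$ in the traversal, their tree distance is
$\ddd(v,w)=H_n(s')+H_n(t')-2\min_{u\in[s',t']}H_n(u)$ up to an error of at most a constant (coming from the fact that the height process records the depth along the contour, and the last common ancestor's depth is attained within a bounded fluctuation of the true minimum — in fact for the height process the identity is exact). Thus, writing $\bar s = s/(2n-2)$ etc., a Riemann-sum argument converts the double sum $\sum_{v,w} f\bigpar{n\qqw\ddd(v,w)}$ over ordered pairs into $(2n-2)^2\iint_{\oi^2} f\bigpar{n\qqw \widetilde\ddd_n(s,t)}\dd s\dd t$ where $\widetilde\ddd_n$ is built from the interpolated height process.

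The second step is to rewrite the left side of \eqref{lp} in exactly this form. By \eqref{dpro} and the definition of $\gL$ (counting ordered pairs, including $v=w$), $\sum_{i} \gL_{\GGWpn}(i) g(i) = \sum_{v,w\in\GGWpn} g\bigpar{\ddd(v,w)}$ for any function $g$; applying this with a smoothing that matches the linear interpolation, and using $|\GGWpn|=n$, one gets
\begin{align}\label{riemann}
  \intoo n\qqcw \gL_{\GGWpn}\bigpar{xn\qq} f(x)\dd x
  = \frac{1}{n^2}\sum_{v,w\in\GGWpn} f\Bigpar{ n\qqw \ddd(v,w) } + o(1),
\end{align}
where the $o(1)$ absorbs the interpolation error (bounded using uniform continuity and compact support of $f$, together with the trivial bound $\diam(\GGWpn)\le 2H(\GGWpn)=\Op(\sqrt n)$ which controls the range of integration). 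Then I parametrize the sum over ordered pairs $(v,w)$ by the pair of visit times in the depth-first traversal and recognize \eqref{riemann} as $\iint_{\oi^2} f\bigpar{n\qqw\widetilde\ddd_n(s,t)}\dd s\dd t$ plus negligible boundary terms.

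The final step is to push \eqref{excconv} through this functional. The map sending a continuous excursion-type function $g\in\coi$ to $\iint_{\oi^2} f\bigpar{\ddd(s,t;g)}\dd s\dd t$ is continuous for the uniform topology when $f$ is bounded and continuous, since $g\mapsto \ddd(s,t;g)$ is $1$-Lipschitz in $\sup$-norm uniformly in $(s,t)$, and then dominated convergence handles the double integral. Applying this continuous functional together with \eqref{excconv} and the continuous mapping theorem gives
\begin{align}
  \iint_{\oi^2} f\Bigpar{ n\qqw\widetilde\ddd_n(s,t) }\dd s\dd t
  \dto \iint_{\oi^2} f\Bigpar{ \ddd\bigpar{s,t;\tfrac{2}{\gs}\be} }\dd s\dd t
  = \iint_{\oi^2} f\Bigpar{ \tfrac{2}{\gs}\ddd(s,t;\be) }\dd s\dd t,
\end{align}
using the homogeneity $\ddd(s,t;c g)=c\,\ddd(s,t;g)$ in the last equality; this is precisely the right side of \eqref{lp}. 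The main obstacle I expect is the bookkeeping in step two: making the discrete-to-continuous passage precise, in particular showing that (i) the discretization of the height process via $i\mapsto \lfloor(2n-2)t\rfloor$ introduces only a negligible error when integrated against $f$, and (ii) the pairs with $v=w$ or with $v,w$ appearing at ``the same'' renormalized time contribute $o(1)$ — both of which are routine but require uniform continuity of $f$ and the tightness of $n\qqw H(\GGWpn)$ to truncate the support. The conceptual content — excursion convergence plus the re-rooting/average interpretation already recorded in \eqref{dpropro} and \eqref{dproI} — does the real work, and \refL{LP} is deliberately stated in the weak (integrated-against-$f$) form precisely so that no tightness of $\gL_n$ as a random function is needed at this stage.
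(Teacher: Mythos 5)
Your argument is essentially the paper's proof: reduce the integral to $n^{-2}\sum_{v,w}f\bigpar{n^{-1/2}\ddd(v,w)}+o(1)$ using uniform continuity and compact support of $f$, encode pairwise distances through a depth-first coding process, and conclude via Aldous's invariance principle, the continuity on $\coi$ of $g\mapsto\intoi\intoi f\bigpar{\ddd(s,t;g)}\dd s\dd t$, and the homogeneity $\ddd(s,t;cg)=c\,\ddd(s,t;g)$ --- the paper carries this out with the contour process, for which the identity $\ddd\bigpar{v(i),v(j)}=\ddd(i,j;C_T)$ is exact. One small correction to your write-up: the height process (depth of the $i$-th vertex in preorder) has $n$ values and is rescaled by $t\mapsto nt$, not $(2n-2)t$, and for it the distance identity is \emph{not} exact (the minimum over the visit interval can exceed the depth of the last common ancestor by $1$, so the formula errs by up to $2$); since your argument only needs an $O(1)$ error, which is absorbed by $\go\bigpar{Cn^{-1/2};f}\to0$, this does not affect the validity of the proof.
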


\begin{proof}
The function $f$ is bounded,  and also uniformly continuous,
\ie, its modulus of continuity $\go(\gd;f)$, defined in
\eqref{go}, satisfies $\go(\gd;f)\to0$ as $\gd\to0$.  
Thus, 
for any rooted tree $T\in\fT_n$,
noting that $\gL_T(x)\le n$ on $[-1,0]$
and using the analogue of \eqref{Ltau} for $\gL$,
\begin{align}\label{lp2}
\hskip2em&\hskip-2em
\intoo  n\qqcw \gL_T\bigpar{x n\qq}f(x) \dd x
=  n\qww\intoo \gL_T\xpar{x}f\bigpar{n\qqw x} \dd x
\notag\\&
=  n\qww\intoom f\bigpar{n\qqw x}\gL_T\xpar{x} \dd x+O\bigpar{n\qw}
\notag\\&
=  n\qww\sumio \int_{i-1}^{i+1}f\bigpar{n\qqw x}\gL_T(i)\tau(x-i) \dd x
   +O\bigpar{n\qw}
\notag\\&
%=  n\qww\sumio \int_{i}^{i+1}f\bigpar{n\qqw i}\gL_T(i)\tau(x-i) \dd x
%+ O\bigpar{\go(n\qqw;f)}
%\notag\\&
=  n\qww\sumio f\bigpar{n\qqw i}\gL_T(i)+ O\bigpar{\go(n\qqw;f)}
+O\bigpar{n\qw}
\notag\\&
=  n\qww\sum_{v,w\in T} f\bigpar{n\qqw\ddd(v,w)}+ o(1),
\end{align}
where (as throughout the proof)
$o(1)$ tends to 0 as \ntoo, uniformly in $T\in\fT_n$. 
Recall that the \emph{contour process} $C_T(x)$ of $T$
is a continuous function $C_T:[0,2n-2]\to\ooo$ that describes the distance
from the root to a particle that travels with speed 1 on the ``outside'' of
the tree. 
(Equivalently, it performs a depth first walk at integer times
$0,1,\dots,2n-2$.) 
For each vertex $v\neq o$, the particle travels through the edge leading
from $v$ towards the root during two time intervals of unit length 
(once in each direction). Thus, as is well-known,
\begin{align}\label{lp3}
  \int_0^{2n-2} f\bigpar{n\qqw C_T(x)}\dd x 
= 2\sum_{v\neq o} f\bigpar{n\qqw \ddd(v,o)}+O\bigpar{n\go(n\qqw; f)}.
\end{align}
We will use a bivariate version of this.
It is also well-known that if $v(i)$ is the vertex visited by the particle
at time $i$, then, for any integers $i,j\in[0,2n-2]$,
\begin{align}\label{lp4}
  \ddd\bigpar{v(i),v(j)}
=
\ddd(i,j;C_T),
\end{align}
where the first $\ddd$ is the graph distance in $T$, and the second is the
pseudometric defined by \eqref{ddd} (now on the interval $[0,2n-2]$).
Hence, the argument yielding \eqref{lp3} also yields
\begin{multline}\label{lp5}
\int_0^{2n-2}\int_0^{2n-2}f\bigpar{n\qqw\ddd(x,y;C_T)}\dd x\dd y
\\
=
4  \sum_{v,w\neq o} f\bigpar{n\qqw\ddd(v,w)} + O\bigpar{n^2\go(n\qqw;f)}.
\end{multline}
We use the standard rescaling of the contour process
\begin{align}\label{lp6}
  \tC_T(t):=n\qqw C_T\bigpar{(2n-2)t}, \qquad t \in [0,1],
\end{align}
and note that for any $g:[0,1] \rightarrow [0, \infty)$ with $g(0) = g(1) = 0$ and $c>0$, 
\begin{align}
  \label{dddc}
\ddd(s,t;cg)=c\ddd(s,t;g), \qquad s,t \in [0,1].
\end{align}
Thus, by \eqref{lp5} and  a change of variables,
\begin{align}\label{lp7}
\hskip4em&\hskip-4em
\int_0^{1}\int_0^{1}f\bigpar{\ddd(s,t;\tC_T)}\dd s\dd t
\notag\\&
=
\frac{1}{(2n-2)^2}\int_0^{2n-2}\int_0^{2n-2}f\bigpar{n\qqw\ddd(x,y;C_T)}\dd x\dd y
\notag\\&
=\frac{1}{(n-1)^2}  \sum_{v,w\neq o} f\bigpar{n\qqw\ddd(v,w)} 
+ O\bigpar{\go(n\qqw;f)}.
\notag\\&
=
\frac{1}{n^2}  \sum_{v,w\neq o} f\bigpar{n\qqw\ddd(v,w)} +o(1).
\end{align}
Combining \eqref{lp2} and \eqref{lp7}, we find
\begin{align}\label{lp8}
\hskip2em&\hskip-2em
\intoo  n\qqcw \gL_T\bigpar{x n\qq}f(x) \dd x
=
\int_0^{1}\int_0^{1}f\bigpar{\ddd(s,t;\tC_T)}\dd s\dd t
+o(1)
.\end{align}

We apply this to $T=\GGWpn$, and use the result by \citet{AldousII,AldousIII},
\begin{align}\label{aldous}
  \tC_{\GGWpn}(t)\dto \frac{2}{\gs}\be(t),
\qquad \text{in $\coi$}
.\end{align}
The functional $g\to\iint f\bigpar{d(s,t;g)}\dd s\dd t$ is continuous on $\coi$,
and the result \eqref{lp} follows from \eqref{lp8} and \eqref{aldous}
by the continuous mapping theorem, using also \eqref{dddc}.
\end{proof}

\section{Distance profile of unrooted trees}\label{SD}

We continue with the distance profile, now turning to unrooted \sgt{s}
for a while.
Throughout this section we assume that $\bw$ is a weight sequence 
and that $\bphi$ and $\bphio$ are the weight sequences given by \eqref{phiw}
and \eqref{phio}.
We assume that
the exponential generating function of $\bw$
has positive radius of convergence; this means that the generating function
$\Phi(z)$ in \eqref{Phi} has positive radius of convergence, which in turn 
implies that there exists a probability weight sequence $\bp$ equivalent to
$\bphi$. We assume furthermore that it is possible to choose $\bp$ such that
$\mu(\bp)=1$; $\bp$ will denote this choice. (For algebraic conditions on
$\Phi$ for such a $\bp$ to exist, see \eg{} \cite{SJ264}.)

We note that by \eqref{phiw}--\eqref{phio}, $\phio_k\le\phi_{k-1}$, $k\ge1$.
Hence, if $p_k=ab^k\phi_k$, then $\sum_k b^k\phio_k<\infty$, and it is
possible to find $a_0>0$ such that $\bpo:=\tbphio$ given by \eqref{equiv0}
also is a probability sequence; hence $\SGphiphio_n=\GGWppo_n$ is a
modified \GWt. Furthermore, $\po_k\le (a_0/a)p_k$, and thus
if $\gss(\bp)<\infty$, then $\gss(\bpo)<\infty$.

We begin with an unrooted version of \refL{LP}.

\begin{lemma}
  \label{LQ}
  Let $\bw$, $\bphi$ and $\bp$ be as above, and assume $\gss:=\gss(\bp)<\infty$.
Let $\gL_n$ be the distance profile of the unrooted \sgt{} $\uctnw$.
Then,  
 as \ntoo,
for any  continuous function with compact support  $f:\ooo\to\bbR$,
 \begin{align}\label{lq}
\intoo  n\qqcw \gL_{n}\bigpar{x n\qq}f(x) \dd x   
\dto 
\intoi\intoi f\Bigpar{\frac{2}{\gs}\ddd\bigpar{s,t;\be}}\dd s\dd t.
 \end{align}
\end{lemma}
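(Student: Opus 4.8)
The plan is to reduce \refL{LQ} to the rooted case \refL{LP} by means of the edge‑marking construction of \refS{Smark2}. Recall from there that $\uctnw$ has the same distribution as the tree obtained by taking a \sgf{} $\SGphi_{n,2}=(\ctnp,\ctnm)$, with $|\ctnp|\ge|\ctnm|$, and joining the two roots by an edge, and that, by \refL{L2}, the smaller component satisfies $|\ctnm|=\Op(1)$, so that $|\ctnp|=n-\Op(1)$. I would also record the trivial fact that $\ctnp$ meets the rest of $\uctnw$ in a single edge, so that $\ddd_{\uctnw}(v,w)=\ddd_{\ctnp}(v,w)$ for all $v,w\in\ctnp$; consequently the distance profiles satisfy $\gL_n\ge\gL_{\ctnp}$ pointwise, while by \eqref{dpro1},
\begin{align*}
\intoom\bigpar{\gL_n(t)-\gL_{\ctnp}(t)}\dd t=n^2-|\ctnp|^2=\Op(n).
\end{align*}

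The first step is to discard $\ctnm$. Since $f$ is bounded and $\gL_n-\gL_{\ctnp}\ge0$, the change of variables used in \eqref{lp2} gives
\begin{align*}
\Bigabs{\intoo n\qqcw\bigpar{\gL_n-\gL_{\ctnp}}\bigpar{xn\qq}f(x)\dd x}\le\norm{f}_\infty n\qww\bigpar{n^2-|\ctnp|^2}=\Op\bigpar{n\qw}=\op(1),
\end{align*}
so it suffices to prove \eqref{lq} with $\gL_n$ replaced by $\gL_{\ctnp}$. Note that if one conditions on $|\ctnm|=\ell$ for a fixed $\ell\ge0$, then $n^2-|\ctnp|^2=2n\ell-\ell^2$ and the bound above becomes the \emph{deterministic} $O(\ell/n)$.

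Next I would condition on $|\ctnm|=\ell$ for fixed $\ell\ge0$. By the defining property of \sgf{s} (the component trees are independent given their sizes, see \refS{GWf}) and the equivalence $\SGphi_m\eqd\GGWp_m$ coming from $\bphi$ being equivalent to $\bp$, the tree $\ctnp$ then has the law of the \cGWt{} $\GGWp_{n-\ell}$, whose offspring distribution has mean $1$ and variance $\gss<\infty$. Hence \refL{LP} applies to $\GGWp_{n-\ell}$ and yields convergence of $\intoo(n-\ell)\qqcw\gL_{\GGWp_{n-\ell}}\bigpar{x(n-\ell)\qq}f(x)\dd x$ to $\intoi\intoi f\bigpar{\tfrac{2}{\gs}\ddd(s,t;\be)}\dd s\dd t$. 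Since $\ell$ is fixed, $f$ has compact support, and $f$ is uniformly continuous, replacing the normalisation $(n-\ell)\qqcw$ and the scaling $(n-\ell)\qq$ inside $\gL$ by $n\qqcw$ and $n\qq$ changes the integral by only $o(1)$; this is the same bookkeeping as in \eqref{lp2} and in the passage \eqref{tom}--\eqref{toa}, using that on the support of $f$ the distances involved are $O(n\qq)$. Together with the previous paragraph this shows that, conditionally on $|\ctnm|=\ell$, the \lhs{} of \eqref{lq} converges in distribution to $Z:=\intoi\intoi f\bigpar{\tfrac{2}{\gs}\ddd(s,t;\be)}\dd s\dd t$, a limit independent of $\ell$.

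Finally I would remove the conditioning. With $X_n$ denoting the \lhs{} of \eqref{lq}, the previous step gives $\E\bigsqpar{h(X_n)\mid|\ctnm|=\ell}\to\E h(Z)$ for every bounded continuous $h$ and every fixed $\ell$; since $|\ctnm|=\Op(1)$ by \refL{L2}, a routine truncation at a large $\ell=K$ then gives $\E h(X_n)\to\E h(Z)$, i.e.\ $X_n\dto Z$, which is \eqref{lq}. The one genuinely essential ingredient is \refL{L2} (the smallness of $\ctnm$): it is what collapses the unrooted problem onto the rooted \refL{LP}. The remaining work is the two routine estimates above, and I would expect the change of scale from $n-\ell$ to $n$ in the third paragraph to be the fussiest point.
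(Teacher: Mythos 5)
Your proof is correct, and it takes a route that differs from the paper's in the choice of reduction. The paper proves \refL{LQ} via the leaf-marking construction of \refS{Smark3}: it replaces $\uctnw$ by the leaf-biased tree $\uctnwl$ using the total-variation estimate of \refL{LL1}, so that the single branch is \emph{exactly} a $\GGWp_{n-1}$ of deterministic size, and then the pointwise bound \eqref{lqq} plus \refL{LP} (with the same $(n/(n-1))\qq$ rescaling bookkeeping you perform) finishes the proof with no conditioning at all. You instead use the edge-marking construction of \refS{Smark2} together with \refL{L2}, discard the small component via the integral identity \eqref{dpro1} rather than a pointwise bound, and then condition on $|\ctnm|=\ell$ so that $\ctnp\eqd\GGWp_{n-\ell}$, removing the conditioning at the end by tightness of $|\ctnm|$. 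Both reductions rest on the same underlying fact (\refL{LGWf}: one rooted component of size $n-\Op(1)$ dominates), and your conditioning-plus-truncation step is exactly the device the paper itself uses elsewhere (e.g.\ in \refT{T0m} and \refL{LM3}); what the paper's leaf-biasing buys is a deterministic branch size and hence a slightly shorter argument, while your version avoids invoking \refL{LL1} altogether. The details you flag as fussy (the passage from $(n-\ell)\qq$ to $n\qq$ using uniform continuity and compact support of $f$, and the conditional law of $\ctnp$ given $|\ctnm|=\ell$, which is legitimate by the exchangeability behind \eqref{julie}) all go through as you describe.
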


\begin{proof}
 Consider the leaf-biased random tree $\uctnwl$ defined in \refS{Smark3}.
By \refL{LL1}, we may assume $\P(\uctnwl\neq\uctnw)\to0$ and thus it suffices
to show \eqref{lq} with $\gL_{\uctnwl}$ instead of $\gL_n$.
If $\ctnp$ denotes the unique branch of $\uctnwl$, then, trivially,
\begin{align}\label{lqq}
0\le  \gL_{\uctnwl}(x)-\gL_{\ctnp}(x) \le 2n-1, \qquad x\ge0,
\end{align}
and thus we may further reduce and replace $\gL_n$  in \eqref{lq} by
$\gL_{\ctnp}$. 
As shown in
\refS{Smark3},
$\ctnp\eqd\SGphi_{n-1}=\GGWp_{n-1}$, and the result now follows from
\refL{LP}, 
replacing $n$ there by $n-1$ and 
$x$ by $x=(n/(n-1))\qq x$, noting that 
$\sup_x|f(x)-f\bigpar{(n/(n-1))\qq x}|\to0$ as \ntoo.
%replacing $n$ there by $n-1$ and changing variables by
%$x=(n/(n-1))\qq y$, noting that then $\sup_x|f(x)-f(y)|\to0$.
\end{proof}

\begin{theorem}\label{Teo}
  Let $\bw$, $\bphi$ and $\bp$ be as above, and assume $\gss:=\gss(\bp)<\infty$.
Let $\gL_n$ be the distance profile of the unrooted \sgt{} $\uctnw$.
Then, as \ntoo, 
\begin{align}
  \label{teo}
n^{-3/2}\gL_n\bigpar{x n\qq}
\dto
\frac{\gs}2\GLX\Bigpar{\frac{\gs}2 x},
\end{align}
in the space $\cooq$,
where $\GLX(x)$ is as in \refT{T1}.
%we have for every bounded measurable
%$f:\ooo\to\bbR$, 
%\begin{align}\label{t1d}
%\intoo f(x) \GLX(x) \dd x
%=2\iint_{0<s<t<1} f\bigpar{\be(s)+\be(t)-2\min_{u\in[s,t]} \be(u) }\dd s\dd t
%.\end{align}
%
\end{theorem}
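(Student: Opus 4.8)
The plan is to adapt the ``shortcut'' strategy of \cite{Drmota} that was already used for \refT{T0m}: first prove tightness of the rescaled distance profiles in $\coo$, then identify every subsequential limit using \refL{LQ}, and finally upgrade the convergence to $\cooq$ via \refL{Lcooq}. Write $X_n(x):=n\qqcw\gL_n(xn\qq)$ with $\gL_n=\gL_{\uctnw}$. The key structural input is the averaging identity \eqref{dpropro}: if $V$ is a uniformly random vertex of $\uctnw$ (independent given $\uctnw$), then
\begin{align*}
X_n(x)=\frac1n\sum_{v\in\uctnw}n\qqw L_{\uctnw(v)}(xn\qq)
=\E\bigsqpar{\,n\qqw L_{\uctnw(V)}(xn\qq)\bigm|\uctnw\,},
\end{align*}
so $X_n$ is the average of the rescaled profiles of all $n$ rerootings of $\uctnw$. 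By the ``mark a vertex'' construction of \refS{Smark1}, the tree $\uctnw(V)$, with the children of each vertex put in uniformly random order, has the law of the \cmGWt{} $\GGWppon$ for the weights $\bphi,\bphio$ of \refS{SD}; the preliminary discussion in \refS{SD} gives $\gss(\bpo)<\infty$, so \refTs{T0m} and \ref{TMGW1} are available.

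For tightness, I would use that the modulus of continuity $\go(\gd;\cdot;[0,A])$ is subadditive and positively homogeneous, and that a profile does not see the sibling order, to obtain, for all $A,\gd>0$,
\begin{align*}
\E\,\go(\gd;X_n;[0,A])
\le\E\,\go\bigpar{\gd;n\qqw L_{\GGWppon}(\cdot\,n\qq);[0,A]}.
\end{align*}
By \refT{T0m} the argument on the right converges in distribution in $\cooq$ to $\tfrac\gs2\LX(\tfrac\gs2\cdot)$, hence so do the (nonnegative, continuous) functionals $\go\bigpar{\gd;n\qqw L_{\GGWppon}(\cdot\,n\qq);[0,A]}$; by \eqref{EW2} these are bounded in $L^2$ (by $4n\qw\E[W(\GGWppon)^2]=O(1)$, since $\go(\gd;g)\le2\norm{g}_\infty$), hence uniformly integrable, so
\begin{align*}
\limsup_{\ntoo}\E\,\go(\gd;X_n;[0,A])\le\E\,\go\bigpar{\gd;\tfrac\gs2\LX(\tfrac\gs2\cdot);[0,A]},
\end{align*}
and the right-hand side tends to $0$ as $\gd\to0$ by continuity of $\LX$ on $[0,\tfrac\gs2A]$. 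Together with $X_n(0)=n\qqw\to0$, this proves tightness of $(X_n)$ in $\coo$.

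Next I would identify the limit. Let $X$ be any subsequential limit of $(X_n)$ in $\coo$; it is a nonnegative random continuous function with bounded support. For $f$ continuous with compact support, $g\mapsto\intoo g(x)f(x)\dd x$ is continuous on $\coo$, so $\intoo X_nf\,\dd x\dto\intoo Xf\,\dd x$ along the subsequence, while \refL{LQ} gives $\intoo X_nf\,\dd x\dto\intoi\intoi f\bigpar{\tfrac2\gs\ddd(s,t;\be)}\dd s\dd t$. Applying \refL{LQ} to finite linear combinations of a fixed countable family $(f_j)$ dense in the compactly supported continuous functions and invoking the Cram\'er--Wold device, the random measure $X(x)\dd x$ has the same law as the image of Lebesgue measure on $[0,1]^2$ under $(s,t)\mapsto\tfrac2\gs\ddd(s,t;\be)$; since $X(x)\dd x$ has a continuous density, so does that image measure. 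Equivalently, the law of $\ddd(s,t;\be)$ under $(s,t)$ uniform on $[0,1]^2$ a.s.\ admits a continuous density $\GLX$, which then satisfies \eqref{t1d}, and $X\eqd\tfrac\gs2\GLX(\tfrac\gs2\cdot)$. Hence every subsequential limit has this law, so $X_n\dto\tfrac\gs2\GLX(\tfrac\gs2\cdot)$ in $\coo$ (condition~\ref{Lcooqa} of \refL{Lcooq}); since $\diam(\uctnw)=\Op(n\qq)$ (\eg{} by \eqref{aldous} applied to the branch $\ctnp\eqd\GGWp_{n-1}$ of \refS{Smark3}) bounds the support of $X_n$, condition~\ref{Lcooqb} holds as well, and \refL{Lcooq} upgrades the convergence to $\cooq$, which is \eqref{teo}.

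The main obstacle is the tightness estimate above: the averaging identity lets one bound the modulus of continuity of the a priori rough object $X_n$ by that of the rescaled profile of a \emph{single} \cmGWt, for which \refT{T0m} already gives control --- but only once the second-moment width bound \eqref{EW2} is fed in to pass $\E\,\go(\gd;\cdot)$ to the limit. (This is also the mechanism that makes the distance profile smoother than the height profile.) The remaining ingredients are essentially bookkeeping: the ``mark a vertex''/``mark a leaf'' dictionary of \refSs{Smark1}--\ref{Smark3}, checking that $\bphi,\bphio$ fall under the hypotheses of \refTs{T0m} and \ref{TMGW1} (done in \refS{SD}), and the routine rescaling adjustments when passing between $\uctnw$, $\uctnwl$ and $\ctnp$ as in the proof of \refL{LQ}.
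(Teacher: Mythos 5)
Your proposal is correct and follows essentially the same route as the paper: the averaging identity \eqref{dpropro} with the mark-a-vertex identification of the rerooted trees as $\GGWppon$, tightness obtained from \refT{T0m} together with the width bound \eqref{EW2} via subadditivity of the modulus of continuity (this is precisely the content of the paper's \refL{LU2}), and identification of the limit through \refL{LQ}. The only differences are in packaging: the paper carries out the identification by mapping $h\mapsto h(x)\dd x$ into the space of measures and citing an injectivity lemma \cite[Lemma 7.1]{SJ185} together with Kallenberg's criterion, working in $\cooq$ throughout, whereas you redo that step by hand with subsequences and Cram\'er--Wold on linear combinations of test functions and then upgrade from $\coo$ to $\cooq$ at the end via \refL{Lcooq} and the diameter bound.
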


\begin{proof}
Let
\begin{align}
  Y_n(x):=n^{-3/2}\gL_n\bigpar{x n\qq}
=n^{-3/2} \gL_{\uctnw}\bigpar{xn\qq}.
\end{align}
Regard $Y_n$ as a random element of $\cooq$.
Define also the mapping $\psi:\cooq\to\moo$, the space of all 
locally finite  Borel measures on $\ooo$, defined by $\psi(h):=h(x)\dd x$;
\ie,  for $h\in\cooq$ and
$f\in\coo$ with compact support,
\begin{align}\label{ior}
  \intoo f(x) \dd \psi(h):=\intoo f(x)h(x)\dd x.
\end{align}
In other word, $\psi(h)$ has density $h$.

We give $\moo$ the vague topology, \ie, $\nu_n\to\nu$ in $\moo$ if
$\int f\dd\nu_n\to\int f\dd\nu$ for every $f\in\coo$ with compact support,
and note that $\moo$ is a Polish  space, see \eg{} 
\cite[Theorem~A2.3]{Kallenberg}. Clearly, the separable Banach space $\cooq$
is also a Polish space.
(Recall that a Polish space has a topology that can be defined by a complete
separable metric.)
It follows from the definition \eqref{ior} that $\psi$ is continuous
$\cooq\to\moo$. Furthermore, $\psi$ is injective, since the density of
a measure is \aex{} uniquely determined.

We will use the alternative method of proof in \cite[p.~123--125]{Drmota},
and show the following two  properties:
\begin{claim}\label{claim1}
  The sequence $Y_n$ is tight in $\cooq$.
\end{claim}

\begin{claim}\label{claim2}
The sequence of random measures
$\psi(Y_n)$ converges in distribution in 
$\moo$ to some random measure $\zeta$.
\end{claim}

It then follows from \cite[Lemma 7.1]{SJ185}
(see also \cite[Theorem 4.17]{Drmota})
that 
\begin{align}
  \label{YtoZ}
Y_n\dto Z
,\qquad \text{in }\cooq,
\end{align}
for some random $Z\in\cooq$ such that 
\begin{align}\label{zeta2}
\psi(Z)\eqd\zeta.  
\end{align}
It will then be easy to complete the proof.

\pfitemx{Proof of Claim \ref{claim1}}
  For $i=1,\dots,n$, let
$\cT(i)$ be $\uctnw$ rooted at $i$.
By symmetry, all $\cT(i)$ have the same distribution; moreover, they equal
in distribution $\uctnwx$ defined in \refS{Smark1} (which has a random root).
Hence, if we order each $\cT(i)$ randomly, we have by \refS{Smark1}
\begin{align}\label{teo1}
  \cT(i)\eqd \ctbw_n =\SGphiphio_n=\GGWppo_n
.\end{align}

By \eqref{dpropro},
\begin{align}\label{joyce}
Y_n(x)=
n^{-3/2} \gL_{\uctnw}\bigpar{xn\qq}=\frac{1}{n}\sumin n\qqw L_{T(i)}\bigpar{xn\qq}.
\end{align}
Since the sequence $n\qqw L_{\GGWppo_n}\bigpar{xn\qq}$ converges in $\cooq$ by
\refT{T0m}, it is tight in $\cooq$.
Furthermore,
\begin{align}
  \sup_x\bigabs{ n\qqw L_{\GGWppo_n}\bigpar{xn\qq}}
=n\qqw W(\GGWppo_n),
\end{align}
which are uniformly integrable by \eqref{EW2} in \refT{TMGW1}.
Hence, by \refL{LU2} (which we state and prove in \refApp{Atight})
and \refR{RU2}, 
\eqref{joyce} and \eqref{teo1} imply that the sequence
$Y_n$
%$n^{-3/2} \gL_{\uctnw}\bigpar{xn\qq}$
is tight in $\cooq$, proving Claim \ref{claim1}.

\pfitemx{Proof of Claim \ref{claim2}}
Let $f\in\coo$ have compact support.
Then, \refL{LQ} shows that
\begin{align}\label{zeta}
  \intoo f(x)\dd\psi(Y_n)
&
=\intoo f(x)Y_n(x)\dd x
\dto
\intoi\intoi f\Bigpar{\frac{2}{\gs}\ddd\bigpar{s,t;\be}}\dd s\dd t
\notag\\&
=\intoo f(x)\dd\zeta(x),
\end{align}
where $\zeta$ is the (random) probability measure on $\ooo$ defined as the
push-forward of Lebesgue measure on $\oi\times\oi$ by the map
$(s,t)\to(2/\gs)\ddd(s,t;\be)$; in other words, $\zeta$ is the conditional
distribution, given $\be$, of $(2/\gs)\ddd(U_1,U_2;\be)$ where $U_1$ and $U_2$
are independent uniform $U\oi$ random variables.
This convergence in distribution for each $f$ with compact support is
equivalent to convergence in $\moo$, see \cite[Theorem 16.16]{Kallenberg}.
Thus, $\psi(Y_n)\dto \zeta$ in $\moo$, proving Claim \ref{claim2}.

As said above, the claims imply \eqref{YtoZ}--\eqref{zeta2}.
Thus, by the definition of $\zeta$ above, see \eqref{zeta},
for any bounded measurable $f:\ooo\to\bbR$,
\begin{align}\label{loke}
\intoo f(x) Z(x)\dd x&
=\intoo f(x)\dd\psi(Z)
%\notag\\&
=\intoo f(x)\dd\zeta(x)
\notag\\&
=
\intoi\intoi f\Bigpar{\frac{2}{\gs}\ddd\bigpar{s,t;\be}}\dd s\dd t
.\end{align}
Define
\begin{align}
  \GLX(x):=\frac{2}{\gs} Z\Bigpar{\frac{2}{\gs}x}.
\end{align}
Then, \eqref{YtoZ} is the same as \eqref{teo}.
Furthermore,
replacing $f(x)$ by $f(\gs x/2)$ in \eqref{loke} yields after a simple
change of variables \eqref{t1dx} and thus \eqref{t1d}.
(In particular, $\GLX$ does not depend on the weight sequence $\bw$.)
\end{proof}

\section{Distance profile of rooted trees}\label{SDR}

Finally, we can prove \refT{T1} as a simple consequence of the corresponding
result \refT{Teo} for unrooted trees.

\begin{proof}[Proof of \refT{T1}]
Let $\bphi=\bp$, and use \eqref{phiw} to define the weight sequence $\bw$.
Then, \refT{Teo} applies to $\uctnw$.
 Consider, as in the proof of \refL{LQ}, the leaf-biased unrooted random
 tree $\uctnwl$. By \refL{LL1}, \refT{Teo} holds also for $\uctnwl$.

Let again $\ctnp\eqd \SGphi_{n-1}=\GGWp_{n-1}$ 
denote the unique branch of $\uctnwl$.
By \eqref{lqq}, \refT{Teo} holds also for $\ctnp$ and thus for
$\GGWp_{n-1}$.
Replace $n$ by $n+1$; then a change of variables shows that \refT{Teo} holds
for $\GGWp_n$ too, which is \refT{T1}.
\end{proof}

As part of the proof,
we have shown the corresponding result \refT{Teo} for unrooted trees. The result also extends easily to \cmGWt{s}, 
using the method by 
\citet{MarckertP} and
\citet{KortchemskiM}. 
\begin{theorem}
  \label{T1m}
Let\/ $\gL_n$ be the distance profile of a \cmGWt{} $\GGWppon$ 
of order $n$, 
and assume that $\mu(\bp)=1$, $\gss(\bp)<\infty$ and
$\mu(\bpo)<\infty$.
Then, 
\eqref{t1} holds
as \ntoo.
\end{theorem}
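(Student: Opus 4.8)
The plan is to imitate the deduction of \refT{T0m} from \refT{T0}, now with the already-proven \refT{T1} playing the role of \refT{T0}: reduce the modified tree to its single dominant branch, which is an ordinary \cGWt, and show that the discarded part is negligible after rescaling. First I would let $\cttxn1,\dots,\cttxn{d(o)}$ be the branches of $\GGWppon$ listed in decreasing order, regarded as rooted trees. Since $\mu(\bp)=1$, $\gss(\bp)\in(0,\infty)$ (the case $p_1=1$ being excluded, $\mu(\bp)=1$ forces $\gss(\bp)>0$) and $\mu(\bpo)<\infty$, \refL{LM3} applies and gives $n-|\cttxn1|=\Op(1)$; moreover, conditioned on $|\cttxn1|=n-\ell$ for a fixed $\ell$, \refL{LM1}\ref{LM1b} identifies $\cttxn1$ in distribution with the \cGWt{} $\GGWp_{n-\ell}$.

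Next I would compare the two distance profiles $\gL_{\GGWppon}$ and $\gL_{\cttxn1}$. A branch is a fringe subtree, hence geodesically convex, so for any $v,w\in\cttxn1$ the unique path between them in $\GGWppon$ stays inside $\cttxn1$ and $\ddd_{\GGWppon}(v,w)=\ddd_{\cttxn1}(v,w)$ (note that, unlike the height profile in \refT{T0m}, there is no depth shift here). Consequently, for every integer $i\ge0$,
\begin{align}
0\le\gL_{\GGWppon}(i)-\gL_{\cttxn1}(i)\le n^2-|\cttxn1|^2\le 2n\bigpar{n-|\cttxn1|},
\end{align}
since every ordered pair counted by the difference has an endpoint outside $\cttxn1$. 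Both functions are piecewise linear on unit intervals, so the bound holds for all real $x\ge0$, and after the $n^{-3/2}$, $x\mapsto xn\qq$ rescaling the $\cooq$-distance between $n^{-3/2}\gL_{\GGWppon}(x n\qq)$ and $n^{-3/2}\gL_{\cttxn1}(x n\qq)$ is at most $2n\qqw\bigpar{n-|\cttxn1|}=\op(1)$.

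Then I would handle the main term. By the identification above together with \refT{T1} applied at size $n-\ell$, one gets $(n-\ell)^{-3/2}\gL_{\cttxn1}\bigpar{x(n-\ell)\qq}\dto\frac{\gs}2\GLX\bigpar{\frac{\gs}2 x}$ in $\cooq$, conditioned on $|\cttxn1|=n-\ell$; since $\frac{\gs}2\GLX(\frac{\gs}2 x)$ is continuous with compact support, the routine change of variables from $n-\ell$ to $n$ (as at the end of the proof of \refT{T1}) upgrades this to $n^{-3/2}\gL_{\cttxn1}\bigpar{x n\qq}\dto\frac{\gs}2\GLX\bigpar{\frac{\gs}2 x}$. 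Combined with the error bound of the previous paragraph, this gives the same conditional limit for $n^{-3/2}\gL_{\GGWppon}(x n\qq)$; as it holds for every fixed $\ell$ and $n-|\cttxn1|=\Op(1)$, I would then pass to conditioning on $n-|\cttxn1|\le K$ and let $K\to\infty$, concluding \eqref{t1} unconditionally exactly as in the last lines of the proof of \refT{T0m}.

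The only genuinely new point — and the step I would check most carefully — is the distance-profile comparison: one must exploit that a branch is geodesically convex, so no retained distance changes when the rest of the tree is deleted, and observe that the $n^{-3/2}$ normalization dominates the $O\bigpar{n\cdot\Op(1)}$ count of the discarded pairs. Everything else is a transcription of arguments already in the paper (the deduction of \refT{T0m} from \refT{T0}, and the change of variables in \refT{T1}), so I expect no real difficulty. A longer alternative would be to rerun the tightness-plus-measure-convergence scheme of \refT{Teo} for a suitable weight sequence $\bw$, but the branch-domination route via \refL{LM3} is shorter and is what "extends easily" refers to.
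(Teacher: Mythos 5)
Your proposal is correct and follows essentially the same route as the paper: bound $\sup_x|\gL_{\GGWppon}(x)-\gL_{\cttxn1}(x)|\le 2n\bigpar{n-|\cttxn1|}=\Op(n)$ via \refL{LM3}, then apply \refT{T1} to the dominant branch by conditioning on its size, exactly the details the paper omits. The geodesic-convexity observation and the $K$-truncation/change-of-variables steps you spell out are precisely the omitted routine parts, so nothing further is needed.
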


\begin{proof}
  This is a simple consequence of \refT{T1} and \refL{LM3}. It follows from
  \eqref{lm3} that 
  \begin{align}
\sup_x\bigabs{\gL_{\GGWppon}(x)-\gL_{\cttxn1}(x)}
\le 2n \bigpar{n-|\cttxn1|} = \Op(n),
  \end{align}
and thus \eqref{t1} for $\GGWppon$ follows from the same result for
$\cttxn1$,
which follows from \eqref{lm3} and \refT{T1} by conditioning on $|\cttxn1|$.
We omit the details.
\end{proof}

\section{Wiener index} \label{WienerI}

Recall that the \emph{Wiener index} of a tree $T$ is defined as 
\begin{align}\label{wien}
  \Wiener(T):=\frac12\sum_{v,w\in T} \ddd(v,w).
\end{align}
where $\ddd$ is the graph distance in $T$. Thus
\begin{align}\label{wien2}
  \Wiener(T)=\frac12\sumi i \gL_T(i)
= \frac12\intoom x\gL_T(x)\dd x.
\end{align}
%\int_{-1}^0 x\gL_T(x)\dd x = -\frac{1}{12}|T|  
%where the last (negligible) term comes from the linear interpolation 
%and $x\in\oi$.

Since the integrand $x$ in \eqref{wien2} is unbounded, convergence of the
Wiener index does not follow immediately from convergence of the profile,
but only a simple extra truncation argument is needed.
For a \cGWt{} $\GGWpn$, with $\mu(\bp)=1$ and $\gss(\bp)<\infty$, 
convergence is more easily proved directly from 
Aldous's result \eqref{aldous}  \cite{AldousII,AldousIII}, 
see \cite{SJ146},
but as an application of the results above, we show the corresponding result
for unrooted trees.

\begin{theorem}\label{Twie}
  Let $\bw$ and $\bp$ be as in \refT{Teo}.
Then
\begin{align}\label{twie}
  n^{-5/2}\Wiener\bigpar{\uctnw}
\dto
\frac{1}{\gs}\intoo x\GLX(x)\dd x
%=\frac{2}{\gs}\iint_{0<s<t<1} \bigpar{\be(s)+\be(t)-2\min_{u\in[s,t]}
%\be(u) }\dd s\dd t
%=\frac{2}{\gs}\iint_{0<s<t<1}d(s,t; \be)\dd s\dd t
=\frac{1}{\gs}\intoi\intoi d(s,t; \be)\dd s\dd t
.\end{align}
\end{theorem}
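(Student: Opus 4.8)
The plan is to derive \refT{Twie} from the distance‑profile limit \refT{Teo}, since the Wiener index is essentially the first moment of the distance profile. By \eqref{wien2},
\begin{align*}
  n^{-5/2}\Wiener\bigpar{\uctnw}=\tfrac12\,n^{-5/2}\intoom x\,\gL_{\uctnw}(x)\dd x .
\end{align*}
As $\gL_{\uctnw}$ rises linearly from $0$ to $n$ on $[-1,0]$, the contribution of $[-1,0]$ is $O(n^{-3/2})=o(1)$, and after the substitution $x=yn\qq$ the contribution of $\ooo$ equals $\tfrac12\intoo y\,Y_n(y)\dd y$, where $Y_n(y):=n^{-3/2}\gL_{\uctnw}\bigpar{yn\qq}$. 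By \refT{Teo}, $Y_n\dto Z$ in $\cooq$, where $Z(y):=\tfrac{\gs}{2}\GLX\bigpar{\tfrac{\gs}{2}y}$. Hence it suffices to prove $\tfrac12\intoo y\,Y_n(y)\dd y\dto\tfrac12\intoo y\,Z(y)\dd y$, and a change of variables identifies the right‑hand side with $\tfrac1{\gs}\intoo z\GLX(z)\dd z$; the second equality in \eqref{twie} is then \eqref{t1dx} applied to the nonnegative measurable function $f(x)=x$ (both sides being a.s.\ finite since $\GLX$ has compact support).

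The only difficulty is that $h\mapsto\intoo y\,h(y)\dd y$ is not continuous on $\cooq$, so a truncation is needed. For $A>0$ let $f_A$ be continuous with $f_A(y)=y$ on $[0,A]$, $f_A=0$ on $[A+1,\infty)$, and linear on $[A,A+1]$. Then $h\mapsto\intoo f_A(y)h(y)\dd y$ is continuous on $\cooq$ (as $f_A$ has compact support), so \refT{Teo} and the continuous mapping theorem give $\intoo f_A\,Y_n\dd y\dto\intoo f_A\,Z\dd y$ for each fixed $A$; and as $A\to\infty$, monotone convergence yields $\intoo f_A\,Z\dd y\to\intoo y\,Z(y)\dd y$ a.s., which is a.s.\ finite because $\GLX$, hence $Z$, vanishes outside a bounded interval (the support of the conditional law of $\tfrac2\gs\ddd(U_1,U_2;\be)$, which lies in $[0,\tfrac2\gs\diam(T_{\be})]$ with $\diam(T_{\be})<\infty$ a.s.). It remains to bound the tail $R_{n,A}:=\intoo\bigpar{y-f_A(y)}Y_n(y)\dd y\ge0$ uniformly in $n$.

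This tail estimate is the main obstacle. Since $\gL_{\uctnw}$ is supported on $[-1,\diam(\uctnw)+1]$, we have $R_{n,A}=0$ whenever $\diam(\uctnw)\le An\qq$, so $\P(R_{n,A}>0)\le\P\bigpar{\diam(\uctnw)>An\qq}$, and it is enough to check that $n\qqw\diam(\uctnw)$ is tight, i.e.\ $\Op(1)$. By \refL{LL1}, $\dtv(\uctnwl,\uctnw)\to0$, and tightness is preserved under total‑variation convergence, so it suffices to treat $\uctnwl$; by \refS{Smark3} its unique branch is $\ctnp\eqd\SGphi_{n-1}=\GGWp_{n-1}$, whence $\diam(\uctnwl)\le 2H(\ctnp)+1$, and $n\qqw H(\GGWp_{n-1})$ is tight by Aldous's contour convergence \eqref{aldous} (the height being the maximum of the contour process). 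Hence $\sup_n\P\bigpar{\diam(\uctnw)>An\qq}\to0$ as $A\to\infty$, so $\sup_n\P(R_{n,A}>\eps)\to0$ as $A\to\infty$ for every $\eps>0$. Combining the three ingredients — $\intoo f_A\,Y_n\dd y\dto\intoo f_A\,Z\dd y$ for each $A$, $\intoo f_A\,Z\dd y\to\intoo y\,Z(y)\dd y$ as $A\to\infty$, and the uniform smallness of $R_{n,A}$ — via the standard approximation lemma for convergence in distribution (see \eg{} \cite{Billingsley}) yields $\tfrac12\intoo y\,Y_n(y)\dd y\dto\tfrac12\intoo y\,Z(y)\dd y=\tfrac1{\gs}\intoo z\GLX(z)\dd z$, which together with the preceding reductions completes the proof. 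Everything other than the diameter tightness is routine.
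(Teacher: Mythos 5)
Your proof is correct and follows essentially the same route as the paper: reduce the Wiener index to $\tfrac12\intoo x\gL(x)\dd x$, truncate the unbounded weight $x$, apply \refT{Teo} with the continuous mapping theorem for each truncation level, control the tail by tightness of $n\qqw\diam(\uctnw)$ (via the constructions of \refSs{Smark1}--\ref{Smark3} and height bounds for rooted \GWt{s}), and conclude by the approximation theorem \cite[Theorem 4.2]{Billingsley}, with the second equality in \eqref{twie} from \eqref{t1dx}. The only cosmetic difference is that you cut the weight off with a compactly supported $f_A$ (making the functional genuinely continuous on $\cooq$) where the paper uses $x\bmin m$, and you spell out the diameter tightness that the paper cites as known.
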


\begin{proof}
Let $T\in\fT_n$, and define a modified version by
\begin{align}\label{wien2'}
  \Wiener'(T):= \frac12\intoo x\gL_T(x)\dd x
=\Wiener(T)+O(n),
\end{align}
using \eqref{wien2} and noting that $\gL_T(x)\le n$ for $x\in[-1,0]$.
 It suffices to prove \eqref{twie} for $\Wiener'(\uctnw)$.
By \eqref{wien2'},
  \begin{align}\label{wien3}
n^{-5/2}\Wiener'(T)&
=\frac{n^{-5/2}}2\intoo x\gL_T(x)\dd x
%\notag\\&
=\frac12\intoo xn\qqcw\gL_T(n\qq x)\dd x
.  \end{align}
Define a truncated version by,
for $m \geq 0$,
  \begin{align}\label{wien4}
%{n^{-5/2}}
\Wiener_m(T)
:=\frac{n^{5/2}}2\intoo (x\bmin m)n\qqcw\gL_T(n\qq x)\dd x.
  \end{align}
Then, since the support of $\gL_T$ is $[-1,\diam(T)+1]$,
\begin{align}\label{wien5}
  \P\bigpar{\Wiener'(\uctnw)\neq\Wiener_m(\uctnw)}
\le 
\P\bigpar{\diam(\uctnw)> n\qq m-1}.
\end{align}
Since $\diam(\uctnw) = \Op(n\qq)$, as an easy consequence of 
any of the constructions in \refSs{Smark1}--\ref{Smark3} and known results
on the height of rooted \GWt{s}, see \eg{} \cite{Kolchin}, \cite{AldousII} or \cite[Theorem 1.2]{SJ250},
it follows that
\begin{align}\label{wien6}
\sup_n  \P\bigpar{\Wiener'(\uctnw)\neq\Wiener_m(\uctnw)}\to0,
\qquad\text{as \mtoo}
.\end{align}
Furthermore, for each fixed $m$, \eqref{wien4} and \refT{Teo} imply,
as \ntoo,
\begin{align}\label{wien7}
{n^{-5/2}}
\Wiener_m(\uctnw)
&\dto
\frac{1}2\intoo (x\bmin m)
\frac{\gs}2\GLX\Bigpar{\frac{\gs}2 x}\dd x
\notag\\&
=
\frac{1}\gs\intoo \bigpar{x\bmin \frac{\gs m}2}
\GLX\xpar{ x}\dd x.
\end{align}
The convergence in \eqref{twie} follows by \eqref{wien6} and \eqref{wien7},
see \cite[Theorem 4.2]{Billingsley}.
The equality in \eqref{twie} holds by \eqref{t1dx}.
%, which by monotone convergence holds for every positive $f$.
\end{proof}
Of course, the limit in \eqref{twie} agrees with the limit in \cite{SJ146}
for the rooted case.

\section{Moments of the distance profile} \label{Momentdp}

In this section we prove the following estimates on moments of the distance
profile for a \cGWt; we use again the simplified notation $L_n$ and $\gL_n$
for the profile and distance profile as in \refTs{T0} and \ref{T1}.
Throughout the section, $C$ and $c$ denote some positive constants that may
depend on the offspring distribution $\bp$ only; 
$C_r$ denotes constants depending on $\bp$ and the parameter $r$ only.
(As always, these may change from one occurrence to the next.)

\begin{theorem}\label{TM1}
Let\/ $\gL_n$ be the distance profile of a \cGWt{} of order $n$, 
with an offspring distribution 
$\bp$ such that $\mu(\bp)=1$ and $\gss(\bp)<\infty$.
%Then the following hold, for some constants $C_r$ and $c$.

\begin{romenumerate}
  
\item \label{TM1a}
Let $r\ge 1$ be a real number.
Then, for all $i, n \ge 1$, 
\begin{align} \label{Mooh}
\E[  \gL_n(i)^{r} ]\le C_r n^{3r/2}e^{-ci^{2}/n}.
\end{align}
%and some positive constants $c$ and $C$ depending on $r$ and $\bp$ only.

\item \label{TM1b}
Let $r\ge1$ be an integer, and suppose that $\bp$ has a finite $(r+1)th$
moment: 
\begin{align}
  \label{Moo}
\sum_{k} k^{1+r}p_{k} < \infty. 
\end{align}
Then, for all $i, n \ge 1$, 
\begin{align} \label{Mooa}
\E[  \gL_n(i)^{r} ]\le C_r i^{r} n^{r}.
\end{align}
%for some positive constant $C$ depending on $r$ and $\bp$ only. 
Furthermore, we may in this case combine \eqref{Mooh} and \eqref{Mooa} to
\begin{align} \label{Moob}
\E[ \gL_n(i)^{r} ]\le C_r i^r n^{r}e^{-ci^{2}/n},
\end{align}
for all $i, n \ge 1$.
\end{romenumerate}
\end{theorem}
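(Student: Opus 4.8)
The plan rests on one deterministic inequality for $\gL_T$. Fix a rooted tree $T$ with root $o$ and fringe subtrees $(T_u)_{u\in T}$. For a vertex $w$ and $i\ge1$, any vertex $v$ with $\ddd(v,w)=i$ has $v\wedge w=w_p$, the ancestor of $w$ lying $p$ steps above $w$, for some $0\le p\le i$, and then $v$ is a descendant of $w_p$ at depth $i-p$ in $T_{w_p}$; hence $L_{T(w)}(i)\le\sum_{p=0}^{i}L_{T_{w_p}}(i-p)$. Summing over $w$, using \eqref{dpropro} and $\abs{\set{w:w_p=u}}=L_{T_u}(p)$, gives the \emph{master inequality}
\begin{align}\label{master}
  \gL_T(i)\le\sum_{u\in T}\bigpar{L_{T_u}*L_{T_u}}(i)
  =\sum_{u\in T}\sum_{p+q=i}L_{T_u}(p)L_{T_u}(q),\qquad i\ge1 ,
\end{align}
together with the trivial facts $L_{T_u}(p)\le W(T_u)$, $\sum_pL_{T_u}(p)=\abs{T_u}$, that each summand vanishes unless $H(T_u)\ge i/2$, and that $\gL_T(i)=0$ for $i>\diam(T)$.

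For part \ref{TM1a}, the case $r=1$ is essentially the estimate \eqref{egln} of \cite{SJ222} (alternatively it follows from $\E[\gL_n(i)]=n^2\P(\ddd(V_1,V_2)=i)$ for two independent uniform vertices, together with \eqref{aldous} and a finite-$n$ two-point estimate). For general $r$ I would combine \eqref{dpropro} with the power-mean inequality, $\gL_n(i)^r\le n^{r-1}\sum_wL_{T(w)}(i)^r\le n^{r}\bigpar{\max_wW(\GGWpn(w))}^{r}\indic{\diam(\GGWpn)\ge i}$, and then use H\"older with $1/p+1/p'=1$:
\begin{align}\label{holder}
  \E\bigsqpar{\gL_n(i)^r}\le n^{r}\,\E\bigsqpar{\bigpar{\max_wW(\GGWpn(w))}^{rp}}^{1/p}\,\P\bigpar{\diam(\GGWpn)\ge i}^{1/p'} .
\end{align}
Here $\P(\diam(\GGWpn)\ge i)\le Ce^{-ci^2/n}$ is classical (from $\diam\le2H$ and the Gaussian height tail, \cite[Theorem~1.2]{SJ250}), so it remains to bound moments of the maximal re-rooted width $\max_wW(\GGWpn(w))$ at scale $\sqrt n$. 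For $r\le2$ (take $p=p'=2$) this is covered by \refT{TMGW1}: re-rooting $\GGWpn$ at a uniform vertex produces the \cmGWt{} $\GGWppon$ of \refS{Smark1} and \refS{Smod2}, with $\phi_k=p_k$ and $\phio_k=p_{k-1}/k$, for which $\gss(\bpo)<\infty$ automatically (indeed $\sum_kk^2\phio_k\le\sum_j(j+1)p_j<\infty$), so \eqref{EW2} gives $\E[W(\GGWppon)^2]=O(n)$; more generally the variant of \refT{TMGW1} indicated in \refP{Pmoments} gives $\E[W(\GGWppon)^{s}]=O(n^{s/2})$ once $\bp$ has enough moments.

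For part \ref{TM1b}, under $\sum_kk^{1+r}p_k<\infty$ I would raise the right-hand side of \eqref{master} to the $r$-th power and expand, $\E[\gL_n(i)^r]\le\sum_{u_1,\dots,u_r}\E\bigsqpar{\prod_{\ell=1}^{r}(L_{T_{u_\ell}}*L_{T_{u_\ell}})(i)}$, organising the sum according to the nested ancestor structure of $\set{u_1,\dots,u_r}$, as in the standard moment method for additive functionals of \cGWt{s}. Each ``top'' vertex gives $O(n)$ choices and, via fringe-tree estimates (after conditioning on sizes the per-level population expectations are $O(1)$), an expected factor $O(i)$ coming from the $\le i+1$ terms of the convolution $L_{T_u}*L_{T_u}$ at $i$; the hypothesis $\sum_kk^{1+r}p_k<\infty$ is exactly what keeps the higher-moment corrections and the vertex-degree factors from the bookkeeping finite. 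This should give $\E[\gL_n(i)^r]\le C_ri^rn^r$, and then \eqref{Moob} follows from $\min(n^{3r/2}e^{-ci^2/n},\,i^rn^r)\le C_ri^rn^re^{-ci^2/n}$ (treat $i\le\sqrt n$ and $i>\sqrt n$ separately).

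The main obstacle is in part \ref{TM1a} for large $r$: controlling all moments of $\max_wW(\GGWpn(w))$ at scale $\sqrt n$ assuming only $\gss(\bp)<\infty$. Note that \refT{TMGW1} controls the re-rooted width only on average over the re-rooting and only in $L^2$ --- this is essentially \refP{Pmoments} --- so a genuinely new input is needed here; I expect it to come either from the re-rooting invariance \eqref{Reroot} together with a second-moment/tightness argument for the maximal sphere size $\max_{w,j}\abs{\set{v:\ddd(v,w)=j}}$ in $\GGWpn$, or by running the additive-functional bound directly on \eqref{master} (where the decay is furnished by $\indic{H(T_u)\ge i/2}$), at the cost of heavier bookkeeping; that bookkeeping of the nested $r$-tuple expansion is the analogous obstacle in part \ref{TM1b}.
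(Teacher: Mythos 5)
Your deterministic convolution bound and the Jensen step $\gL_n(i)^r\le n^{r-1}\sum_w L_{T(w)}(i)^r$ are sound (the latter is essentially where the paper also starts, via \eqref{dpropro}), but the argument then stalls exactly where you flag it yourself, and the gap is real. First, the identity you invoke for $r\le2$ is not correct: re-rooting the conditioned Galton--Watson tree $\GGWpn$ at a uniform vertex is \emph{not} distributed as the \cmGWt{} $\GGWppon$ with $\phio_k=p_{k-1}/k$. That exact statement holds for the \emph{unrooted} simply generated tree $\uctnw$ marked at a uniform vertex (Section \ref{Smark1}); but $\GGWpn$ with its root forgotten is not distributed as $\uctnw$ (the product weight $\prod_v p_{\dout(v)}$ is not invariant under re-rooting, and the rooted/unrooted transfers in Sections \ref{Smark1}--\ref{Smark3} are exact only in the unrooted-to-rooted direction, or asymptotic as in \refL{LL1}). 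Second, even granting such an identity, your H\"older step requires moments of the \emph{maximal} re-rooted width $\max_w W(\GGWpn(w))$, whereas \eqref{EW2} controls only the second moment of the width for a single uniformly chosen root; higher moments are precisely the open \refP{Pmoments}. So part \ref{TM1a} is not established for any $r$, let alone for all real $r\ge1$ under only $\gss(\bp)<\infty$, and part \ref{TM1b} remains an unexecuted moment-method plan whose nested-ancestor bookkeeping is the whole difficulty.

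The missing idea, and the paper's actual route, is to avoid widths and maxima altogether and reduce to the ordinary profile via the re-rooting invariance of Bertoin and Miermont \cite{BertCut}, stated as Proposition \ref{PropositionBer}. For a pointed tree $(T,v)$ the number of vertices at distance $i\ge1$ from $v$ is at most $Z_{i-1}(\hat{T}^{\hat{v}})+Z_i(T_v)$, and under the $\gs$-finite pointed measure $\P^{\bullet}$ the pair $(\hat{T}^{\hat{v}},T_v)$ has the same law as $(T^{v},T_v)$, with $T_v$ an unconditioned \GWt. Combining this with \eqref{dpropro} and Jensen gives $\E[\gL_n(i)^r]\le 2^r n^r\bigpar{\E[L_n(i-1)^r]+\sup_{1\le m\le n}\E[L_m(i)^r]}$, and both \eqref{Mooh} and \eqref{Mooa} then follow at once from the known profile estimates \eqref{Looh} of \cite{SJ250} and \eqref{Looa} of \cite{SJ167}; the $(r+1)$th moment hypothesis enters only through the latter. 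This reduction is the ``genuinely new input'' you anticipate needing, and without it (or an equivalent substitute) neither part of your argument closes.
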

The proof is given later.
Note that \eqref{Moob} trivially implies \eqref{Mooa} and (changing
$C_r$ and $c$) \eqref{Mooh}; conversely \eqref{Mooh} and \eqref{Mooa}
imply \eqref{Moob} by considering $i\le n\qq$ and $i>n\qq$ separately.  

The special case $r=1$ of \eqref{Mooa},
i.e., 
\begin{align}\label{egln}
\E  \gL_n(i)\le C in
,\end{align}
was proved in \cite[Theorem 1.3]{SJ222}; note that when $r=1$, \eqref{Moo}
holds automatically by our assumption $\gss(\bp)<\infty$.

\begin{remark}\label{RL}
 The estimates above are natural analogues of estimates for the profile
$L_n$.
First, as proved in \cite[Theorem 1.6]{SJ250}, 
under the conditions in \refT{TM1}\ref{TM1a},
\begin{align} \label{Looh}
\E[ L_n(i)^{r} ]&\le C_r n^{r/2}e^{-ci^{2}/n},
\end{align}
for all $i,n\ge1$.
Secondly, 
as proved in \cite[Theorem 1.13]{SJ167},
under the conditions in \refT{TM1}\ref{TM1b},
\begin{align}\label{Looa}
\E[L_n(i)^{r}] &\le C_r i^{r},
\end{align}
for all $i,n\ge1$.
%as proved in \cite[Theorem 1.13]{SJ167}.
The estimates \eqref{Looh}--\eqref{Looa} are used in our proof of \refT{TM1}.
\end{remark}

\begin{remark}
  We do not know whether the moment assumption \eqref{Moo} really is
  necessary for the result. This assumption is necessary for the
  corresponding estimate \eqref{Looa}
for the profile $L_n(i)$, as noted in \cite{SJ250}, but the argument there
does not apply to the distance profile. We state this as an open problem.
\end{remark}

\begin{problem}
  Does \eqref{Mooa} hold without the assumption \eqref{Moo}?
\end{problem}

\begin{remark}
  We also do not know whether the assumption that $r$ is an integer is
necessary in \refT{TM1}\ref{TM1a}; we conjecture that it is not. (This
assumption is used in the proof of \eqref{Looa} in \cite{SJ167}.) 
\end{remark}

As an immediate consequence of \refTs{T1} and \ref{TM1}, we obtain the
corresponding results for the asymptotic profile $\GLX$.

\begin{theorem}
  \label{TM2}
For any $r \ge 1$ and all $x\ge0$,
\begin{align}\label{tm2}
  \E[\GLX(x)^r]
\le C_r \min\bigpar{x^r, e^{-c x^2}}
.\end{align}
\end{theorem}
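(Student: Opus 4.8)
The plan is to deduce \refT{TM2} from \refTs{T1} and \ref{TM1} by passing the moment bounds \eqref{Mooh}--\eqref{Mooa} to the limit. First I would fix $x\ge0$ and observe that, by \refT{T1}, for any sequence $i=i_n$ with $i_n n\qqw\to \frac2\gs\cdot\frac\gs2 x = x$, i.e. $i_n\sim \frac{\gs}2 x\sqrt n$ — wait, let me recompute: \eqref{t1} says $n^{-3/2}\gL_n(x n\qq)\dto \frac\gs2\GLX(\frac\gs2 x)$, so to reach $\GLX(x)$ I should evaluate at the argument $\frac2\gs x\cdot n\qq$, obtaining $n^{-3/2}\gL_n\bigpar{\frac2\gs x n\qq}\dto \frac\gs2\GLX(x)$. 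So I set $i_n:=\floor{\frac2\gs x n\qq}$ (or the nearest integer), and then $n^{-3/2}\gL_n(i_n)$ has the same limit since $\gL_n$ is piecewise linear and the correction from rounding is $O(n^{-3/2}\cdot\max|\gL_n(i_n)-\gL_n(i_n\pm1)|)$, which is negligible after rescaling. Thus $n^{-3/2}\gL_n(i_n)\dto \frac\gs2\GLX(x)$.

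Next I would invoke the moment bounds. By \refT{TM1}\ref{TM1a}, for every real $r\ge1$,
\begin{align*}
\E\bigsqpar{\bigpar{n^{-3/2}\gL_n(i_n)}^r}
= n^{-3r/2}\E[\gL_n(i_n)^r]
\le C_r e^{-c i_n^2/n}
\to C_r e^{-c' x^2},
\end{align*}
for a suitable $c'>0$ (absorbing the constant $\frac{4}{\gs^2}$ from $i_n^2/n\to \frac4{\gs^2}x^2$ into $c$). In particular the family $\set{(n^{-3/2}\gL_n(i_n))^{r'}}_n$ is bounded in $L^{r}$ for any $r'<r$, hence uniformly integrable; combined with the convergence in distribution this gives convergence of the $r'$-th moments, so
\begin{align*}
\E\bigsqpar{\bigpar{\tfrac\gs2\GLX(x)}^{r}}
\le \liminf_n \E\bigsqpar{\bigpar{n^{-3/2}\gL_n(i_n)}^{r}}
\le C_r e^{-c' x^2}
\end{align*}
(using a slightly larger exponent in \refT{TM1}\ref{TM1a} to get uniform integrability at exponent $r$, or simply Fatou for the $\liminf$ inequality, which is all that is needed). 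Rescaling the constant, $\E[\GLX(x)^r]\le C_r e^{-c x^2}$ for all $r\ge1$. This handles the $e^{-cx^2}$ half of \eqref{tm2} and, importantly, holds with no extra moment assumption on $\bp$ beyond $\gss(\bp)<\infty$, because \refT{TM1}\ref{TM1a} has no such assumption.

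For the $x^r$ half I would repeat the argument using \eqref{Mooa}, but now only for integer $r$, getting $n^{-3r/2}\E[\gL_n(i_n)^r]\le C_r i_n^r n^{r} n^{-3r/2} = C_r (i_n n\qqw)^r \to C_r (\tfrac2\gs x)^r$, so $\E[\GLX(x)^r]\le C_r x^r$ for integer $r$. However \eqref{Mooa} requires the finite $(r+1)$-th moment assumption \eqref{Moo}, which the statement of \refT{TM2} does not impose — so the honest route is: the bound $\E[\GLX(x)^r]\le C_r x^r$ for \emph{all} real $r\ge1$ should instead be extracted from the $e^{-cx^2}$ bound together with a separate small-$x$ estimate, OR one notes $\min(x^r,e^{-cx^2})\le$ something. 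Actually the cleanest fix is to prove $\E[\GLX(x)^r]\le C_r x^r$ directly for all real $r\ge1$ from the case $r=1$ (which needs only $\gss<\infty$, cf.\ \eqref{egln}) combined with an a.s.\ pointwise \Holder/Lipschitz-type bound near $0$ — but since \refT{TNC} and \refS{SHolder} show $\GLX$ is \HCga{} for all $\ga<1$, in particular $\GLX(x)=O(x^{1-\eps})$ a.s., one gets control for small $x$; the interpolation then yields all real $r$. Then \eqref{tm2} follows by taking, for each $x$, the better of the two bounds. The main obstacle I anticipate is exactly this mismatch: getting the $x^r$ bound for \emph{real} (non-integer) $r$ without the moment hypothesis \eqref{Moo}, so that the statement of \refT{TM2} — which assumes only $\gss(\bp)<\infty$ — is actually supported. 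I would resolve it by working from the integer-$r$ cases plus H\"older interpolation in $r$ (Lyapunov's inequality applied to the random variable $\GLX(x)$ gives monotonicity of $\bigpar{\E[\GLX(x)^r]}^{1/r}$ in $r$, so the bound for real $r$ follows by squeezing between consecutive integers), and for the small-$x$ regime fall back on the case $r=1$ bound $\E\GLX(x)\le Cx$ — which holds under the stated hypotheses — combined with the a.s.\ uniform boundedness of $\GLX$ to control higher moments via $\E[\GLX(x)^r]\le (\sup\GLX)^{r-1}\E\GLX(x)$ after truncation. Everything else is routine uniform-integrability bookkeeping.
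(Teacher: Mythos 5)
Your skeleton---evaluate \eqref{t1} at $i_n\approx(2/\gs)x n\qq$, push the bounds of \refT{TM1} through the limit by Fatou's lemma, and pass from integer to real $r$ by Lyapounov's inequality---is exactly the paper's argument, and the $e^{-cx^2}$ half is correct as you wrote it (Fatou alone suffices; the uniform-integrability detour is not needed). The genuine gap is in how you treat the hypothesis \eqref{Moo} for the $x^r$ half. The ``mismatch'' you worry about does not exist: \refT{TM2} is a statement about the limit object $\GLX$ alone, and $\GLX$ does not depend on the offspring distribution (this is noted explicitly at the end of the proof of \refT{Teo}). Hence one is free to prove \eqref{tm2} by fixing one convenient $\bp$ with \emph{all} moments finite, e.g.\ $\Po(1)$; then \eqref{Moo} holds for every $r$, \eqref{Mooa} applies for every integer $r$, and Lyapounov's inequality handles non-integer $r$. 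This is precisely what the paper does, and it makes your fallback construction unnecessary.

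More importantly, the fallback you sketch in its place does not work as stated. The a.s.\ bound $\GLX(x)=O(x^{1-\eps})$ coming from \HCga{} continuity carries a \emph{random} constant (the H\"older seminorm); to turn it into a moment bound you would need $\E\normHaoo{\GLX}^r<\infty$ for $\ga$ close to $1$, which the paper provides only for $\ga<\frac12$ (\refC{CTH2}), and even then you would obtain the exponent $\ga r<r$, not the claimed $x^r$. Likewise the inequality $\E[\GLX(x)^r]\le(\sup\GLX)^{r-1}\E\GLX(x)$ is not legitimate: $\sup\GLX$ is a random variable and cannot be pulled out of the expectation; repairing this via H\"older's inequality would require moment bounds on $\sup\GLX$ that are not available at this stage and essentially presuppose \eqref{tm2}. (Leaning on \refS{SHolder} here is also structurally awkward, since that section comes after, and uses, the moment estimates of the present one.) Finally, the trivial case $x=0$, where $\GLX(0)=0$ a.s., should be recorded to cover all $x\ge0$.
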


\begin{proof}
Fix an offspring distribution $\bp$ with $\mu(\bp)=1$ and all moments
finite. (For example, we may choose a well-known example such as $\Po(1)$ or
$\Bi(2,\frac12)$.)
Let $x\in(0,\infty)$. %, and consider only $n\ge 1/x$. 
Define $i_n:=\ceil{2\gs\qw xn\qq}$. Then $i_n/n\qq\to 2x/\gs$ as \ntoo, 
and \eqref{t1} implies
\begin{align}\label{tmm}
  n^{-3/2}\gL_n(i_n)
\dto \frac{\gs}2\GLX(x).
\end{align}
Hence,  Fatou's lemma and \eqref{Mooh} yield, for any $r\ge1$,
\begin{align}\label{tmp}
   \E[\GLX(x)^r] \le 
(2/\gs)^r \liminf_\ntoo \E\bigsqpar{  n^{-3r/2}\gL_n(i_n)^r}
\le C_r e^{-cx^2}.
\end{align}
Similarly, 
Fatou's lemma and \eqref{Mooa} yield
\begin{align}\label{tmq}
   \E[\GLX(x)^r] 
\le C_r x^r
\end{align}
for integer $r\ge1$;
equivalently, the $L^r$ norm is estimated by $\norm{\GLX(x)}_r \le C_r x$.
This estimate extends to all real $r\ge1$ by 
Lyapounov's inequality.
Hence, \eqref{tmp} and \eqref{tmq} both hold for all $r\ge1$, which yields
\eqref{tm2} for $x>0$.

Finally, 
the result \eqref{tm2} is trivial for $x=0$, since $\GLX(0)=0$ a.s.
\end{proof}

%\begin{align}
%  \E\GLX(x) \le Cx, \qquad x\ge0
%.\end{align}

\begin{remark}\label{RM2}
 The same argument shows that \refT{T0} and \eqref{Looh}--\eqref{Looa} imply
\begin{align}\label{rm2}
  \E[\LX(x)^r]
\le C_r \min\bigpar{x^r, e^{-c x^2}},
\end{align}
for any  $r \ge 1$ and all $x\ge0$.
\end{remark}

The proof of \refT{TM1} relies on an invariance property of the law of
\GWt{s} under random re-rooting 
proved by Bertoin and Miermont \cite[Proposition 2]{BertCut}. 
A pointed tree is a pair $(T, v)$, where $T$ is an ordered rooted tree
(also called planar rooted tree)
and $v$ is a vertex of $T$. 
We endow the space of pointed trees with the $\gs$-finite measure
$\P^{\bullet}$ defined by  
\begin{align} \label{Moa}
\P^{\bullet}((T,v)) = \P( \GGWp = T),
\end{align}
where $\GGWp$ is a \GWt{} with offspring distribution $\bp$ such that
$\mu(\bp)=1$. We let $\E^{\bullet}$ denote the expectation under this
``law''. In particular, the conditional law $\P^{\bullet}(\, \cdot \, \mid |T|
= n)$ on the space of pointed trees with $n$ vertices is well defined, and
equals the distribution of $(\GGWp_{n}, v)$ where given the \cGWt{}
$\GGWp_{n}$ of order $n$, $v$ is a uniform random vertex of $\GGWp_{n}$. 

Let us now describe the transformation of pointed trees of Bertoin and
Miermont \cite[Section 4]{BertCut}. 
They work with planted planar trees; 
the base in the planted tree is useful since it implicitly
specifies the ordering of the transformed tree. 
However, 
we ignore this detail
and formulate their transformation for rooted trees;
our formulation is easily seen to be equivalent to theirs. 
For any rooted planar tree $T$ and vertex $v$ of
$T$, let $T_{v}$ be the fringe subtree of $T$ rooted at $v$, and let $T^{v}$
be the subtree of $T$ obtained by deleting all the strict descendants of $v$
in $T$. We define a new pointed tree $(\hat{T},{\hat{v}})$ in the following
way.  
If $v$ is the root, we do nothing, and let $\bigpar{\hat{T},\hat{v}}=(T,v)$.
Otherwise,
first remove the edge $e(v)$ between $v$ and its parent $pr(v)$ in $T$, and
instead connect $v$ to the root of $T$. 
We then re-root the resulting tree at $pr(v)$ and obtain the new
rooted tree $\hat{T}$, which we point at $\hat v = v$. 
Note that $\hat T_{\hat v}=T_v$, and that
$\hat{T}^{\hat{v}}\setminus\set{\hat v}$ equals
$T^v\setminus\set{v}=T\setminus T_v$ rerooted at $pr(v)$.

\citet[Proposition 2 and its proof]{BertCut} establish that this
transformation preserves the measure $\P^\bullet$; 
this includes the following. 

\begin{proposition}[\citet{BertCut}] \label{PropositionBer}
Under $\P^{\bullet}$, $(\hat{T}^{\hat{v}}, T_{v})$ and $(T^{v}, T_{v})$ have
the same ``law''.  
Furthermore, the trees $T^{v}$ and $T_{v}$ are independent, with $T_{v}$
being a \GWt{} with offspring distribution $\bp$.
%$\GGWp$.  
\nopf
\end{proposition}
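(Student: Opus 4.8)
The plan is to treat the two assertions separately. The independence of $T^{v}$ and $T_{v}$ together with the fact that $T_{v}$ is a \GWt{} with offspring distribution $\bp$ is simply the branching property, which can be read off the product form of $\P^{\bullet}$; and the equality in ``law'' of $(\hat T^{\hat v},T_{v})$ and $(T^{v},T_{v})$ will follow once we know that the re-rooting map $\Phi\colon(T,v)\mapsto(\hat T,\hat v)$ preserves $\P^{\bullet}$, since $\hat v=v$ and $\hat T_{\hat v}=T_{v}$ (as already noted above, the subtree hanging at $v$ is carried along unchanged), so that $(\hat T^{\hat v},\hat T_{\hat v})$ is a deterministic functional of $(\hat T,\hat v)$ equal at $(T,v)$ to $(T^{v},T_{v})$.

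\emph{Factorisation.} First I would record that $(T,v)\mapsto\bigpar{(T^{v},v),\,T_{v}}$ is a bijection between pointed trees and pairs consisting of a rooted tree with a distinguished \emph{leaf} and an arbitrary rooted tree, the inverse grafting the second tree in place of the distinguished leaf of the first. Writing $\P(\GGWp=T)=\prod_{w\in T}p_{\dout(w)}$ and observing that $\dout_{T}(w)=\dout_{T^{v}}(w)$ for every $w\in T^{v}$ with $w\neq v$, that $\dout_{T}(v)=\dout_{T_{v}}(v)$, and that $\dout_{T}(w)=\dout_{T_{v}}(w)$ for every strict descendant $w$ of $v$, one gets
\begin{align*}
\P^{\bullet}\bigpar{(T,v)}
=\Bigpar{\prod_{w\in T^{v},\;w\neq v}p_{\dout_{T^{v}}(w)}}\,\P(\GGWp=T_{v}).
\end{align*}
The first factor depends only on $(T^{v},v)$, so under the bijection above $\P^{\bullet}$ becomes a product measure: $(T^{v},v)$ and $T_{v}$ are independent, and $T_{v}$ is distributed as $\GGWp$. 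This is the second assertion.

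\emph{Invariance under $\Phi$.} Let $o=u_{0},u_{1},\dots,u_{k}=v$ be the ancestral path of $v$ in $T$. If $k\le1$ then $\Phi$ is the identity; if $k\ge2$ then $\Phi$ removes the edge $\set{u_{k-1},u_{k}}$, adds the edge $\set{u_{0},u_{k}}$, and moves the root from $u_{0}$ to $u_{k-1}$ (the planar orders being transported through the planted-tree device of \cite{BertCut}, which makes $\Phi$ a well-defined involution: a second application removes $\set{u_{0},u_{k}}$, restores $\set{u_{k-1},u_{k}}$, and moves the root back). Since $\P^{\bullet}\bigpar{(T,v)}=\P(\GGWp=T)=\prod_{w}p_{\dout_{T}(w)}$ depends on the out-degrees alone, it suffices to check that $\Phi$ preserves every out-degree. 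Passing from $T$ to $\hat T$, the undirected degree goes up by $1$ at $u_{0}$, down by $1$ at $u_{k-1}$, and is unchanged at all other vertices (in particular at $v$, which swaps the neighbour $u_{k-1}$ for $u_{0}$); simultaneously the root-indicator $\indic{w\neq\text{root}}$ switches at $u_{0}$ (from $0$ to $1$) and at $u_{k-1}$ (from $1$ to $0$), contributing $-1$ to $\dout$ at $u_{0}$ and $+1$ at $u_{k-1}$. The two effects cancel vertex by vertex, so $\dout_{\hat T}\equiv\dout_{T}$ and hence $\P^{\bullet}\bigpar{(\hat T,\hat v)}=\P^{\bullet}\bigpar{(T,v)}$. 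Being a measure-preserving involution, $\Phi$ gives $(\hat T,\hat v)\eqd(T,v)$ under $\P^{\bullet}$, and restricting to $(\hat T^{\hat v},\hat T_{\hat v})=(\hat T^{\hat v},T_{v})$ yields $(\hat T^{\hat v},T_{v})\eqd(T^{v},T_{v})$.

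I expect the only genuine obstacle to be the bookkeeping of the planar (ordered) structure: to turn $\Phi$ into an honest bijection on \emph{ordered} pointed trees one needs the planted-tree formalism of \cite{BertCut}, and one must verify that with that convention $\Phi$ is an involution and that the fringe subtree at $v$ is transported without reordering. The out-degree cancellation and the branching-property factorisation are then routine. Throughout, one should keep in mind that $\P^{\bullet}$ is only $\gs$-finite, so all equalities are equalities of $\gs$-finite measures, equivalently of the probability measures obtained by conditioning on $\abs{T}=n$.
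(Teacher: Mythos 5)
Your proposal is correct, and it in fact supplies an argument that the paper itself omits: the paper states the proposition with no proof, citing \citet[Proposition 2 and its proof]{BertCut} for the fact that the re-rooting transformation preserves $\P^{\bullet}$, and it explicitly declares that it ignores the planar bookkeeping handled by the planted-tree device. Your route is the same one the citation points to. The two ingredients check out: the out-degree bookkeeping is right, since $\dout(w)=d(w)-\indic{w\neq \text{root}}$, the degree changes $+1$ at $u_0$ and $-1$ at $u_{k-1}$ are exactly cancelled by the root indicator switching at those two vertices, and all other vertices (including $v$) keep both degree and root status, so $\P^{\bullet}$, which depends only on out-degrees, is preserved atom by atom by the involution $\Phi$; combined with $\hat{v}=v$ and $\hat{T}_{\hat{v}}=T_{v}$ this yields $(\hat{T}^{\hat{v}},T_{v})\eqd(T^{v},T_{v})$. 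The factorisation $\P^{\bullet}((T,v))=\bigpar{\prod_{w\in T^{v},\,w\neq v}p_{\dout_{T^{v}}(w)}}\P(\GGWp=T_{v})$, via the graft-at-the-marked-leaf bijection, is likewise correct and gives the independence and the \GWt{} law of $T_{v}$ (as an identity of a $\gs$-finite measure with a product measure, as you note). The only loose end is the one you flag yourself — making $\Phi$ an honest bijection on \emph{ordered} pointed trees, which requires a convention for where the transferred edge is inserted (your remark that $\Phi$ is ``the identity'' for $k=1$ is only true up to this convention); this is precisely the detail the paper delegates to the planted-tree formalism of \citet{BertCut}, so your treatment is consistent with the paper's.
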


\begin{proof}[Proof of \refT{TM1}]
For a rooted plane tree $T$ and $i \ge 0$ an integer, 
it will be convenient to write $Z_{i}(T):=L_T(i)$
for the number of vertices at distance $i$ from the root of $T$. For a
vertex $v$ in $T$, note that the number of vertices at distance $i \ge 1$
from $v$ is less than or equal to $Z_{i-1}(\hat{T}^{\hat{v}}) +
Z_{i}(T_{v})$.
(Strict inequality may occur because of the extra vertex $\hat v$
added in $\hat{T}^{\hat{v}}$.) 
From \eqref{dpropro}, we thus obtain
that if $\gL_{T}$ is the distance profile of $T$ defined
in \eqref{dpro}, then
\begin{align}  \label{Mob}
\gL_{T}(i)\le \sum_{v \in T} \bigpar{ Z_{i-1}(\hat{T}^{\hat{v}}) + Z_{i}(T_{v})},
\qquad i \geq 1. 
\end{align}
If $r\ge1$ and $|T|=n$, then \eqref{Mob} and Jensen's inequality yield
\begin{align}  \label{Mobb}
\bigpar{n\qw\gL_{T}(i)}^r
\le \frac{1}{n}\sum_{v \in T} \bigpar{ Z_{i-1}(\hat{T}^{\hat{v}}) + Z_{i}(T_{v})}^r.
%\qquad i \geq 1. 
\end{align}
Consequently, using
\eqref{Moa},
\begin{align} \label{Moc}
\E[  \gL_n(i)^{r} ] & \le n^{r} \E^{\bullet} \bigsqpar{ \bigpar{ Z_{i-1}(\hat{T}^{\hat{v}}) + Z_{i}(T_{v})}^{r} \, \big | \, |T| = n } \notag \\
& \le 2^{r} n^{r}  \E^{\bullet} \bigsqpar{  Z_{i-1}(\hat{T}^{\hat{v}})^{r} + Z_{i}(T_{v})^{r} \, \big | \, |T| = n }.
\end{align} 

By Proposition \ref{PropositionBer}, we have on the one hand that
\begin{align} \label{Mod}
\E^{\bullet} \bigsqpar{  Z_{i-1}(\hat{T}^{\hat{v}})^{r} \, \big | \, |T| = n } & = \E^{\bullet} \bigsqpar{  Z_{i-1}(T^{v})^{r} \, \big | \, |T| = n } \notag \\
& \le \E^{\bullet} \bigsqpar{  Z_{i-1}(T)^{r} \, \big | \, |T| = n } \notag \\
& = \E [ Z_{i-1}( \GGWp_{n})^{r}  ].
\end{align} 
On the other hand, since $|T| = |T^{v}| + |T_{v}| -1$, we see from Proposition \ref{PropositionBer} that
\begin{align} 
\E^{\bullet} \bigsqpar{  Z_{i}(T_{v})^{r} \, \big | \, |T| = n } & = \sum_{m=1}^{n} \E^{\bullet} \bigsqpar{  Z_{i}(T_{v})^{r} \, \big | \, |T_{v}| = m,  |T^{v}| = n-m+1 }  \notag \\
& \hspace*{20mm} \times  \P^{\bullet} \bigsqpar{  |T_{v}| = m \, \big | \, |T| = n } \notag \\
& =  \sum_{m=1}^{n} \E \bigsqpar{  Z_{i}(\GGWp_{m})^{r} } \P^{\bullet} \bigsqpar{  |T_{v}| = m \, \big | \, |T| = n } \notag \\
& \le \sup_{1 \le m \le n} \E \bigsqpar{  Z_{i}(\GGWp_{m})^{r} } \label{Moe}. 
\end{align} 
Combining \eqref{Moc}, \eqref{Mod} and \eqref{Moe}, we have that 
\begin{align} \label{Mof}
\E[ \gL_n(i)^{r} ] 
&\le 2^{r} n^{r} \bigpar{\E \bigsqpar{ Z_{i-1}( \GGWp_{n})^{r}  }
+ \sup_{1 \le m \le n} \E \bigsqpar{  Z_{i}(\GGWp_{m})^{r} }}
\notag\\
&= 2^{r} n^{r} \bigpar{\E \bigsqpar{L_n(i-1)^{r} }
+ \sup_{1 \le m \le n} \E \bigsqpar{  L_m(i)^{r} }}
.\end{align} 
Therefore, 
\eqref{Mooh} and \eqref{Mooa} follow from
\eqref{Looh} and \eqref{Looa}, proved in 
\cite[Theorem 1.6]{SJ250}
and \cite[Theorem 1.13]{SJ167},
respectively.

Finally \eqref{Moob} follows from \eqref{Mooh} and \eqref{Mooa} as noted above.
\end{proof}

\section{H\"older continuity}\label{SHolder}

We now discuss H\"older continuity properties of the continuous random
functions $\LX$ and  $\GLX$.
We begin with $\LX$, the local time of $\be$.
The results are to a large extent known, although we do not know any
reference to the form of them stated here; nevertheless we treat also 
$\LX$ in detail, as a background to and preparation for the discussion of
$\GLX$ below.

It is well-known that the local time of Brownian motion at some fixed time
\as{} is H\"older continuous of order $\alpha$ 
for any $\alpha <1/2$,
%but not for any $\ga\ge\frac12$,
%see \cite[VI.(1.8) and VI.(1.12)]{RY}.
see \cite[VI.(1.8)]{RY}.
It is also known that this \Holder{} continuity 
extends by standard arguments to the 
local times of Brownian bridge and Brownian excursion, 
and thus (see \refT{T0}) to $\LX$.
We need a quantitative version of this.

For $\ga>0$ and an interval $I\subseteq\bbR$, let 
the \Holder\ space
$\Ha=\Ha(I)$ be the
space of functions $f:I\to\bbR$ such that
\begin{align}
  \normHa{f}:=\sup\Bigcpar{\frac{|f(x)-f(y)|}{|x-y|^\ga}: x,y\in I, x\neq y}
<\infty
\end{align}
This is a semi-norm. We may regard $\Ha$ as a space of functions modulo
constants, and then $\normHa\cdot$ is a norm and $\Ha$ a Banach space.

\begin{theorem}\label{TH1}
  Let $0<\ga<\frac12$ and let $A<\infty$.
Then
\begin{align}\label{th1}
  \Ez \normHaA{\LX} <\infty.
\end{align}
In particular, $\LX\in\Haoo$ \as
\end{theorem}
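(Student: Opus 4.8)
The plan is to deduce \eqref{th1} from a single moment estimate for the increments of $\LX$, namely: for every integer $k\ge1$ and every $A<\infty$ there is a constant $C=C(k,A)<\infty$ with
\begin{align*}
  \Ez\bigsqpar{\bigpar{\LX(x)-\LX(y)}^{2k}}\le C\,|x-y|^{k},
\qquad x,y\in[0,A].
\end{align*}
Indeed, once this is available, a quantitative form of Kolmogorov's continuity criterion (the Garsia--Rodemich--Rumsey lemma) gives $\Ez\bigsqpar{\normHaA{\LX}^{2k}}<\infty$ whenever $(k-1)/(2k)>\ga$; since $\ga<\tfrac12$ we may pick $k$ that large, and then in particular $\Ez\normHaA{\LX}<\infty$, which is \eqref{th1}. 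For the final assertion, $\LX$ is continuous with support contained in $[0,\max_{t\in\oi}\be(t)]$, so $\normHaoo{\LX}=\normHax{\LX}{[0,\max\be]}$; since $\max\be<\infty$ a.s.\ and the events $\set{\max\be\le A}$ exhaust the probability space as $A\to\infty$, \eqref{th1} for each fixed $A$ yields $\normHaoo{\LX}<\infty$ a.s.

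To prove the displayed increment bound I would reproduce, for $\be$ in place of Brownian motion, the classical computation that gives $\Ez\bigsqpar{(\ell^{\,x}_1-\ell^{\,y}_1)^{2k}}\le C_k|x-y|^k$ for the local time $\ell$ of standard Brownian motion (provable from Kac's moment formula for local times, or via Tanaka's formula together with the Burkholder--Davis--Gundy and resolvent inequalities, as in Barlow--Yor). The occupation-density identity \eqref{local} combined with the Markov property of the normalized excursion $\be$ expresses $\Ez\bigsqpar{\LX(x_1)\cdots\LX(x_m)}$ as a finite sum --- over the possible time orderings of $m$ ``visits'' in $\oi$ --- of integrals over simplices of products of the transition kernels $q_t(x,y):=p_t(x,y)-p_t(x,-y)$ of Brownian motion killed at $0$ (with $p_t$ the standard heat kernel on $\bbR$), divided by a finite positive normalising constant. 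One then expands $(\LX(x)-\LX(y))^{2k}$ and uses the elementary bounds $0\le q_t(x,y)\le p_t(x,y)$, $\intoo q_t(x,y)\dd y\le1$, $\intoo\bigabs{q_t(x,z)-q_t(y,z)}\dd z\le C\bigpar{t\qqw|x-y|}\bmin1$ and $\intoi t\qqw\dd t<\infty$; the sign cancellations in the expansion convert the $2k$ kernel products into $k$ factors of order $O(|x-y|)$, exactly as in the Brownian case, giving the bound $C|x-y|^k$.

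The main obstacle is this last step: although the combinatorics is identical to the classical Brownian local-time computation, carrying the cancellation argument through for the \emph{pinned} process $\be$ needs care with the normalising constant and with the time integrals near the endpoints $0$ and $1$ of $\oi$, where the kernels blow up like $t\qqw$ (but remain integrable). Two variants sidestep this. One replaces the direct kernel computation by a known description of the process $(\LX(x))_{x\ge0}$ as a time-changed squared Bessel-type bridge, obtained from the Ray--Knight theorems for the excursion, and then invokes standard moment bounds for such diffusions. The other transfers the increment bound from the discrete side, combining \refT{T0}, the lower semicontinuity of $\normHaA{\cdot}$ under uniform convergence, and uniform-in-$n$ moment estimates for increments of the profile of a \cGWt{} of the kind available by the methods of \cite{SJ250, SJ167}.
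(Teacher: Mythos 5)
Your reduction is sound as far as it goes: the claimed increment bound $\Ez\bigsqpar{(\LX(x)-\LX(y))^{2k}}\le C|x-y|^k$ together with the quantitative Kolmogorov/GRR criterion would indeed give $\Ez\normHaA{\LX}<\infty$ for every $\ga<\tfrac12$, and the passage from finite intervals to $\Haoo$ via the compact support $[0,\sup\be]$ is the same observation the paper makes. The genuine gap is that the increment bound itself is never established, and it is exactly the hard part of the theorem. Your primary route treats the normalized excursion as if its finite-dimensional laws were products of killed heat kernels divided by a constant; in fact they carry entrance and exit densities at both endpoints of $[0,1]$, and redoing the $2k$-fold cancellation bookkeeping with those extra singular factors is precisely the "main obstacle" you flag and then defer. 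Neither fallback closes it. The discrete-transfer variant needs uniform-in-$n$ moment bounds for \emph{increments} of the profile $L_n$, but \cite{SJ250} and \cite{SJ167} give only pointwise bounds on $\E[L_n(i)^r]$; the only known increment estimate is the fourth-moment bound \eqref{bg4} of \citet{DrmotaG}, which yields Hölder exponents $\ga<1/4$ only, and its extension \eqref{bgp} to higher moments is stated in this paper as an open problem, so that route is not "available by the methods of" those references. The Ray--Knight variant is plausible in principle, but Pitman's description \cite{Pitman} of $(\LX(x))_x$ is a time-inhomogeneous SDE with a singular drift, so "standard moment bounds for such diffusions" are not off the shelf; making that precise would require work comparable to the estimate you are trying to avoid.

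For comparison, the paper's proof sidesteps increment moments of $\LX$ entirely except for a martingale piece: it represents $\be$ as a semimartingale through the Bessel(3) process, applies Tanaka's formula \eqref{be5}, bounds the stochastic-integral term by Burkholder--Davis--Gundy plus the occupation times formula and the pointwise moment bound \eqref{rm2} (then Kolmogorov's criterion), bounds the drift term pathwise by H\"older's inequality and \eqref{rm2}, and finishes with the reflection $\be(1-t)\eqd\be(t)$. If you want to salvage a proof in the spirit of your plan, the cleanest repair is the one indicated in \refR{Rbridge}: prove your $2k$-th moment increment bound for the Brownian \emph{bridge} local time, where on $[0,1/2]$ the bridge is absolutely continuous with bounded density with respect to Brownian motion (and time reversal handles $[1/2,1]$), so the classical Brownian computation you invoke (Kac moment formula with cancellations, or Tanaka/BDG, or Barlow--Yor) applies legitimately; then transfer to $\LX$ by Vervaat's theorem \cite{Vervaat}, under which $\LX$ is a spatial shift of $\Lb$ and H\"older norms are preserved. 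As written, however, the proposal leaves the central estimate unproved.
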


\begin{proof}
We let $\ga$ and $A$ be fixed. Constants $C$ below may depend on $\ga$ and $A$.
%We write for simplicity $\Ha =\HaA$.

Recall that $\LX(x)=L^x_1$, where $L^x_t$ is the local time of a (standard)
Brownian excursion $\be$. The proof will actually show the result for 
$L^x_t$ for any fixed $t\in\oi$.

We first note that 
if $\beta$ is a $3$-dimensional Bessel process started from $0$ (see e.g.\
\cite[Section VI.3]{RY}), 
then a standard Brownian excursion $\be$ is given by
$\be(1)=0$ and 
\begin{align} 
  \be(t) = (1- t) \beta \Bigpar{ \frac{t}{1-t} }, \hspace*{4mm} 0 \leq t < 1;
\end{align}
see \cite[II.(1.5)]{Blu}. 
One then can deduce from \cite[VI.(3.3)]{RY} and an
application of the It\^o integration by parts formula that $\be$ satisfies
\begin{align} \label{be2}
  \be(t) 
= \int_{0}^{t} \Bigpar{ \frac{1}{\be(s)} - \frac{\be(s)}{1-s} } \dd s +  B(t), 
\qquad 0 \le t \le 1,
\end{align}
where $B$ is a standard Brownian motion. 
In particular, $\be$ is a continuous
semi-martingale. 

We now follow the proof of \cite[VI.(1.7), see also VI.(1.32)]{RY}, but avoid
localizing at the cost of further calculations.
Let 
\begin{align}\label{bv}
  \bv(t):=
\frac{1}{\be(t)} - \frac{\be(t)}{1-t} ,
\qquad 0 \le t <1,
\end{align}
so \eqref{be2} can be written
\begin{align} \label{be4}
  \be(t) 
= B(t) +\int_{0}^{t} \bv(s) \dd s
%\qquad 0 \le t \le 1
.\end{align}
Then, by Tanaka's formula \cite[VI.(1.2)]{RY},
writing $x^+:=x\bmax 0$,
\begin{align}\label{be5}
  L^x_t = 2 \bigpar{\be(t)-x}^+
- 2 \bigpar{-x}^+
&
-2\intot \indic{\be(s)>x}\dd B(s)
\notag\\&\qquad
-2\intot \indic{\be(t)>x} \bv(s)\dd s.
\end{align}
Fix $t\in\oi$ and denote the four random functions of $x$ on the \rhs{} of
\eqref{be5} by 
$F_1(x),\dots,F_4(x)$. 
Trivially, the first two are in $\cH_1$ (= Lipschitz), with norm at most 2.
Hence,
\begin{align}\label{be6}
\normHaA{F_1}+\normHaA{F_2}\le C.  
\end{align}

For each $x$, $F_3(x)$ is a continuous martingale in $t$.
Thus, if $0\le x<y\le A$, the
Burkholder--Davis--Gundy inequality  \cite[IV.(4.1)]{RY}
and the occupation times formula \cite[VI.(1.9)]{RY}
yield, for any $p\ge2$,
\begin{align}\label{be7}
  \E |F_3(x)-F_3(y)|^p&
\le C_p \E \Bigsqpar{\Bigpar{\intot \indic{x<\be(s)\le y}\dd s}^{p/2}}
\notag\\&
= C_p \E \Bigsqpar{\Bigpar{\int_x^y L^z_t\dd z}^{p/2}}
\notag\\&
= C_p (y-x)^{p/2}\E \Bigsqpar{\Bigpar{\frac{1}{y-x}\int_x^y L^z_t\dd z}^{p/2}}
\notag\\&
\le C_p (y-x)^{p/2}\E \Bigsqpar{{\frac{1}{y-x}\int_x^y \bigpar{L^z_t}^{p/2}\dd z}}.
\notag\\&
= C_p (y-x)^{p/2}{\frac{1}{y-x}\int_x^y \E\bigpar{L^z_t}^{p/2}\dd z}.
\end{align}
We have $L^z_t \le L^z_1=\LX(z)$, and thus \eqref{rm2} 
implies $\E(L^z_t)^{p/2}\le C_p$ for all $z$; hence
\eqref{be7} yields
\begin{align}\label{be8}
  \E |F_3(x)-F_3(y)|^p&
\le C_p (y-x)^{p/2}.
\end{align}
Consequently, $F_3$ satisfies the Kolmogorov continuity criterion, 
and the version of it stated in \cite[I.(2.1)]{RY} shows that if $p$ is
chosen so large that $(p/2-1)/p > \ga$, then \eqref{be8} implies
\begin{align}\label{be9}
  \E\, \normHaA{F_3} 
\le \bigpar{ \E\, \normHaA{F_3} ^p}^{1/p}
<\infty.
\end{align}

Similarly, using the extension of the occupation times formula in 
\cite[VI.(1.15)]{RY}, if again $0\le x<y\le A$,
\begin{align}\label{be10}
  F_4(y)-F_4(x)
&=2\intot\indic{x<\be(s)\le y}\bv(s)\dd s
\notag\\&
=2\intot\indic{x<\be(s)\le y}
\Bigpar{\frac{1}{\be(s)} - \frac{\be(s)}{1-s}}\dd s
\notag\\&
=2\int_x^y\dd z
\intot \Bigpar{\frac{1}{z} - \frac{z}{1-s}}\dd L^z_s
.\end{align}
We now simplify and assume $0\le t\le \frac12$. Then \eqref{be10} implies
\begin{align}\label{be11}
\bigabs{  F_4(y)-F_4(x)}
\le2\int_x^y
\Bigpar{\frac{1}{z} + 2z}L^z_t \dd z
\le C\int_x^y \frac{1}{z} L^z_t \dd z
\le C\int_x^y \frac{\LX(z)}{z}  \dd z
.\end{align}
Let $p':=1/\ga$ and let $p:=(1-\ga)\qw>1$ be the conjugate exponent. 
Then, by \eqref{be11} and \Holder's inequality,
\begin{align}\label{be12}
\bigabs{  F_4(y)-F_4(x)}
\le C(y-x)^\ga\Bigpar{\int_x^y \frac{\LX(z)^p}{z^p}  \dd z}^{1/p}
%\le C(y-x)^\ga\Bigpar{\int_0^A \frac{\LX(z)^p}{z^p}  \dd z}^{1/p}
.\end{align}
Consequently,
\begin{align}\label{be13}
  \normHaoo{F_4}
\le C\Bigpar{\intoo \frac{\LX(z)^p}{z^p}  \dd z}^{1/p}.
\end{align}
Thus, using again \eqref{rm2},
\begin{align}\label{be14}
\Ez \normHaoo{F_4}^p
\le C\E{\intoo \frac{\LX(z)^p}{z^p}  \dd z}
= C\intoo \frac{\E\LX(z)^p}{z^p}  \dd z
<\infty
\end{align}
and thus
\begin{align}\label{be15}
  \Ez \normHaA{F_4}
\le   \Ez \normHaoo{F_4}<\infty.
\end{align}
Consequently, \eqref{be5}, \eqref{be6}, \eqref{be9} and \eqref{be15} yield
\begin{align}
\Ez \normHaA{L_t^{\cdotx}}<\infty
\end{align}
for $0\le t\le\frac12$.

We have for simplicity assumed $t\le\frac12$.
To complete the proof, we note that $\be$ is invariant under reflection:
$\be(1-t)\eqd\be(t)$ (as processes), 
and thus $L^x_1-L^x_{1/2}\eqd L^x_{1/2}$ (as processes in $x$).
Consequently,
\begin{align}\label{bre18}
\Ez \normHaA{\LX}=
  \Ez \normHaA{L_1^{\cdotx}}
\le  2 \Ez \normHaA{L_{1/2}^{\cdotx}}
<\infty,
\end{align}
showing \eqref{th1}. (The case $\frac12<t<1$ follows similarly.)

Finally, \eqref{th1} shows that \as, $\LX\in\HaA$ for every $A>0$.
Moreover, $\LX$ has finite support $[0, \sup\be]$, and thus $\LX\in\Haoo$.
\end{proof}

We have for simplicity considered a finite interval $[0,A]$ in \refT{TH1},
but the result can easily be  extended to $\Haoo$.
\begin{theorem}\label{THoo}
  Let $0<\ga<\frac12$.
Then
\begin{align}\label{thoo}
  \Ez \normHaoo{\LX} <\infty.
\end{align}
\end{theorem}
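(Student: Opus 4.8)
The plan is to bootstrap from \refT{TH1}, exploiting two facts about $\LX$: it has compact support $[0,\sup\be]$, and the excursion maximum has a Gaussian upper tail, $\P(\sup\be>t)\le Ce^{-ct^2}$ (classical). First I would record an elementary splitting of the \Holder{} seminorm over unit blocks: for any bounded continuous $f:\ooo\to\bbR$ and $0<\ga<1$,
\begin{align}\label{plan1}
\normHaoo{f}\le 2\sumjo\norm{f}_{\cH_\ga[j,j+1]}+2\sup_{z\ge0}|f(z)|.
\end{align}
Indeed, if $|x-y|\le1$ then $x$ and $y$ lie in at most two adjacent blocks $[j,j+1]$, $[j+1,j+2]$, and since $a^\ga+b^\ga\le 2(a+b)^\ga$ for $a,b\ge0$ one gets $|f(x)-f(y)|\le 2\bigpar{\norm{f}_{\cH_\ga[j,j+1]}+\norm{f}_{\cH_\ga[j+1,j+2]}}|x-y|^\ga$; if $|x-y|>1$ then $|f(x)-f(y)|\le 2\sup|f|\le 2\sup|f|\,|x-y|^\ga$. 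Applying \eqref{plan1} to $f=\LX$, using $\sup_z\LX(z)=2W$ from \eqref{wlx} together with $\E W<\infty$ (all moments of $W$ are finite; see the discussion around \eqref{dag}), and noting that $\norm{\LX}_{\cH_\ga[j,j+1]}=0$ whenever $j\ge\sup\be$, it remains to prove $\sumjo\E\norm{\LX}_{\cH_\ga[j,j+1]}<\infty$.

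The second step is a \emph{location-uniform} $L^p$ bound on the blocks: for $p>\tfrac{2}{1-2\ga}$ there is a constant $C=C_{\ga,p}$ with $\E\,\norm{\LX}_{\cH_\ga[j,j+1]}^p\le C$ for every $j\ge0$. This comes from revisiting the proof of \refT{TH1}. Writing $\LX=L_1^{\cdotx}$ and reducing to $t\le\tfrac12$ by the reflection argument there, decompose $L_t^x=F_1(x)+F_3(x)+F_4(x)$ as in \eqref{be5} (with $F_2\equiv0$ on $\ooo$). Here $F_1$ is $2$-Lipschitz, so $\norm{F_1}_{\cH_\ga[j,j+1]}\le2$; the Burkholder--Davis--Gundy and occupation-times estimate \eqref{be7}--\eqref{be8} gives $\E|F_3(x)-F_3(y)|^p\le C_p|x-y|^{p/2}$ with $C_p$ \emph{not depending on} $x,y$, since it only uses the uniform bound $\E\bigsqpar{(L_t^z)^{p/2}}\le C_p$ coming from \eqref{rm2}; hence Kolmogorov's continuity criterion applied on each unit interval yields $\E\,\norm{F_3}_{\cH_\ga[j,j+1]}^p\le C_{\ga,p}$, uniformly in $j$; and $\norm{F_4}_{\cH_\ga[j,j+1]}\le\normHaoo{F_4}$, with $\E\,\normHaoo{F_4}^p<\infty$ by the argument for \eqref{be13}--\eqref{be14}, the only change being that the bound $\tfrac1z+2z\le C\tfrac1z$ used there on a bounded interval is replaced by $\tfrac1z+2z\le 3\max(z,\tfrac1z)$; the extra term $\int_x^y z\,\LX(z)\dd z$ thereby produced is controlled exactly as in \eqref{be12}--\eqref{be14} via \eqref{rm2}.

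Finally I would combine these. Since $\norm{\LX}_{\cH_\ga[j,j+1]}=0$ on the event $\{\sup\be\le j\}$, \Holder's inequality and the previous step give
\begin{align}\label{plan3}
\E\,\norm{\LX}_{\cH_\ga[j,j+1]}
\le\bigpar{\E\,\norm{\LX}_{\cH_\ga[j,j+1]}^p}^{1/p}\bigpar{\P(\sup\be>j)}^{1-1/p}
\le C^{1/p}\bigpar{Ce^{-cj^2}}^{1-1/p},
\end{align}
which is summable over $j\ge0$; substituting into \eqref{plan1} then yields $\E\normHaoo{\LX}<\infty$, which is \eqref{thoo}.

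The main obstacle is the middle step: one must verify that no constant in the proof of \refT{TH1} deteriorates on blocks far from the origin. The martingale contribution $F_3$ is genuinely location-uniform because it is governed by the uniform moment bound \eqref{rm2}, but the real bookkeeping is in checking the drift term $F_4$ (whose naive estimate in \refT{TH1} was interval-dependent) and, trivially, the Lipschitz term $F_1$. Everything else --- the combinatorial splitting \eqref{plan1}, the Gaussian tail of $\sup\be$, and the finiteness of $\E W$ --- is standard.
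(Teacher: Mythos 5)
Your proposal is correct and follows essentially the same route as the paper: re-run the proof of \refT{TH1} on unit blocks $[j,j+1]$, use that the block seminorm vanishes unless $\sup\be>j$ together with the subgaussian tail of $\sup\be$ and \Holder's inequality, and sum over blocks. The only (harmless) variations are that you make the block bound uniform in $j$ by controlling the drift term $F_4$ globally via the Gaussian decay in \eqref{rm2}, and you add a $\sup$-term in the block splitting, whereas the paper tolerates a block bound growing like $m^p$ and absorbs it with the factor $e^{-cm^2}$.
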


\begin{proof}
Note that the proof of \refT{TH1}
actually shows $\Ez\normHaA{\LX}^p<\infty$ for some $p>1$, see 
\eqref{be9} and \eqref{be14}.
Moreover, the same proof applied to the interval $[m,m+1]$ shows that for
any $m\ge1$,
\begin{align}\label{oro}
  \Ez\normHax{\LX}{[m,m+1]}^p\le Cm^p,
\end{align}
with $C$ independent of $m$, where the factor $m^p$ comes from \eqref{be11}.
Furthermore, $\LX=0$ on $[m,m+1]$ unless $\sup\be>m$. 
The explicit formula for the distribution function of $\sup\be$,
see \eg{} \cite{Chung-excursion}, \cite{Kennedy} or \cite[p.~114]{Drmota},
yields the well-known subgaussian decay 
\begin{align}\label{sug}
\P\bigpar{\sup\be>x}\le e^{-cx^2},
\qquad x\ge1.
\end{align}
(See also \cite{SJ250} for the corresponding result for
heights of \cGWt{s}.) 
Combining \eqref{oro} and \eqref{sug} with
\Holder's inequality, we obtain, with $1/q=1-1/p$, 
\begin{align}\label{hsa}
  \Ez\normHax{\LX}{[m,m+1]}
&
\le \Bigpar{\Ez\normHax{\LX}{[m,m+1]}^p}^{1/p}
\P\bigpar{\normHax{\LX}{[m,m+1]}\neq0}^{1/q}
\notag\\&
\le Cm \P\bigpar{\sup\be>m}^{1/q}
\le Cm e^{-cm^2},
\end{align}
Finally, it is easy to see that
\begin{align}\label{hsb}
\normHaoo{\LX}
\le \summo\normHax{\LX}{[m,m+1]}  
\end{align}
and thus, by \eqref{th1} and \eqref{hsa},
\begin{align}\label{hsc}
\Ez\normHaoo{\LX}
\le \summo\Ez\normHax{\LX}{[m,m+1]}  
\le C + \summ Cme^{-cm^2}<\infty.
\end{align}
\end{proof}

\begin{remark}\label{Rbridge}
The proofs above apply also to the local time %$\hL^x_t$ say, 
of the Brownian bridge. The main differences are that we consider functions
on $\oooo$ and that the term $1/\be(t)$ in
\eqref{bv} disappears. This shows that, writing $\Lb(x)$
for the local time of the Brownian bridge at time $t=1$,
\begin{align}\label{bbr}
  \Ez\normHax{\Lb}{\oooo} <\infty.
\end{align}
By \citet{Vervaat}, the Brownian excursion can be constructed from
a Brownian bridge by  shifts in both time and space,
and thus $\LX$ is a (random) shift of $\Lb$.
Consequently, with this coupling, 
\begin{align}\label{bb=}
\normHaoo{\LX}=\normHax{\Lb}{\oooo},  
\end{align}
and thus \eqref{thoo} and \eqref{bbr} are equivalent.
This yields an alternative proof of \refT{THoo}.
\end{remark}

{
We have so far considered $\ga<1/2$.
For $\ga\ge1/2$, it is well-known that Brownian motion is \as{} not locally
\HCga; 
see \eg{} \cite[I.(2.7)]{RY} or \cite[Section 1.2]{MPeres} for even more
precise results. Moreover, the same holds for the local time $\bL^x_t$
of Brownian
motion, regarded as a function of the space variable $x$ (for any fixed $t>0$); 
we do not know an explicit reference but
this follows easily, for example, 
from the first Ray--Knight theorem
\cite[XI.(2.2)]{RY}, which says that if we stop at $\tau_1$, the hitting
time of 1, then the process $(\bL^{1-x}_{\tau_1})$, $0\le x\le 1$, 
is a 2-dimensional squared
Bessel process, which (at least away from 0) has the same smoothness as a
Brownian motion (since it can be written as the sum of squares of two
independent Brownian motions); we omit the details.
}

Similarly,  $\LX=L^x_1$, which is the local time of a standard Brownian
excursion is \as{} not \HCh. One way to see this is that if we stop a
Brownian motion when its local time at 0 reaches 1, \ie, 
at $\tau:=\inf\set{t:\bL^0_t=1}$, then the part of the Brownian motion
before time $\tau$ and above
any fixed $\gd>0$ is \as{} included in a finite number of excursions, and
these are independent, conditioned on the number of them and their lengths.
Hence, if the local time of a Brownian excursion were \HCh{} with positive
probability, then so would $\bL_\tau^x$, restricted to $x\ge\gd$, be, 
and then $\bL_t^x$, $x\ge\gd$, would be \HCh{} with positive probability for
some rational $t$, and thus for all $t>0$ by scaling, which contradicts the
argument above.

\subsection{Distance profile}

We next consider the asymptotic distance profile $\GLX$.
We first note that the nice re-rooting invariance property of the standard
Brownian excursion $\be$ presented in Section \ref{SSCRT} yields the
following corollary of \refT{THoo}.

\begin{corollary}\label{CTH2}
  Let $0<\ga<\frac12$.
Then
\begin{align}
  \Ez \normHaoo{\GLX} <\infty.
\end{align}
In particular, the random function $\GLX\in\Haoo$ \as
\end{corollary}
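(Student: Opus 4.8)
The plan is to combine the re-rooting representation \eqref{dproI} with \refT{THoo}. Recall from \eqref{dproI} that, almost surely, $\GLX(x)=\int_0^1\LXs(x)\dd s$ for $x\ge0$, where $\LXs$ denotes the local time of the re-rooted excursion $\bes$. The first step is a measurability remark: from \eqref{PathT} and \eqref{ddd}, $\bes(t)$ is jointly (in fact continuously) a function of $(s,t)$, so the usual construction of local time yields a version of $\LXs(x)$ that is jointly measurable in $(s,x,\go)$; since each $\LXs$ is continuous in $x$, the supremum
\begin{align*}
\normHaoo{\LXs}=\sup\Bigcpar{\frac{|\LXs(x)-\LXs(y)|}{|x-y|^\ga}:x,y\in\bbQ\cap\ooo,\ x\neq y}
\end{align*}
is a countable supremum of jointly measurable functions, hence jointly measurable in $(s,\go)$.

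The second step is the elementary interchange of supremum, integral and expectation. For any outcome and any $x\neq y$ in $\ooo$,
\begin{align*}
|\GLX(x)-\GLX(y)|\le\int_0^1|\LXs(x)-\LXs(y)|\dd s\le|x-y|^\ga\int_0^1\normHaoo{\LXs}\dd s,
\end{align*}
so that $\normHaoo{\GLX}\le\int_0^1\normHaoo{\LXs}\dd s$, the \rhs{} being a priori possibly $+\infty$. Taking expectations and applying Tonelli's theorem,
\begin{align*}
\Ez\normHaoo{\GLX}\le\int_0^1\Ez\normHaoo{\LXs}\dd s.
\end{align*}
Finally, for each fixed $s\in[0,1]$ we have $\bes\eqd\be$ by \eqref{Reroot}; since the local time is a measurable functional of the excursion path and $\normHaoo{\cdot}$ is a measurable $[0,\infty]$-valued functional, this gives $\normHaoo{\LXs}\eqd\normHaoo{\LX}$, hence $\Ez\normHaoo{\LXs}=\Ez\normHaoo{\LX}$, which is finite by \refT{THoo}. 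Therefore $\Ez\normHaoo{\GLX}\le\Ez\normHaoo{\LX}<\infty$, and in particular $\int_0^1\normHaoo{\LXs}\dd s<\infty$ a.s., so $\GLX\in\Haoo$ a.s.\ (consistent with \refT{Teo}, which already gives $\GLX\in\cooq$; both sides of \eqref{dproI} are then continuous, so the identity holds for every $x$, not merely $\dd x$-a.e.).

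The main obstacle is precisely this measurability/Tonelli bookkeeping: one must ensure that a jointly measurable version of $s\mapsto\LXs$ exists and that \eqref{dproI} is a genuine pointwise identity rather than an $\dd x$-a.e.\ one. Both are standard facts about local times viewed as measurable functionals of the path, and once they are in place the corollary is immediate; the concluding assertion $\GLX\in\Haoo$ a.s.\ then follows from finiteness of the expectation.
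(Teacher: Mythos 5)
Your proof is correct and follows essentially the same route as the paper: bound $\normHaoo{\GLX}$ by $\int_0^1\normHaoo{\LXs}\dd s$ via \eqref{dproI}, take expectations, and invoke the re-rooting invariance \eqref{Reroot} together with \refT{THoo}. The additional measurability and Tonelli bookkeeping you supply is left implicit in the paper but is sound.
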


\begin{proof}
It follows from the
identity \eqref{dproI}  that
\begin{align}  \label{ineq1}
\normHaoo{\GLX} \le \int_{0}^{1}  \normHaoo{  \LXs} \dd s,
\end{align}
where for every $s \in\oi$, $\LXs$ denotes the local time of the
process $\be^{[s]}$ defined in \eqref{PathT}, which is distributed as a
standard Brownian excursion \eqref{Reroot}. 
Hence,
\refT{THoo} yields
\begin{align}  \label{ineq11}
\Ez \normHaoo{\GLX} \le \int_{0}^{1} \Ez \normHaoo{  \LXs} \dd s
=
\Ez \normHaoo{  \LX} <\infty
.\end{align}
%
%The final statement is proved as in \refT{TH1}, since $\GLX$ has compact
%support 
%$[0,\sup_{s,t} \ddd(s,t;\be)]\subseteq[0,2\sup\be]$, see \eqref{t1dx} and
%\eqref{ddd}. 
\end{proof}

However, it turns out that the averaging in \eqref{dproI} actually makes
$\GLX$ smoother than $\LX$; we have the following stronger result,
which improves \refC{CTH2}.

\begin{theorem}\label{TF}
The asymptotic distance profile $\GLX\in \Ha$ for every $\ga<1$, a.s.
Furthermore, $\GLX$
is \as{} absolutely continuous and
with a derivative $\GLXd$ (defined \aex) that belongs to $L^p(\bbR)$ for
every $p<\infty$.    
\end{theorem}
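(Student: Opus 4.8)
The plan is to exploit the averaging representation~\eqref{dproI} more quantitatively than in \refC{CTH2}. The key point is that, while each individual local time $\LXs$ is only $\Holder(\ga)$ for $\ga<1/2$, the singularity it has near the maximum of $\be^{[s]}$ is located at a point that \emph{moves} as $s$ varies, and after integrating over $s$ the worst-case modulus of continuity improves. Concretely, I would show that for $0<x<y$,
\begin{align}\label{plan1}
  \bigabs{\GLX(y)-\GLX(x)}
  =\Bigabs{\intoi\bigpar{\LXs(y)-\LXs(x)}\dd s}
  \le \intoi\bigabs{\LXs(y)-\LXs(x)}\dd s,
\end{align}
and then bound the right-hand side not pointwise in $s$ but in an averaged (e.g.\ $L^1$ or $L^2$) sense. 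By the occupation-density interpretation, $\intoi\abs{\LXs(y)-\LXs(x)}\dd s$ is comparable to a double integral over $(s,t)$ of indicator-type quantities, and after unfolding $\be^{[s]}$ via~\eqref{PathT} this becomes an expression directly in terms of $\be$ alone; I expect it to reduce to controlling the Lebesgue measure of pairs $(s,t)\in\oi^2$ with $\ddd(s,t;\be)$ in a window of width $y-x$, which is exactly $\int_x^y\GLX(z)\dd z$ by~\eqref{t1dx}. This suggests the cleaner route described next.

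\textbf{Step 1 (reduce to an $L^p$ bound on the density).} I would first prove that $\GLX$ is absolutely continuous with an a.e.\ derivative $\GLXd$ satisfying $\GLXd\in L^p(\bbR)$ for all $p<\infty$, and then deduce the $\Holder(\ga)$ statement for $\ga<1$ from it. The implication ``$\GLXd\in L^p$ for all finite $p$'' $\Rightarrow$ ``$\GLX\in\Holder(\ga)$ for all $\ga<1$'' is the standard Morrey/Sobolev embedding on an interval: for $x<y$, $\abs{\GLX(y)-\GLX(x)}\le\int_x^y\abs{\GLXd}\le(y-x)^{1-1/p}\norm{\GLXd}_{L^p}$ by \Holder's inequality, and $1-1/p\uparrow1$ as $p\to\infty$; the compact support of $\GLX$ (contained in $[0,\diam(T_\be)]$, which is a.s.\ finite) handles the behaviour at infinity, using also the subgaussian tail~\eqref{sug} type bound for $\diam(T_\be)$.

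\textbf{Step 2 (produce the derivative and estimate its $L^p$ norm).} This is the heart of the argument. The natural candidate for $\GLXd(x)$ comes from differentiating~\eqref{t1dx}: formally $\GLXd(x)$ is (twice) the ``density of the density'', i.e.\ the second mixed object built from $\be$. I would instead argue via moments: using \refT{TM2}, $\E\bigsqpar{\GLX(x)^r}\le C_r\min(x^r,e^{-cx^2})$ for every $r\ge1$, combined with the a.s.\ $\Holder(\ga_0)$ regularity of $\GLX$ from \refC{CTH2} (for any fixed $\ga_0<1/2$), to set up a Besov-space / Garsia--Rodemich--Rumsey estimate. The cleanest implementation: show that for every $p<\infty$,
\begin{align}\label{plan2}
  \Ez\int_\bbR\int_\bbR
  \frac{\abs{\GLX(x)-\GLX(y)}^{2p}}{\abs{x-y}^{1+2p}}\dd x\dd y
  <\infty,
\end{align}
which by the Garsia--Rodemich--Rumsey lemma (in the form giving membership in the Besov space $B^{1-1/(2p)}_{2p,2p}$, hence in $\Holder(\ga)$ for $\ga<1-1/(2p)$, and letting $p\to\infty$) gives both the \Holder\ statement and, crucially, that $\GLX$ has a weak derivative in $L^{2p}$ for every $p$, hence is absolutely continuous with $\GLXd\in L^q(\bbR)$ for all $q<\infty$. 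To prove~\eqref{plan2} I would bound $\abs{\GLX(x)-\GLX(y)}$ using~\eqref{plan1} and the re-rooting identity, reducing the inner increment to $\int_x^y\GLX(z)\dd z$-type quantities plus boundary terms from the singularity at $0$; then take expectations and invoke the polynomial-and-subgaussian moment bounds of \refT{TM2} to see the double integral converges. Alternatively, and perhaps more transparently, one differentiates~\eqref{dproI}: $\GLXd(x)=\intoi (\LXs)'(x)\dd s$ in a distributional sense, but the point measures $(\LXs)'$ have atoms that cancel in the average; quantifying this cancellation via the joint law of $(\be^{[s]},\be^{[s']})$ is the technical core.

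\textbf{The main obstacle} I expect is precisely making rigorous the claim that averaging the local times $\LXs$ over $s$ genuinely gains a full derivative rather than, say, only a half-derivative. Pointwise in $s$ one only has $\Holder(1/2-)$, and the improvement to $\Holder(1-)$ must come entirely from the fact that the non-smooth behaviour of $\LXs$ near $x=\max\be^{[s]}$ occurs at an $s$-dependent location with a controlled (Hölder-in-$s$) speed, so that $\int_0^1\LXs\,ds$ smooths the cusp. The careful bookkeeping here — identifying the right functional of $\be$ that represents $\int_x^y\abs{\LXs(y)-\LXs(x)}\,ds$, then controlling its moments via \refT{TM2} and the tail estimate for $\diam(T_\be)$ — is where the real work lies; everything else (the Sobolev embedding, the Garsia--Rodemich--Rumsey step, the reduction to finite intervals) is routine.
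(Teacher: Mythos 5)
There is a genuine gap at the heart of your Step 2, and neither device you offer closes it. First, the quantity you propose to bound in your key display is infinite as stated: the kernel $|x-y|^{-(1+2p)}$ is the Gagliardo kernel of smoothness index $1$ (for $B^{1}_{2p,2p}$, not $B^{1-1/(2p)}_{2p,2p}$, whose kernel is $|x-y|^{-2p}$), and in dimension one a function with $\iint |f(x)-f(y)|^{q}|x-y|^{-(1+q)}\dd x\dd y<\infty$, $q\ge1$, is a.e.\ constant; since $\GLX$ is continuous, compactly supported and has integral $1$, the proposed expectation cannot be finite. Second, even after correcting the exponent, the route you sketch cannot produce the estimate: the triangle inequality $|\GLX(y)-\GLX(x)|\le\intoi|\LXs(y)-\LXs(x)|\dd s$ discards exactly the cancellation between different roots $s$ that you yourself identify as the source of the extra smoothness. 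Each $\LXs$ is a Brownian-excursion local time, whose increments are of genuine order $|x-y|^{1/2}$ (it is not \HCga{} for any $\ga\ge\frac12$), so this pointwise bound can never yield an exponent beyond $\frac12$ — it reproduces \refC{CTH2} and nothing more. The further reduction of $\intoi|\LXs(y)-\LXs(x)|\dd s$ to $\int_x^y\GLX(z)\dd z$ is false: the absolute difference of a local time at two levels is not the occupation measure of the window $[x,y]$. Finally, \refT{TM2} controls only one-point moments of $\GLX$ and gives no increment information, and your alternative suggestion (differentiating \eqref{dproI} and quantifying the cancellation via the joint law of $(\be^{[s]},\be^{[s']})$) is precisely the missing technical core, which you acknowledge but do not carry out.

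The paper gains the full extra derivative by a different and genuinely quantitative mechanism: using the re-rooting invariance of $T_\be$ and Aldous's explicit density for the reduced subtree spanned by the root and three independent uniform points, it computes the second moment of the Fourier transform and obtains $\E|\hGLX(\xi)|^2\le C(\xi^{-4}\wedge1)$ (\refL{LF}); this $L^2$ bound encodes the averaging over roots that your absolute-value bound destroys. From it, $\E\norm{\GLX}_{\BES\ga}^2<\infty$ for all $\ga<3/2$, so a.s.\ $\GLX\in\SOB\ga=\BES\ga$, whence $\GLXd\in\BES{\ga-1}$ and the Sobolev/Besov embeddings give $\GLXd\in L^p(\bbR)$ for every $p<\infty$ and $\GLX\in\Ha$ for every $\ga<1$ (your Step 1 reduction is the same and is fine). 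Some second-moment computation of increments or of the Fourier transform — rather than pointwise bounds on $|\LXs(y)-\LXs(x)|$ — is indispensable here, and your proposal does not supply one.
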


In order to prove \refT{TF}, 
we first prove an estimate of the Fourier transform
of $\GLX$.
We choose to define Fourier transforms as, for  $f\in L^1(\bbR)$, 
\begin{align}\label{ft}
\widehat f(\xi):=\intoooo f(x) e^{\ii \xi x}\dd x,
\qquad -\infty<\xi<\infty.
\end{align}
Note that by \eqref{t1dx}, the Fourier transform $\hGLX$ can be written as
\begin{align}\label{fa}
  \hGLX(\xi):=\intoo e^{\ii\xi x}\GLX(x)\dd x
=\iint_{s,t\in\oi} e^{\ii\xi\ddd(s,t;\be)}\dd s\dd t
.\end{align}

\begin{lemma}
  \label{LF}
There exists a constant $C$ such that
  \begin{align}\label{fb}
    \Ez|\hGLX(\xi)|^2 \le C\bigpar{\xi^{-4}\land 1},
%\qquad\xi\in\oooo
\qquad -\infty<\xi<\infty
.  \end{align}
\end{lemma}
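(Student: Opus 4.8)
The plan is to start from the explicit Fourier representation \eqref{fa} and turn \refL{LF} into a statement about four random points in the Brownian CRT. Since $\GLX$ is a deterministic functional of $\be$ and $\GLX\ge0$ with $\intoo\GLX(x)\dd x=1$, we have $\bigabs{\hGLX(\xi)}\le1$, hence $\Ez\bigabs{\hGLX(\xi)}^2\le1$; this already yields the factor $\xi^{-4}\bmin1$ for $|\xi|\le1$, so it remains to prove $\Ez\bigabs{\hGLX(\xi)}^2\le C|\xi|^{-4}$ for $|\xi|\ge1$. Writing $\bigabs{\hGLX(\xi)}^2=\hGLX(\xi)\hGLX(-\xi)$, using \eqref{fa} and Fubini,
\begin{align*}
\Ez\bigabs{\hGLX(\xi)}^2=\Ez\bigsqpar{e^{\ii\xi(D_1-D_2)}},\qquad D_1:=\ddd(U_1,U_2;\be),\quad D_2:=\ddd(U_3,U_4;\be),
\end{align*}
where $U_1,\dots,U_4$ are i.i.d.\ uniform on $\oi$, independent of $\be$; equivalently, $D_1$ and $D_2$ are the distances in $T_\be$ within the first and the second of two independent uniformly random pairs among the four i.i.d.\ $\mu$-points $\rho_\be(U_1),\dots,\rho_\be(U_4)$.

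The key structural step is to describe the law of $D_1-D_2$ via the reduced subtree $\mathcal R$ of $T_\be$ spanned by these four points. Almost surely $\mathcal R$ is a binary tree with the four points as leaves and hence has five edges: the pendant edges of lengths $\ell_1,\dots,\ell_4$ (with $\ell_i$ incident to $\rho_\be(U_i)$) and one internal edge of length $\ell_5$. Checking the three possible quartet topologies one by one, the internal edge lies on the geodesic $U_1$--$U_2$ if and only if it lies on the geodesic $U_3$--$U_4$, so it cancels, and in every case $D_1-D_2=\ell_1+\ell_2-\ell_3-\ell_4$ a.s. Re-rooting $T_\be$ at $\rho_\be(U_1)$ (invariance, cf.\ \eqref{Reroot} and \refR{RemarkReroot}), $\mathcal R$ becomes Aldous's reduced subtree spanned by the root and three independent $\mu$-points, so by the explicit description of reduced trees \cite{AldousI,AldousII,AldousIII} (in the $T_\be$-normalisation) the total length $\sigma:=\ell_1+\dots+\ell_5$ has density proportional to $\sigma^5e^{-2\sigma^2}$ on $(0,\infty)$, is independent of $(\ell_i/\sigma)_{i=1}^5$, and the latter vector is uniform on the simplex $\{\theta\in\bbR^5:\theta_i\ge0,\ \sum_i\theta_i=1\}$. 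Hence $D_1-D_2\eqd Y:=\sigma A$, with $\sigma$ and $A:=\Theta_1+\Theta_2-\Theta_3-\Theta_4$ independent, $(\Theta_1,\dots,\Theta_5)$ uniform on that simplex.

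It then remains to show $\Ez[e^{\ii\xi Y}]=\widehat{f_Y}(\xi)=O(|\xi|^{-4})$, where $f_Y$ is the density of $Y$. A short computation with the uniform (Dirichlet) law on the simplex gives $A$ the explicit density $f_A(a)=(1-|a|)^2(1+2|a|)=1-3a^2+2|a|^3$ on $[-1,1]$ (and $0$ outside); thus $f_A$ is $C^2$ but $f_A'''$ jumps at $0$. Writing $f_Y$ as the scale mixture $f_Y(y)=\intoo s\qw f_A(y/s)f_\sigma(s)\dd s$ and using that $f_\sigma$ is smooth on $\ooo$, of rapid (sub-Gaussian) decay, and vanishes to order $5$ at $0$ (so that $c_j:=\intoo s^{-j}f_\sigma(s)\dd s<\infty$ for $j\le4$), one finds that the singularities of $f_A$ at $\pm1$ are averaged out by the $s$-integration while the one at $0$ survives: $f_Y$ is $C^\infty$ on $\bbR\setminus\{0\}$ with rapid decay, and for small $|y|$
\begin{align*}
f_Y(y)=c_0-3c_2\,y^2+2c_4\,|y|^3+g(y),\qquad g\in C^4,
\end{align*}
obtained by expanding $f_A(y/s)$ and tracking the $|y|$-dependence of the truncation $\{s\ge|y|\}$. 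Consequently $f_Y\in C^2(\bbR)$, $f_Y''$ is continuous, $f_Y'''$ has a single finite jump, at $0$, and $f_Y^{(4)}\in L^1(\bbR)$; four integrations by parts (vanishing boundary terms, the jump of $f_Y'''$ at $0$ offset by the continuity of $f_Y''$) then give $\bigabs{\widehat{f_Y}(\xi)}\le C|\xi|^{-4}$ for $|\xi|\ge1$, completing the proof of \eqref{fb}.

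I expect the main obstacle to be honestly producing the exponent $4$. The naive estimate $\bigabs{\Ez[e^{\ii\xi\sigma A}]}\le\Ez\bigabs{\widehat{f_\sigma}(\xi A)}$ only yields $O(|\xi|\qw)$, since $A$ has a bounded density positive at the origin and $\P(|A|\le|\xi|\qw)\asymp|\xi|\qw$; one must retain the oscillatory cancellation coming from the spread of $Y=\sigma A$ near $0$ -- equivalently, from the vanishing of $f_\sigma$ together with its first four derivatives at $0$ -- which is exactly what the explicit density computation above is designed to capture. A secondary point needing care is the case-by-case topology bookkeeping of the second step and invoking the reduced-tree law of the CRT in precisely the $T_\be$-normalisation used here.
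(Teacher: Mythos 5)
Your proposal is correct in substance, and its first half coincides with the paper's argument: the trivial bound for $|\xi|\le1$ from $\intoo\GLX=1$, the identity $\Ez\bigabs{\hGLX(\xi)}^2=\E e^{\ii\xi(\ddd(U_1,U_2;\be)-\ddd(U_3,U_4;\be))}$, the reduction via re-rooting invariance to Aldous's law \eqref{aldk} for the reduced subtree with five edges, and the key cancellation $D_1-D_2=\ell_1+\ell_2-\ell_3-\ell_4$ (the paper re-roots at $\bU_4$ and checks the three shapes of the tree spanned by the root and three points; your quartet bookkeeping is the same observation). Where you genuinely diverge is the final decay estimate. The paper computes the expectation exactly: it Laplace-transforms the simplex integral $\psi(s)$, does a partial-fraction expansion, exploits homogeneity to read off the powers $\xi^{-4}$ and $\xi^{-3}$, and bounds the resulting Fourier transforms of $s^m e^{-2s^2}$; this yields \eqref{fb} and, almost for free, the sharp asymptotic $48\xi^{-4}+O(\xi^{-6})$ of \eqref{fl}. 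You instead exploit the factorization $D_1-D_2\eqd\sigma A$ with $\sigma$ (density $\propto\sigma^5e^{-2\sigma^2}$) independent of $A=\Theta_1+\Theta_2-\Theta_3-\Theta_4$, compute the Dirichlet marginal $f_A(a)=(1-|a|)^2(1+2|a|)$ (which is correct), and get $O(|\xi|^{-4})$ from the regularity of the scale-mixture density $f_Y$ by four integrations by parts. This is a legitimate, more real-variable route, and you correctly identify why the naive bound only gives $O(|\xi|^{-1})$; its cost is that the regularity claims need honest verification: $f_A$ is \emph{not} $C^2$ on $\bbR$ (its second derivative jumps at $\pm1$), so one must check, as you indicate, that these endpoint singularities are smoothed by the $\sigma$-average (they sit at $s=|y|$ where $f_A(1)=f_A'(1)=0$ and $f_\sigma$ vanishes to fifth order at $0$), and that $f_Y^{(4)}\in L^1$ with the only non-smoothness a finite jump of $f_Y'''$ at $0$. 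There is also a small indexing slip: because the mixture carries the Jacobian $s^{-1}$, the expansion should read $f_Y(y)=c_1-3c_3y^2+2c_4|y|^3+g(y)$ with $c_j=\intoo s^{-j}f_\sigma(s)\dd s$, not $c_0,c_2$. With those details filled in, your argument proves \eqref{fb}; it does not, however, directly deliver the sharp constant of \eqref{fl}, which the paper's exact computation provides.
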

\begin{proof}
Note first that the profile $\GLX$ is a (random) non-negative 
function with integral 1, \eg{} by \eqref{t1d}.
Hence, for every $\xi$,
\begin{align}\label{fc}
  \bigabs{\hGLX(\xi)}\le\intoo \bigabs{\GLX(x)}\dd x=1.
\end{align}
Thus, \eqref{fb} is trivial for $|\xi|\le1$.
Assume in the remainder of the proof $|\xi|\ge1$.

By \eqref{fa},
\begin{align}
  \label{fd}
\Ez\bigabs{\hGLX(\xi)}^2&
=\E \idotsint_{t_1,\dots t_4\in\oi} e^{\ii \xi (\ddd(t_1,t_2;\be)-\ddd(t_3,t_4;\be))}
\dd t_1\dotsm\dd t_4
\notag\\&
=\E  e^{\ii \xi (\ddd(U_1,U_2;\be)-\ddd(U_3,U_4;\be))},
\end{align}
where $U_1,\dots,U_4$ are \iid{} uniform $U(0,1)$ random variables,
independent of $\be$.

Recall from \refSS{SSCRT} that the Brownian excursion $\be$ defines the
continuum random tree $\Tbe$, with a  quotient map
$\rho_{\be}:\oi\to \Tbe$. Let $\bU_i:=\rho_\be(U_i)$, $i=1,\dots,4$, be the
points in $\Tbe$ corresponding to $U_i$; 
these are \iid{} uniformly random points in $\Tbe$, 
and $\ddd(U_i,U_j;\be)$ equals
the distance $\ddd(\bU_i,\bU_j)$
between $\bU_i$ and $\bU_j$ in the real tree $\Tbe$.
Then \eqref{fd} becomes
\begin{align}
  \label{fe}
\Ez\bigabs{\hGLX(\xi)}^2&
%\notag\\&
=\E  e^{\ii \xi (\ddd(\bU_1,\bU_2)-\ddd(\bU_3,\bU_4))}.
\end{align}
Furthermore, we can simplify the calculations by rerooting $\Tbe$ at
$\bU_4$; this preserves the distribution, see \eqref{Reroot}, and
$\bU_1,\dots,\bU_3$ are still independent uniformly random points in the
tree; hence, we also have, with $o=\rho_\be(0)$ the root of $\Tbe$.
\begin{align}
  \label{feo}
\Ez\bigabs{\hGLX(\xi)}^2&
%\notag\\&
=\E  e^{\ii \xi (\ddd(\bU_1,\bU_2)-\ddd(\bU_3,o))}.
\end{align}

To calculate \eqref{feo}, it suffices to consider the (real) subtree of
$\Tbe$ spanned by the root $o$ and $\bU_1,\dots,\bU_3$.
\citet[Corollary 22]{AldousIII} showed that the distribution of the random
real tree $\aT_k$ spanned in this way by $k\ge1$ \iid{} uniform random points 
$\bU_1,\dots,\bU_k\in\Tbe$ can be described as follows.
Let the \emph{shape} $\tau$ of $\aT_k$ be the tree regarded as a combinatorial
tree, \ie, ignoring the edge lengths.
Then $\tau$ is \as{} a rooted binary tree with $k$ leaves, labelled by
$1,\dots,k$ (corresponding to $\bU_1,\dots,\bU_k$).
Let $\TAU{k}$ be the set of possible shapes, \ie, the set of 
binary trees with $k$ labelled leaves. Then $|\TAU{k}|=(2k-3)!!$.
Each shape $\tau$ has $k-1$ internal vertices, and thus $2k$ vertices
and $2k-1$ edges. For each shape $\tau$, label the edges $1,\dots,2k-1$ in
some order, and for each tree $\aT_k$ with shape $\tau$, let
$\ell_1,\dots,\ell_{2k-1}$ be the corresponding edge lengths.
Then $\aT_k$ is described by $(\tau,\ell_1,\dots,\ell_k)$, and the
distribution of $(\tau,\ell_1,\dots,\ell_k)$ is
given by the density,
for  $\tau\in\TAU{k}$ and $\ell_i>0$,
\begin{align}\label{aldk}
%  s e^{-s^2/2}\dd\ell_1\dotsm\dd\ell_{2k-1}, 
2^{2k}  s e^{-2s^2}\dd\ell_1\dotsm\dd\ell_{2k-1}, 
\qquad \text{with } s:=\ell_1+\dotsm\ell_{2k-1}
%\text{ for } \ell_i>0 \text{ and }\tau\in\TAU{k}.
.\end{align}
Recall that our normalization differs from
\cite{AldousIII}, where $T_{2\be}$ is used instead of $T_{\be}$, and thus all
  edges are twice as long; hence \eqref{aldk} is obtained  from
  \cite[(33)]{AldousIII} by a trivial change of variables.

For $k=3$ we have $|\TAU3|=3!!=3$ different shapes.
It is easy to verify that in each of them, we may label the five edges such that
$\ddd(\bU_1,\bU_2)-\ddd(\bU_3,o)=\ell_1+\ell_2-\ell_3-\ell_4$.
Consequently, \eqref{feo} and \eqref{aldk} yield, with $s$ as in \eqref{aldk},
\begin{align}  \label{ff}
\Ez\bigabs{\hGLX(\xi)}^2&
%\notag\\&
%=\E  e^{\ii \xi (\ddd(\bU_1,\bU_2)-\ddd(\bU_3,o))}.
= 3\int_{\bbR_+^5} e^{\ii\xi(\ell_1+\ell_2-\ell_3-\ell_4)} 2^6 s e^{-2s^2}
\dd\ell_1\dotsm\dd\ell_5
\notag\\&
=192 \intoo \psi(s) s e^{-2s^2}\dd s,
\end{align}
where, with
$\Sigma_5(s)
:=\set{(\ell_1,\dots,\ell_5):\ell_i>0 \text{ and } \ell_1+\dots+\ell_5=s}$,
\begin{align}\label{fg}
  \psi(s):=
 \int_{\Sigma_5(s)} e^{\ii\xi(\ell_1+\ell_2-\ell_3-\ell_4)} 
\dd\ell_1\dotsm\dd\ell_4.
\end{align}
We can easily find the Laplace transform of $\psi(s)$: for $\gl>0$,
\begin{align}\label{fh}
  \intoo e^{-\gl s}\psi(s)\dd s &
= \int_{\bbR_+^5} e^{-\gl(\ell_1+\dots+\ell_5)}e^{\ii\xi(\ell_1+\ell_2-\ell_3-\ell_4)} 
\dd\ell_1\dotsm\dd\ell_5
\notag\\&
=\frac{1}{(\gl-\ii\xi)^2(\gl+\ii\xi)^2\gl}.
\end{align}
A partial fraction expansion of \eqref{fh} yields,
%using that \eqref{fh} is even in $\xi$,
\begin{align}
\label{fi}
  \intoo e^{-\gl s}\psi(s)\dd s &
=\frac{a(\xi)}{\gl}
+\frac{b_+(\xi)}{\gl-\ii\xi}
+\frac{b_-(\xi)}{\gl+\ii\xi}
+\frac{c_+(\xi)}{(\gl-\ii\xi)^2}
+\frac{c_-(\xi)}{(\gl+\ii\xi)^2},
\end{align}
%where $a(\xi)=\xi^{-4}$. 
for some coefficients $a(\xi),b_\pm(\xi),c_\pm(\xi)$.
It is easy to calculate these,
but it suffices to note that \eqref{fh} is homogeneous of degree $-5$ in
$(\gl,\xi)$, and thus $a(\xi)=a\xi^{-4}$,
$b_\pm(\xi)=b_\pm\xi^{-4}$ and $c_\pm(\xi)=c_\pm\xi^{-3}$ for some
complex $a,b_\pm,c_\pm$.
By inverting the Laplace transform \eqref{fi}, we find
\begin{align}\label{fj}
  \psi(s)
=
a\xi^{-4} + b_+\xi^{-4}e^{\ii\xi s}+ b_-\xi^{-4}e^{-\ii\xi s}
+ c_+\xi^{-3} se^{\ii\xi s} + c_-\xi^{-3} se^{-\ii\xi s}
%a\xi^{-4} + b\xi^{-4}\bigpar{e^{\ii\xi s}+e^{-\ii\xi s}}
%+ c\xi^{-3}\bigpar{se^{\ii\xi s}+se^{-\ii\xi s}}.
.\end{align}
Finally, we substitute \eqref{fj} in \eqref{ff}. 
We define
\begin{align}
h_m(s):=s^m e^{-2s^2}\indic{s>0},
\end{align}
and obtain, recalling \eqref{ft},
\begin{align}\label{fk}
  \Ez\bigabs{\hGLX(s)}^2
=192\xi^{-4}
\bigpar{a\hh_1(0)+b_+\hh_1(\xi)+b_-\hh_1(-\xi)
+c_+\xi\hh_2(\xi)+c_-\xi\hh_2(-\xi)
}.
\end{align}
We have $\hh_1(\xi)=O(1)$ since $h_1$ is integrable, and 
and  $\xi\hh_2(\xi)=\ii\widehat{h_2'}(\xi)=O(1)$
since $h_2$ is differentiable with integrable derivative.
Consequently, the result \eqref{fb} follows.
\end{proof}

\begin{remark}
It is easy to see that in the proof above, $a=1$ and 
$\hh_1(0)=\intoo s e^{-2s^2}\dd s=1/4$; moreover,
$\hh_1(\xi)=O(\xi^{-2})$ and $\hh_2(\xi)=O(\xi^{-3})$.
Hence, the proof yields
\begin{align}\label{fl}
  \Ez\bigabs{\hGLX(\xi)}^2
=48\xi^{-4} + O\bigpar{\xi^{-6}}.
\end{align}
Thus, the estimate in \refL{LF} is sharp.
  \end{remark}

  \begin{remark}
The result \eqref{fb} (or \eqref{fl})
can also be obtained in the same way from \eqref{fd}.
However,
for $k=4$ we have $|\TAU4|=5!!=15$ different shapes that we have to
consider, and they yield several different terms in \eqref{ff}.
The leading term in the partial fraction expansion \eqref{fi} will now be
$a\xi^{-4}/\gl^3$. 
  \end{remark}

  \begin{proof}[Proof of \refT{TF}]
Let $0<\ga<3/2$. Then, by \refL{LF}, 
\begin{align}\label{tfw}
  \E \intoooo \bigabs{|\xi|^{\ga}\hGLX(\xi)}^2\dd\xi
&=
2 \intoo |\xi|^{2\ga}\Ez\bigabs{\hGLX(\xi)}^2\dd\xi
\notag\\&
%\le C \intoo \bigpar{\xi^{2\ga} \bmin \xi^{2\ga-4}}\dd\xi <\infty
\le C \intoi \xi^{2\ga}\dd\xi+ C\int_1^\infty \xi^{2\ga-4}\dd\xi <\infty
.\end{align}
Hence, \as, 
%$\intoooo \bigabs{|\xi|^{\ga}\hGLX(\xi)}^2\dd\xi<\infty$, which 
%(together with $\GLX\in L^2$)
%says that
the function $\GLX$ belongs to the (generalized) Sobolev space 
$\SOB{\ga}$ 
% Stein 1970: \cL^2_\ga
% BL: H^\ga_2
(also called potential space; many different notations exist),
which is defined as the space of all functions $f\in L^2(\bbR)$
such that
\begin{align}\label{SOBga}
\norm{f}_{\SOB\ga}^2:=
\norm{f}_{L^2}^2+
\intoooo \bigabs{|\xi|^{\ga}\hGLX(\xi)}^2\dd\xi
<\infty
,\end{align}
see \eg{} \cite[Chapter V]{Stein} or \cite[Chapter 6]{BL}.
Furthermore, this Sobolev space
equals the Besov space $\BES{\ga}$, see
% Stein 1970: \gL^{2,2}_\ga
% BL: B^\ga_{2,2}
again 
\cite[Chapter V (there denoted $\gL_\ga^{2,2}$)]{Stein} or \cite[Chapter 6]{BL}.
$\BES\ga$ is an $L^2$ version of $\Ha$, and
for $0<\ga<1$,
$\BES{\ga}$
may be defined as  the set of functions in $L^2$ such that
\begin{align}\label{B22ga}
\norm{f}_{\BES\ga}^2:=
\norm{f}_{L^2}^2+
 \intoo \lrpar{\frac{\norm{f(\cdot+u)-f(\cdot)}_{L^2(\bbR)}}{u^\ga}}^2
  \frac{\dd u}{u} <\infty;
\end{align}
the equivalence (within constant factors) of \eqref{B22ga} with
\eqref{SOBga}
follows easily from the Plancherel formula.
(For a much more general result, see
\cite[Theorem 6.2.5]{BL} or \cite[(V.60)]{Stein}.)
For $1<\ga<2$, \eqref{B22ga} fails, but 
$\BES\ga=\set{f\in L^2:f'\in\BES{\ga-1}}$, which again equals $\SOB\ga$.

If $1\le\ga<3/2$, it follows that the derivative 
(in distribution sense)
$\GLXd\in\SOB{\ga-1}$. 
By 
the Sobolev (or Besov) embedding theorem
\cite[Theorem 6.5.1]{BL}, see also \cite[Theorem V.2]{Stein} and, for a simplified version, \cite[Lemma 6.2]{SJ185}, this implies
$\GLXd\in L^p(\bbR)$ with $1/p = 1/2-(\ga-1)$. Since $\ga$ may be chosen
arbitrarily close to $3/2$, it follows that $\GLXd\in L^p$ for every
$p<\infty$. (Recall that $\GLX$, and thus $\GLXd$, has compact support, so
small $p$ is not a problem.) The continuous function $\GLX$ thus has a
distributional derivative $\GLXd$ that is integrable, which implies
that
$\GLX$ is the integral of this derivative, so $\GLX$ is absolutely
continuous, with a derivative in the usual sense \aex, which equals the
distributional derivative $\GLXd$.

Finally, $\GLX\in\Ha$ for $\ga<1$ by $\GLX'\in L^p$ (with $p=1/(1-\ga)$)
and \Holder's inequality, or directly by $\GLX\in\SOB{\ga+1/2}=\BES{\ga+1/2}$
and the Besov embedding theorem
\cite[Theorem 6.5.1]{BL},  \cite[V.6.7]{Stein}, noting $\Ha=\BESoo{\ga}$.
  \end{proof}

\begin{remark}
For the Fourier transform $\hLX$, we have instead of \eqref{fb} and
\eqref{fl} the analogous estimate
  \begin{align}\label{fbl}
 \Ez|\hLX(\xi)|^2 \le C\bigpar{\xi^{-2}\land 1},
%\qquad\xi\in\oooo
\qquad -\infty<\xi<\infty
,  \end{align}
and, more precisely,
\begin{align}\label{fll}
 \Ez\bigabs{\hLX(\xi)}^2
=4\xi^{-2} + O\bigpar{\xi^{-4}}.
\end{align}
These are proved by the same argument as above, now with 
$\bigpar{d(\bU_1,o)-d(\bU_2,o)}$ in the exponent in \eqref{fe}
and using \eqref{aldk} with $k=2$; we omit the details.

Note that the exponent of $\xi$ is $-2$ in \eqref{fll}, but $-4$ in \eqref{fl}.
Hence, $\hGLX(\xi)$ decays faster than $\hLX(\xi)$ (at least in an average
sense), which intuitively means that $\GLX$ is smoother than $\LX$, as seen
in the results above. 

Note also that the result of \citet{Vervaat} mentioned in \refR{Rbridge}
implies that $|\hLX(\xi)|=|\hLb(\xi)|$.
The expectation $\E|\hLb(\xi)|^2$ can easily be calculated and estimated
directly, since $\bb$ is a Gaussian process, which leads to another proof of
\eqref{fbl} and \eqref{fll}; we leave the details to the reader.
\end{remark}

The proof above shows that
\begin{align}\label{fog1}
\GLX\in \BES\ga,
\qquad \ga<3/2,   
\end{align}
%for every $\ga<3/2$, 
and thus 
\begin{align}\label{fog2}
\GLXd\in \BES\ga,
\qquad \ga<1/2.
\end{align}
More precisely, it follows from the proof that
\begin{align}\label{fog3}
\Ez\norm{\GLX'}_{\BES{\ga-1}}^2
\le C
\Ez\norm{\GLX}_{\BES\ga}^2<\infty 
\qquad\text{for $\ga<3/2$},  
\end{align}
%for $\ga<3/2$, 
but not for any $\ga\ge3/2$.

For comparison, \eqref{fll} implies that
%\begin{align}
%\LX\in \BES\ga,
%\qquad 0<\ga<1/2,
%\end{align}
\begin{align}\label{fog4}
\Ez\norm{\LX}_{\BES\ga}^2<\infty 
\qquad\text{for $\ga<1/2$},  
\end{align}
but not for  $\ga\ge\frac12$.
In this case, the range of $\ga$ in \eqref{fog4}
is thus the same as the range of \Holder{} continuity,
note that since $\Ha=\BESoo\ga$ for $\ga\in(0,1)$, this range
equals the set of $\ga\in(0,1)$ such that $\LX\in\BESoo\ga$.
Another, similar, example is the Brownian bridge $\bb$;
a simple calculation
shows  that %if $\bb$ is a Brownian bridge, then 
$\Ez \norm{\bb}_{\BES{\ga}}^2<\infty$ for $\ga<\frac12$, but not for larger
$\ga$.

This suggests (but does not prove) that $\GLX$ is even smoother
than shown by \refT{TF}. 
However, the derivative $\GLX'$ is \emph{not} \Holder{} continuous
on $\oooo$, as
might be guessed from the analogy between  \eqref{fog3} and \eqref{fog4}.
In fact, the (two-sided) derivative $\GLX'$ does not exist at 0,
and thus $\GLX'$
is not even continuous, at least not at $0$.
\begin{theorem}\label{TNC}
  $\GLX$ does not \as{} have a (two-sided) derivative at $0$.
\end{theorem}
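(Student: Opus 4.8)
The plan is to argue by contradiction: I will show that if, almost surely, $\GLX$ had a two-sided derivative at $0$, then $\E\GLX(x)/x$ would have to tend to $0$ as $x\downarrow0$, contradicting the exact first-moment formula. It is convenient to regard $\GLX$ as defined on all of $\bbR$ with $\GLX(x):=0$ for $x<0$; this is the natural convention, since by \eqref{t1dx} $\GLX$ is the conditional density, given $\be$, of the nonnegative variable $\ddd(\bU_1,\bU_2)$, and $\GLX(0)=0$ a.s.\ (see the proof of \refT{TM2}). With this convention the left derivative of $\GLX$ at $0$ is $0$, so a two-sided derivative at $0$, if it exists, must equal $0$; hence on the event that $\GLX$ is differentiable at $0$ we have $\GLX(x)/x\to0$ as $x\downarrow0$. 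Thus it suffices to show that the event $\set{\GLX(x)/x\to0\text{ as }x\downarrow0}$ does \emph{not} have probability $1$.

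First I would invoke the exact first moment. By \refL{LNC} --- obtained from the Aldous description \eqref{aldk} of the subtree spanned by the root and $k=2$ uniform points, in which $\ddd(\bU_1,\bU_2)$ is a sum of two of the three edge lengths and an elementary integration gives $\P\bigpar{\ddd(\bU_1,\bU_2)\le x}=1-e^{-2x^2}$ --- we have $\E\GLX(x)=4xe^{-2x^2}$, so that
\begin{align*}
  \frac{\E\GLX(x)}{x}=4e^{-2x^2}\longrightarrow 4\qquad\text{as }x\downarrow0 .
\end{align*}
Next I would use \refT{TM2} with $r=2$: $\E\bigsqpar{\GLX(x)^2}\le C_2x^2$ for all $x\ge0$, hence $\sup_{0<x\le1}\E\bigsqpar{\bigpar{\GLX(x)/x}^2}\le C_2<\infty$, so the family $\set{\GLX(x)/x:0<x\le1}$ is bounded in $L^2$ and therefore uniformly integrable. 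Now fix any sequence $x_n\downarrow0$ with $x_n\le1$. If $\P\bigpar{\GLX(x)/x\to0}=1$, then $\GLX(x_n)/x_n\to0$ a.s.; combined with uniform integrability this yields $L^1$ convergence, so $\E\bigsqpar{\GLX(x_n)/x_n}\to0$. But $\E\bigsqpar{\GLX(x_n)/x_n}=4e^{-2x_n^2}\to4\neq0$, a contradiction. Hence $\P\bigpar{\GLX(x)/x\to0}<1$, and by the previous paragraph $\GLX$ does not a.s.\ have a two-sided derivative at $0$.

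The argument is short once \refL{LNC} and \refT{TM2} are in hand, and I expect the only genuinely delicate point to be the passage from the expectation statement to the almost-sure one: the convergence $\E\GLX(x)/x\to4$ does not on its own contradict $\GLX(x)/x\to0$ a.s.\ (mass could escape to infinity), and it is precisely the second-moment bound of \refT{TM2} that supplies the uniform integrability needed to rule this out; there is no obvious way to argue pathwise, since the re-rooted profiles that are averaged in \eqref{dproI} fluctuate (by a law-of-the-iterated-logarithm effect at the root) and $x^{-1}\GLX(x)$ is not plainly controllable sample path by sample path. A secondary point to be careful about is the interpretation of ``two-sided derivative at $0$'' for a function supported on $\ooo$, handled above by the extension-by-zero convention. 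Everything else --- the computation of $\E\GLX$ from \eqref{aldk} and the Vitali-type convergence argument --- is routine.
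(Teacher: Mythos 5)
Your proof is correct and follows essentially the same route as the paper: the extension-by-zero convention forcing the derivative at $0$ to be $0$, the second-moment bound of \refT{TM2} giving uniform integrability of $\GLX(x)/x$, and the contradiction with $\E\GLX(x)/x\to4$ from \refL{LNC}. No gaps; the only cosmetic difference is that you sketch a derivation of \refL{LNC} via \eqref{aldk} with $k=2$, while the paper simply cites that lemma.
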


To see this, we note first the following.
\begin{lemma}\label{LNC}
  For every $x\ge0$,
  \begin{align}
    \label{le}
\E\GLX(x) = \E \LX(x) = 4x e^{-2x^2}.
  \end{align}
\end{lemma}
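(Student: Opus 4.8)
The plan is to establish the two equalities separately. For the identity $\E\GLX(x)=\E\LX(x)$, I would start from the averaging representation \eqref{dproI}, which gives $\GLX(x)=\intoi\LXs(x)\dd s$ almost surely for every $x\ge0$, where $\LXs$ denotes the local time of the re-rooted excursion $\bes$. Taking expectations and applying Tonelli's theorem (valid since the integrand is non-negative) yields $\E\GLX(x)=\intoi\E\LXs(x)\dd s$, and the re-rooting invariance \eqref{Reroot}, which says $\bes\eqd\be$ for each fixed $s$, implies $\E\LXs(x)=\E\LX(x)$; the $s$-integral therefore collapses to $\E\LX(x)$.

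For the formula $\E\LX(x)=4xe^{-2x^2}$, the plan is to identify the deterministic measure $\E\LX(x)\dd x$ on $\ooo$. Taking expectations in the occupation-density identity \eqref{local} (equivalently \eqref{t0d}) and using Fubini gives, for every bounded measurable $f:\ooo\to\bbR$,
\begin{align}
  \intoo f(x)\,\E\LX(x)\dd x&=\E\intoi f\bigpar{\be(t)}\dd t=\E f\bigpar{\be(U)},
\end{align}
with $U$ a $\Uoi$ variable independent of $\be$. Since $\be(0)=0$ and $\be\ge0$, \eqref{ddd} gives $\be(t)=\ddd(0,t;\be)$ for all $t$, so $\be(U)$ has the law of the distance in $\Tbe$ between the root $\rho_{\be}(0)$ and the uniform random point $\rho_{\be}(U)$. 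That distance is precisely the length of the single edge of the subtree $\aT_1$ spanned by these two points, whose distribution is given by \eqref{aldk} with $k=1$: the density $2^{2}\ell_1e^{-2\ell_1^2}=4\ell_1e^{-2\ell_1^2}$ on $(0,\infty)$. Hence $\E\LX(x)\dd x=4xe^{-2x^2}\dd x$, \ie\ $\E\LX(x)=4xe^{-2x^2}$ for almost every $x$. To upgrade this to every $x\ge0$, I would note that $x\mapsto\E\LX(x)$ is continuous, since $\LX$ is \as{} continuous and $0\le\LX(x)\le\max_y\LX(y)=2W$ with $\E W=\sqrt{\pi/2}<\infty$ (see \eqref{wlx} and the proof of \refT{TMGW1}), so dominated convergence applies; two continuous functions that agree almost everywhere agree everywhere.

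I do not foresee a genuine difficulty here: the argument merely assembles facts already available. The two mildly delicate points are that \eqref{dproI} and \eqref{local} determine the functions only up to almost-everywhere equivalence (handled by the continuity remark) and the degenerate case $k=1$ of \eqref{aldk}, where the spanned tree $\aT_1$ is simply a segment of random length $\ddd(\rho_{\be}(0),\rho_{\be}(U))$. If one prefers to avoid invoking \eqref{aldk}, the same formula follows by writing $\E\LX(x)=\intoi p_t(x)\dd t$ with $p_t$ the explicit one-dimensional density of $\be(t)$ and evaluating the integral directly (the substitutions $u=t(1-t)$ and then $w-2x^2=v^2$ reduce it to a Gaussian integral).
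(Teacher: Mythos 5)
Your proof is correct and follows essentially the same route as the paper: the identity $\E\GLX(x)=\E\LX(x)$ via \eqref{dproI} and the re-rooting invariance \eqref{Reroot}, and the formula $4xe^{-2x^2}$ from the $k=1$ case of \eqref{aldk} (which the paper simply cites, via \citet{Chung-excursion}, as equivalent). Your extra care about the a.e.-versus-everywhere issue, handled by continuity and domination by $2W$, is a sound (if routine) addition.
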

\begin{proof}
  The result for $\LX$ is well-known; it was shown by
  \citet[(6.2)]{Chung-excursion}, and it is equivalent to the case $k=1$ of
  \eqref{aldk}. 

The result for $\GLX$ follows by \eqref{dproI} and \eqref{Reroot}. 
\end{proof}

\begin{proof}[Proof of \refT{TNC}]
Suppose that $\GLX$ is differentiable at 0.
Since  $\GLX(x)=0$ for $x<0$, the derivative has to be 0, and thus
$\GLX(x)/x\to0$ as $x\downto0$.
Furthermore, \refT{TM2} shows that $\E\bigpar{\GLX(x)/x}^2 \le C$, and thus
$\GLX(x)/x$ is uniformly integrable for $x>0$. 
Consequently, if $\GLX$ were \as{} differentiable at 0, then 
$\E\bigpar{\GLX(x)/x}\to0$ as $x\downto0$, which contradicts \refL{LNC}.
\end{proof}

Nevertheless, it is quite possible that $\GLX$ is continuously
differentiable on $\ooo$, with a one-sided derivative at 0.
We end with some open problems suggested by the results above.
\begin{problem}\quad
  \begin{romenumerate}

  \item 
  Is $\GLX$ \as{} Lipschitz, \ie, is the derivative $\GLXd$ \as{} bounded?
\item 
Does $\GLXd(0)$ exist \as{} as a right derivative?
  \item 
    Is the derivative $\GLXd$ \as{} continuous on $\ooo$?
  \item 
    Is the derivative $\GLXd$ \as{} in $\Haoo$ for some $\ga>0$?
For every $\ga<\frac12$?
  \end{romenumerate}
\end{problem}

\subsection{Finite $n$} 
We have in this section so far  considered only the asymptotic profiles $\LX$
  and $\GLX$. Consider now the 
profile $L_n$ and distance profile $\gL_n$
of a \cGWt{} $\GGWpn$, for a given offspring distribution $\bp$. 
We assume throut the setion that $\mu(\bp)=1$ and that $0<\gss(\bp)<\infty$;
we will often add the condition that $\bp$ has a finite fourth moment 
$\mu_4(\bp):=\sum_k k^4 p_k$.

%For finite $n$, 
It was shown by
\citet{DrmotaG}, see also \cite[Theorem 4.24]{Drmota}, that if the offspring
distribution has an exponential  moment, then
\begin{align}\label{bg4}
  \E \bigabs{n\qqw L_n(xn\qq)-n\qqw L_n(yn\qq)}^4
\le C |x-y|^2,
\end{align}
which by the standard argument in \eqref{be8}--\eqref{be9}
yields 
\begin{align}\label{bg5}
  \Ez\normHaA{n\qqw L_n(n\qq\cdot)} <\infty
\end{align}
for $\ga<1/4$.
This and \refT{T0} then yield
\eqref{th1} for $\ga<1/4$.
We conjecture that \eqref{bg4} can be extended to higher moments, yielding
\eqref{bg5} and thus
another proof of \eqref{TH1} for all $\ga<\frac12$, but this seems to
require some non-trivial work.

Moreover, 
as said in \cite[footnote on page 127]{Drmota}, the proof of \eqref{bg4} 
does not really require an exponential moment, but it seems to require at
least a fourth moment for the offspring distribution.
We do not know whether such a moment condition really is necessary for
\eqref{bg4}, and we state the following problem.
%\end{remark}

\begin{problem} Let $p\ge 2$. Is it true for any offspring distribution
$\bp$ with $\mu(\bp)=1$ and $\gss(\bp)<\infty$ then
\begin{align}\label{bgp}
  \Ez \bigabs{n\qqw L_n(xn\qq)-n\qqw L_n(yn\qq)}^p
\le C_p |x-y|^{p/2}\;?
\end{align}
  Is this true assuming a $p$th moment for $\bp$? 
Assuming an exponential moment?
\end{problem}

\begin{remark}
  A rerooting argument as in the proof of \refT{TM1} shows that
\eqref{bgp} would imply
\begin{align}\label{bgpl}
  \Ez \bigabs{n^{-3/2} \gL_n(xn\qq)-n^{-3/2}\gL_n(yn\qq)}^p
\le C_p |x-y|^{p/2}.
\end{align}
Again we can ask under which conditions this holds.
\end{remark}

We can also use generating functions and singularity analysis to estimate
the Fourier transform of $L_{n}$ and $\gL_n$. Recall that we in \refSS{SSprof} 
have defined $L_n$ and $\gL_n$ as functions on $\bbR$, using
linear interpolation. We first consider their restrictions to the integers, 
and the corresponding Fourier transforms,
for $- \infty < \xi < \infty$,
\begin{align} \label{dfou}
  \hzLn (\xi) 
:= \sum_{k=0}^{\infty} L_{n}(k)e^{\ii\xi k}
  \hspace*{4mm} \text{and} \hspace*{4mm}  {\hzgLn}(\xi) 
:= \sum_{k=0}^{\infty} \gL_n(k)e^{\ii \xi k}.
\end{align}
Note that these are periodic functions with period $2\pi$, 
and that, as a consequence of \eqref{pro1} and \eqref{dpro1}, 
\begin{align}\label{hL0}
  |\hzLn(\xi)|\le \hzLn(0)=n \hspace*{4mm} \text{and} \hspace*{4mm} 
  |\hzgLn(\xi)|\le \hzgLn(0)=n^2.
\end{align}

\begin{lemma}  \label{LFN} 
Let  $\bp$ be an offspring distribution with $\mu(\bp)=1$.    
  \begin{romenumerate}
  \item 
If\/ $\bp$ has a finite second moment, then
\begin{align} \label{bg6}
 \Ez | \hzLn(\xi n^{-1/2}) |^{2}  
&\le C n^{2}(\xi^{-2} \wedge 1), \qquad |\xi|\le \pi n^{1/2}.
\end{align}
\item 
If\/ $\bp$ has a finite  fourth moment, then
\begin{align} 
\label{bg7}
\Ez| \hzgLn( \xi n^{-1/2}) |^{2} 
&\le Cn^{4} (\xi^{-4} \wedge 1), \qquad |\xi|\le \pi n^{1/2}.
\end{align}
  \end{romenumerate}
\end{lemma}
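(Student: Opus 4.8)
The plan is to evaluate both second moments by generating functions and singularity analysis, in parallel with the continuum computation in the proof of \refL{LF}, of which \eqref{bg6}--\eqref{bg7} are the finite-$n$ versions. Setting $\theta=\xi n^{-1/2}$, the two claims are equivalent to the bounds
\[
\E|\hzLn(\theta)|^2\le C\bigpar{n\theta^{-2}\wedge n^2},\qquad \E|\hzgLn(\theta)|^2\le C\bigpar{n^2\theta^{-4}\wedge n^4},
\]
uniformly for $\theta\in[-\pi,\pi]$, which is what I would prove. First I would reduce to a coefficient extraction. Since $L_n(k)$ counts the vertices $v$ of $\GGWpn$ at distance $k$ from the root and $\gL_n(k)$ the ordered pairs $(v,w)$ at distance $k$, one has $\hzLn(\theta)=\sum_v e^{\ii\theta\ddd(v,o)}$ and $\hzgLn(\theta)=\sum_{v,w}e^{\ii\theta\ddd(v,w)}$, so $\E|\hzLn(\theta)|^2=\E\sum_{v,w}e^{\ii\theta(\ddd(v,o)-\ddd(w,o))}$ and $\E|\hzgLn(\theta)|^2=\E\sum_{v,w,v',w'}e^{\ii\theta(\ddd(v,w)-\ddd(v',w'))}$, the sums over tuples of vertices of $\GGWpn$. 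Writing $\ddd(v,o)-\ddd(w,o)=\ddd(v,\ell)-\ddd(w,\ell)$ and $\ddd(v,w)=\ddd(v,\ell)+\ddd(w,\ell)$, with $\ell$ the last common ancestor of $v$ and $w$, each summand depends only on the edge lengths of the reduced subtree spanned by the root and the marked vertices; hence each expectation equals $[z^n]F(z)/[z^n]B(z)$ for an explicit generating function $F$, where $B(z)=\sum_n\P(|\GGWp|=n)z^n$ satisfies $B(z)=z\Phi(B(z))$ with $\Phi(z)=\sum_kp_kz^k$.

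For the marked generating functions, a one-step decomposition at the root gives $B_1(z,u):=\sum_T\phi(T)z^{|T|}\sum_v u^{\ddd(v,o)}=B(z)/(1-zu\Phi'(B(z)))$. Marking two vertices with weight $v_1^{\ddd(v,o)}v_2^{\ddd(w,o)}$ and decomposing along the spine from the root to $\ell$ produces a factor $1/(1-zv_1v_2\Phi'(B(z)))$ times the generating function of the subtree at $\ell$,
\[
C(z,v_1,v_2)=B(z)+z\Phi'(B(z))\bigpar{v_1B_1(z,v_1)+v_2B_1(z,v_2)}+z\Phi''(B(z))\,v_1v_2B_1(z,v_1)B_1(z,v_2),
\]
the three groups corresponding to $\ell$ being one of $v,w$, both, or neither; taking $v_1=e^{\ii\theta}$, $v_2=e^{-\ii\theta}$ (so $v_1v_2=1$) exhibits $\E|\hzLn(\theta)|^2$ as such a ratio, and it is the $\Phi''$ term that uses $\gss(\bp)=\Phi''(1)<\infty$. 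For $\hzgLn$ I would instead mark four vertices $v,w,v',w'$ with weight $e^{\ii\theta(\ddd(v,w)-\ddd(v',w'))}$ and decompose the tree along the reduced subtree spanned by the root and these four vertices; this gives a finite sum, over the finitely many combinatorial shapes of that subtree (coincidences of marks allowed), of terms rational in $z$, $B(z)$, the $B_1(z,\cdot)$, and the derivatives $\Phi^{(j)}(B(z))$ with $j\le4$. The derivative $\Phi^{(4)}$ occurs only in the degenerate shape where all four marks branch off a single vertex carrying also the edge towards the root, and it is precisely there that the finite fourth moment $\mu_4(\bp)=\Phi^{(4)}(1)<\infty$ enters.

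Then comes the coefficient asymptotics, which is the core. As $\mu(\bp)=1$ and $0<\gss:=\gss(\bp)<\infty$, $B$ has radius of convergence $1$ and a square-root singularity, $B(z)=1-\sqrt{2/\gss}\,(1-z)^{1/2}+O(1-z)$, so $[z^n]B(z)\asymp n^{-3/2}$ and $1-z\Phi'(B(z))\sim\sqrt{2\gss}\,(1-z)^{1/2}$ near $z=1$; with $u=e^{\ii\theta}$ one then gets the local expansions $B_1(z,e^{\ii\theta})\approx\bigpar{-\ii\theta+\sqrt{2\gss}\,(1-z)^{1/2}}^{-1}$ and $B_1(z,e^{\ii\theta})B_1(z,e^{-\ii\theta})\approx\bigpar{\theta^2+2\gss(1-z)}^{-1}$, so in the products that occur the square-root branch points partly cancel and the relevant functions acquire a simple pole at $z_*(\theta)=1+\theta^2/(2\gss)+O(\theta^3)$ just outside the unit disc. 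The $\theta$-singular behaviour of $C(z,e^{\ii\theta},e^{-\ii\theta})/(1-z\Phi'(B(z)))$ is carried by the $\Phi''$ term, $\asymp(1-z)^{-1/2}/(\theta^2+2\gss(1-z))$ near $z=1$; for $\hzgLn$ it is carried by the reduced-subtree shapes of maximal singularity order (those for which, after the weight $e^{\ii\theta(\ddd(v,w)-\ddd(v',w'))}$ is distributed over the edges, three of the internal segments carry net weight $1$), with local behaviour $\asymp(1-z)^{-3/2}/(\theta^2+2\gss(1-z))^2$. One then extracts $[z^n]$ of each piece by singularity analysis, uniformly for $\theta\in[-\pi,\pi]$: the range $|\theta|\le n^{-1/2}$ is covered by the trivial bounds $|\hzLn(\theta)|\le n$, $|\hzgLn(\theta)|\le n^2$ of \eqref{hL0}; the middle range by the competition between the pole at $z_*(\theta)$ (contributing $\asymp z_*(\theta)^{-n}\asymp e^{-cn\theta^2}$) and the branch point at $z=1$ (contributing the polynomial factors); and the range $|\theta|\ge\delta$ by the fact that $1-e^{\ii\theta}z\Phi'(B(z))$ stays bounded away from $0$ at $z=1$, so the marked generating functions are there as regular as $B$ and the ratios are $O(n)$ resp.\ $O(n^2)$. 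Dividing by $[z^n]B(z)\asymp n^{-3/2}$ yields the two displayed bounds, i.e.\ \eqref{bg6}--\eqref{bg7}.

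The main obstacle is part~(ii): enumerating the finitely many shapes of the reduced subtree on the root plus four marked vertices, writing down their generating functions, and carrying out the partial-fraction and cancellation bookkeeping that turns the superposed square-root singularities into a single pole at $z_*(\theta)$, all while keeping the estimates uniform in $\theta$. Part~(i) is short — essentially the computation above — but part~(ii) is structurally the same and only longer, and it is the combinatorial overhead of the four-mark decomposition that forces the fourth-moment hypothesis. (Periodicity of $\bp$ is handled by the standard modifications; and if $\Phi$ has radius of convergence exactly $1$, so that the transfer theorem does not literally apply, one replaces it by direct saddle-point estimates on $|z|=1-1/n$ together with the Otter--Dwass formula and the local limit theorem, as in the proof of \refL{LM2}, without changing any bound.)
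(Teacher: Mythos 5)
Your proposal follows the same overall route as the paper: both reduce $\E|\hzLn(\xi)|^2$ and $\E|\hzgLn(\xi)|^2$ to coefficients of explicit generating functions built from $A(z)$, $B(z,x)=A(z)/(1-xz\Phi'(A(z)))$ and the derivatives $\Phi^{(j)}(A(z))$, with $x=e^{\ii\xi}$, $y=\bar x$, and both finish by singularity analysis on a $\Delta$-domain; your two-mark formula for part~(i) is literally the paper's \eqref{C5} after rewriting $B(z,x)+B(z,y)-A(z)=A(z)+xz\Phi'(A(z))B(z,x)+yz\Phi'(A(z))B(z,y)$. The differences are organisational. For part~(ii) the paper does not enumerate shapes of the reduced subtree spanned by the root and four marks; it instead computes $F=\E[z^{|\GGW|}Q(\GGW,x)Q(\GGW,y)]$ by a recursion on the root degree, after first deriving the intermediate functions $D=\E[z^{|\GGW|}Q]$ and $E=\E[z^{|\GGW|}QP]$ (identities \eqref{C6}, \eqref{C7}). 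This automates your shape bookkeeping, and confirms your diagnosis that $\Phi''''$ — hence the fourth moment — enters through exactly one term, $x^2y^2z\,B(z,x)^2B(z,y)^2\Phi''''(A(z))$, corresponding to the four marks separating at a single vertex. The more substantive difference is the coefficient extraction: the paper never locates the pole $z_*(\theta)$ nor balances it against the branch point. Since $x\bar x=1$ the outer denominators collapse to $1-z\Phi'(A(z))$, bounded below by $c|1-z|^{1/2}$, and every conjugate factor is bounded crudely and uniformly on the $\Delta$-domain by $|1-xz\Phi'(A(z))|\ge c|1-x|$ (Lemma~\ref{LemmaC}); a single application of the transfer theorem then yields $n^{-1/2}|1-x|^{-2}$, resp.\ $n^{1/2}|1-x|^{-4}+|1-x|^{-5}$, uniformly in $\xi$, with no case split into ranges of $\theta$. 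Your saddle-point analysis in the ``middle range'' would also work, but it is strictly harder to make uniform and aims at a Gaussian factor that the stated bounds do not require; if you pursue your version, that uniformity is the one point you would still have to supply.
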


The proofs of  \eqref{bg6} and \eqref{bg7} are based on lengthy
calculations of some generating functions, and are given in 
\refApp{GeneF}. 

We return to the interpolated profiles $L_n$ and $\gL_n$ and their Fourier
transforms $\hLn$ and $\hgLn$ defined by \eqref{ft}. It follows from
\eqref{Ltau} that
\begin{align}\label{bp1}
  \hLn(\xi)=\hzLn(\xi)\htau(\xi)=
\hzLn(\xi) \frac{\sin^2(\xi/2)}{(\xi/2)^2}
\end{align}
and similarly
\begin{align}\label{bp2}
  \hgLn(\xi)=\hzgLn(\xi)\htau(\xi)=
\hzgLn(\xi) \frac{\sin^2(\xi/2)}{(\xi/2)^2}.
\end{align}
In particular, $|\hLn(\xi)|\le|\hzLn(\xi)|$ and 
$|\hgLn(\xi)|\le|\hzgLn(\xi)|$; hence the estimates in \refL{LFN} holds also
for $\hLn$ and $\hgLn$.

We have stated \refTs{T0} and \ref{T1} with convergence (in distribution)
in $C\oooq$;
however, it is obvious that then the same statement holds with convergence
in $C[-\infty,\infty]$. 
Note that both sides of \eqref{t0} and \eqref{t1} are non-negative functions
on $\oooo$ with integral 1. It is easily seen that in the set of such
functions, convergence \aex{} (and \emph{a fortiori} convergence 
in $C[-\infty,\infty]$, \ie, uniform convergence) implies convergence in
$L^1(\bbR)$, and thus (uniform) convergence of the Fourier transforms.
Furthermore, the Fourier transforms of the \lhs{s} of \eqref{t0} and
\eqref{t1} are $n\qw\hLn(\xi n\qqw)$ and $n\qww\hgLn(\xi n\qqw)$, respectively.
Hence, \refTs{T0} and \ref{T1} imply that for any fixed $\xi$
\begin{align}\label{bp3}
  n\qw\hLn(\xi n\qqw)&\dto\hLX\bigpar{\tfrac{2}{\gs}\xi},
\\\label{bp4}
n\qww\hgLn(\xi n\qqw)&\dto \hGLX\bigpar{\tfrac{2}{\gs}\xi}.
\end{align}
Consequently, by Fatou's lemma, 
the estimates \eqref{bg6} and \eqref{bg7} in Lemma \ref{LFN} 
yield another proof of the estimates \eqref{fbl} and \eqref{fb} above.

Moreover, we can argue similarly to the proof of \refT{TF} and obtain a
corresponding 
result on the smoothness of $\gL_n$. Of course, this profile is
by construction Lipschitz, but what is relevant is 
a smoothness estimate that is uniform in $n$.
\begin{theorem}\label{THn}
%  Let  $\bp$ be an offspring distribution with $\mu(\bp)=1$.    
If the offspring distribution $\bp$ has a finite  fourth moment, then
  \begin{align}\label{bp7}
\Ez\bignorm{n^{-3/2}\gL_n\bigpar{x n\qq}}_{\Ha}^2
\le C
\end{align}
uniformly in $n$,
for every fixed $\ga<1$.
\end{theorem}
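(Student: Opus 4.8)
The plan is to run the Fourier-analytic argument used for $\GLX$ in the proof of \refT{TF}, with the estimate \eqref{bg7} of \refL{LFN} playing the role of \refL{LF}. Write $G_n(x):=n^{-3/2}\gL_n\bigpar{xn\qq}$, a random element of $\cooq$ with compact support; as noted in the text its Fourier transform (in the convention \eqref{ft}) is $\widehat{G_n}(\xi)=n\qww\hgLn\bigpar{\xi n\qqw}$, and by \eqref{bp2} this equals $n\qww\hzgLn\bigpar{\xi n\qqw}\htau\bigpar{\xi n\qqw}$. It suffices to show that, for every fixed $\gb\in\bigpar{\tfrac12,\tfrac32}$,
\begin{align}\label{THnPlanA}
\Ez\intoooo\bigpar{1+\xi^2}^{\gb}\bigabs{\widehat{G_n}(\xi)}^2\dd\xi\le C
\end{align}
uniformly in $n$; indeed \eqref{THnPlanA} says $\Ez\norm{G_n}_{\SOB\gb}^2\le C$, and since $\SOB\gb=\BES\gb$ embeds continuously into $\BESoo{\gb-1/2}=\cH_{\gb-1/2}$ for $\gb>\tfrac12$ (the Besov embedding \cite[Theorem 6.5.1]{BL} used in the proof of \refT{TF}), the choice $\gb=\ga+\tfrac12$ gives \eqref{bp7}.

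To prove \eqref{THnPlanA} I would split the integral at $|\xi|=\pi n\qq$. On $|\xi|\le\pi n\qq$ the argument $\xi n\qqw$ lies in a single period $[-\pi,\pi]$, so $|\htau(\xi n\qqw)|\le1$ and \eqref{bg7} applies directly (with its frequency equal to ours), giving
\begin{align}\label{THnPlanB}
\Ez\bigabs{\widehat{G_n}(\xi)}^2\le n^{-4}\,\Ez\bigabs{\hzgLn(\xi n\qqw)}^2\le C\bigpar{\xi^{-4}\wedge1};
\end{align}
integrating $\bigpar{1+\xi^2}^{\gb}$ against this is finite and uniform in $n$ exactly because $2\gb-4<-1$. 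The genuinely new range is $|\xi|>\pi n\qq$, which is outside the scope of \refL{LFN}. Here the idea is to use that $\hzgLn$ is $2\pi$-periodic while the interpolation kernel $\htau(\xi)=\sin^2(\xi/2)/(\xi/2)^2$ supplies decay: writing $\xi n\qqw=\theta+2\pi m$ with $\theta\in(-\pi,\pi]$ and a nonzero integer $m$, one has $\hzgLn(\xi n\qqw)=\hzgLn(\theta)$ and, since $\sin^2$ has period $\pi$ and $|\theta+2\pi m|\ge\pi|m|$, $|\htau(\xi n\qqw)|\le C\min(\theta^2,1)/m^2$. Applying \eqref{bg7} with its frequency chosen so that the argument is $\theta$ (that is, with $\xi$ replaced by $\theta n\qq$) gives $\Ez|\hzgLn(\theta)|^2\le Cn^4\bigpar{(\theta n\qq)^{-4}\wedge1}$; multiplying by $\min(\theta^4,1)$ and separating the cases $|\theta|\ge n\qqw$ and $|\theta|<n\qqw$ shows the product is $\le Cn^2$, so that $\Ez|\widehat{G_n}(\xi)|^2\le Cn^{-2}m^{-4}$. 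Since $|m|\ge c|\xi|n\qqw$ throughout this range, the right-hand side is $\le C\xi^{-4}$, and hence $\int_{|\xi|>\pi n\qq}\bigpar{1+\xi^2}^{\gb}\xi^{-4}\dd\xi<\infty$ uniformly in $n$ (again since $2\gb-4<-1$). Combining the two ranges gives \eqref{THnPlanA} and therefore \eqref{bp7}.

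The hard part, and essentially the only thing that is not a direct transcription of the proof of \refT{TF}, is this high-frequency regime $|\xi|>\pi n\qq$. The crude bound $|\hzgLn|\le n^2$ combined with the $O(\xi^{-2})$ decay of $\htau$ only yields $\Ez|\widehat{G_n}(\xi)|^2=O\bigpar{n^2\xi^{-4}}$ there, which is far from summable once one weights by $\xi^{2\gb}$ with $\gb$ near $\tfrac32$; one really has to transfer the quantitative fourth-moment decay of \eqref{bg7} to the reduced frequency $\theta$ via periodicity, as above. Everything else — the change of variables, the elementary case analysis, and the invocation of the Besov embedding — is routine.
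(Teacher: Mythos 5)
Your proposal is correct, but it takes a genuinely different route from the paper. The paper first proves a discrete Sobolev-type embedding (\refL{Lsobolev}): since $\gL_n$ is the linear interpolation of its values on $\bbZ$, the H\"older seminorm is controlled by differences at integer points, and Fourier inversion of the \emph{periodic} transform over a single period gives
$\normHa{\gL_n}^2\le C\intpipi|\xi|^{2\ga+1}|\hzgLn(\xi)|^2\dd\xi$ directly; then \eqref{bg7} and a change of variables yield \eqref{bp7}, with no interpolation kernel and no frequencies beyond $\pi n\qq$ ever appearing. You instead work with the continuous transform of the interpolated profile, prove a uniform bound on the weighted $L^2$ norm of $\widehat{G_n}$ (i.e.\ on $\Ez\norm{G_n}_{\SOB{\ga+1/2}}^2$), and conclude via the same Besov embedding $\BES{\ga+1/2}\subset\BESoo{\ga}=\Ha$ that the paper uses for $\GLX$ in the proof of \refT{TF}. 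The price is the aliasing range $|\xi|>\pi n\qq$, which \refL{LFN} does not cover; your treatment of it is sound: periodicity gives $\hzgLn(\xi n\qqw)=\hzgLn(\theta)$, the kernel $\htau$ contributes $C\min(\theta^2,1)/m^2$, the product $\min((\theta n\qq)^{-4},1)\min(\theta^4,1)\le n^{-2}$ in all cases, and $|m|\ge c|\xi|n\qqw$ turns $Cn^{-2}m^{-4}$ into $C\xi^{-4}$, so the tail integral converges uniformly precisely when $\ga<1$ --- the same threshold as in the paper's computation. In short, the paper's argument is more elementary and self-contained (the discrete lemma replaces the embedding theorem and makes aliasing a non-issue), while yours reuses the continuous machinery of \refT{TF} at the cost of the high-frequency analysis, and as a small bonus yields the slightly stronger uniform bound on the $\BES{\ga+1/2}$ norm of $n^{-3/2}\gL_n(\cdot\,n\qq)$, the finite-$n$ analogue of \eqref{fog1}.
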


For the proof, we need a discrete version of (a simple case of) the Sobolev
embedding theorem. 
We do not know an explicit reference, so
for completeness, we state this and give a proof.

\begin{lemma}
  \label{Lsobolev}
Let $f:\bbR\to\bbR$ be a function initially defined on $\bbZ$ and extended
by linear interpolation to $\bbR$. Suppose that
$\sum_{-\infty}^\infty|f(k)|<\infty$, and define, as in \eqref{dfou}, 
$
\hzf(\xi):=\sum_k f(k) e^{\ii\xi k}.
$
Let $0<\ga<1$. Then
\begin{align}\label{sob1}
  \normHa{f}\le C_\ga \Bigpar{\intpipi |\xi|^{2\ga+1} |\hzf(\xi)|^2\dd\xi}\qq
.\end{align}
\end{lemma}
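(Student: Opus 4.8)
The plan is to reduce the continuous \Holder{} seminorm $\normHa{f}$ to the \Holder{} seminorm of the restriction of $f$ to $\bbZ$, and then to estimate this discrete seminorm by Fourier inversion on $[-\pi,\pi]$ combined with a frequency split at the scale $1/(k-j)$. Throughout write $N_\ga(f):=\sup_{j<k}|f(k)-f(j)|/(k-j)^\ga$ for this discrete seminorm (an element of $[0,\infty]$).

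\emph{Step 1 (reduction to $\bbZ$).} I would show $\normHa{f}\le 3N_\ga(f)$; this is trivial if $N_\ga(f)=\infty$, so assume it is finite. Since $f$ is linear on each interval $[k,k+1]$ with slope $f(k+1)-f(k)$, its global Lipschitz constant is $M:=\sup_k|f(k+1)-f(k)|\le N_\ga(f)$, whence $|f(x)-f(y)|\le M|x-y|\le N_\ga(f)|x-y|^\ga$ whenever $|x-y|\le1$. For $|x-y|>1$, say $x<y$, write
\begin{equation*}
f(y)-f(x)=\bigpar{f(y)-f(\floor{y})}+\bigpar{f(\floor{y})-f(\ceil{x})}+\bigpar{f(\ceil{x})-f(x)},
\end{equation*}
where $\ceil{x}\le\floor{y}$; the two outer terms have absolute value at most $M$, while the middle one (which vanishes when $\ceil{x}=\floor{y}$) is at most $N_\ga(f)(\floor{y}-\ceil{x})^\ga\le N_\ga(f)|x-y|^\ga$. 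As $|x-y|^\ga>1$, adding up gives $|f(x)-f(y)|\le 3N_\ga(f)|x-y|^\ga$, proving the claim.

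\emph{Step 2 (Fourier estimate for $N_\ga(f)$).} Since $\sum_k|f(k)|<\infty$, $\hzf$ is continuous and $f(k)=\frac1{2\pi}\intpipi\hzf(\xi)e^{-\ii\xi k}\dd\xi$, so for $j<k$, with $n:=k-j$,
\begin{equation*}
|f(k)-f(j)|\le\frac1{2\pi}\intpipi|\hzf(\xi)|\,\bigabs{1-e^{-\ii\xi n}}\dd\xi .
\end{equation*}
I would split this integral at $|\xi|=1/n$, using $|1-e^{-\ii\xi n}|\le n|\xi|$ on $\{|\xi|\le1/n\}$ and $|1-e^{-\ii\xi n}|\le2$ on $\{1/n<|\xi|\le\pi\}$, and apply the \CSineq{} to each piece, pairing $|\xi|^{\ga+1/2}|\hzf(\xi)|$ with the remaining weight. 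For the low-frequency piece the residual weight is $n|\xi|^{1/2-\ga}$ and $\int_{|\xi|\le1/n}n^2|\xi|^{1-2\ga}\dd\xi=n^{2\ga}/(1-\ga)$ (finite because $\ga<1$); for the high-frequency piece the residual weight is $2|\xi|^{-\ga-1/2}$ and $\int_{1/n<|\xi|\le\pi}4|\xi|^{-2\ga-1}\dd\xi\le 4n^{2\ga}/\ga$ (using $\ga>0$). Adding the two estimates yields
\begin{equation*}
|f(k)-f(j)|\le C_\ga\,n^\ga\Bigpar{\intpipi|\xi|^{2\ga+1}|\hzf(\xi)|^2\dd\xi}^{1/2},\qquad C_\ga=\tfrac1{2\pi}\bigpar{(1-\ga)^{-1/2}+2\ga^{-1/2}}.
\end{equation*}
Dividing by $n^\ga$ and taking the supremum over $j<k$ bounds $N_\ga(f)$, and composing with Step 1 gives \eqref{sob1}.

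\emph{Main obstacle.} The only delicate point is that the weight $|\xi|^{2\ga+1}$ degenerates at $\xi=0$: the crude estimate $|1-e^{-\ii\xi n}|\le 2^{1-\ga}(n|\xi|)^\ga$, which would be perfectly adequate away from the origin, leads after \CS{} to the divergent integral $\intpipi|\xi|^{-1}\dd\xi$. Splitting at the scale $|\xi|\sim1/n$ and exploiting the full linear vanishing $|1-e^{-\ii\xi n}|\le n|\xi|$ near $0$ is exactly what makes both pieces integrable — the low-frequency piece requires $\ga<1$ and the high-frequency piece requires $\ga>0$, which is where the hypothesis $0<\ga<1$ is used.
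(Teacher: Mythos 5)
Your proof is correct and takes essentially the same approach as the paper: Fourier inversion on $[-\pi,\pi]$, the elementary bound $|1-e^{-\ii n\xi}|\le \min(n|\xi|,2)$, and Cauchy--Schwarz against the weight $|\xi|^{2\ga+1}$; your explicit split at $|\xi|=1/n$ is just the hands-on version of the paper's single Cauchy--Schwarz with the combined weight $(|j-k|^2\xi^2\bmin 1)\xi^{-2\ga-1}$, whose integral is evaluated by the same change of variables, and both uses of $0<\ga<1$ occur at the same places. Your Step 1 merely spells out the paper's brief remark that the integer estimate transfers to all of $\bbR$ because $f$ is linear between integers.
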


\begin{proof}
  Let $j,k\in\bbZ$. Then, by Fourier inversion,
  \begin{align}\label{sob2}
|f(j)-f(k)|&
=
\lrabs{\frac{1}{2\pi}\int_{-\pi}^{\pi}\bigpar{e^{-\ii j\xi}-e^{-\ii k\xi}}
\hzf(\xi)\dd\xi  }  
\notag\\&
\le \int_{0}^{\pi}\bigpar{|j-k|\xi \bmin1}\bigabs{\hzf(\xi)}\dd\xi  
. \end{align}
Hence, using the \CSineq,
\begin{align}\label{sob3}
  |f(j)-f(k)|^2
\le
\intpi \bigpar{|j-k|^2\xi^2 \bmin1} \xi^{-2\ga-1} \dd\xi
\intpi \bigabs{\hzf(\xi)}^2 \xi^{2\ga+1}\dd\xi  
.\end{align}
Writing $M:=\intpi \bigabs{\hzf(\xi)}^2 \xi^{2\ga+1}\dd\xi$,  
and changing variables to $t:=|j-k|\xi$ in the first integral
(assuming $j\neq k$),
we  obtain
\begin{align}\label{sob33}
  |f(j)-f(k)|^2
\le
|j-k|^{2\ga}\intoo \bigpar{t^2 \bmin1} t^{-2\ga-1} \dd t
\cdot M
= C M |j-k|^{2\ga}
\end{align}
for some $C<\infty$ (depending on $\ga$).
Hence,
\begin{align}\label{sob4}
  |f(x)-f(y)| \le C M\qq |x-y|^\ga
\end{align}
whenever $x$ and $y$ are integers. Since $f$ is linear between integers, it
follows that \eqref{sob4} holds for all $x,y\in\bbR$ (with another $C$),
\ie, $\normHa{f}\le C M\qq$, as claimed in \eqref{sob1}.
\end{proof}

\begin{proof}[Proof of \refT{THn}]
  \refL{Lsobolev} and a change of variables together with \eqref{bg7} yield
  \begin{align}
\Ez\normHa{\gL_n}^2&
\le C\E \intpipi |\xi|^{2\ga+1} \bigabs{\hzgLn(\xi)}^2\dd\xi
\notag\\&
= C n^{-\ga-1}\int_{-\pi n\qq}^{\pi n\qq} |\xi|^{2\ga+1} \E\bigabs{\hzgLn(\xi n\qqw)}^2
\dd\xi
\notag\\&
\le C n^{3-\ga}\intoooo \bigpar{|\xi|^{2\ga-3}\bmin |\xi|^{2\ga+1}}\dd\xi
= C n^{3-\ga},
  \end{align}
since the final integral converges for $0<\ga<1$.
A change of variables yields \eqref{bp7}.
\end{proof}

We end this subsection with a couple of open problems suggested by \refT{THn}.

The fourth moment condition in \refT{THn} is used in the proof,
but it seems likely that it can be weakened, see \refR{R4}. Hence, we ask
the following.
\begin{problem}\label{P2}
  Does \eqref{bp7} hold assuming only a finite second moment for $\bp$?
\end{problem}

Just as for the limit $\GLX$, we do not know whether \eqref{bp7} holds for
$\ga=1$. Since 
\HCx1 equals Lipschitz, and
$\gL_n$ is defined by linear interpolation, 
it is
easy to see that 
  \begin{align}\label{lip}
\bignorm{n^{-3/2}\gL_n\bigpar{x n\qq}}_{\Hax1}
=n\qw  \max_k|\gL_n(k+1)-\gL_n(k)|.
\end{align}
Consequently, the question whether \eqref{bp7} holds for $\ga=1$ is
equivalent to the following. (Cf.\ \eqref{egln}.)

\begin{problem}\label{PnLip}
Assume that $\mu(\bp)=1$ and that 
$\bp$ has a finite second moment (or fourth moment, or an exponential  moment).
Is then 
  \begin{align}
\E \max_k|\gL_n(k+1)-\gL_n(k)|^2 \le Cn^2\;?
  \end{align}
A slightly weaker version is: is
  \begin{align}
\E \max_k|\gL_n(k+1)-\gL_n(k)| \le Cn\;?
  \end{align}
\end{problem}

\section{Further remarks}\label{Sfurther}

\begin{remark}
  In analogy with \refSs{Sroot} and \ref{Sunroot}, we might, as a third
alternative, also define random \emph{unlabelled simply generated trees} 
by using the weights
\eqref{wT} on the set
$\fU_n$ of all unlabelled unrooted trees of order $n$.
(These are defined as equivalence classes of labelled trees in $\fL_n$ under
isomorphisms.)
It is usually more challenging to consider unlabelled trees;
we conjecture that results similar to those above hold for 
 unlabelled \sgt{s} too,
but we leave this as an open problem.

There is some existing work \cite{Wang, RStu} 
on yet another related  model for random trees:
\emph{simply generated unrooted plane trees} 
(i.e., unlabelled unrooted trees where each vertex is endowed with
a cyclic ordering of its neighbourhood). In particular, the results in
\cite[Sections 2 and 3]{RStu} seem to be reminiscent of some of our results
in Sections 3--5. Informally, through a series of approximations (using
rooted/labelled/differently weighted versions) one ends up comparing those
tree models to a pair of Galton--Watson trees (with the roots joined by an
edge), 
conditioned on  
the sum of vertices. 
One of the two
trees being typically macroscopic while the other is  microscopic. 
\end{remark}

\begin{remark}\label{Rinc}
Another class of ``simply generated'' trees is 
simply generated \emph{increasing} trees,
see 
\cite{BergeronFS92} and \cite[Section 1.3.3]{Drmota}.
These random trees are quite different: 
They have
(under weak conditions)  logarithmic height; moreover, 
both the profile and distance profile are degenerate, in the sense
that the distribution of depths or distances divided by $\log n$ converges
to a point mass at some positive constant.
With a more refined scaling, the profile and distance profile are both Gaussian,
see \citet{PP2004b}.
\end{remark}

\begin{remark}
  The degenerate behaviour seen in \refR{Rinc},
with almost all distances close to $c\log n$ for some constant $c>0$,  
seems to be typical for many  classes of random trees
  with logarithmic height;
see \cite{SJ352}.
\end{remark}

\begin{problem}
  We have in this paper assumed that the offspring distribution is critical
  and has finite variance. 
It would be interesting to extend the results to the case of infinite
variance, with the offspring distribution in the domain of attraction of a
stable variable. In this case  the tree converges in a suitable
sense to a random stable tree
\cite{Duquesnestable, Kortchemskistable};
moreover, there is a corresponding limit result
for the profile \cite{Kersting,AngtuncioUB}.
However,
there seems to be several technical challenges to adapt the arguments above
to this case, and we leave it as an open problem.
\end{problem}

%\newpage

\appendix

\section{Tightness}\label{Atight}

We show here a general technical result used in the proof of 
\refT{Teo}, and thus of
our main theorem.

We recall the standard criterion for tightness in $\coi$ in \eg{}
\citet[Theorem 8.2]{Billingsley}, which we formulate as follows,
using the modulus of continuity defined in \eqref{go}.
(The equivalence of our formulation and the one in \cite{Billingsley} is a
simple exercise.)
\begin{lemma}\label{LU1}
 A sequence of random functions $(X_n(t))\xoo$ in $\coi$
is tight if and only if the following two conditions hold:
\begin{romenumerate}
\item\label{LU1a} 
The sequence $X_n(0)$ of random variables is tight.
\item\label{LU1b}
 $\go(\gd;X_n)\pto0$ as $\gd\to0$, uniformly in $n$.
\end{romenumerate}
%\nopf
\end{lemma}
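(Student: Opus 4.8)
The plan is to derive this criterion from the classical Prokhorov--Arzel\`a--Ascoli tightness characterisation in $\coi$, namely \citet[Theorem 8.2]{Billingsley}: a sequence $(X_n)\xoo$ in $\coi$ is tight if and only if \textup{(a)}~for every $\eps>0$ there exist $a<\infty$ and $n_0$ with $\P\bigpar{\sup_{t\in\oi}|X_n(t)|\ge a}\le\eps$ for all $n\ge n_0$, and \textup{(b)}~for every $\eps,\eta>0$ there exist $\gd\in(0,1)$ and $n_0$ with $\P\bigpar{\go(\gd;X_n)\ge\eps}\le\eta$ for all $n\ge n_0$, where $\go$ is the modulus of continuity \eqref{go}. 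Thus it suffices to show that conditions \textup{(i)}--\textup{(ii)} of the lemma are equivalent to \textup{(a)}--\textup{(b)}.

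First I would treat the oscillation conditions. Condition~\textup{(ii)} trivially implies~\textup{(b)} (take $n_0=1$). For the converse, the only thing to do is absorb the finitely many indices $n<n_0$ allowed in~\textup{(b)}: each $X_n$ is a single random element of $\coi$, and every continuous function on the compact interval $\oi$ is uniformly continuous, so $\go(\gd;X_n)\downto0$ surely as $\gd\downto0$ and hence $\go(\gd;X_n)\pto0$; taking the minimum over the finite index set $\set{1,\dots,n_0-1}$ of the $\gd$'s produced for each such $n$ upgrades~\textup{(b)} to the uniform-in-$n$ statement~\textup{(ii)}.

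Next, granting the equivalence of~\textup{(ii)} and~\textup{(b)}, I would match up the first conditions. Since $|X_n(0)|\le\sup_{t}|X_n(t)|$, condition~\textup{(a)} implies~\textup{(i)}. Conversely, from $\sup_{t\in\oi}|x(t)|\le|x(0)|+\go(1;x)$ together with the elementary bound $\go(m\gd;x)\le m\,\go(\gd;x)$ (interpolate by at most $m$ increments of length $\le\gd$), condition~\textup{(ii)} lets one pick $m$ with $\sup_n\P\bigpar{\go(1/m;X_n)>1}$ arbitrarily small, hence $\sup_n\P\bigpar{\go(1;X_n)>m}$ arbitrarily small, and then~\textup{(i)} supplies a uniform bound on $|X_n(0)|$; combining gives~\textup{(a)} with $n_0=1$. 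With \textup{(i)}+\textup{(ii)}$\iff$\textup{(a)}+\textup{(b)} established, \citet[Theorem 8.2]{Billingsley} finishes the proof in both directions.

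There is no serious obstacle here; the substantive content---Prokhorov's theorem and the Arzel\`a--Ascoli description of compact subsets of $\coi$---is entirely inside the cited theorem, and the only point needing a moment's care (and the reason this is ``a simple exercise'') is the passage between Billingsley's ``for all $n\ge n_0$'' and our uniformity over all $n\ge1$, which is handled by the surely valid uniform continuity of paths in $\coi$.
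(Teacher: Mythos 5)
Your proposal is correct and is essentially the paper's own approach: the paper gives no proof of Lemma~\ref{LU1}, merely citing \citet[Theorem 8.2]{Billingsley} and calling the equivalence ``a simple exercise'', and your argument (absorbing the finitely many indices $n<n_0$ via the sure uniform continuity of paths, and passing between the conditions using $\sup_{t\in\oi}|x(t)|\le|x(0)|+\go(1;x)$ and $\go(m\gd;x)\le m\,\go(\gd;x)$) is precisely that exercise. The only small inaccuracy is that Billingsley's Theorem 8.2 states its first condition in terms of $|x(0)|$ rather than $\sup_{t\in\oi}|X_n(t)|$, but since your own estimates show the two forms are interchangeable (and a single random variable, hence any finite collection, is tight), this does not affect the validity of the proof.
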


The uniform convergence in probability in \ref{LU1b} means that for every
$\eps,\eta>0$, there exists $\gd>0$ such that
\begin{align}\label{a3}
  \sup_n \P\bigpar{\go(\gd; X_n)\ge\eps}\le\eta.
\end{align}
Equivalently, for every sequence $\gd_k\to0$ and every sequence $(n_k)\xoo$,
\begin{align}\label{a4}
  \go(\gd_k;X_{n_k})\pto0\qquad\text{as \ktoo}
.\end{align}

We use this to prove the following general lemma on tightness of averages.

\begin{lemma}\label{LU2}
  Suppose that $(X_n(t))\xoo$ is a sequence of random functions in $\coi$,
that $(N_n)\xoo$ is an arbitrary sequence of positive integers, and that
for each $n$, $(X_{ni}(t))_{i=1}^{N_n}$ is a, possibly dependent, family of
identically distributed random functions with
$X_{ni}\eqd X_n$.
Assume that
\begin{romenumerate}
\item \label{LU2a}
The sequence $(X_n)$ is tight in $\coi$.
\item \label{LU2b}
The sequence of random variables
  \begin{align}\label{lu2b}
    \norm{X_n}:=
\sup_{t\in\oi}|X_n(t)|
  \end{align}
is uniformly integrable.
\end{romenumerate}
Then the sequence of averages
\begin{align}\label{lu2}
  Y_n(t):=\frac{1}{N_n}\sum_{i=1}^{N_n} X_{ni}(t)
\end{align}
is tight in $\coi$.
\end{lemma}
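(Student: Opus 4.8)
The plan is to verify the two conditions of \refL{LU1} for the sequence $(Y_n)$, namely tightness of $Y_n(0)$ and uniform control of the moduli of continuity $\go(\gd;Y_n)$. For condition \ref{LU1a} I would simply estimate $|Y_n(0)|\le N_n\qw\sum_{i=1}^{N_n}|X_{ni}(0)|\le N_n\qw\sum_{i=1}^{N_n}\norm{X_{ni}}$, so that, using $X_{ni}\eqd X_n$,
\begin{align}
  \E|Y_n(0)|\le\E\norm{X_n},
\end{align}
and the right-hand side is bounded uniformly in $n$ since a uniformly integrable family is bounded in $L^1$. Hence $(Y_n(0))$ is bounded in $L^1$ and therefore tight.

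For condition \ref{LU1b} the key observation is that the modulus of continuity is subadditive and positively homogeneous in its function argument: $\go(\gd;f+g)\le\go(\gd;f)+\go(\gd;g)$ and $\go(\gd;cf)=c\,\go(\gd;f)$ for $c\ge0$. Applying this to the average \eqref{lu2} gives
\begin{align}
  \go(\gd;Y_n)\le\frac1{N_n}\sum_{i=1}^{N_n}\go(\gd;X_{ni}),
\end{align}
and since $\go(\gd;X_{ni})\eqd\go(\gd;X_n)$, taking expectations yields $\E\go(\gd;Y_n)\le\E\go(\gd;X_n)$. By Markov's inequality it therefore suffices to prove that $\sup_n\E\go(\gd;X_n)\to0$ as $\gd\to0$; once this is established, \ref{LU1b} holds for $(Y_n)$ in the form \eqref{a3}, and \refL{LU1} completes the proof.

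This last reduction is where the two hypotheses combine. Assumption \ref{LU2a} together with \refL{LU1}\ref{LU1b} gives that $\go(\gd;X_n)\pto0$ as $\gd\to0$, uniformly in $n$ in the sense of \eqref{a3}; assumption \ref{LU2b}, combined with the trivial bound $\go(\gd;X_n)\le2\norm{X_n}$ valid for every $\gd$, shows that the family $\set{\go(\gd;X_n):n\ge1,\ \gd>0}$ is uniformly integrable. Splitting $\E\go(\gd;X_n)$ over the events $\set{\go(\gd;X_n)\le\eps}$, $\set{\eps<\go(\gd;X_n)\le K}$ and $\set{\go(\gd;X_n)>K}$ — the first contribution being at most $\eps$, the third being small uniformly in $n$ and $\gd$ once $K$ is chosen large via uniform integrability, and the middle being at most $K\sup_n\P(\go(\gd;X_n)>\eps)$, which tends to $0$ by \eqref{a3} for $\gd$ small — one obtains $\sup_n\E\go(\gd;X_n)\to0$. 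The only point demanding care here is precisely this passage from uniform convergence in probability of $\go(\gd;X_n)$ to uniform convergence of its expectation; supplying that passage is exactly the role of the uniform integrability assumption \ref{LU2b}, and it is why tightness of $(X_n)$ alone would not be enough.
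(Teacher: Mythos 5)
Your proposal is correct and follows essentially the same route as the paper's proof: subadditivity of the modulus of continuity plus the identical distribution of the $X_{ni}$ gives $\E\go(\gd;Y_n)\le\E\go(\gd;X_n)$, and the bound $\go(\gd;X_n)\le2\norm{X_n}$ together with uniform integrability upgrades the uniform convergence in probability from tightness of $(X_n)$ to convergence of expectations, after which Markov's inequality and \refL{LU1} finish the argument. The only difference is cosmetic: the paper phrases the uniformity via arbitrary sequences $\gd_k\to0$, $n_k$, while you give the explicit truncation/splitting argument for $\sup_n\E\go(\gd;X_n)\to0$, which the paper leaves implicit in the statement that a uniformly integrable sequence converging in probability converges in $L^1$.
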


\begin{remark}\label{RU2}
  We state \refL{LU2} for $\coi$; it transfers immediately to \eg{} $\cooq$.
It follows also that the lemma holds for $\coo$, with \ref{LU2b}
replaced by the assumption that $\sup_{[0,b]}|X_n|$ over any finite interval 
$[0,b]$ is
uniformly integrable.
\end{remark}

\begin{proof}
Let $\gd_k\to0$ and let $n_k$ be arbitrary positive integers.
Then \eqref{a4} holds by assumption \ref{LU2a} and \refL{LU1}.

Furthermore,   for any $\gd>0$, we obviously have, by \eqref{go},
$\go(\gd;X_n)\le2\norm{X_n}$. In particular,
$\go(\gd_k;X_{n_k})\le2\norm{X_{n_k}}$, and it follows from \ref{LU2b} that
the sequence $\go(\gd_k;X_{n_k})$, $k\ge1$, is uniformly integrable.
Hence, \eqref{a4} implies 
\begin{align}\label{a5}
\E \go(\gd_k;X_{n_k})\to0,\qquad\text{as \ktoo}
.\end{align}

Next, it follows from the definitions \eqref{lu2} and \eqref{go} that, for
any $\gd>0$,
\begin{align}\label{a6}
  \go\xpar{\gd;Y_n}
=\frac{1}{N_n}\go\Bigpar{\gd;\sum_{i=1}^{N_n}X_{ni}} 
\le
\frac{1}{N_n}\sum_{i=1}^{N_n}\go\bigpar{\gd;X_{ni}} 
\end{align}
and hence,
\begin{align}\label{a7}
\E \go\xpar{\gd;Y_n}
\le
\frac{1}{N_n}\sum_{i=1}^{N_n}\E\go\bigpar{\gd;X_{ni}} 
=\E\go\bigpar{\gd;X_{n}}. 
\end{align}

 Consequently, by \eqref{a5} and \eqref{a7},
\begin{align}\label{a8}
\E \go(\gd_k;Y_{n_k})
\le
\E \go(\gd_k;X_{n_k})\to0,\qquad\text{as \ktoo}
.\end{align}
Since the sequences $(\gd_k)_k$ and $(n_k)_k$ are arbitrary,
this shows that the sequence $(Y_n)_n$ satisfies \eqref{a4} and thus
condition \refL{LU1}\ref{LU1b}.

Furthermore, condition \refL{LU1}\ref{LU1a} holds too, because
by \eqref{lu2}, \eqref{lu2b} and assumption \ref{LU2b},
\begin{align}
  \E|Y_n(0)|
\le 
\frac{1}{N_n}\sum_{i=1}^{N_n}\E| X_{ni}(0)|
=\E|X_n(0)|
\le\Ez\norm{X_n}
=O(1),
\end{align}
which implies that the sequence $Y_n(0)$ is tight.
Hence, the conclusion follows by \refL{LU1}.
\end{proof}

\begin{remark}
  The assumption on uniform integrability in \refL{LU2}\ref{LU2b}
cannot be weakened
  to $\Ez\norm{X_n}=O(1)$.
For an example, let 
\begin{align}
  X_n(t):=\xi_n n(1\bmin nt), \qquad t \ge 0,
\end{align}
where
$\xi_n\sim\Be(1/n)$, \ie, $\P(\xi_n=1)=1/n$ and
otherwise $\xi_n=0$.
Then $\Ez\norm{X_n}=1$. On the other hand, take $N_n:=n$ and let 
$X_{ni}\eqd X_n$ be independent, $1\le i\le N_n$. Then, for every $n$,
with probability $e\qw+o(1)$ exactly one $X_{ni}$ is not identically $0$,
and in this case $Y_n=(1\bmin nt)$ and thus $\go(1/n;Y_n)=1$.
Hence, $\P\bigpar{\go(1/n;Y_n)=1} \ge e\qw+o(1)$ and thus \eqref{a4} and
\refL{LU1}\ref{LU1b} do not hold for $(Y_n)$; thus the sequence $Y_n$ is
not tight.
\end{remark}

\section{Generating functions}\label{Agen}
The relations in \refSs{Smark1} and \ref{Smark2} between random rooted and
unrooted \sgt{s} can also be expressed using generating functions.
We discuss this briefly here; the results are not used in the main body of
the paper, but they further illuminate the relation and the constructions
in \refSs{Smark1}--\ref{Smark2}.

Let $(w_k)\zoo$ be given sequence of weights and define $\phi_k$ by
\eqref{phiw}.
Recall that by \eqref{Phi} and \eqref{phiw},
the ordinary generating function of $(\phi_k)\zoo$ is
\begin{align}\label{Phiw}
\Phi(z)=\sumko \phi_k z^k
  =\sumko \frac{w_{k+1}}{k!}z^{k}.
\end{align}
Furthermore, let $\Psi(z)$ be 
the exponential generating function of $(w_k)\xoo$, \ie,
\begin{align}\label{Psi}
  \Psi(z):=\sumk \frac{w_k}{k!}z^k.
\end{align}
Thus,  \eqref{Phiw} implies
\begin{align}\label{Psi'}
  \Psi'(z)
=\sumk \frac{w_k}{(k-1)!}z^{k-1}
=\sumko \frac{w_{k+1}}{k!}z^{k}
=\Phi(z).
\end{align}

We consider generating functions for the rooted and unrooted \sgt{s}.
We begin with the rooted case, which is standard.
Let $\za_n$ be the total weight \eqref{wTr} of all ordered rooted
trees in $\fT_n$, 
\ie,
\begin{align}
  a_n:=\sum_{T\in\fT_n}\phi(T),
\end{align}
and let $\ZA(z)$ be the corresponding ordinary generating
function, 
\begin{align}\label{ZA}
  \ZA(z):=\sumn \za_n z^n
=\sum_{T\in\fT}\phi(T)z^{|T|}
.\end{align}
Then, as is well-known and easy to see
\cite[Section II.5.1 p.~126]{FlajoletS},
\cite[Section 3.1.4]{Drmota},
\begin{align}\label{AAA}
  \ZA(z)=z\Phi\bigpar{\ZA(z)}.
\end{align}

Similarly, for the unrooted case, let $\zb_n$ be the total weight \eqref{wT} of all 
unrooted labelled  trees in
$\fL_n$, \ie,
\begin{align}
  \zb_n:=\sum_{T\in\fL_n}w(T),
\end{align}
and let $\ZB(z)$ be the corresponding exponential generating function,
\begin{align}\label{ZB}
  \ZB(z):=\sumn \frac{\zb_n}{n!}z^n
=\sum_{T\in\fL} w(T)\frac{z^{|T|}}{|T|!}
.\end{align}

We now return to the constructions in \refSs{Smark1} and \ref{Smark2}
in order to find relations between $A(z)$ and $\ZB(z)$.

\subsection{Marking a vertex}
In the construction in \refS{Smark1}, we first mark a vertex.
We have $n$ choices for each tree in $\fT_n$, and thus the sum
of the weight \eqref{wT} over all marked
(\ie, rooted) labelled trees of order $n$ is $n\zb_n$.

We then order the tree; this introduces more choices, but these are
compensated for by changing the weight to \eqref{wtb}. 
Hence, the sum of the weight \eqref{wtb} over all labelled ordered trees of
order $n$ is also $n\zb_n$.

Finally, we forget the labelling; since each tree has $n!$ labellings, the
sum of the weight \eqref{wtb} over all ordered trees in $\fT_n$ is thus
$n\zb_n/n!$. Hence, using \eqref{ZB},
\begin{align}\label{ZB1}
  \sum_{T\in \fT} w\x(T) z^{|T|} = \sumn \frac{n\zb_n}{n!} z^n
=z \ZB'(z).
\end{align}
Furthermore, by standard arguments and the definitions
\eqref{phiw}--\eqref{phio},
\begin{align}\label{kula}
  \sum_{T\in \fT} w\x(T) z^{|T|} &
= \sumko \sum_{T_1,\dots,T_k\in\fT}
z^{1+|T_1|+\dots+|T_k|}\phio_k \prod_{j=1}^k\prod_{v\in T_j} \phi_{\dout(v)}
\notag\\&
=
z\Psi\bigpar{\ZA(z)}.
\end{align}
Thus,
\begin{align}\label{BA}
  \ZB'(z)=\Psi\bigpar{\ZA(z)}.
\end{align}

\subsection{Marking an edge}
In the construction in \refS{Smark2}, we first mark a directed edge.
There are $2(n-1)$ choices, and thus the sum of the weight \eqref{wT} over
all marked labelled trees of order $n$ is $2(n-1)\zb_n$.
We then order the trees, and as in the preceding subsection compensate for
the number of orderings by changing the weight. Thus, the sum of the weight
\eqref{ww} over all pairs of ordered rooted trees $(T_+,T_-)$ with
$|T_+|+|T_-|=n$, labelled together with $1,\dots,n$, is also $2(n-1)\zb_n$.
Hence, the sum of the weight \eqref{ww} over unlabelled such pairs
$(T_+,T_i)$ is $2(n-1)\zb_n/n!$. Consequently, 
using \eqref{hw2} and \eqref{ZA},
\begin{align}
  \sumn \frac{2(n-1)\zb_n}{n!}z^n
  =\sum_{T_+,T_-\in\fT}\hw(T_+)\hw(T_-)z^{|T_+|+|T_-|}
  =\ZA(z)^2.
\end{align}
Hence,
\begin{align}\label{BAA}
  2z\ZB'(z)-2\ZB(z)=\ZA(z)^2.
\end{align}

\subsection{A sanity check}
Differentiating  \eqref{BA} yields, using \eqref{Psi'} and \eqref{AAA},
\begin{align}\label{B''AA}
  \ZB''(z)=\Psi'\bigpar{A(z)}A'(z)
=\Phi\bigpar{A(z)}A'(z)
=z\qw A(z)A'(z),
\end{align}
while differentiating \eqref{BAA} yields
\begin{align}\label{BB''A}
  2z \ZB''(z)=2(z\ZB'(z))'-2\ZB'(z) = 2A(z)A'(z).
\end{align}
Hence, \eqref{BA} and \eqref{BAA} are equivalent.

\section{Proof of \eqref{bg6} and \eqref{bg7}} \label{GeneF}

Let $\GGW$ be a (unconditioned) {\GWt} with offspring distribution $\bp$ such that $\mu(\bp)=1$. Define
\begin{align}\label{CdPQ}
 P(\GGW, x) := \sum_{v \in \GGW} x^{d(v,o)}, 
\hspace*{4mm} 
Q(\GGW, x) := \sum_{v,w \in \GGW} x^{d(v,w)},
\end{align}
and the generating functions
\begin{align} 
  \Phi(z) & :=\sumko p_kz^{k}, \label{CdPhi}\\
  A(z) & := \E[z^{|\GGW|}],\label{CdA}\\
  B(z,x) & := \E[z^{|\GGW|} P(\GGW, x) ],\label{CdB} \\
  C(z, x, y) & := \E[z^{|\GGW|} P(\GGW, x) P(\GGW, y)],\label{CdC} \\
  D(z, y) & := \E[z^{|\GGW|} Q(\GGW, x) ], \label{CdD}\\
  E(z, x, y) & := \E[z^{|\GGW|} Q(\GGW, x) P(\GGW, y)], \label{CdE}\\
  F(z, x, y) & := \E[z^{|\GGW|} Q(\GGW, x) Q(\GGW, y)]. \label{CdF}
\end{align}
These functions are defined and continuous at least for 
$|z|<1$ and $|x|,|y|\le1$, and analytic for
$|x|, |y|, |z| <1$. 

It has been shown in \cite[(2.1)]{SJ222} that
\begin{align} \label{C1}
B(z,x) & = \frac{A(z)}{1-xz  \Phi^{\prime}( A(z) )}. 
\end{align}
Furthermore, by taking $x=y$ in \cite[(2.2)]{SJ222},
\begin{align} \label{C2}
D(z,x) & = \frac{ x^{2}z \Phi^{\prime \prime}( A(z) )   B(z,x)^{2}+ 2xz \Phi^{\prime}( A(z) )  B(z,x)  + A(z)}{1- z\Phi^{\prime}( A(z) )}.
\end{align}
It only remains to compute the generating functions $C$, $E$ and $F$. We
follow the approach used in \cite{SJ222} to compute \eqref{C1} and
\eqref{C2}.

Let us condition on the degree $d(o)$ of the root of $\GGW$. If $d(o) =
\ell$, then $\GGW$ has $\ell$ subtrees $\GGW_{1}, \dots, \GGW_{\ell}$ at the
root $o$, and conditioned on $d(o) = \ell$, these are independent and with
the same distribution as $\GGW$. 
% we denote their roots (the neighbours of $o$), by $o_{1}, \dots, o_{\ell}$. 

Assume $d(o) = \ell$. First, $|\GGW| = 1 + \sum_{i=1}^{\ell} |\GGW_{i}|$ and
thus 
\begin{align}\label{C0}
z^{|\GGW|} = z \prod_{i=1}^{\ell} z^{|\GGW_{i}|}.   
\end{align}
Hence, as is well-known, see \eqref{AAA},
\begin{align}
  A(z) = z \Phi(A(z)).
\end{align}
Next,
separating the cases $v \in \GGW_{i}$, $i =1, \dots, \ell$ and $v = o$, we
see that 
\begin{align} \label{C3}
P(\GGW, x)  = x \sum_{i = 1}^{\ell} P(\GGW_{i}, x)  + 1.
\end{align}
Similarly, 
using here and below $\sumx$ to denote multiple sums where the indices
are distinct integers in \set{1,\dots,\ell},
%considering the cases $v \in \GGW_{i}$, $w \in \GGW_{j}$, $i,j =1, \dots, \ell$ and $v = o, w = o$,
\begin{align} \label{C4}
Q(\GGW, x)  & 
=  \sum_{i =1 }^{\ell} Q(\GGW_{i}, x) 
+ x^{2}\sumxij P(\GGW_{i}, x) P(\GGW_{j}, x)  
+ 2x \sum_{i = 1}^{\ell} P(\GGW_{i}, x)  + 1.
\end{align}

We start by computing $C$. From \eqref{C0} and \eqref{C3}, we see that 
\begin{align}\label{CPP}
z^{|\GGW|} P(\GGW, x) P(\GGW, y) & =   xyz  \sum_{i = 1}^{\ell}
                                   z^{|\GGW_{i}|}P(\GGW_{i}, x) P(\GGW_{i},
                                   y) \prod_{k\neq i}^{\ell} z^{|\GGW_{k}|}  
\notag\\
& \hspace*{5mm} +  xyz \sumxij z^{|\GGW_{i}|} P(\GGW_{i}, x)
           z^{|\GGW_{j}|} P(\GGW_{j}, y) \prod_{k\neq i,j}^{\ell}
           z^{|\GGW_{k}|} 
\notag\\
& \hspace*{5mm} + xz \sum_{i = 1}^{\ell} z^{|\GGW_{i}|} P(\GGW_{i}, x)
           \prod_{k\neq i}^{\ell} z^{|\GGW_{k}|}  
\notag\\
& \hspace*{5mm} +  yz \sum_{i = 1}^{\ell} z^{|\GGW_{i}|}  P(\GGW_{i}, y) \prod_{k\neq i}^{\ell} z^{|\GGW_{k}|}+ z  \prod_{k\neq i}^{\ell} z^{|\GGW_{k}|}.
\end{align}
Hence,
\begin{align}\label{CPPE}
\E\bigsqpar{ z^{|\GGW|} P(\GGW, x) P(\GGW, y)\mid d(o) = \ell}  
& = xyz \ell C(z, x, y) A(z)^{\ell -1}   \notag\\
& \hspace*{5mm}  + xyz \ell(\ell-1) B(z,x)B(z,y) A(z)^{\ell -2} \notag\\
& \hspace*{5mm} + xz \ell B(z,x) A(z)^{\ell -1}  \notag\\
& \hspace*{5mm}  + yz \ell B(z,y) A(z)^{\ell -1} + z A(z)^{\ell}
\end{align}
and thus
\begin{align}\label{C5a}
C(z,x,y) & = xyz \Phi^{\prime}( A(z) ) C(z,x, y) +   xyz \Phi^{\prime \prime}( A(z) )   B(z,x)B(z,y)  \notag\\
& \qquad + xz \Phi^{\prime}( A(z) )  B(z,x)  + yz \Phi^{\prime}( A(z) )  B(z,y) + A(z) \notag\\
& = xyz \Phi^{\prime}( A(z) ) C(z,x, y) +   xyz \Phi^{\prime \prime}( A(z) )   B(z,x)B(z,y)  \notag\\
& \qquad +  B(z,x) +B(z,y)-A(z)
%+ yz \Phi^{\prime}( A(z) )  B(z,x) 
\end{align}
which gives
\begin{align} \label{C5}
C(z,x, y) & =  \frac{xyz \Phi^{\prime \prime}( A(z) ) B(z,x)B(z,y) + B(z,x)  +B(z,y)-A(z) }{1-xyz \Phi^{\prime}( A(z) )}.
\end{align}

Next, we compute $E$. From \eqref{C3} and  \eqref{C4}, we see that
\begin{align}\label{CQP}
z^{|\GGW|} Q(\GGW, x) P(\GGW, y) & =  y z  \sum_{i = 1}^{\ell} z^{|\GGW_{i}|}Q(\GGW_{i}, x) P(\GGW_{i}, y) \prod_{k\neq i}^{\ell} z^{|\GGW_{k}|}  \notag\\
& \hspace*{5mm} +  y z \sumxij z^{|\GGW_{i}|} Q(\GGW_{i}, x) z^{|\GGW_{j}|} P(\GGW_{j}, y) \prod_{k\neq i,j}^{\ell} z^{|\GGW_{k}|} \notag\\
& \hspace*{5mm} +  2x^{2} y z \sumxij z^{|\GGW_{i}|} P(\GGW_{i}, x) P(\GGW_{i}, y) z^{|\GGW_{j}|} P(\GGW_{j}, x) \prod_{k\neq i,j}^{\ell} z^{|\GGW_{k}|} \notag\\
& \hspace*{5mm} +  x^{2}  yz \sumxijl z^{|\GGW_{i}|} P(\GGW_{i}, x)  z^{|\GGW_{j}|} P(\GGW_{j}, x) z^{|\GGW_{l}|}  P(\GGW_{l}, y) \prod_{k\neq i,j,l}^{\ell} z^{|\GGW_{k}|} \notag\\
& \hspace*{5mm} + 2x yz \sum_{i = 1}^{\ell} z^{|\GGW_{i}|} P(\GGW_{i}, x) P(\GGW_{i}, y) \prod_{k\neq i}^{\ell} z^{|\GGW_{k}|}  \notag\\
& \hspace*{5mm} + 2x yz \sumxij  z^{|\GGW_{i}|} P(\GGW_{i}, x) z^{|\GGW_{j}|} P(\GGW_{j}, y) \prod_{k\neq i,j}^{\ell} z^{|\GGW_{k}|}  \notag\\
& \hspace*{5mm} + y z \sum_{i =1}^{\ell} z^{|\GGW_{i}|} P(\GGW_{i}, y)  \prod_{k\neq i}^{\ell} z^{|\GGW_{k}|}  + z^{|\GGW|} Q(\GGW, x). 
\end{align}
Hence,
\begin{align}\label{CQPE}
\E\bigsqpar{ z^{|\GGW|} Q(\GGW, x) P(\GGW, y)\mid d(o) = \ell}  
\hskip-8em&\hskip8em 
= y z \ell E(z,x, y) A(z)^{\ell -1}   \notag\\
& \hspace*{5mm} + y z \ell(\ell-1) D(z,x) B(z, y) A(z)^{\ell -2} \notag\\
& \hspace*{5mm} + 2x^{2}y z \ell(\ell-1) C(z,x,y) B(z,x) A(z)^{\ell -2} \notag\\
& \hspace*{5mm} + x^{2}yz \ell(\ell-1)(\ell-2) B(z,x)^{2}B(z,y) A(z)^{\ell -3} \notag\\
& \hspace*{5mm} + 2x yz \ell C(z,x, y) A(z)^{\ell -1}  + \notag\\
& \hspace*{5mm} + 2 xyz \ell (\ell -1) B(z,x)B(z,y) A(z)^{\ell -2} \notag\\
& \hspace*{5mm} + yz \ell 
B(z,y) A(z)^{\ell -1} 
+ \E\bigsqpar{ z^{|\GGW|} Q(\GGW, x) | d(0) = \ell}
\end{align}
and thus
\begin{align} \label{C6}
E(z,x, y)  & = y z \Phi^{\prime}( A(z) )E(z,x, y) +   yz \Phi^{\prime \prime}( A(z) )D(z,x) B(z, y) \notag \\
& \hspace*{5mm} + 2x^{2} yz \Phi^{\prime \prime}( A(z) ) C(z,x,y) B(z,x)  \notag \\
& \hspace*{5mm} + x^{2}yz \Phi^{\prime \prime \prime}( A(z) ) B(z,x)^{2}B(z,y) \notag \\
& \hspace*{5mm} + 2x yz \Phi^{\prime}( A(z) ) C(z,x, y)   +  2 xyz \Phi^{\prime \prime}( A(z) ) B(z,x)B(z,y) \notag \\
& \hspace*{5mm} +yz \Phi^{\prime}( A(z) ) B(z,y) + D(z,x)
.\end{align}
Consequently, in analogy with \eqref{C5},
\begin{align}\label{C6b}
  E(z,x, y)  & = \frac{\dots}{1-y z \Phi^{\prime}( A(z) )},
\end{align}
where the numerator consists of all terms except the first
on the \rhs{} of \eqref{C6}.

Finally, we compute $F$. From \eqref{C4}, we see that 
\begin{align}\label{CQQ}
z^{|\GGW|} Q(\GGW, x) Q(\GGW, y) 
\hskip-6em&\hskip6em =  
z  \sum_{i = 1}^{\ell} z^{|\GGW_{i}|}Q(\GGW_{i}, x) Q(\GGW_{i}, y) \prod_{k\neq i}^{\ell} z^{|\GGW_{k}|}  \notag\\
&  \hspace*{5mm} + z  \sumxij  z^{|\GGW_{i}|}Q(\GGW_{i}, x) z^{|\GGW_{j}|} Q(\GGW_{j}, y) \prod_{k\neq i,j}^{\ell} z^{|\GGW_{k}|}  \notag\\
& \hspace*{5mm} +  2y^{2} z \sumxij z^{|\GGW_{i}|} Q(\GGW_{i}, x) P(\GGW_{i}, y) z^{|\GGW_{j}|} P(\GGW_{j}, y) \prod_{k\neq i,j}^{\ell} z^{|\GGW_{k}|} \notag\\
& \hspace*{5mm} +  y^{2}z \sumxijl z^{|\GGW_{i}|} Q(\GGW_{i}, x)  z^{|\GGW_{j}|} P(\GGW_{j}, y) z^{|\GGW_{l}|}  P(\GGW_{l}, y) \prod_{k\neq i,j,l}^{\ell} z^{|\GGW_{k}|} \notag\\
& \hspace*{5mm} + 2y z \sum_{i = 1}^{\ell} z^{|\GGW_{i}|} Q(\GGW_{i}, x) P(\GGW_{i}, y) \prod_{k\neq i}^{\ell} z^{|\GGW_{k}|}  \notag\\
& \hspace*{5mm} + 2 y z \sumxij  z^{|\GGW_{i}|} Q(\GGW_{i}, x) z^{|\GGW_{j}|} P(\GGW_{j}, y) \prod_{k\neq i,j}^{\ell} z^{|\GGW_{k}|}  \notag\\
%%&\RED \hspace*{5mm} + z \sum_{i =1}^{\ell} z^{|\GGW_{i}|} Q(\GGW_{i}, x)  \prod_{k\neq i}^{\ell} z^{|\GGW_{k}|} \notag\\
& \hspace*{5mm} +  2x^{2} z \sumxij z^{|\GGW_{i}|} Q(\GGW_{i}, y) P(\GGW_{i}, x)  z^{|\GGW_{j}|} P(\GGW_{j}, x) \prod_{k\neq i,j}^{\ell} z^{|\GGW_{k}|} \notag\\
& \hspace*{5mm} + x^{2}z \sumxijl z^{|\GGW_{i}|} P(\GGW_{i}, x)  z^{|\GGW_{j}|} P(\GGW_{j}, x) z^{|\GGW_{l}|}  Q(\GGW_{l}, y) \prod_{k\neq i,j,l}^{\ell} z^{|\GGW_{k}|} \notag\\
& \hspace*{5mm} +  4x^{2}y^{2}z \sumxijl z^{|\GGW_{i}|} P(\GGW_{i}, x) P(\GGW_{i}, y)  z^{|\GGW_{j}|} P(\GGW_{j}, x) z^{|\GGW_{l}|}  P(\GGW_{l}, y) \prod_{k\neq i,j,l}^{\ell} z^{|\GGW_{k}|} \notag\\
& \hspace*{5mm} +  2x^{2}y^{2}z \sumxij z^{|\GGW_{i}|} P(\GGW_{i}, x) P(\GGW_{i}, y)   z^{|\GGW_{j}|} P(\GGW_{j}, x)  P(\GGW_{j}, y) \prod_{k\neq i,j}^{\ell} z^{|\GGW_{k}|} \notag\\
& \hspace*{5mm} +  x^{2}y^{2}z\sumxijlr z^{|\GGW_{i}|} P(\GGW_{i}, x) z^{|\GGW_{j}|} P(\GGW_{j}, x)   z^{|\GGW_{l}|} P(\GGW_{l}, y) z^{|\GGW_{r}|}  P(\GGW_{r}, y) \prod_{k\neq i,j,l,r}^{\ell} z^{|\GGW_{k}|} \notag\\
& \hspace*{5mm} +  4x^{2}yz \sumxij z^{|\GGW_{i}|} P(\GGW_{i}, x) P(\GGW_{i}, y) z^{|\GGW_{j}|} P(\GGW_{j}, x)  \prod_{k\neq i,j}^{\ell} z^{|\GGW_{k}|} \notag\\
& \hspace*{5mm} + 2x^{2}yz   \sumxijl z^{|\GGW_{i}|}P(\GGW_{i}, x) z^{|\GGW_{j}|}P(\GGW_{j}, x) z^{|\GGW_{l}|}P(\GGW_{l}, y)  \prod_{k\neq i,j,l}^{\ell} z^{|\GGW_{k}|}  \notag\\
%%&\RED \hspace*{5mm} + x^{2}z   \sumxij  z^{|\GGW_{i}|}P(\GGW_{i}, x) z^{|\GGW_{j}|}P(\GGW_{j}, x)   \prod_{k\neq i,j}^{\ell} z^{|\GGW_{k}|}  \notag\\
& \hspace*{5mm} + 2xz   \sum_{i =1}^{\ell} z^{|\GGW_{i}|} Q(\GGW_{i}, y) P(\GGW_{i}, x)    \prod_{k\neq i}^{\ell} z^{|\GGW_{k}|}  \notag\\
& \hspace*{5mm} + 2xz   \sumxij  z^{|\GGW_{i}|}P(\GGW_{i}, x) z^{|\GGW_{j}|}Q(\GGW_{j}, y)   \prod_{k\neq i,j}^{\ell} z^{|\GGW_{k}|}  \notag\\
& \hspace*{5mm} +  4xy^{2} z \sumxij z^{|\GGW_{i}|} P(\GGW_{i}, x) P(\GGW_{i}, y) x^{|\GGW_{j}|} P(\GGW_{j}, y) \prod_{k\neq i,j}^{\ell} z^{|\GGW_{k}|} \notag\\
& \hspace*{5mm} + 2xy^{2}z \sumxijl z^{|\GGW_{i}|} P(\GGW_{i}, x)  z^{|\GGW_{j}|} P(\GGW_{j}, y) z^{|\GGW_{l}|}  P(\GGW_{l}, y) \prod_{k\neq i,j,l}^{\ell} z^{|\GGW_{k}|} \notag\\
& \hspace*{5mm} + 4xyz \sum_{i = 1}^{\ell} z^{|\GGW_{i}|} P(\GGW_{i}, x) P(\GGW_{i}, y) \prod_{k\neq i}^{\ell} z^{|\GGW_{k}|}  \notag\\
& \hspace*{5mm} + 4xyz \sumxij  z^{|\GGW_{i}|} P(\GGW_{i}, x) z^{|\GGW_{j}|} P(\GGW_{j}, y) \prod_{k\neq i,j}^{\ell} z^{|\GGW_{k}|}  \notag\\
%%&\RED \hspace*{5mm} + 2xz \sum_{i =1}^{\ell} z^{|\GGW_{i}|} P(\GGW_{i}, x)  \prod_{k\neq i}^{\ell} z^{|\GGW_{k}|} \notag\\
& \hspace*{5mm} 
+ z^{|\GGW|}Q(\mathcal{T},x)
+ z^{|\GGW|}Q(\mathcal{T},y)
-z^{|\GGW|}
.\end{align}
Hence,
\begin{align}\label{CQQE}
\E[ z^{|\GGW|} Q(\GGW, x) Q(\GGW, y)\mid d(o) = \ell]  
\hskip-6em&\hskip6em 
=  z \ell F(z,x, y) A(z)^{\ell -1}  \notag\\
& \hspace*{5mm} +  z \ell(\ell-1) D(z,x) D(z, y) A(z)^{\ell -2} \notag\\
& \hspace*{5mm} + 2y^{2} z \ell(\ell-1) E(z,x, y)B(z,y) A(z)^{\ell -2} \notag\\
& \hspace*{5mm} +  y^{2} z \ell(\ell-1)(\ell-2) D(z,x)B(z,y)^{2} A(z)^{\ell -3} \notag\\
& \hspace*{5mm} + 2 yz \ell E(z,x, y) A(z)^{\ell -1}  \notag\\
& \hspace*{5mm} +  2 yz \ell (\ell -1) D(z,x)B(z,y) A(z)^{\ell -2} \notag\\
%%&\RED \hspace*{5mm} + z\ell D(z,x) A(z)^{\ell -1} \notag\\
& \hspace*{5mm} + 2x^{2}z \ell(\ell-1) E(z,y, x) B(z,x) A(z)^{\ell -2} \notag\\
& \hspace*{5mm} + x^{2}z \ell(\ell-1)(\ell-2)  B(z,x)^{2} D(z,y) A(z)^{\ell -3} \notag\\
& \hspace*{5mm} + 4x^{2}y^{2}z \ell(\ell-1)(\ell-2)  C(z,x,y) B(z,x) B(z,y)  A(z)^{\ell -3} \notag\\
& \hspace*{5mm} + 2x^2 y^2z \ell(\ell-1) C(z,x, y)^{2}  A(z)^{\ell -2} \notag\\
& \hspace*{5mm} + x^2 y^2z \ell(\ell-1)(\ell-2)(\ell-3) B(z,x)^{2} B(z, y)^{2} A(z)^{\ell -4} \notag\\
& \hspace*{5mm} + 4 x^{2} yz \ell(\ell-1) C(z,x, y) B(z, x) A(z)^{\ell -2} \notag\\
& \hspace*{5mm} + 2 x^{2} yz \ell(\ell-1)(\ell -2) B(z,x)^{2} B(z, y) A(z)^{\ell -3} \notag\\
%%&\RED \hspace*{5mm} + x^{2} z \ell(\ell-1) B(z,x)^{2} A(z)^{\ell -2} \notag\\
& \hspace*{5mm} + 2x z \ell E(z,y, x) A(z)^{\ell -1} \notag\\
& \hspace*{5mm} + 2x z \ell(\ell-1) D(z,y) B(z,x) A(z)^{\ell -2} \notag\\
& \hspace*{5mm} + 4x y^2 z \ell(\ell-1) C(z,x, y) B(z,y) A(z)^{\ell -2} \notag\\
& \hspace*{5mm} + 2xy^{2} z \ell(\ell-1)(\ell-2) B(z,x) B(z, y)^{2} A(z)^{\ell -3} \notag\\
& \hspace*{5mm} + 4xyz \ell C(z,x, y) A(z)^{\ell -1} \notag\\
& \hspace*{5mm} + 4xyz\ell(\ell -1) B(z,x) B(z, y) A(z)^{\ell -2} \notag\\
%%&\RED \hspace*{5mm} + 2xz\ell B(z,x) A(z)^{\ell -1} \notag\\
& \hspace*{5mm} + \E\bigsqpar{ z^{|\GGW|}\bigpar{Q(T,x)+ Q(T, y)-1}
\mid d(o) = \ell}
\end{align}
and thus
\begin{align} \label{C7}
F(z,x, y)  & = z F(z,x, y) \Phi^{\prime}(A(z))  \notag \\
& \hspace*{5mm} +  z D(z,x) D(z, y) \Phi^{\prime \prime}(A(z)) \notag \\
& \hspace*{5mm} + 2y^{2} z E(z,x, y)B(z,y) \Phi^{\prime \prime}(A(z)) \notag \\
& \hspace*{5mm} +  y^{2} z D(z,x)B(z,y)^{2} \Phi^{\prime \prime \prime}(A(z)) \notag \\
& \hspace*{5mm} + 2 y z E(z,x, y) \Phi^{\prime}(A(z)) \notag  \\
& \hspace*{5mm} +  2 yz D(z,x)B(z,y) \Phi^{\prime \prime}(A(z)) \notag \\
%%&\RED \hspace*{5mm} + {\RED z} D(z,x) \Phi^{\prime}(A(z)) \notag \\
& \hspace*{5mm} + 2x^{2}z E(z,y, x) B(z,x) \Phi^{\prime \prime}(A(z)) \notag \\
& \hspace*{5mm} + x^{2}z  B(z,x)^{2} D(z,y) \Phi^{\prime \prime \prime}(A(z)) \notag \\
& \hspace*{5mm} + 4x^{2}y^{2}z C(z,x, y) B(z,x) B(z, y) \Phi^{\prime \prime \prime}(A(z)) \notag \\
& \hspace*{5mm} + 2x^2y^2z
C(z,x, y)^{2}  \Phi^{\prime \prime}(A(z)) \notag \\
& \hspace*{5mm} + x^2 y^2z
B(z,x)^{2} B(z, y)^{2} \Phi^{\prime \prime \prime \prime}(A(z)) \notag \\
& \hspace*{5mm} + 4 x^{2} y z C(z,x, y) B(z, x) \Phi^{\prime \prime}(A(z)) \notag \\
& \hspace*{5mm} + 2 x^{2} yz  B(z,x)^{2} B(z, y) \Phi^{\prime \prime \prime}(A(z))  \notag \\
%&\RED \hspace*{5mm} + x^{2} z B(z,x)^{2} \Phi^{\prime \prime}(A(z)) \notag \\
& \hspace*{5mm} + 2x z E(z,y,x) \Phi^{\prime}(A(z)) \notag \\
& \hspace*{5mm} + 2x z D(z,y) B(z,x) \Phi^{\prime \prime}(A(z)) \notag \\
& \hspace*{5mm} + 4xy^2z C(z,x, y) B(z,y) \Phi^{\prime \prime}(A(z)) \notag \\
& \hspace*{5mm} + 2xy^{2} z B(z,x) B(z, y)^{2} \Phi^{\prime \prime \prime}(A(z)) \notag \\
& \hspace*{5mm} + 4xy z  C(z,x, y) \Phi^{\prime}(A(z)) \notag \\
& \hspace*{5mm} + 4xyz B(z,x) B(x, y) \Phi^{\prime \prime}(A(z)) \notag \\
%%&\RED \hspace*{5mm} + 2x z B(z,x) \Phi^{\prime}(A(z)) \notag \\
&\hspace*{5mm} + D(z,x)+D(z, y)-A(z)
.\end{align}
Consequently, in analogy with \eqref{C5} and \eqref{C6b},
\begin{align}\label{C7b}
  F(z,x, y)  & = \frac{\dots}{1- z \Phi^{\prime}( A(z) )},
\end{align}
where the numerator consists of all terms except the first
on the \rhs{} of \eqref{C7}.

From now on, assume for simplicity that $\bp$ has also span $1$;
the general case follows by standard modifications.

We will use standard singularity analysis, see e.g. Flajolet and Sedgewick
\cite{FlajoletS}, and define the domain, for $0 < \beta < \pi/2$ and $\delta >0$,
\begin{align*}
\Delta(\beta, \delta) := \{z \in \mathbb{C}: |z|<1 + \delta, z \neq 1, |{\rm arg}(z-1)| > \pi/2-\beta \}.
\end{align*}

The next lemma can be found in \cite[Lemma 2.1]{SJ222}, together with \cite[Lemma A.2]{SJ167}.  %|A(z)|<1
In what follows,
$C_{1}, C_{2}, \dots$ and $c_{1}, c_{2}, \dots$ denote some positive
constants that may depend on the offspring distribution $\bp$ only. 
\begin{lemma} \label{LemmaC}
Suppose that $\bp$ has $\mu(\bp)=1$, $0<\gss(\bp)<\infty$ and span $1$. Then there exists $\beta, \delta > 0$ such that $\Phi$ extends to an analytic function in $\Delta(\beta, \delta)$ and, for some $c_{1}, c_{2} > 0$, if $x,z \in \Delta(\beta, \delta)$, then
\begin{align}
|A(z)|&<1,\label{lcA}\\
|1- z \Phi^{\prime}(A(z))| & \ge \ccname{\cclc2} |1-z|^{1/2}, \label{lc2}\\
|1- xz \Phi^{\prime}(A(z))| & \ge \cc |1-x|.\label{lc3}
\end{align}
Consequently, $B(z, x)$ and $D(z,x)$ extend to analytic functions of $x, z \in \Delta(\beta, \delta)$, and for all $x, z \in \Delta(\beta, \delta)$,
\begin{align}
|B(z,x)| & \le \CC |1-x|^{-1}, \label{lcB}\\
|D(z,x)| & \le \CC |1-z|^{-1/2}|1-x|^{-2}.\label{lcD}
\end{align}
\end{lemma}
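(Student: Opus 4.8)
The plan is to derive this from \cite[Lemma~2.1]{SJ222} — which gives the statements about $\Phi$, $A$, $B$ together with the estimates \eqref{lcA}, \eqref{lc2}, \eqref{lc3} and \eqref{lcB} — combined with \cite[Lemma~A.2]{SJ167}, which supplies the bound \eqref{lcD} for $D$; I indicate the mechanism. Recall from \eqref{AAA} that $A(z)=z\Phi(A(z))$. Since $\mu(\bp)=1$ and $\gss(\bp)<\infty$, the tree $\GGW$ is finite a.s., so $A(1)=1$; moreover $\Phi(1)=\Phi'(1)=1$ and $\Phi''(1)=\gss(\bp)\in(0,\infty)$, so $y=1$ is the critical point of $y\mapsto y/\Phi(y)$ with critical value $1$. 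By the standard analysis of critical simply generated trees (the smooth implicit-function schema), see \eg{} \cite{FlajoletS}, $A$ then has a square-root singularity at $z=1$, namely $A(z)=1-\sqrt{2(1-z)/\gss(\bp)}+O(|1-z|)$, and the span-$1$ hypothesis ensures that $z=1$ is the only singularity of $A$ on $|z|=1$; consequently $A$, and likewise $\Phi\circ A$, continues analytically to a domain $\Delta(\beta,\delta)$ for suitable $\beta,\delta>0$ (this continuation is the content of \cite[Lemma~2.1]{SJ222}), and after shrinking $\delta$ one has $|A(z)|<1$ there, which is \eqref{lcA}.

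Next I would prove \eqref{lc2} and \eqref{lc3}. Put $g(z):=z\Phi'(A(z))$, which is analytic on $\Delta(\beta,\delta)$ with $g(1)=\Phi'(1)=1$. A Taylor expansion of $\Phi'$ at $1$ combined with the asymptotics of $A$ gives $1-g(z)=(1-z)+z\bigl(1-\Phi'(A(z))\bigr)=\gss(\bp)\bigl(1-A(z)\bigr)\bigl(1+o(1)\bigr)=\sqrt{2\gss(\bp)(1-z)}\,\bigl(1+o(1)\bigr)$ as $z\to1$ in the domain; hence $|1-g(z)|\ge c_2|1-z|^{1/2}$ near $z=1$, and after shrinking $\beta,\delta$ this persists on all of $\Delta(\beta,\delta)$ by compactness together with $g(z)\ne1$ for $z\in\overline{\Delta(\beta,\delta)}\setminus\set{1}$ (which uses aperiodicity). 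For \eqref{lc3} write $1-xg(z)=(1-x)+x\bigl(1-g(z)\bigr)$: choosing $\beta$ small enough that the arguments of $(1-x)$ and of $x\bigl(1-g(z)\bigr)$ — the latter close to $\tfrac12\arg(1-z)$ by the previous display — both lie in a common open half-plane, the two summands cannot cancel, giving $|1-xg(z)|\ge c_1|1-x|$ when $x,z$ are near $1$; for $x$ bounded away from $1$ one instead uses $|g(z)|\le1$ on $\overline{\Delta(\beta,\delta)}$ and a compactness argument to bound $|1-xg(z)|$ below by a positive constant, hence by $c_1|1-x|$.

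Finally, \eqref{lcB} and \eqref{lcD} follow by substituting these bounds into the explicit formulas already derived in this appendix. From \eqref{C1}, $B(z,x)=A(z)/\bigl(1-xg(z)\bigr)$, which is analytic on $\Delta(\beta,\delta)^2$ since the denominator is non-zero there, and $|B(z,x)|\le|A(z)|/(c_1|1-x|)\le C|1-x|^{-1}$ by \eqref{lcA} and \eqref{lc3}. From \eqref{C2}, $D(z,x)$ is a fixed rational combination of $A(z)$, $B(z,x)$, $\Phi'(A(z))$ and $\Phi''(A(z))$ divided by $1-g(z)$; its numerator is $O(|1-x|^{-2})$ (the $B(z,x)^2$ term dominating, the others being $O(|1-x|^{-1})$ or $O(1)$), while the denominator is $\ge c_2|1-z|^{1/2}$ by \eqref{lc2}, which yields $|D(z,x)|\le C|1-z|^{-1/2}|1-x|^{-2}$, analyticity again being immediate. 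The main obstacle is the first paragraph: justifying the analytic continuation of $A$ (and hence of $\Phi\circ A$, $B$ and $D$) to a genuine $\Delta$-domain under only the finite-variance and span-$1$ hypotheses, and pinning down the exact square-root asymptotics feeding \eqref{lc2}; once these are in place, \eqref{lc3}, \eqref{lcB} and \eqref{lcD} are routine bookkeeping with the formulas \eqref{C1}--\eqref{C2}.
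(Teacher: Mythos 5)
Your proposal coincides with the paper's treatment: the paper gives no proof of \refL{LemmaC} at all, simply importing it from \cite[Lemma 2.1]{SJ222} together with \cite[Lemma A.2]{SJ167}, which is exactly the reduction you make, and your sketch of the underlying mechanism (square-root singularity of $A$ at $1$, the expansion $1-z\Phi'(A(z))\sim\sqrt{2\gss(\bp)(1-z)}$ giving \eqref{lc2}, the half-plane argument for \eqref{lc3}, and then \eqref{C1}--\eqref{C2} for \eqref{lcB} and \eqref{lcD}) is the standard content of those cited results. One minor slip in your sketch: $|z\Phi'(A(z))|\le 1$ does \emph{not} hold on the part of $\Delta(\beta,\delta)$ outside the unit disc (for real $z=1+t$ one gets $z\Phi'(A(z))\approx 1-\ii\sqrt{2\gss t}$, of modulus $>1$), so the case ``$x$ bounded away from $1$'' should instead be handled by splitting according to whether $z$ is near $1$ (where $z\Phi'(A(z))\approx1$, so $|1-xz\Phi'(A(z))|\ge c$ directly) or bounded away from $1$ (where one uses $|z\Phi'(A(z))|<1$ for $|z|\le1$, $z\ne1$, and then compactness and a shrinking of $\delta$); this is an easily patched detail inside material the paper anyway takes from the references.
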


We can now show similar estimates for $C(z,x,y)$, $E(z,x,y)$ and $F(z,x,y)$.
For simplicity we consider only the case $|x|=1$ and $y=\bar x$, where $\bar x$ denotes the complex conjugate of $x$; note that then
$xy=1$ and $|1-y|=|1-x|$. 

\begin{lemma} \label{LemmaCEF}
Let $\bp$ be as in \refL{LemmaC}, and assume also $\mu_4(\bp)<\infty$.
Let $\beta, \delta$ be as in \refL{LemmaC}.
If  $|x|=1$ with $x\neq1$, then
$C(z, x, \bx), E(z,x,\bx)$ and $F(z,x,\bx)$ extend to analytic functions of 
$z \in \Delta(\beta, \delta)$, 
such that, for all $z \in \Delta(\beta, \delta)$,
\begin{align} 
|C(z,x, \bar{x} )| & \le \CC |1-z|^{-1/2} |1-x|^{-2}, \label{lcC} \\
|E(z,x, \bar{x} )| & \le \CC |1-z|^{-1/2} |1-x|^{-4}, \label{lcE}\\
|F(z,x, \bar{x} )| & \le \CC |1-z|^{-3/2} |1-x|^{-4} +\CC |1-z|^{-1} |1-x|^{-5}
\label{lcF}
.\end{align}
\end{lemma}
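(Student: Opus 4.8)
The plan is to deduce the three estimates directly from the closed forms \eqref{C5}, \eqref{C6b} and \eqref{C7b} by substituting $y=\bx$ — so that $xy=1$ and $|1-y|=|1-x|$ — and then estimating every factor by means of \refL{LemmaC}. The only preliminary observation needed is that $\mu_4(\bp)<\infty$ forces the power series $\Phi^{(j)}(w)=\sum_k k\fall{j}p_k\,w^{k-j}$ to converge absolutely on the closed unit disc for $0\le j\le 4$; combined with $|A(z)|<1$ from \eqref{lcA}, this shows that $\Phi'(A(z)),\dots,\Phi''''(A(z))$ are all bounded uniformly on $\Delta(\beta,\delta)$. I also record that $|1-z|$ is bounded on $\Delta(\beta,\delta)$, so $|1-z|^{-1/2}$ and $|1-z|^{-1}$ are bounded below there, which lets lower-order terms be absorbed without comment. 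Analyticity of $C(\cdot,x,\bx)$, $E(\cdot,x,\bx)$ and $F(\cdot,x,\bx)$ on $\Delta(\beta,\delta)$ is then automatic, proved by induction in this order: each is a rational expression in the analytic functions $A,B,D$ (and in the previously treated ones) and in $\Phi^{(j)}(A)$, with denominator bounded away from $0$ by \eqref{lc2} or \eqref{lc3}.

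For $C$, putting $y=\bx$ in \eqref{C5} turns the denominator into $1-z\Phi'(A(z))$, which by \eqref{lc2} is $\ge c|1-z|^{1/2}$ — this is exactly why the ``product one'' choice $y=\bx$ is convenient — while the numerator is $O(|1-x|^{-2})$ by \eqref{lcB} and the boundedness of $\Phi''(A(z))$ and $A(z)$. This gives \eqref{lcC}. For $E$ I would use \eqref{C6b}: the denominator $1-\bx z\Phi'(A(z))$ is $\ge c|1-x|$ by \eqref{lc3}, and every term of its numerator (all terms but the first on the right of \eqref{C6}) is estimated using \eqref{lcB}, \eqref{lcD}, the bound \eqref{lcC} just obtained, and the boundedness of the derivatives of $\Phi$. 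The two dominant terms, $\bx z\Phi''(A(z))D(z,x)B(z,\bx)$ and $x^2\bx z\Phi''(A(z))C(z,x,\bx)B(z,x)$, are each $O(|1-z|^{-1/2}|1-x|^{-3})$, so after division by the denominator one gets $|E(z,x,\bx)|\le C|1-z|^{-1/2}|1-x|^{-4}$, which is \eqref{lcE}; by the symmetry of the construction under interchanging the two arguments, $E(z,\bx,x)$ satisfies the same bound.

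For $F$ I would proceed identically from \eqref{C7b}, whose denominator $1-z\Phi'(A(z))$ is $\ge c|1-z|^{1/2}$ by \eqref{lc2}, bounding the roughly twenty terms of the numerator with \eqref{lcB}, \eqref{lcD}, \eqref{lcC}, \eqref{lcE} and the bound for $E(z,\bx,x)$. The first summand of \eqref{lcF} arises from $zD(z,x)D(z,\bx)\Phi''(A(z))$ and $x^2\bx^2 z\,C(z,x,\bx)^2\Phi''(A(z))$, each $O(|1-z|^{-1}|1-x|^{-4})$; the term $x^2\bx^2 z\,B(z,x)^2B(z,\bx)^2\Phi''''(A(z))$ is $O(|1-x|^{-4})$, and this is the one place where the fourth-moment hypothesis is genuinely used (it is then absorbed into the first summand of \eqref{lcF} since $|1-z|^{-1}$ is bounded below); and the second summand of \eqref{lcF} arises, after division by $|1-z|^{1/2}$, from the terms linear in $E$, e.g.\ $\bx^2 z\,E(z,x,\bx)B(z,\bx)\Phi''(A(z))$, which are $O(|1-z|^{-1/2}|1-x|^{-5})$. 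Every remaining term is of strictly smaller order. This yields \eqref{lcF}.

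The argument needs no idea beyond \refL{LemmaC} and the specialisation $y=\bx$; the work is pure bookkeeping. The main obstacle is simply the number of terms in \eqref{C7}: for each one must track the exact powers of $|1-z|$ and $|1-x|$ and check that none exceeds $-3/2$ and $-5$ respectively. The points to verify with care are that the only source of an $|1-x|^{-5}$ contribution is the terms containing a single factor $E$ (hence, unwinding \eqref{C6b}, the ``$D\cdot B$'' substructure of $E$), that the only sources of an $|1-z|^{-1}$ contribution are the ``$D\cdot D$'' and ``$C\cdot C$'' terms, and that the $\Phi''''(A(z))$ term affects only the $|1-z|$-power, and harmlessly so.
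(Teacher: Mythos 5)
Your proposal is correct and takes essentially the same route as the paper's proof: substitute $y=\bx$ in \eqref{C5}, \eqref{C6b} and \eqref{C7b}, bound the denominators below by \eqref{lc2} and \eqref{lc3}, and bound each numerator term via \eqref{lcB}, \eqref{lcD}, the previously proved bounds, and the uniform boundedness of $\Phi',\dots,\Phi''''$ at $A(z)$ coming from $|A(z)|<1$ and $\mu_4(\bp)<\infty$. The paper merely states that the cases of $E$ and $F$ are "similar"; your term-by-term bookkeeping (including the observation that $E(z,\bx,x)$ obeys the same bound and the identification of the dominant $D\cdot D$, $C\cdot C$ and $E\cdot B$ terms) is exactly the omitted verification.
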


\begin{proof}
We first use \eqref{C5}. The denominator simplifies to $1-z\Phi'(A(z))$, 
which is non-zero for $z\in \Delta(\beta, \delta)$
by \eqref{lc2}; thus \eqref{C5} yields the desired
analytic extension. 
Moreover, the assumption $\gss(\bp)<\infty$ together with \eqref{lcA} 
implies $|\Phi''(A(z))|\le C$, 
and thus
\eqref{lcA} and \eqref{lcB} show that the
numerator of \eqref{C5} is bounded by $\CC |1-x|^{-2}$, while
\eqref{lc2} shows that the absolute value of the 
denominator is at least $\cclc2|1-z|\qq$;
thus \eqref{lcC} follows.
%Note that the assumption $\gss(\bp)<\infty$ is used together with
%\eqref{lcA} to obtain $|\Phi''(A(z))|\le C$.

The proofs of \eqref{lcE} and \eqref{lcF} are similar, using
\eqref{C6b} and \eqref{C7b}, together with the already proved parts of the
lemma. 
We  have $|\Phi'''(A(z))|\le C$ and $|\Phi''''(A(z))|\le C$ by
\eqref{lcA} and
the assumption $\mu_4(\bp)<\infty$.
\end{proof}

We have now all the ingredients to prove \eqref{bg6} and \eqref{bg7}. Recall that 
$\hzLn$ and $\hzgLn$ denote the Fourier transform of the height profile
$L_{n}$ and distance profile $\gL_{n}$ of a \cGWt{} of order $n$; see
\eqref{dfou}. 

\begin{proof}[Proof of \eqref{bg6}]
Let $- \pi \le \xi \le\pi$ and take $x:=e^{\ii\xi}$, $y:=\bx=e^{-\ii\xi}$.
Let $\beta, \delta$ be as in \refL{LemmaC}, and
suppose that $z \in \Delta(\beta, \delta)$.
Note that by the definitions \eqref{CdC}, \eqref{CdPQ} and \eqref{dfou},
\begin{align}
  C(z,x, \bar{x}) = \sum_{n=1}^{\infty} \mathbb{P}(|\GGW| = n) \cn(x, \bar{x}) z^{n},
\end{align}
where $\cn(x, \bar{x}) := \Ez \bigabs{\hzLn(\xi)}^{2}$. 

The inequality (\ref{lcC}) and singularity analysis \cite[Theorem VI.3]{FlajoletS} imply that
\begin{align}\label{sing}
|\mathbb{P}(|\GGW| = n) \cn(x, \bar{x})| \le \CC n^{-1/2} |1-x|^{-2}.
\end{align}
It is well-known (see e.g., \cite{Otter}, \cite[Proposition 24]{AldousIII}, \cite[Theorem 3.1]{MM} or \eqref{b4}) that 
\begin{align} \label{C10}
\mathbb{P}(|\GGW| = n) \sim \cc n^{-3/2}.
\end{align}
Thus, \eqref{sing} yields
\begin{align} \label{C11}
\Ez\bigabs{\hzLn(\xi)}^2=
\cn(x, \bar{x})
\le \CC n |1-x|^{-2}
= \CCx n |1-e^{\ii\xi}|^{-2}
\le \CC n\xi\qww.
\end{align}
Replacing $\xi$ by $\xi n\qqw$, we obtain \eqref{bg6} for 
$1\le|\xi|\le\pi n\qq$. 
Finally, the case $|\xi|\le1$ is trivial, by \eqref{hL0}.
%
%\begin{align} \label{C14}
%|\widehat{L_{n}}(\xi n^{-1/2})| \le \sum_{k=0}^{\infty} L_{n}(n^{-1/2}k) \le n;
%\end{align}
%see e.g., \eqref{pro1}. 
\end{proof}

\begin{proof}[Proof of \eqref{bg7}]
Let $\xi,x,y$ and $z$ be as in the proof of \eqref{bg6}.
Note that by \eqref{CdF}, \eqref{CdPQ} and \eqref{dfou},
\begin{align}
  F(z,x, \bar{x}) 
= \sum_{n=1}^{\infty} \mathbb{P}(|\GGW| = n) f_{n}(x, \bar{x}) z^{n},
\end{align}
where $f_{n}(x, \bar{x}) := \E |\hzgLn(\xi)|^{2}$. 
The inequality (\ref{lcF}) and singularity analysis 
\cite[Theorem VI.3 and its proof]{FlajoletS} imply that
\begin{align}
|\mathbb{P}(|\GGW| = n) f_{n}(x, \bar{x})| \le 
\CC n^{1/2} |1-x|^{-4} + \CC |1-x|^{-5}.
\end{align}
Thus, (\ref{C10}) implies that
\begin{align}
\Ez\bigabs{\hzgLn(\xi)}^2&=
f_{n}(x, \bar{x}) \le \CC n^{2} |1-x|^{-4} + \CC n^{3/2}|1-x|^{-5}
\notag\\&
\le \CC n^2 \xi^{-4} + \CC n^{3/2} \xi^{-5}
.\end{align}
This implies \eqref{bg7} for $1\le|\xi|\le\pi n\qq$.

Finally, the case $|\xi|\le1$ is again trivial by \eqref{hL0}.
\end{proof}

\begin{remark}
  \label{R4}
The condition $\mu_4(\bp)$ was used in the proof of \refL{LemmaCEF} to
obtain $\Phi''''(A(z))=O(1)$. However, this term appears only once in
\eqref{C7}, and is then part of a term that contributes only $|1-z|\qqw
|1-x|^{-4}$ to \eqref{lcF}. There is  thus a margin to the bound in
\eqref{lcF}, which suggests that the proof might work also if the condition
$\mu_4(\bp)<\infty$ is relaxed; perhaps a second moment is enough here too. We have not investigated this further and leave
this to the reader.
\end{remark}

\begin{problem}
  Do the estimates in \refL{LemmaCEF} 
%(and thus in \refT{THn})
hold assuming only a second moment
$\gss(\bp)<\infty$?
\end{problem}
Note that this would imply a positive answer to \refP{P2}.

\newcommand\AAP{\emph{Adv. Appl. Probab.} }
\newcommand\JAP{\emph{J. Appl. Probab.} }
\newcommand\JAMS{\emph{J. \AMS} }
\newcommand\MAMS{\emph{Memoirs \AMS} }
\newcommand\PAMS{\emph{Proc. \AMS} }
\newcommand\TAMS{\emph{Trans. \AMS} }
\newcommand\AnnMS{\emph{Ann. Math. Statist.} }
\newcommand\AnnPr{\emph{Ann. Probab.} }
\newcommand\CPC{\emph{Combin. Probab. Comput.} }
\newcommand\JMAA{\emph{J. Math. Anal. Appl.} }
\newcommand\RSA{\emph{Random Structures Algorithms} }
\newcommand\DMTCS{\jour{Discr. Math. Theor. Comput. Sci.} }

\newcommand\AMS{Amer. Math. Soc.}
\newcommand\Springer{Springer-Verlag}
\newcommand\Wiley{Wiley}

\newcommand\vol{\textbf}
\newcommand\jour{\emph}
\newcommand\book{\emph}
\newcommand\inbook{\emph}
\def\no#1#2,{\unskip#2, no. #1,} %(typeset after year) 
\newcommand\toappear{\unskip, to appear}

\newcommand\arxiv[1]{\texttt{arXiv}:#1}
\newcommand\arXiv{\arxiv}

\def\nobibitem#1\par{}

\section*{Acknowledgements }

We thank Tom Hillegers for questions that inspired the present paper,
and 
Stephan Wagner for helpful comments.

GBO is supported by the Knut and Alice Wallenberg
Foundation, a grant from the Swedish Research Council and The Swedish
Foundations' starting grant from the Ragnar S\"oderberg Foundation.
SJ is supported by the Knut and Alice Wallenberg Foundation.


\begin{thebibliography}{99}

\bibitem{SJ250}
 Louigi Addario-Berry, Luc Devroye \& Svante Janson:
Sub-Gaussian tail bounds for the width and height of conditioned 
Galton--Watson trees.
\emph{Ann.  Probab.} \vol{41} (2013), no. 2, 1072--1087.


\bibitem[Aldous(1991)]{AldousI} 
David Aldous:
The continuum random tree I.  
\AnnPr  \vol{19} \no1  (1991),  1--28.
%MR1085326
 
\bibitem[Aldous(1991)]{AldousII} 
David Aldous:
The continuum random tree II: an overview.
\emph{Stochastic Analysis (Durham, 1990)}, 23--70, 
London Math. Soc. Lecture Note Ser. 167, Cambridge Univ. Press, 
Cambridge, 1991. 
%MR1166406

\bibitem[Aldous(1993)]{AldousIII} 
David Aldous: 
The continuum random tree III.
\AnnPr \textbf{21} (1993), no. 1, 248--289.
%MR1207226

\bibitem{AngtuncioUB}
Osvaldo Angtuncio \& Ger\'onimo Uribe Bravo:
On the profile of trees with a given degree sequence.
Preprint, 2020. \arXiv{2008.12242}

\bibitem[Athreya and Ney(1972)]{AN}
Krishna B. Athreya \& Peter E. Ney:
\book{Branching Processes}.
Springer-Verlag, Berlin, 1972. %-Verlag

\bibitem[Bergeron,  Flajolet and Salvy(1992)]{BergeronFS92}
Fran\c cois Bergeron, Philippe Flajolet \& Bruno Salvy:
Varieties of increasing trees. 
\emph{CAAP '92 (Rennes, 1992)}, 24--48,
Lecture Notes in Comput. Sci. 581, Springer, Berlin, 1992.


\bibitem{BL} 
J{\"o}ran Bergh \& J{\"o}rgen L{\"o}fstr{\"o}m:
\emph{Interpolation spaces. An introduction.}
%Grundlehren der Mathematischen Wissenschaften, No. 223. 
Springer-Verlag, Berlin-New York, 1976. %x+207 pp.
%MR0482275

\bibitem[Bertoin and Miermont(2013)]{BertCut}
Jean Bertoin \& Gr\'{e}gory Miermont:
The cut-tree of large {G}alton-{W}atson trees and the {B}rownian {CRT}.  
\emph{Ann. Appl. Probab.} \vol{23} (2013), no. 4, 1469--1493. 

\bibitem[Billingsley(1968)]{Billingsley}
Patrick Billingsley:
\book{Convergence of Probability Measures}.
%1st ed.,
Wiley, New York, 1968.

\bibitem{BPY} 
Philippe Biane, Jim Pitman \& Marc Yor:
Probability laws related to the Jacobi theta and Riemann zeta functions, and
Brownian excursions.
\emph{Bull. Amer. Math. Soc. (N.S.)}
%Bulletin of the American Mathematical Society}
\textbf{38}  (2001), no. 4, 435--465.

\bibitem[Blumenthal(1992)]{Blu}
Robert M. Blumenthal:
\book{Excursions of {M}arkov processes}.
%\emph{Probability and its Applications}.
%1st ed.,
Birkh\"{a}user Boston, Inc., Boston, MA, 1992.

\bibitem{SJ185}
Mireille Bousquet-M\'elou \& Svante Janson:
The density of the ISE and local limit laws for embedded trees.  
\emph{Ann. Appl. Probab.} \vol{16} (2006), no. 3, 1597--1632. 

\bibitem[Chung(1976)]{Chung-excursion}
Kai Lai Chung:
Excursions in Brownian motion.
\emph{Ark. Mat.} \vol{14} (1976), no. 2, 155--177.
%MR0467948

\bibitem{SJ222}
Luc Devroye \& Svante Janson:
Distances between pairs of vertices and vertical profile in conditioned
Galton--Watson trees. 
\emph{Random Structures Algorithms} \vol{38} (2011), no. 4, 381--395. 

\bibitem[Drmota(2009)]{Drmota}
Michael Drmota:
\emph{Random Trees}.
Springer-Verlag, Vienna, 2009. % Springer-Verlag Wien; SpringerWienNewYork


\bibitem[Drmota and Gittenberger(1997)]{DrmotaG}
Michael Drmota \& Bernhard Gittenberger:
On the profile of random trees.
\emph{Random Structures Algorithms} \vol{10} (1997), no. 4, 421--451.
%MR1608230

\bibitem[Drmota and Gittenberger(2004)]{DrmotaG2004}
Michael Drmota \& Bernhard Gittenberger:
The width of Galton--Watson trees conditioned by the size.
\emph{Discrete Math. Theor. Comput. Sci.} \vol6 (2004), no. 2, 387--400.
%MR2081482 

\bibitem[Duquesne(2003)]{Duquesnestable}
Thomas Duquesne:
A limit theorem for the contour process of conditioned {G}alton-{W}atson trees.
\emph{Ann. Probab.} \vol{31} (2003), no. 2, 996--1027.
%MR1964956

\bibitem[Duquesne and Le Gall(2005)]{DuquesneLeGall-aspects}
Thomas Duquesne \& Jean-Fran\c{c}ois Le Gall:
Probabilistic and fractal aspects of L\'{e}vy trees.
\emph{Probab. Theory Related Fields} \vol{131} (2005), no. 4, 553--603.
%MR2147221 

\bibitem[Duquesne and Le Gall(2009)]{DuquesneLeGall-rerooting}
Thomas Duquesne \& Jean-Fran\c{c}ois Le Gall:
On the re-rooting invariance property of L\'{e}vy trees.
\emph{Electron. Commun. Probab.} \vol{14} (2009), 317--326.
%MR2535079


\bibitem{DurrettIM1977}
Richard T. Durrett,  Donald L. Iglehart \&  Douglas R. Miller:
Weak convergence to Brownian meander and Brownian excursion.
\emph{Ann. Probability} \vol5 (1977), no. 1, 117--129.
%MR0436353

\bibitem[Dwass(1969)]{Dwass}
M. Dwass:
The total progeny in a branching process and a related random walk.
\emph{J. Appl. Probab.} \textbf{6} (1969), 682--686.

\bibitem[Flajolet and Sedgewick(2009)]{FlajoletS} 
Philippe~Flajolet \& Robert~Sedgewick:
\emph{Analytic Combinatorics}.
Cambridge Univ. Press, Cambridge, UK, 2009.
%% http://algo.inria.fr/flajolet/Publications/books.html.

\bibitem{Gut}
Allan Gut:
\emph{Probability: A Graduate Course},
2nd ed., Springer, New York, 2013. 

\bibitem{SJIII}
Svante Janson:
Orthogonal decompositions and functional limit theorems for random graph
statistics.  
\emph{Mem. Amer. Math. Soc.} \vol{111} (1994), no. 534. %, vi+78 pp.  
%MR1219708

\bibitem{SJ146}
Svante Janson:
The Wiener index of simply generated random trees. 
\emph{Random Structures Algorithms} \vol{22} (2003), no. 4, 337--358.

\bibitem{SJ167}
Svante Janson:
Random cutting and records in deterministic and random trees.
\emph{Random Structures Algorithms} \vol{29} (2006), no. 2, 139--179.


\bibitem[Janson(2012)]{SJ264}
Svante Janson:
Simply generated trees, conditioned Galton--Watson trees,
 random allocations and condensation.
\emph{Probab. Surv.} \vol9 (2012), 103--252.

\bibitem[Janson(2021+)]{SJ352}
Svante Janson:
Tree limits and limits of random trees.
\emph{Combin. Probab. Comput.} (2021+),  First View.
%Preprint, 2020.\arXiv{2005.13832}

\bibitem{Kallenberg}
Olav Kallenberg:
\book{Foundations of Modern Probability}.
2nd ed., Springer, New York, 2002. 

\bibitem{Kennedy} 
Douglas P. Kennedy:
The distribution of the maximum Brownian excursion. 
%\emph{Journal of Applied Probability}
\emph{J. Appl. Probability} \textbf{13} (1976), no. 2, 371--376.
%MR0402955

%\bibitem[Kersting(1996)]{Kersting1996}
%G\"otz Kersting:
%On the  profile of a conditioned Galton--Watson process.
%Unpublished paper, 1996.

\bibitem[Kersting(2011)]{Kersting}
G\"otz Kersting:
On the height profile of a conditioned Galton--Watson tree.
Unpublished paper, 1998.
\arXiv{1101.3656}

\bibitem[Kesten(1986)]{Kesten}
Harry Kesten:
Subdiffusive behavior of random walk on a random cluster. 
\emph{Ann. Inst. H. Poincar\'e Probab. Statist.} \vol{22} (1986), no. 4,
425--487. 
%MR0871905

%\bibitem{Knight} 
% Frank B. Knight:  \marginal{Delete?}
% On the excursion process of Brownian motion. 
% \emph{Trans. Amer. Math. Soc.} \vol{258} (1980), no. 1, 77--86.
% %MR0554319

\bibitem[Kolchin(1984)]{Kolchin} 
Valentin F. Kolchin:
\emph{Random Mappings}.
Nauka, Moscow, 1984 (Russian).
English transl.:
Optimization Software, New York, 1986.

\bibitem[Kortchemski(2013)]{Kortchemskistable}
Igor Kortchemski:
A simple proof of {D}uquesne's theorem on contour processes of conditioned {G}alton-{W}atson trees. 
\emph{S\'{e}minaire de {P}robabilit\'{e}s {XLV}.}, 537--558, Lecture Notes in Math., \vol{2078}, Springer, Cham, 2013.
%MR3185928

\bibitem[Kortchemski and Marzouk(2016)]{KortchemskiM}
Igor Kortchemski \& Cyril Marzouk:
Triangulating stable laminations. 
\emph{Electron. J. Probab.} \vol{21} (2016), Paper No. 11, 31 pp. 
%MR3485353

%
%\bibitem[Kortchemski and Marzouk(2017)]{KortchemskiM2017}
%Igor Kortchemski \& Cyril Marzouk:
%Simply generated non-crossing partitions. 
%\emph{Combin. Probab. Comput.} \vol{26} (2017), no. 4, 560--592. 
%%MR3656342

\bibitem[Le Gall(2005)]{LeGall2005}
Jean-Fran\c coise Le Gall: 
Random trees and applications.
\emph{Probab. Surv.} \vol2 (2005), 245--311.
%MR2203728

\bibitem[Marckert and Mokkadem(2006)]{MM-map}
Jean-Fran\c{c}ois Marckert \&  Abdelkader Mokkadem: 
Limit of normalized quadrangulations: the Brownian map. 
\emph{Ann. Probab.} \vol{34} (2006), no. 6, 2144--2202. 


\bibitem[Marckert and Panholzer(2002)]{MarckertP}
Jean-Fran\c{c}ois Marckert \& Alois Panholzer:
Noncrossing trees are almost conditioned Galton--Watson trees.
\emph{Random Structures Algorithms} \vol{20} (2002), no. 1, 115--125.
%MR1871954

\bibitem[Meir and Moon(1978)]{MM}
A. Meir and J.W. Moon:
On the altitude of nodes in random trees.
\emph{Canadian Journal of Mathematics}, \vol{30} (1978), no. 5, 997--1015.

\bibitem{MPeres}
Peter M{\"o}rters \& Yuval Peres:
\emph{Brownian Motion}.
%Cambridge Series in Statistical and Probabilistic Mathematics, 30. 
Cambridge University Press,
Cambridge, 2010. 
%xii+403 pp. ISBN: 978-0-521-76018-8 
%MR2604525


\bibitem[Otter(1949)]{Otter}
Richard Otter:
The multiplicative process. 
\emph{Ann. Math. Statistics} \vol{20} (1949), 206--224.
%MR0030716 

\bibitem[Panholzer and  Prodinger(2004)]{PP2004b}
Alois Panholzer \& Helmut Prodinger:
Analysis of some statistics for increasing tree families.
\emph{Discrete Math. Theor. Comput. Sci.} \vol6 (2004), no. 2, 437--460.
%MR2180051

\bibitem{Petrov}
Valentin V. Petrov:
\emph{Sums of Independent Random Variables}.
Springer-Verlag, Berlin, 1975.

\bibitem[Pitman(1999)]{Pitman}
Jim Pitman:
The SDE solved by local times of a Brownian excursion or bridge derived from
the height profile of a random tree or forest.
\emph{Ann. Probab.} \vol{27} (1999), no. 1, 261--283.
%MR1681110 

\bibitem[Ramzews and Stufler(2019)]{RStu}
Leon Ramzews and Benedikt Stufler:
Simply generated unrooted plane trees.
\emph{ALEA Lat. Am. J. Probab. Math. Stat.}, \vol{16} (2019), no. 1, 333--359.

\bibitem[Revuz and Yor(1999)]{RY}  
Daniel Revuz \&  Marc Yor:
\book{Continuous Martingales and Brownian Motion}.
$3^{\textrm{rd}}$ edition,
Springer, Berlin, 1999.

\bibitem{Stein}
Elias M. Stein:
\emph{Singular Integrals and Differentiability Properties of Functions}. 
%Princeton Mathematical Series, No. 30 
Princeton University Press, Princeton, N.J., 1970.
% xiv+290 pp.
% MR0290095

\bibitem[Vervaat(1979)]{Vervaat}
Wim Vervaat:
A relation between {B}rownian bridge and {B}rownian excursion.
\emph{Ann.  Probab.} \vol{7} (1979), no. 1, 143--149.

\bibitem[Wang(2016)]{Wang}
Minmin Wang:
Scaling limits for a family of unrooted trees.
\emph{ALEA Lat. Am. J. Probab. Math. Stat.}, \vol{13} (2016), no. 2, 1039--1067.



%\bibitem[??]{??} 


\end{thebibliography}
\end{document}